%%%%%%%%%%%%%%%%%%%%%%%%%%%%%%%%%%%%%%%%%%%%%%%%%%%%%%%%%%%%%%%%%%
%%		Convex barrier.
%%
%%		Dower, McEneaney, Cantoni.
%%
%%		v10. Address Mac's comments.
%%		v11. Address MC's comments.
%%		v12. Reformat. Cite extra refs identified by MC.
%%		v13. Address MC's comments received 13-Feb-18.
%%		v14a. Remove redundant labels. Ready for cut.
%%		v14. Start cut for submission.
%%		v15. Include additional references.
%%

\documentclass[11pt,letter]{article}

\usepackage{amsmath,amssymb,amsthm}
\usepackage{mathrsfs}
\usepackage{graphicx,epsfig,psfrag}
\usepackage{xcolor}
\usepackage{enumerate}
\usepackage{subcaption}

\textheight=22cm
\voffset=-20mm
\textwidth=17cm
\hoffset=-20mm

\allowdisplaybreaks

%%%%%%%%%%%%%%%%%%%%%%%%%%%%%%%%%%%%%%%%%%%%%%%%%%%%%%%%%%%%%%%%%%%%
%%
%%		Definitions, commands, environments, etc.
%%

% ----- theorem-like environments
\newtheorem{theorem}		{Theorem}[section]
\newtheorem{corollary}		[theorem]{Corollary}

\newtheorem{lemma}		[theorem]{Lemma}
\newtheorem{sideremark}		[theorem]{Remark}
\newtheorem{sidenote}		[theorem]{Note}
\newtheorem{sideeg}		[theorem]{Example}
\newtheorem{sideconj}		[theorem]{Conjecture}
\newtheorem{sideassumption}	{Assumption}

% ----- environments
% \newenvironment{proof}          {\begin{sideproof}\rm}{\end{sideproof}}
\newenvironment{remark}		{\begin{sideremark}\rm}{\end{sideremark}}

\DeclareMathOperator*{\argmin}{arg\,min}
\DeclareMathOperator*{\argmax}{arg\,max}

\DeclareMathOperator*{\dom}{dom}
\DeclareMathOperator*{\closure}{cl}
\DeclareMathOperator*{\lsc}{lsc}

\newcommand{\itsep}	{\itemsep=0mm}

% ----- commands

\newcommand{\ba}				{\begin{array}}
\newcommand{\ea}				{\end{array}}
\newcommand{\ddtone}[2]			{{\frac{d {#1}}{d {#2}}}}
\newcommand{\er}[1]			{{(\ref{#1})}}
\newcommand{\nn}				{{\nonumber}}

\newcommand{\pdtone}[2]			{{\frac{\partial {#1}}{\partial {#2}}}}
\newcommand{\pdttwo}[2]			{{\frac{\partial^2 {#1}}{\partial {#2}^2}}}

\newcommand{\ts}[1]				{{\textstyle{#1}}}

\newcommand{\funspace}[1]		{{\mathscr{#1}}}
\newcommand{\ol}[1]				{{\overline{#1}}}
\newcommand{\wh}[1]			{{\widehat{#1}}}

\newcommand{\op}[1]			{{\mathcal{#1}}}
\newcommand{\Frechet}			{{Fr\`{e}chet}}

\newcommand{\N}				{{\mathbb{N}}}
\newcommand{\R}				{{\mathbb{R}}}
\newcommand{\Z}				{{\mathbb{Z}}}

\newcommand{\cA}				{{\funspace{A}}}
\newcommand{\cB}				{{\funspace{B}}}
\newcommand{\cL}				{{\funspace{L}}}
\newcommand{\cU}				{{\funspace{U}}}
\newcommand{\cV}				{{\funspace{V}}}

\newcommand{\bo}				{{\mathcal{L}}}
\newcommand{\demi}			{{\ts{\frac{1}{2}}}}
\newcommand{\eps}				{{\epsilon}}

\newcommand{\Ltwo}			{{{\cL}_2}}
\newcommand{\ggrad}			{{\nabla}}

\newcommand{\weakly}			{\rightharpoonup}

\newcommand{\rev}[1]			{{\color{red} {#1}}}

%%%%%%%%%%%%%%%%%%%%%%%%%%%%%%%%%%%%%%%%%%%%%%%%%%%%%%%%%%%%%%%%%%%%
%%
%%		Title.
%%

\title{\bf
% Representation and approximation of state constrained continuous time linear regulator problems via unconstrained two player games$^*$
Game representations for state constrained continuous time linear regulator problems% <- stop the space
\thanks{Research partially supported by the Australian Research Council, AFOSR, and NSF.}
}

\author{Peter M. Dower$^{\dagger}$\qquad
William M. McEneaney${^\ddagger}$\qquad
Michael Cantoni% <-this % stops a space
\thanks{\{Dower, Cantoni\} are with the Department of Electrical \& Electronic Engineering, 
		University of Melbourne,
		Victoria 3010, Australia.
		{\tt\small \{pdower,cantoni\}@unimelb.edu.au}
$^{\ddagger}$McEneaney is with the Department of Mechanical and Aerospace Engineering,
		University of California at San Diego,
		La Jolla, CA 92093, USA. 
		{\tt\small wmceneaney@ucsd.edu}}%
}

%%%%%%%%%%%%%%%%%%%%%%%%%%%%%%%%%%%%%%%%%%%%%%%%%%%%%%%%%%%%%%%%%%%%
%%
%%		Start of document.
%%

\begin{document}

\date{}

\maketitle
\thispagestyle{empty}
% \pagestyle{empty}

% \tableofcontents

%%%%%%%%%%%%%%%%%%%%%%%%%%%%%%%%%%%%%%%%%%%%%%%%%%%%%%%%%%%%%%%%%%%%
%%
%%		Abstract.
%%

\begin{abstract}
A supremum-of-quadratics representation for convex barrier-type constraints is developed and applied within the context of a class of continuous time state constrained linear regulator problems. Using this representation, it is shown that a linear regulator problem subjected to such a convex barrier-type constraint can be equivalently formulated as an unconstrained two-player linear quadratic game. By demonstrating equivalence of the upper and lower values of this game, state feedback characterizations for the optimal policies of both players are developed. These characterizations are subsequently illustrated by example. 
\end{abstract}

%%%%%%%%%%%%%%%%%%%%%%%%%%%%%%%%%%%%%%%%%%%%%%%%%%%%%%%%%%%%%%%%%%%%
%%
%%		Introduction.
%%

\section{Introduction}
The study of unconstrained continuous time linear quadratic regulator (LQR) problems has provided the foundation of numerous advances in systems theory over many decades, including in optimal control \cite{AM:71}, and in the development of practical receding horizon / model predictive control strategies \cite{GPM:89}.
The value function that attends their formulation as an optimal control problem is guaranteed to be finite everywhere on sufficiently short time horizons, and is quadratic in the initial state, see for example \cite{AM:71,GL:95}. Indeed, it is well known that the Hessian of the value function is characterized in terms of the unique solution of a corresponding final value problem defined by a differential Riccati equation (DRE) subject to a terminal condition set by the Hessian of the terminal payoff. Standard tools exist for the efficient solution of DREs, and thus continuous time LQR problems.

The introduction of state constraints into LQR problems fundamentally impacts their solvability. Indeed, the nonlinearity inherent in such a constraint naturally destroys the quadratic structure that underpins the solvability of LQR problems via DREs. Instead, the value functions involved are inherently non-quadratic, and satisfy a more general non-stationary Hamilton-Jacobi-Bellman (HJB) partial differential equation (PDE). It is well-known that HJB PDEs are difficult to solve for nonlinear regulator problems, and that computational strategies that attend their solution suffer from a curse-of-dimensionality \cite{M:06}. These difficulties limit the  imposition of state constraints in continuous time LQR problems.

In this paper, implementation of a simple state constraint in an otherwise standard finite dimensional continuous time LQR problem is considered. Specifically, in addition to the standard linear open-loop dynamics and quadratic costs, a general convex barrier function is introduced with a view to constraining the state to a ball in the state space. By employing what is effectively a convex relaxation, an alternative solution strategy for such problems is explored via the study of a related unconstrained two-player game.
Fundamental to this exploration is the development of an exact {\em sup(remum)}-of-quadratics representation for the introduced convex barrier function. In this representation, the associated non-quadratic state penalty is expressed as the supremum of a parameterized family of quadratic penalties. The parameter identifying elements of this family is a single unbounded real variable that is related to the Hessian of the associated quadratic penalty. Its manipulation ultimately allows a quadratic state penalty to be selected in a state dependent fashion, in lieu of the general convex barrier penalty, as the underlying state trajectory evolves in time. Where necessary, an infinite state penalty corresponding to activation of the state constraint can be levied by allowing this quadratic penalty parameter to tend to infinity.
An approximate sup-of-quadratics representation follows by limiting the parameter involved to a bounded interval. This approximation exactly represents the barrier function for states inside a ball, and approximates it by a single quadratic function for states outside that ball. It is parameterized by the upper bound of the interval involved, and converges to the exact sup-of-quadratics representation as this upper interval bound tends to infinity.
Examples of the type of sup-of-quadratics representation obtained are illustrated in Figure \ref{fig:sup-of-quad}. % (For further details concerning this specific illustration, see Section \ref{sec:examples}). 

\begin{figure}[h!]
\vspace{0mm}
\begin{center}
\begin{subfigure}[c]{0.48\textwidth}
\psfrag{sigma}{$\sqrt{\rho}$}
\psfrag{Convex barrier}{\small\hspace{-17mm}Barrier $\Phi(\rho)$ (-\,-), quadratics (-\,\!\!-)}
\psfrag{LogbarrierXXXXXXXX}{barrier}
\psfrag{Quadratics}{quadratics}
\includegraphics[width=\textwidth,height=65mm]{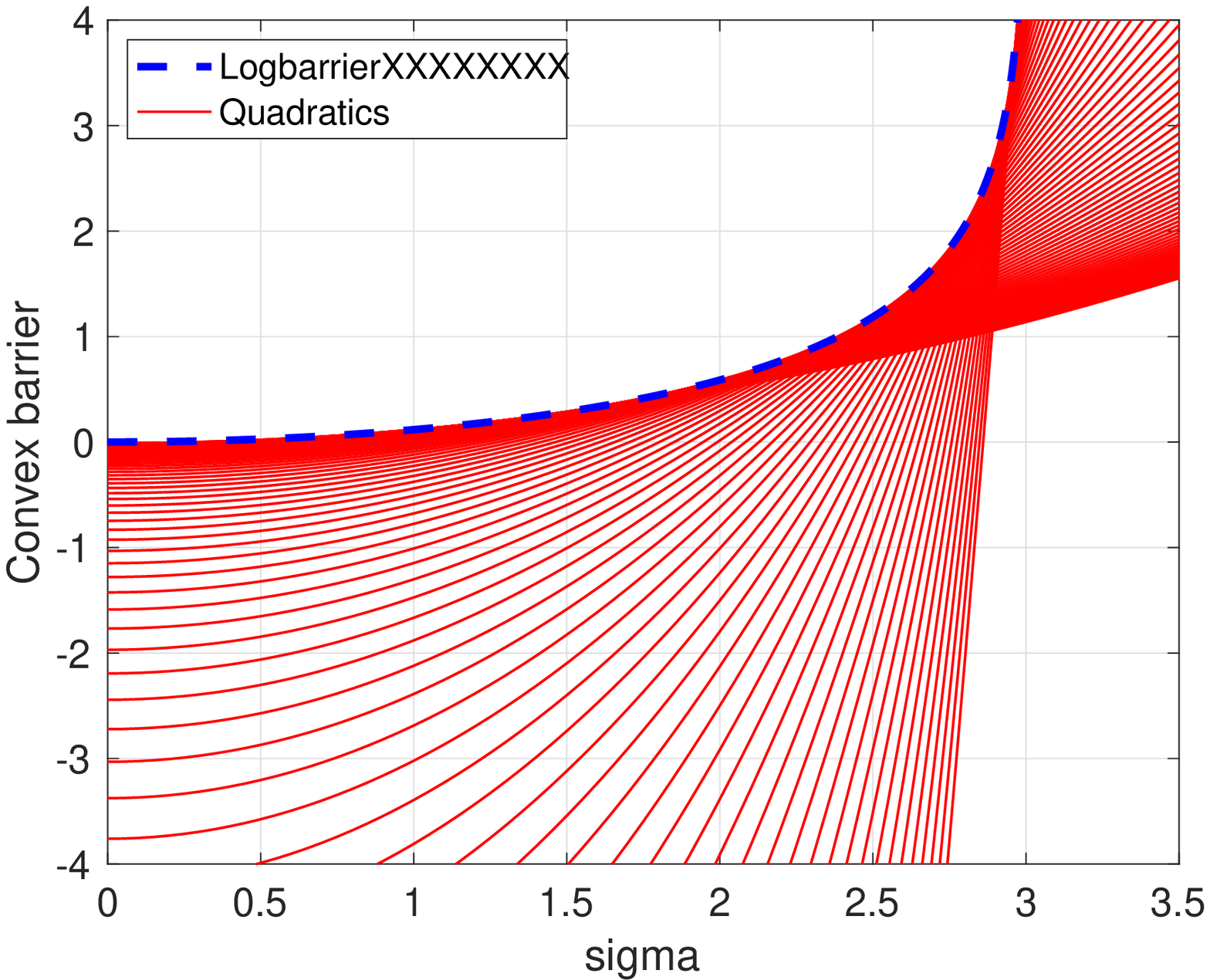}
\vspace{-5mm}
\caption{Convex barrier, see Theorem \ref{thm:Phi-sup-of-quadratics} and Section \ref{sec:examples}.}
\label{fig:sup-of-quad-no-dip}
\end{subfigure}
\quad
\begin{subfigure}[c]{0.48\textwidth}
\psfrag{sigma}{$\sqrt{\rho}$}
\psfrag{Convex barrier}{\small\hspace{-17mm}Barrier $\Phi(\rho)$ (-\,-), quadratics (-\,\!\!-)}
\psfrag{LogbarrierXXXXXXXX}{barrier}
\psfrag{Quadratics}{quadratics}
\includegraphics[width=\textwidth,height=65mm]{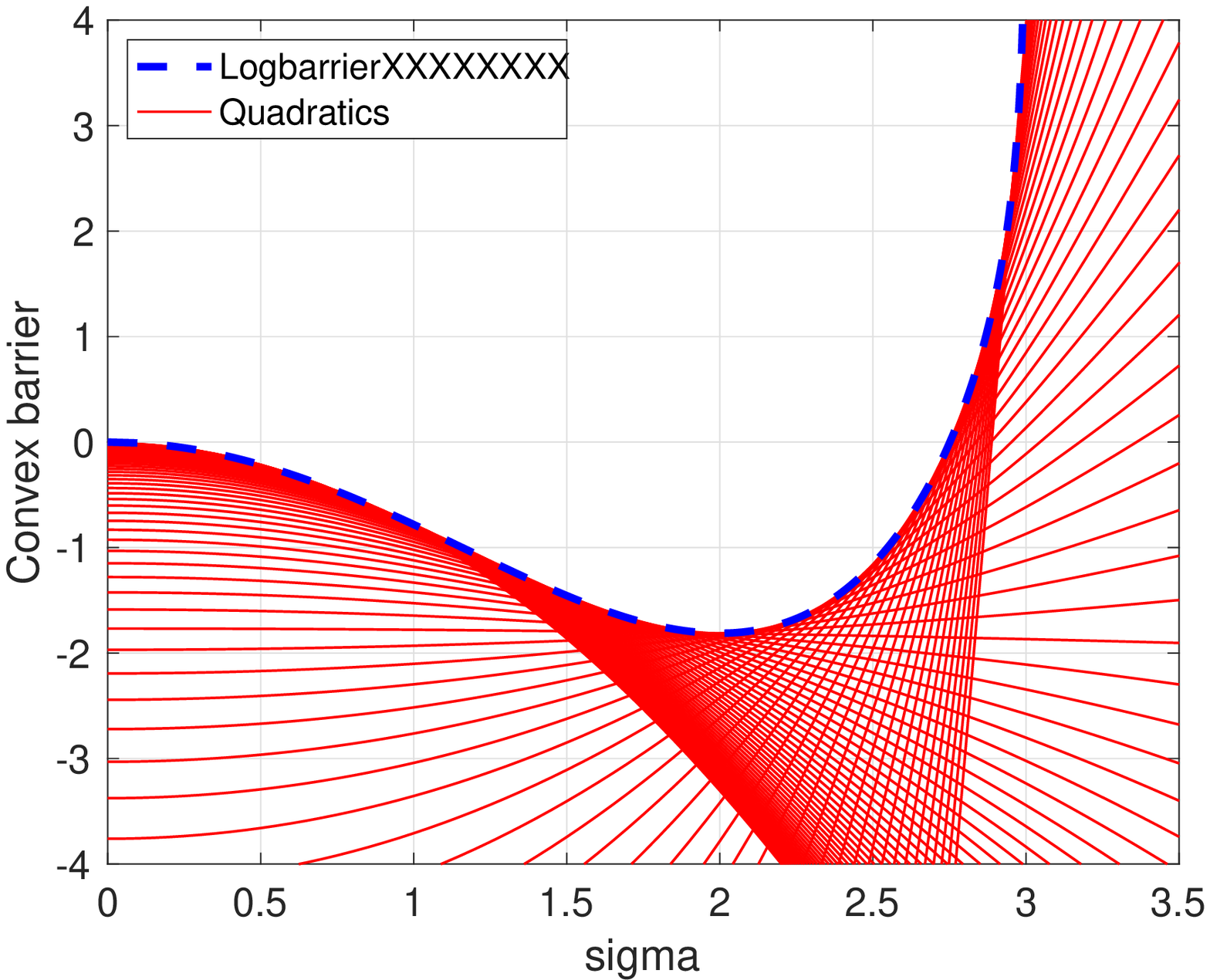}
\vspace{-5mm}
\caption{Semiconvex barrier, see Remark \ref{rem:Phi-semiconvex}.}
\label{fig:sup-of-quad-dip}
\end{subfigure}
\vspace{-2mm}
\caption{Sup-of-quadratics representation for $\Phi$ of \er{eq:barrier}, \er{eq:example}.}
\label{fig:sup-of-quad}
\end{center}
\vspace{-6mm}
\end{figure}

Invocation of the exact or approximate sup-of-quadratics representation for the convex barrier function in the state constrained regulator problem of interest yields respectively the same regulator problem, or a convergent approximation to it. Convergence is demonstrated both in terms of the value functions involved, and the behaviour of their optimal state trajectories. It is shown explicitly that the exact representation yields an implementation of the state constraint, with trajectories always confined to the closed ball of interest, and to its interior almost always (a.e. in time). The approximate problem is shown to yield trajectories that converge to this behaviour. 
Measurable selection subsequently
leads to the consideration of corresponding two player games. In these games, the minimizing player corresponds to the usual control, while the maximizing player is an adversary that negotiates an appropriate state penalty, given the current state of the trajectory relative to the state constraint. It is shown that the upper values of the exact and approximate games are equivalent respectively to the exact and approximate regulator problems indicated above, in the same quantitative manner regarding value and constraint satisfaction. The upper and lower values are subsequently shown to be equivalent, see also \cite{MD1:15}, which is useful in computation.
Consideration of the approximating game lower value yields corresponding state feedback characterizations for the optimal policies of both players. These policies are shown to explicitly depend on the solution of the state dynamics driven by the approximate optimal control, and the solution of an attendant family of DREs. It is demonstrated that solutions of these DREs always exist, and that they encode the actions of the maximizing (state penalty negotiating) player. Convergence of these policies to those of the exact game is guaranteed, so that the optimal control in particular converges to that of the original state constrained regulator problem of interest.
An illustrative example is included that evaluates an approximate optimal strategy for both players, given a specific initial plant state and terminal cost. The effect of the state constraint on the optimal control and trajectory is also identified. % The availability of the aforementioned DRE characterization of the optimal policies of both players, and hence the original optimal control, is crucial in this calculation.

A selected collection of immediately relevant prior works that invoke duality in optimal control, and the implementation of constraints in otherwise linear quadratic regulator problems, include \cite{W:95,DG:06,GS:07,RG:08,BKM:14,FE:17}, and (from the authors) \cite{DMC1:16, DMC2:16, DC1:17}. Specifically, \cite{W:95} develops an approximation scheme for general convex costs, and studies consistency of this approximation, while \cite{DG:06} considers continuous time constrained control in a model predictive control setting, subject to an interiority condition on the feedback policy. One of many related investigations exploiting barrier functions in the implementation of constraints is detailed in \cite{FE:17}, via a discrete time setting. Duality and saddle point properties are explored in a more general setting in \cite{GS:07,BKM:14}, albeit in the restricted case of control constraints.
% , i.e. no state constraints. 
The tools of convex analysis are employed in the general treatment of a closely related class of continuous time problems in \cite{RG:08} that addresses both control and state constraints. Motivated by \cite{MD1:15}, a key contribution of the current work relative to \cite{RG:08} concerns the sup-of-quadratics representation developed for the extended real barrier functions involved, and their invocation in studying the optimal control problem via an unconstrained game.
Preliminary efforts \cite{DMC1:16,DMC2:16,DC1:17} (by the authors) document the genesis of this contribution, with log-barrier functions considered initially, and more general convex barrier functions considered subsequently. The more recent work \cite{DC1:17} has demonstrated how this approach can be generalized to more general convex state constraints, in the company of time-varying dynamics. For brevity, those details are not included here.

\if{false}

\cite{W:95} primal / dual, piecewise constant controls, approximation scheme, consistency.

\cite{DG:06} cts time NMPC, barrier functions for trajectory and terminal constraints, requires interiority conditions the control / barrier functions.

\cite{GS:07} control constraints, otherwise very related approach.

\cite{RG:08} state and control constraints, no sup-of-quadratics representation.

\cite{BKM:14} control constraints

\cite{FE:15} discrete time, MPC

\fi

In terms of organization, the state constrained linear regulator problem of interest is posed in Section \ref{sec:optimal}, along with the class of convex barrier functions involved. This is followed in Section \ref{sec:sup-of-quadratics} by development of the exact and approximate sup-of-quadratics representations for these convex barrier functions, and the introduction of the approximate regulator problem. Existence and uniqueness of optimal trajectories for the exact and approximate regulator problems are considered in Section \ref{sec:existence}, along with their behaviour relative to the state constraint of interest. Exact and approximate two player games are formulated in Section \ref{sec:game}, and their respective equivalences with the exact and approximate regulator problems is demonstrated. A further equivalence of the upper and lower values is demonstrated in each case. This in turn motivates characterization of the optimal policies involved via solution of a two-point boundary value problem defined in terms of a DRE. This characterization is subsequently illustrated by example of Section \ref{sec:examples}. The paper concludes with some minor summarizing remarks in Section \ref{sec:conc}. An appendix is included for technicalities that might otherwise interrupt the developments described.

Throughout, $\R$, $\N$, $\Z$ denote the reals, natural numbers, and integers, while $\R_{\ge 0}$, $\R_{>0}$, and $\ol{\R}$ denote the non-negative, positive, and extended reals respectively, with the latter defined by $\ol{\R}\doteq\ol{\R}^-\cup\ol{\R}^+$, $\ol{\R}^\pm\doteq\R\cup\{\pm\infty\}$. For convenience, $\R_{\ge a} \doteq [a,\infty)$ and $\R_{>a}\doteq(a,\infty)$ for any $a\in\R$. An $n$-dimensional Euclidean space is denoted by $\R^n$. The space of matrices mapping $\R^m$ to $\R^n$ is denoted by $\R^{n\times m}$. The subset of positive semidefinite symmetric matrices in $\R^{n\times n}$ is denoted by $\Sigma^n$. The Euclidean and induced matrix norms are denoted by $|\cdot|$ and $\|\cdot\|$ respectively. Otherwise, the norm on a Banach space $\cU$ is denoted by $\|\cdot\|_{\cU}$, or simply $\|\cdot\|$ if the space is contextually apparent. Open and closed balls of radius $r\in\R_{\ge 0}$ in $\cU$ are denoted respectively by $\cB_{\cU}(0;r)$ and $\cB_{\cU}[0;r]$ respectively. Weak convergence of a sequence $\{u_k\}_{k\in\N}\subset\cU$ to some $\bar u\in\cU$ is denoted by $u_k \weakly \bar u$ (as $k\rightarrow\infty$). The product space $\cU\times \cdots \times \cU$ of $k\in\N$ instances of $\cU$ is denoted by $\cU^k$. The space of bounded linear operators between Banach spaces $\cU$ and $\cV$ is denoted by $\bo(\cU;\cV)$.
The respective spaces of continuous and $k$-times continuously differentiable functions mapping $\cU$ to $\cV$ are denoted by $C(\cU;\cV)$ and $C^{k}(\cU;\cV)$ for $k\in\N\cup\{\infty\}$. Differentiability at a closed left or right end-point of an interval is interpreted throughout to mean right- or left-differentiability respectively. The space of (Lebesgue) square integrable mappings from $[0,t]\subset\R_{\ge 0}$ to $\cU$ is denoted by $\Ltwo([0,t];\cU)$. Unless otherwise specified, $C([0,t];\cU)$ is equipped with the sup norm, i.e. $\|F\|\doteq\|F\|_{C([0,t];\cU)} \doteq \sup_{s\in[0,t]} \|F(s)\|_\cU$, $F\in C([0,t];\cU)$.
The following concern a function $f:\cU\rightarrow\ol{\R}$ on Banach space $\cU$:
% \begin{align}

$f:\cU\rightarrow\ol{\R}$ has (possibly empty) domain $\dom f\doteq \{ u\in\cU \, | \, f(u)< \infty \}$.

$f:\cU\rightarrow\ol{\R}$ is proper if $\dom f \neq \emptyset$ and $f$ is finite on $\dom f$, i.e. $f(u)>-\infty$ for all $u\in\dom f$.

$f:\cU\rightarrow\ol{\R}$ is lower semicontinuous if $\lsc f(u) \doteq \liminf_{\tilde u\rightarrow u} f(\tilde u) \ge f(u)$ for all $u\in\cU$.

$f:\cU\rightarrow\ol{\R}$ is (lower) closed if $f = \closure f$, see  \cite[(3.8), p.15]{R:74}, where
$$
	\closure f(u)
	\doteq \left\{ \ba{cl}
		\lsc f(u),
		& \forall \ u\in\cU \text{ if }
		\lsc f(u) >-\infty \ \forall \ u\in\cU,
		\\
		-\infty, & \forall \ u\in\cU \text{ if } \exists \, \bar u\in\cU \text{ s.t. } \lsc f(\bar u) = -\infty.
	\ea
	\right.
$$

$f:\cU\rightarrow\ol{\R}$ is coercive if $\lim_{\|u\|\rightarrow\infty} f(u) / \|u\| = \infty$. 

$f:\cU\rightarrow\ol{\R}$ is (strictly) convex if $f:\dom f\rightarrow\ol{\R}^-$ is (strictly) convex, i.e.
$f((1-\lambda) u + \lambda\tilde u) \le (1-\lambda) f(u) + \lambda f(\tilde u)$ (strictly) for all $\lambda\in(0,1)$, $u,\tilde u\in\dom f$. The map $u\mapsto\infty$ is strictly convex.

%%% 10Dec17 
\if{false}

$f:\cU\rightarrow\ol{\R}$ is (strictly) mid-point convex if $f(u + \tilde u) - 2\, f(u) + f(u - \tilde u) \ge 0$ (strictly) for all $u,\tilde u\in\cU$.

\begin{lemma}
\label{lem:mid-point}
Suppose $f:\cU\rightarrow\ol{\R}$ is proper, and (strictly) mid-point convex and continuous on $\dom f\subset\cU$. Then, $f$ is (strictly) convex on $\cU$.
\end{lemma}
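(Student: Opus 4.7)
The approach is the classical upgrade from mid-point convexity to full convexity along each line segment, with continuity bridging the gap between dyadic rationals and arbitrary real coefficients. Fix $u,\tilde u\in\dom f$ and set $\phi(\lambda)\doteq f((1-\lambda)u+\lambda\tilde u)$ for $\lambda\in[0,1]$; mid-point convexity of $f$ (applied with centre $(w_1+w_2)/2$ and half-displacement $(w_1-w_2)/2$) translates into $\phi((\lambda_1+\lambda_2)/2)\le(\phi(\lambda_1)+\phi(\lambda_2))/2$ whenever $\phi(\lambda_1),\phi(\lambda_2)$ are finite. The target is the inequality $\phi(\lambda)\le(1-\lambda)\phi(0)+\lambda\phi(1)$ for every $\lambda\in(0,1)$.

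The first key step is an induction on $n\in\N$ establishing that every dyadic $\lambda=k/2^n\in[0,1]$ satisfies both $(1-\lambda)u+\lambda\tilde u\in\dom f$ and the convex-combination inequality above. The case $n=1$ is precisely mid-point convexity applied at the midpoint of $u$ and $\tilde u$; for the inductive step an odd numerator $k=2j+1$ is rewritten via $k/2^n$ as the midpoint of $j/2^{n-1}$ and $(j+1)/2^{n-1}$, and mid-point convexity combined with the inductive hypothesis at level $n-1$ delivers both the domain membership and the inequality. The second step passes to an arbitrary $\lambda\in(0,1)$ through a dyadic approximation $\lambda_m\to\lambda$: the points $w_m\doteq(1-\lambda_m)u+\lambda_m\tilde u$ lie in $\dom f$, converge in $\cU$ to $w\doteq(1-\lambda)u+\lambda\tilde u$, and satisfy $f(w_m)\le(1-\lambda_m)\phi(0)+\lambda_m\phi(1)$, which is bounded uniformly in $m$; continuity of $f$ on $\dom f$ then transfers both the membership $w\in\dom f$ and the inequality to the limit.

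The main obstacle is precisely this final transfer, i.e.\ establishing that $w\in\dom f$, which is essential for the stated convexity inequality to be non-trivial (otherwise the left-hand side is $+\infty$ while the right-hand side is finite). Without continuity, mid-point convexity alone does not imply convexity of $\dom f$, as classical Hamel-basis pathologies demonstrate, so the continuity hypothesis must be leveraged to rule out blow-up along the dyadic approach sequence $\{w_m\}$ whose $f$-values have already been bounded. The strict case is handled by propagating strict inequality from the initial application of mid-point convexity at the midpoint of $u$ and $\tilde u$, which yields strict inequality on every interior dyadic; this strictness survives the limit through a standard comparison with an affine minorant chosen to lie strictly below $(1-\lambda)\phi(0)+\lambda\phi(1)$ at a fixed interior dyadic close to $\lambda$.
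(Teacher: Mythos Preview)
The paper does not actually prove this lemma: both the statement and the pointer ``The proof of Lemma~\ref{lem:mid-point} is postponed to Appendix~\ref{app:chi-and-dom}'' sit inside an \verb|\if{false}...\fi| block, and the referenced appendix (itself commented out) only treats Lemmas~\ref{lem:chi} and~\ref{lem:dom-value-W-bar}. So there is no paper proof to compare against; what follows is an assessment of your argument on its own.

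Your dyadic induction is the right skeleton, and you correctly isolate the decisive step: showing that the limit point $w=(1-\lambda)u+\lambda\tilde u$ lies in $\dom f$. But your resolution of that step is not valid. ``Continuous on $\dom f$'' means the restriction $f|_{\dom f}:\dom f\to\R$ is continuous for the subspace topology; it says nothing about $f$ at points outside $\dom f$. Having $w_m\in\dom f$, $w_m\to w$ in $\cU$, and $\sup_m f(w_m)<\infty$ does \emph{not} force $w\in\dom f$ under that hypothesis alone.

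Worse, under the paper's definitions the lemma is false as written, so the gap cannot be patched without an extra hypothesis. Take $\cU=\R$, let $D\subset[0,1]$ be the set of dyadic rationals, and define $f\equiv 0$ on $D$, $f\equiv+\infty$ off $D$. Then $\dom f=D$, $f$ is proper, and $f|_D$ is (trivially) continuous. Mid-point convexity $2f(u)\le f(u+\tilde u)+f(u-\tilde u)$ holds for all $u,\tilde u\in\R$: if $u\in D$ the left side is $0$; if $u\notin D$ then $u+\tilde u$ and $u-\tilde u$ cannot both be dyadic in $[0,1]$ (their average would be $u$), so the right side is $+\infty$. Yet $f$ is not convex in the paper's sense: $0,1\in\dom f$ while $f(1/3)=+\infty>0=\tfrac{2}{3}f(0)+\tfrac{1}{3}f(1)$. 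This is exactly the failure mode you flagged but did not rule out.

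What would rescue the argument is an assumption that lets boundedness along $\{w_m\}$ force $w\in\dom f$: e.g.\ continuity of $f$ as a map $\cU\to\ol\R$ (so $f(w)=\lim_m f(w_m)<\infty$), or lower semicontinuity of $f$ on $\cU$, or simply assuming $\dom f$ convex. With any of these in hand your dyadic-then-limit scheme goes through; the strict case then follows cleanly by sandwiching an arbitrary $\lambda\in(0,1)$ between two interior dyadics $d_1<\lambda<d_2$, using the already-established (non-strict) convexity on the sub-segment together with the strict dyadic inequality at $d_1,d_2$.
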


The proof of Lemma \ref{lem:mid-point} is postponed to Appendix \ref{app:chi-and-dom}.

\fi
%%%

%%%%%%%%%%%%%%%%%%%%%%%%%%%%%%%%%%%%%%%%%%%%%%%%%%%%%%%%%%%%%%%%%%%%
%%
%%		State constrained linear regulator problem.
%%

\section{State constrained linear regulator problem}
\label{sec:optimal}

Interest is restricted to optimal control problems defined on a finite time horizon $t\in\R_{\ge 0}$, with respect to linear dynamics and a convex barrier state constraint. The value function $\ol{W}_t:\R^n\rightarrow\ol{\R}^+$ involved is defined by
\begin{align}
	\ol{W}_t(x)
	& \doteq \inf_{u\in\cU[0,t]} \bar{J}_t(x,u)\,,
	\label{eq:value-W}
\end{align}
for all $x\in\R^n$, in which $\cU[0,t] \doteq \Ltwo([0,t];\R^m)$ is the space of open loop controls, and $\bar J_t$ is a cost function defined with respect to the integrated running costs $\bar I_t$ and $I_t^\kappa$, $\kappa\in\R_{>0}$, and a terminal cost $\Psi$. In particular, $\bar J_t, \bar I_t:\R^n\times\cU[0,t]\rightarrow\ol{\R}^+$, $I_t^\kappa:\cU[0,t]\rightarrow\R_{\ge 0}$, and $\Psi:\R^n\rightarrow\R_{\ge 0}$, are defined by
\begin{gather}
	\bar J_t(x,u)
	\doteq \bar I_t(x,u) + I_t^\kappa(u) + \Psi(x_t)\,,
	\label{eq:cost-J-bar}
	\\
	\bar I_t(x,u)
	\doteq \int_0^t \ts{\frac{K}{2}}\, |\xi_s|^2 + \demi\, \Phi(|\xi_s|^2)\, ds\,,
	\label{eq:cost-I-bar}
	\quad
	I_t(u) \doteq I_t^\kappa(u)
	\doteq \ts{\frac{\kappa}{2}}\, \|u\|_{\cU[0,t]}^2\,,
	% \label{eq:cost-I}
	\\
	\Psi(x)
	\doteq \demi\, \langle x - z,\, P_t\, (x - z) \rangle\,,
	\label{eq:cost-Psi}
\end{gather}
for all $x\in\R^n$, $u\in\cU[0,t]$, in which $K\in\R$ and $P_t\in\Sigma^n$ are a priori fixed, and $\Phi$ is an extended real valued barrier function to be specified below. The map $s\mapsto\xi_s\in\R^n$, $s\in[0,t]$, describes the unique trajectory of a linear dynamical system corresponding to an initial state $x\in\R^n$ and input $u\in\cU[0,t]$, given explicitly via a map $\chi:\R^n\times\cU[0,t]\rightarrow\R^n$, where
\begin{align}
	\xi_s 
	& = [\chi(x,u)]_s \doteq e^{A\, s}\, x + \int_0^s e^{A\, (s-\sigma)}\, B\, u_\sigma\, d\sigma\,,
	\label{eq:dynamics}
\end{align}
for all $s\in[0,t]$, given $A\in\R^{n\times n}$, $B\in\R^{n\times m}$, $B\ne 0$. The barrier function $\Phi:\R\rightarrow\ol{\R}^+$ is defined by
\begin{align}
	\Phi(\rho)
	& \doteq \left\{ \ba{cl}
		\phi(\rho)\,,
		& \rho\in[0,b^2),
		\\
		+\infty\,,
		& \rho\not\in[0,b^2), 
	\ea \right.
	\label{eq:barrier}
\end{align}
for fixed $b\in\R_{>0}$, in which $\phi:[0,b^2)\rightarrow\R$ satisfies the following properties:
\begin{gather}
	\begin{aligned}
	&\, \left\{ \ba{ll}
	\text{\em (i)} & \text{$\phi$ is twice continuously differentiable, with $\phi''$ strictly positive;}
	\\
	\text{\em (ii)} \ & \text{$\lim_{\rho\uparrow b^2} \phi(\rho) = \infty$, and $\phi'(0)\ge -K$;}
	% \ea \right.
	% \\[-1mm]
	% & \left\{ \ba{ll}
	\\
	\text{\em (iii)} & \text{$\phi$ is strictly convex;}
	\\
	\text{\em (iv)} & \text{$\phi'$ is strictly increasing,
	and $\phi':[0,b^2)\rightarrow[\phi'(0),\infty)$; and}
	\\
	\text{\em (v)} & \text{$(\phi')^{-1}$ exists, is strictly increasing, and
	$(\phi')^{-1}:[\phi'(0),\infty)\rightarrow[0,b^2)$.}
	\ea \right.
	\end{aligned}
	\label{eq:phi-properties-1-2}
\end{gather}
Note in particular that {\em (iii)--(v)} follow as a consequence of {\em (i)--(ii)}, see for example \cite[Theorem 2.13, p.46]{RW:97}. As a consequence, $\phi$ has a well-defined convex dual $a:\R_{\ge\phi'(0)}\rightarrow\R_{\ge -\phi(0)}$ given by
\begin{align}
	&
	a(\beta)
	\doteq \beta\, (\phi')^{-1}(\beta) - \phi\circ(\phi')^{-1}(\beta)\,,
	\label{eq:exact-a}
\end{align}
for all $\beta\in\R_{\ge\phi'(0)}$, that satisfies a variety of properties, including invertibility, etc, see Lemma \ref{lem:a-properties} in Appendix \ref{app:properties}. It defines a useful change of coordinates in the sup-of-quadratics representation that is developed for barrier $\Phi$ in Section \ref{sec:sup-of-quadratics}. Two preliminary lemmas concerning \er{eq:value-W}, \er{eq:dynamics} are included prior to commencing this development. Their proofs are standard and are omitted for brevity. 
% postponed to Appendix \ref{app:chi-and-dom}.

\begin{lemma}
\label{lem:chi}
Given any $t\in\R_{\ge 0}$, $x\in\R^n$, $\ol{U}\in\R_{\ge 0}$, $\{u_k\}_{k\in\N}\subset\cB_{\cU[0,t]}[0;\ol{U}]$, with $\xi_k\doteq \chi(x,u_k)$ defined via \er{eq:dynamics} for all $k\in\N$, the following properties hold:
\begin{enumerate}[\it (i)] \itsep
\item $\xi \doteq\chi(x,u):[0,t]\rightarrow\R^n$ is uniformly continuous, given any fixed $u\in\cU[0,t]$;
\item $\chi(x,\cdot)\in C^\infty(\cU[0,t];C([0,t];\R^n))$, with $k^{th}$-order {\Frechet} derivatives given for $k\in\N$ by
\begin{gather}
	\begin{gathered}
	D_u^k\, \chi(x,\cdot):\cU[0,t]\rightarrow\bo((\cU[0,t])^k;C([0,t];\R^n)),
	\\
	[D_u\, \chi(x,u)\, h]_s
	= [\op{A}\, h]_s \doteq \int_0^s e^{A\, (s-\sigma)}\, B\, h_\sigma\, d\sigma\,,
	\quad
	D_u^k\, \chi(x,u) = 0, \ k\in\N_{\ge 2}\,,
	\end{gathered}
	\label{eq:derivatives}
\end{gather}
for all $u,h\in\cU[0,t]$, $s\in[0,t]$, in which $0\in\bo((\cU[0,t])^k;C([0,t];\R^n))$ denotes the zero operator. 
Moreover, $\op{A}\in\bo(\cU[0,t];C([0,t];\R^n))$, with $\|\op{A}\, h\|_{C([0,t];\R^n)} \le \sup_{s\in[0,t]} \| e^{A\, s}\, B \| \, \sqrt{t}\, \|h\|_{\cU[0,t]}$ for all $h\in\cU[0,t]$; and

\item $\{ \xi_k \}_{k\in\N}\subset C([0,t];\R^n)$ is uniformly equicontinuous and uniformly bounded. Furthermore, there exists a $\bar u\in\cU[0,t]$ and subsequences $\{v_k\}_{k\in\N}\subset\{ u_k \}_{k\in\N}$ and $\{y_k\}_{k\in\N} \subset \{ \xi_k \}_{k\in\N}$ such that $v_k\weakly \bar u$ weakly and $y_k\rightarrow\bar\xi\doteq\chi(x,\bar u)$ uniformly, in which $y_k = \chi(x,v_k)$ for all $k\in\N$.
\end{enumerate}
\end{lemma}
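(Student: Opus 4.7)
The plan is to exploit the affine structure $\chi(x,u)(s) = e^{As}x + [\op{A}u]_s$ systematically, since this renders the first two items essentially direct calculations and reduces the third to a standard Arzela-Ascoli plus weak compactness argument.

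For \emph{(i)}, I would fix $0 \le s' \le s \le t$ and split $\xi_s - \xi_{s'}$ into the free-motion piece $(e^{As} - e^{As'})x$, the tail $\int_{s'}^{s} e^{A(s-\sigma)} B u_\sigma\, d\sigma$, and the shift $\int_0^{s'}(e^{A(s-\sigma)} - e^{A(s'-\sigma)})B u_\sigma\, d\sigma$. The first is continuous in $s$ on the compact interval $[0,t]$; the tail is controlled by $\sup_{\tau\in[0,t]}\|e^{A\tau}B\|\cdot\sqrt{s-s'}\cdot\|u\|_{\cU[0,t]}$ via Cauchy-Schwarz; and the shift is bounded by $\sqrt{t}\cdot\sup_{\tau\in[0,t]}\|e^{A(s-\tau)} - e^{A(s'-\tau)}\|\cdot\|B\|\cdot\|u\|_{\cU[0,t]}$. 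Each piece tends to zero as $|s-s'|\to 0$ uniformly in the choice of $s,s'$, so $\xi$ is uniformly continuous on $[0,t]$. For \emph{(ii)}, the map $u \mapsto \chi(x,u)$ is the sum of the constant trajectory $s\mapsto e^{As}x$ and the bounded linear operator $\op{A}$, hence its \Frechet~derivative at every $u$ is $\op{A}$ itself and all higher derivatives vanish. The claimed operator bound follows from a pointwise-in-$s$ application of Cauchy-Schwarz to $\int_0^s \|e^{A(s-\sigma)}B\|\,|h_\sigma|\, d\sigma$, together with the change of variable $\tau = s - \sigma$.

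For \emph{(iii)}, uniform boundedness of $\{\xi_k\}$ follows from $|\xi_k(s)|\le \sup_{\tau\in[0,t]}\|e^{A\tau}\|\cdot|x| + \|\op{A}\|\cdot\ol{U}$, and uniform equicontinuity comes from rerunning the estimates of \emph{(i)} while observing that every modulus-of-continuity constant depends on $k$ only through $\|u_k\|_{\cU[0,t]} \le \ol{U}$. Since $\cU[0,t] = \Ltwo([0,t];\R^m)$ is a Hilbert space, hence reflexive, the Banach-Alaoglu theorem supplies a subsequence $\{v_k\}\subset\{u_k\}$ and $\bar u \in \cU[0,t]$ with $v_k\weakly \bar u$. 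The Arzela-Ascoli theorem applied to the equicontinuous, uniformly bounded family $\{\chi(x,v_k)\}\subset C([0,t];\R^n)$ yields a further subsequence $\{y_k\}$ converging uniformly to some $\bar\xi \in C([0,t];\R^n)$. To identify $\bar\xi$, I would note that for each fixed $s\in[0,t]$ and each coordinate $i$, the functional $u \mapsto \int_0^s (e^{A(s-\sigma)}B)_{i,\cdot}\, u_\sigma\, d\sigma$ is continuous and linear on $\cU[0,t]$, so weak convergence $v_k\weakly\bar u$ gives $[\chi(x,v_k)]_s\to[\chi(x,\bar u)]_s$ componentwise at every $s$; matching this pointwise limit against the uniform limit pins down $\bar\xi = \chi(x,\bar u)$.

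The main (though modest) obstacle is this final identification step, which mixes two different notions of convergence; it is resolved cleanly by the affinity of $\chi(x,\cdot)$, which transfers weak convergence of inputs into pointwise convergence of outputs and thereby determines the Arzela-Ascoli limit unambiguously. Everything else reduces to Cauchy-Schwarz, continuity of $\tau\mapsto e^{A\tau}$, and standard functional-analytic compactness.
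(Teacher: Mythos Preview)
Your proposal is correct and follows essentially the same route as the paper's (omitted but standard) argument: Cauchy--Schwarz estimates on the variation-of-constants formula for \emph{(i)} and \emph{(ii)}, then Arzel\`a--Ascoli plus weak sequential compactness in the Hilbert space $\cU[0,t]$ for \emph{(iii)}, with the limit identified via the bounded linear functionals $u\mapsto\langle w,\,[\op{A}u]_s\rangle$. The only cosmetic difference is that the paper extracts the Arzel\`a--Ascoli subsequence first and then passes to a weakly convergent sub-subsequence, whereas you reverse the order; either way a final relabelling gives $y_k=\chi(x,v_k)$ with both convergences holding along the same index set.
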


\begin{lemma}
\label{lem:dom-value-W-bar}
$0\in\dom\ol{W}_t$ for all $t\in\R_{\ge 0}$.
\end{lemma}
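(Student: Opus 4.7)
The plan is to exhibit a single admissible control that achieves a finite cost at $x=0$, thereby showing $\ol{W}_t(0)<\infty$, which is precisely the statement $0\in\dom\ol{W}_t$.

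The natural candidate is the zero control $u\equiv 0\in\cU[0,t]$. Substituting into \er{eq:dynamics}, I get $\xi_s=[\chi(0,0)]_s=e^{As}\cdot 0+\int_0^s e^{A(s-\sigma)}B\cdot 0\,d\sigma=0$ for every $s\in[0,t]$, so the entire state trajectory is identically zero.

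Next I evaluate each term in $\bar J_t(0,0)$ from \er{eq:cost-J-bar}--\er{eq:cost-Psi}. The running-cost integrand $\tfrac{K}{2}|\xi_s|^2+\tfrac{1}{2}\Phi(|\xi_s|^2)$ reduces to $\tfrac{1}{2}\Phi(0)=\tfrac{1}{2}\phi(0)$, which is a finite real number because $0\in[0,b^2)$ and $\phi:[0,b^2)\to\R$ by \er{eq:phi-properties-1-2}\emph{(i)}. Hence $\bar I_t(0,0)=\tfrac{t}{2}\phi(0)<\infty$. The control cost $I_t^\kappa(0)=\tfrac{\kappa}{2}\|0\|_{\cU[0,t]}^2=0$, and the terminal cost $\Psi(\xi_t)=\Psi(0)=\tfrac{1}{2}\langle z,P_t z\rangle$, which is finite since $P_t\in\Sigma^n$ and $z\in\R^n$ are fixed.

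Summing, $\bar J_t(0,0)=\tfrac{t}{2}\phi(0)+\tfrac{1}{2}\langle z,P_t z\rangle<\infty$, so by \er{eq:value-W} the infimum $\ol{W}_t(0)\le\bar J_t(0,0)<\infty$, giving $0\in\dom\ol{W}_t$. There is no genuine obstacle here; the only thing to be careful about is observing that the barrier $\Phi$ is real-valued at the origin, which follows immediately from the definition \er{eq:barrier} together with $b>0$ and property \er{eq:phi-properties-1-2}\emph{(i)}.
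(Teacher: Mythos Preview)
Your proof is correct and matches the paper's own (omitted) argument exactly: take $x=0$, $u\equiv 0$, observe $\xi\equiv 0$, and conclude $\ol{W}_t(0)\le\bar J_t(0,0)=\tfrac{t}{2}\phi(0)+\tfrac{1}{2}\langle z,P_t z\rangle<\infty$.
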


%%%%%%%%%%%%%%%%%%%%%%%%%%%%%%%%%%%%%%%%%%%%%%%%%%%%%%%%%%%%%%%%%%%%
%%
%%		Sup-of-quadratics representations for convex barrier functions.
%%

\section{Barrier representations and an approximate regulator problem} 
\label{sec:sup-of-quadratics}
Exact and approximate sup-of-quadratics representations for closed convex barrier functions of the form of $\Phi$ of \er{eq:barrier} can be established via convex duality \cite{RW:97,R:74}. These representations are fundamental to the development of a convergent approximation for the state constrained regulator problem \er{eq:value-W}, and its subsequent representation via unconstrained linear quadratic games. The development of these representations and the approximate regulator problem follow below.

%%		Exact representation.

\newcommand{\Ahat}		{{\Theta}}

\subsection{Exact sup-of-quadratics representation for convex barriers}

\begin{lemma}
\label{lem:exact-convex-duality}
The barrier function $\Phi:\R\rightarrow\ol{\R}^+$ of \er{eq:barrier} is closed and convex, and there exists a closed and convex function $\Ahat:\R\rightarrow\R$ such that 
\begin{align}
	\Phi(\rho)
	& = \sup_{\beta\in\R} \left\{ \beta\, \rho - \Ahat(\beta) \right\},
	\qquad
	\Ahat(\beta)
	= \sup_{\rho\in\R} \left\{ \beta\, \rho - \Phi(\rho) \right\} 
	= \left\{ \ba{rl}
		-\phi(0), &	\beta\in\R_{<\phi'(0)},
		\\
		a(\beta),
		& \beta\in\R_{\ge \phi'(0)},
	\ea \right.
	\label{eq:exact-Phi-and-A}
\end{align}
for all $\rho,\beta\in\R$, with $a$ as per \er{eq:exact-a}. Furthermore, the optimizers $\hat\beta^*:\R\rightarrow\ol{\R}$ and $\hat\rho^*:\R\rightarrow\R$ in \er{eq:exact-Phi-and-A}, defined by $\hat\beta^*(\rho) \doteq \argmax_{\beta\in\R} \{ \beta\, \rho - \Ahat(\beta) \}$ and $\hat\rho^*(\beta) \doteq \argmax_{\rho\in\R} \{ \beta\, \rho - \Phi(\rho) \}$, are given by
\begin{align}
	\hat\beta^*(\rho)
	& = \left\{ \ba{rl}
		-\infty,
		& \rho\in\R_{<0},
		\\
		\phi'(\rho),
		& \rho\in[0,b^2)\,,
		\\
		+\infty,
		& \rho\in\R_{\ge b^2},
	\ea \right.
	\quad
	\hat\rho^*(\beta)
	= \left\{ \ba{rl}
		0, &	\beta\in\R_{<\phi'(0)},
		\\
		(\phi')^{-1}(\beta),
		& \beta\in\R_{\ge\phi'(0)},
	\ea \right.
	\label{eq:exact-beta-and-rho-star}
\end{align}
for all $\beta,\rho\in\R$.
\end{lemma}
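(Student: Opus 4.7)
The plan is to prove the representation as a direct application of convex duality: verify that $\Phi$ is proper, closed and convex on $\R$, define $\Ahat$ as its Fenchel conjugate $\Phi^*$, compute $\Ahat$ in closed form, and then recover \er{eq:exact-Phi-and-A} by Fenchel--Moreau biconjugation. The optimizer formulas will drop out of the first-order conditions and the Fenchel--Young equality.

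First I would check that $\Phi$ is proper (since $\Phi(0)=\phi(0)<\infty$), convex (strict convexity of $\phi$ on $[0,b^2)$ from \er{eq:phi-properties-1-2}(iii), together with the fact that extending by $+\infty$ off $[0,b^2)$ preserves convexity), and lower semicontinuous on $\R$. For the latter the only non-trivial points are $\rho=0$, where continuity of $\phi$ gives $\liminf_{\rho\downarrow 0}\phi(\rho)=\phi(0)=\Phi(0)$, and $\rho=b^2$, where \er{eq:phi-properties-1-2}(ii) gives $\phi(\rho)\to+\infty=\Phi(b^2)$ as $\rho\uparrow b^2$. Closedness then follows since $\Phi\ge 0$ forces $\closure\Phi=\lsc\Phi=\Phi$.

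Next I would compute $\Ahat(\beta)=\sup_{\rho\in[0,b^2)}\{\beta\rho-\phi(\rho)\}$ by splitting on the sign of $\beta-\phi'(0)$. For $\beta\ge\phi'(0)$ the stationarity condition $\phi'(\rho)=\beta$ has a unique interior solution $\rho=(\phi')^{-1}(\beta)\in[0,b^2)$ by \er{eq:phi-properties-1-2}(iv)--(v), and substitution gives $\Ahat(\beta)=a(\beta)$ via \er{eq:exact-a}. For $\beta<\phi'(0)$ the derivative $\beta-\phi'(\rho)$ is strictly negative on $[0,b^2)$, so the supremum is at the boundary $\rho=0$ and equals $-\phi(0)$. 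The explicit formula shows $\Ahat$ is real-valued, and as a pointwise supremum of functions affine in $\beta$ it is automatically convex and lsc. Applying Fenchel--Moreau \cite{R:74} to the proper closed convex $\Phi$ then yields $\Phi=\Ahat^*$, completing \er{eq:exact-Phi-and-A}.

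For \er{eq:exact-beta-and-rho-star}, the form of $\hat\rho^*(\beta)$ is read off directly from the case split in the computation of $\Ahat$. For $\hat\beta^*(\rho)$ I would handle three regimes: on $\rho\in[0,b^2)$ the Fenchel--Young equality $\phi'(\rho)\rho-a(\phi'(\rho))=\phi(\rho)$ is immediate from \er{eq:exact-a}, identifying $\hat\beta^*(\rho)=\phi'(\rho)$; on $\rho<0$ the objective tends to $+\infty$ as $\beta\downarrow-\infty$ because $\Ahat\equiv-\phi(0)$ on $\R_{<\phi'(0)}$; on $\rho\ge b^2$ I would rewrite $\beta\rho-a(\beta)=\beta\bigl(\rho-(\phi')^{-1}(\beta)\bigr)+\phi\circ(\phi')^{-1}(\beta)$ and exploit $(\phi')^{-1}(\beta)<b^2$ together with $\phi\circ(\phi')^{-1}(\beta)\to+\infty$ as $\beta\to+\infty$ to conclude that the supremum is $+\infty$. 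The main obstacle I expect is precisely this last estimate, which is the coercivity-type content of the barrier property \er{eq:phi-properties-1-2}(ii) and presumably feeds into Lemma \ref{lem:a-properties}; the remainder is careful bookkeeping of boundary versus interior maximizers.
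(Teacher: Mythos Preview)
Your proposal is correct and follows essentially the same route as the paper: both compute $\Ahat=\Phi^*$ by the case split at $\beta=\phi'(0)$, invoke Fenchel--Moreau for the biconjugate, and handle the three regimes for $\hat\beta^*(\rho)$ by the same mechanisms (the paper packages the $\rho\ge b^2$ estimate via the auxiliary map $\gamma_\rho$ of Lemma~\ref{lem:gamma-properties}, while you carry out the identical estimate directly via the rewriting $\beta\rho-a(\beta)=\beta(\rho-(\phi')^{-1}(\beta))+\phi\circ(\phi')^{-1}(\beta)$). One small slip: $\Phi$ is not asserted to be nonnegative (only $\Phi>-\infty$, since $\Phi:\R\to\ol\R^+=\R\cup\{+\infty\}$), but $\Phi>-\infty$ is already enough to conclude $\closure\Phi=\lsc\Phi=\Phi$.
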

\begin{proof}
See Appendix \ref{app:convex-duality-proofs}.
\end{proof}
This lemma, and a change of coordinates via \er{eq:exact-a}, yields the sup-of-quadratics representation.
\begin{theorem}
\label{thm:Phi-sup-of-quadratics}
The barrier function $\Phi(|\cdot|^2):\R^n\rightarrow\ol{\R}^+$ appearing in \er{eq:value-W} via \er{eq:cost-J-bar}, \er{eq:cost-I-bar}, and defined by \er{eq:barrier}, has the exact sup-of-quadratics representation
\begin{align}
	\Phi(|x|^2)
	& = \sup_{\alpha\ge -\phi(0) } \{ a^{-1}(\alpha)\, |x|^2 - \alpha \}
	\label{eq:Phi-sup-of-quadratics}
\end{align}
for all $x\in\R^n$, in which $a^{-1}$ is defined via \er{eq:exact-a}. Furthermore, the optimizer $\hat\alpha^*(|\cdot|^2):\R^n\rightarrow\R_{\ge -\phi(0)}^+$ in \er{eq:Phi-sup-of-quadratics} is defined via $\phi'$, $a$ of \er{eq:phi-properties-1-2}, \er{eq:exact-a} by
\begin{align}
	\hat\alpha^*(|x|^2)
	& = \argmax_{\alpha\ge -\phi(0) } \{ a^{-1}(\alpha)\, |x|^2 - \alpha \}
	% \nn\\
	% & 
	= \left\{ \ba{rl}
		a\circ \phi'(|x|^2)\,,	&	|x|<b\,,
		\\
		\infty\,,		&	|x|\ge b\,,
	\ea \right.			
	\label{eq:alpha-star}
\end{align}
for all $x\in\R^n$.
\end{theorem}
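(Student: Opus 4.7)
The plan is to derive the sup-of-quadratics representation directly from Lemma \ref{lem:exact-convex-duality} by specializing $\rho = |x|^2$ and re-parameterizing the dual variable via the convex dual $a$ of \er{eq:exact-a}. First, I would invoke Lemma \ref{lem:exact-convex-duality} to write
\begin{align*}
    \Phi(|x|^2) = \sup_{\beta\in\R} \bigl\{ \beta\, |x|^2 - \Ahat(\beta) \bigr\},
\end{align*}
with $\Ahat(\beta) = -\phi(0)$ on $\R_{<\phi'(0)}$ and $\Ahat(\beta) = a(\beta)$ on $\R_{\ge\phi'(0)}$. The next step is to argue that the supremum over the region $\beta<\phi'(0)$ is absorbed by the region $\beta\ge\phi'(0)$. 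Since $|x|^2\ge 0$, the map $\beta\mapsto \beta\, |x|^2 + \phi(0)$ is non-decreasing on $\R_{<\phi'(0)}$, so its supremum there equals its limit $\phi'(0)\, |x|^2 + \phi(0)$ as $\beta\uparrow\phi'(0)$. Substituting $\beta=\phi'(0)$ into \er{eq:exact-a} (using $(\phi')^{-1}(\phi'(0))=0$) gives $a(\phi'(0)) = -\phi(0)$, so the value at $\beta=\phi'(0)$ matches this limit. Consequently,
\begin{align*}
    \Phi(|x|^2) = \sup_{\beta\ge\phi'(0)} \bigl\{ \beta\, |x|^2 - a(\beta) \bigr\}.
\end{align*}

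Next, I would change variables via $\alpha = a(\beta)$. By Lemma \ref{lem:a-properties} (Appendix \ref{app:properties}), $a$ is a strictly increasing bijection from $[\phi'(0),\infty)$ onto $[-\phi(0),\infty)$, so $\beta = a^{-1}(\alpha)$ is well-defined for $\alpha\ge -\phi(0)$. Applying this substitution yields exactly \er{eq:Phi-sup-of-quadratics}.

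For the optimizer characterization \er{eq:alpha-star}, I would transport the optimizer $\hat\beta^*(|x|^2)$ from Lemma \ref{lem:exact-convex-duality} through the same change of coordinates. On $|x|<b$, the formula \er{eq:exact-beta-and-rho-star} gives $\hat\beta^*(|x|^2) = \phi'(|x|^2) \ge \phi'(0)$ (using strict monotonicity of $\phi'$ from property \emph{(iv)}), so $\hat\alpha^*(|x|^2) = a(\hat\beta^*(|x|^2)) = a\circ\phi'(|x|^2)$. On $|x|\ge b$, $\hat\beta^*(|x|^2) = +\infty$, and since $a$ is increasing and unbounded above (again Lemma \ref{lem:a-properties}), the supremum in \er{eq:Phi-sup-of-quadratics} is not attained at any finite $\alpha$, so $\hat\alpha^*(|x|^2)=\infty$, matching \er{eq:alpha-star}.

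The only delicate point is verifying the boundary matching $a(\phi'(0)) = -\phi(0)$ and ensuring the properties of $a$ (monotone bijectivity and surjectivity onto $[-\phi(0),\infty)$) justify the change of variable; all of these are encapsulated in Lemma \ref{lem:a-properties} of Appendix \ref{app:properties}, which relies on the properties \er{eq:phi-properties-1-2} of $\phi$. Given those, the rest is bookkeeping.
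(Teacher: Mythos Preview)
Your proposal is correct and follows essentially the same approach as the paper: invoke Lemma~\ref{lem:exact-convex-duality}, restrict the supremum to $\beta\ge\phi'(0)$ using $|x|^2\ge 0$, and then apply the change of variable $\alpha=a(\beta)$ via Lemma~\ref{lem:a-properties}. The only cosmetic difference is that the paper justifies the restriction to $\beta\ge\phi'(0)$ by noting the optimizer $\hat\beta^*(|x|^2)$ already lies there, whereas you argue by matching the supremum over $\beta<\phi'(0)$ to the value at $\beta=\phi'(0)$; both are equivalent.
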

\begin{proof} 
Fix arbitrary $x\in\R^n$. Recall by Lemma \ref{lem:exact-convex-duality} that $\Phi(|x|^2)$ has the representation \er{eq:exact-Phi-and-A}, with the optimizer $\hat\beta^*(|x|^2)$ that achieves the supremum over $\beta\in\R$ there defined by \er{eq:exact-beta-and-rho-star}. As $|x|^2\in\R_{\ge 0}$, \er{eq:exact-beta-and-rho-star} implies by inspection that $\hat\beta^*(|x|^2) \ge \phi'(|x|^2)$. Furthermore, by property {\em (iv)} of \er{eq:phi-properties-1-2}, $\phi'(|x|^2)\ge\phi'(0)$. Hence, $\hat\beta^*(|x|^2) \ge \phi'(0)$. Meanwhile, $a$ of \er{eq:exact-a} defines the change of variable $\alpha = a(\beta)$ for all $\beta\in[\phi'(0),\infty)$. Note in particular that $\phi'(0) = a^{-1}(-\phi(0))$, as $a\circ\phi'(0) = - \phi(0)$ and $a$ is invertible, see \er{eq:exact-a} and Lemma \ref{lem:a-properties}.
Hence, $\Phi(|x|^2)$ simplifies from the left-hand equation in \er{eq:exact-Phi-and-A}, via \er{eq:exact-a}, to
$
	\Phi(|x|^2)
	= \sup_{\beta\ge\phi'(0)} \{ \beta\, |x|^2 - \Ahat(\beta) \}
	= \sup_{\beta\ge a^{-1}(-\phi(0))} \{ \beta\, |x|^2 - a(\beta) \}
$,
which yields \er{eq:Phi-sup-of-quadratics}. The same change of variable applied to $\hat\beta^*(|x|^2)$ via \er{eq:exact-beta-and-rho-star} similarly yields \er{eq:alpha-star}.
\end{proof}

\begin{remark}
\label{rem:Phi-semiconvex}
While the barrier map $\rho\mapsto\Phi(\rho):\R\rightarrow\ol{\R}^+$ of \er{eq:barrier} % , \er{eq:exact-Phi-and-A} 
is guaranteed to be convex by Lemma \ref{lem:exact-convex-duality}, the corresponding state space map $x\mapsto\Phi(|x|^2):\R^n\rightarrow\ol{\R}^+$ need not be convex. However, Theorem \ref{thm:Phi-sup-of-quadratics} implies that $x\mapsto \Phi(|x|^2):\R^n\rightarrow\ol{\R}^+$ is {\em uniformly semiconvex} \cite{M:06,FM:00}. In particular, choosing any $\eta \ge -2\, a^{-1}(-\phi(0))$,  \er{eq:Phi-sup-of-quadratics} yields
\begin{align}
	\Phi(|x|^2) + \ts{\frac{\eta}{2}}\, |x|^2
	& = \sup_{\alpha\ge -\phi(0)} \left\{ [ a^{-1}(\alpha) + \ts{\frac{\eta}{2}} ] \, |x|^2 - \alpha \right\},
	\label{eq:semi-convex}
\end{align}
for all $x\in\R^n$, in which $a^{-1}(\alpha) + \ts{\frac{\eta}{2}} \ge a^{-1}(\alpha) - a^{-1}(-\phi(0)) \ge 0$ for all $\alpha\ge -\phi(0)$, as $a^{-1}$ is strictly increasing by Lemma \ref{lem:a-properties}. The right-hand side of \er{eq:semi-convex} is thus a supremum of convex functions, which is therefore also convex, see \cite[p.7]{R:74}. That is, \er{eq:semi-convex} implies that there exists an $\eta\in\R$ such that $\Phi(|\cdot|^2) + \ts{\frac{\eta}{2}}\, |\cdot|^2$ is convex, so that $\Phi(|\cdot|^2)$ is uniformly semiconvex by definition, see  \cite{M:06,FM:00}.
\end{remark}

%%		Approximate sup-of-quadratics representation.

\subsection{Approximate sup-of-quadratics representation for convex barriers}
An approximation of the sup-of-quadratics representation of Theorem \ref{thm:Phi-sup-of-quadratics} can be obtained by restricting the interval over which the supremum is evaluated in the left-hand equation in \er{eq:exact-Phi-and-A}. To this end, define $\Phi^M:\R\rightarrow\ol{\R}^+$ and $\hat\rho:[-\phi(0),\infty)\rightarrow[0,b^2)$ by
\begin{align}
	\Phi^M(\rho)
	& \doteq \sup_{\beta\le a^{-1}(M)} \{ \beta\, \rho - \Ahat(\beta) \}\,,
	\qquad
	\hat\rho(M) 
	\doteq (\phi')^{-1}\circ a^{-1}(M),
	\label{eq:Phi-M-from-A-and-rho-M}
\end{align}
for all $M\in\R_{\ge -\phi(0)}$, $\rho\in\R$, with $\phi'$, $a$, $\Ahat$ as per \er{eq:phi-properties-1-2}, \er{eq:exact-a}, \er{eq:exact-Phi-and-A}, with the range of $\Phi^M$ to be verified.

\begin{lemma}
\label{lem:approx-convex-duality}
The following properties hold:
\begin{enumerate}[\it (i)] \itsep
\item $\Phi^M:\R\rightarrow\ol{\R}^+$, $M\in\R_{\ge -\phi(0)}$, of \er{eq:Phi-M-from-A-and-rho-M} satisfies
\begin{align}
	& \hspace{-4mm}
	\Phi^M(\rho)
	= \left\{ \ba{rl}
		\infty,	&	\rho\in\R_{<0},
		\\
		\phi(\rho),	&	\rho\in[0,\hat\rho(M)],
		\\
		a^{-1}(M)\, \rho - M,	&	\rho\in\R_{>\hat\rho(M)},
	\ea \right.
	\label{eq:Phi-M-explicit}
\end{align}
for all $\rho\in\R$, in which the corresponding maximizer $\beta = \beta^{M*}:\R\rightarrow\ol{\R}^-$ is given by 
\begin{align}
	\hat\beta^{M*}(\rho)
	& \doteq \left\{ \ba{rl}
		-\infty,		&	\rho\in\R_{<0},
		\\
		\phi'(\rho),		&	\rho\in[0,\hat\rho(M)],
		\\
		a^{-1}(M),		&	\rho\in\R_{>\hat\rho(M)}.
	\ea \right.
	\label{eq:beta-M-star-explicit}
\end{align}

\item $\Phi^M\in C(\R_{\ge 0};\R)\cap C^1(\R_{>0};\R)$, $M\in\R_{\ge -\phi(0)}$, and it is closed and strictly convex on $\R$;

\item $\Phi^M$ is pointwise non-decreasing in $M\in\R_{\ge -\phi(0)}$, and satisfies
$
	\Phi(\rho)
	= \sup_{M \ge -\phi(0)} \Phi^M(\rho) = \lim_{M\rightarrow\infty} \Phi^M(\rho),
	% \lbel{eq:Phi-M-limit}
$
for all $\rho\in\R$, with $\Phi$ as per \er{eq:barrier};

\item There exists an $M_1\in\R_{\ge-\phi(0)}$ and $c\in\R$ such that $\inf_{M\ge M_1} \inf_{\rho\in\R} \Phi^M(\rho)> c$.
\end{enumerate}
\end{lemma}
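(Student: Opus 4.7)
The plan is to derive the explicit formulae in (i) by a case analysis on $\rho$, and then to deduce (ii)--(iv) from them using the properties of $a$ and $(\phi')^{-1}$ given in Lemma \ref{lem:a-properties} and \er{eq:phi-properties-1-2}. The only part requiring genuine thought is the case split in (i); the remainder is essentially bookkeeping driven by the duality identity \er{eq:exact-a}.

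For (i), fix $M \ge -\phi(0)$. When $\rho<0$, sending $\beta\to-\infty$ in the region $\beta<\phi'(0)$ (where $\Ahat(\beta)=-\phi(0)$) makes $\beta\rho-\Ahat(\beta)=\beta\rho+\phi(0)$ blow up to $+\infty$, so $\Phi^M(\rho)=+\infty$ with maximizer $-\infty$. When $\rho\ge 0$, the map $\beta\mapsto\beta\rho-\Ahat(\beta)$ is concave with unconstrained maximizer $\hat\beta^*(\rho)$ from Lemma \ref{lem:exact-convex-duality}, and property (v) of \er{eq:phi-properties-1-2} makes the constraint $\phi'(\rho)\le a^{-1}(M)$ equivalent to $\rho\le(\phi')^{-1}(a^{-1}(M))=\hat\rho(M)$. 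Hence for $\rho\in[0,\hat\rho(M)]$ the constraint is slack, giving $\Phi^M(\rho)=\Phi(\rho)=\phi(\rho)$ with maximizer $\phi'(\rho)$; for $\rho>\hat\rho(M)$ concavity forces the constrained supremum to be attained at $\beta=a^{-1}(M)$, and \er{eq:exact-a} yields $\Phi^M(\rho)=a^{-1}(M)\rho-a(a^{-1}(M))=a^{-1}(M)\rho-M$, establishing \er{eq:Phi-M-explicit} and \er{eq:beta-M-star-explicit}.

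Part (ii) then reduces to checking compatibility of the two pieces at $\rho=\hat\rho(M)$: the values match because \er{eq:exact-a} evaluated at $\beta=a^{-1}(M)$ gives $\phi(\hat\rho(M))=a^{-1}(M)\hat\rho(M)-M$, while both one-sided derivatives equal $\phi'(\hat\rho(M))=a^{-1}(M)$, so $\Phi^M\in C(\R_{\ge 0};\R)\cap C^1(\R_{>0};\R)$. Closedness and convexity follow because $\Phi^M$ is a supremum of affine functions \cite[p.7]{R:74}; strict convexity on the portion of $\dom\Phi^M$ where $\phi$ is active is inherited from property (iii) of \er{eq:phi-properties-1-2}. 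For (iii), the feasible set $(-\infty,a^{-1}(M)]$ grows with $M$ since $a^{-1}$ is strictly increasing (Lemma \ref{lem:a-properties}), yielding pointwise monotonicity. As $M\to\infty$, $a^{-1}(M)\to\infty$ and hence $\hat\rho(M)\to b^2$ by the bijections in Lemma \ref{lem:a-properties} and \er{eq:phi-properties-1-2}, so each $\rho\in[0,b^2)$ eventually satisfies $\Phi^M(\rho)=\phi(\rho)=\Phi(\rho)$; for $\rho\ge b^2$, writing $r=\hat\rho(M)$ one has $\Phi^M(\rho)=a^{-1}(M)(\rho-r)+\phi(r)\to\infty=\Phi(\rho)$ because $\phi(r)\to\infty$ while $\rho-r\to\rho-b^2\ge 0$, and the case $\rho<0$ is trivial.

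For (iv), on $[0,\hat\rho(M)]$ the function $\Phi^M$ coincides with $\phi$, so is bounded below by $c_1:=\inf_{\rho\in[0,b^2)}\phi(\rho)$, which is finite since $\phi$ is continuous on $[0,b^2)$ and tends to $+\infty$ at $b^2$. On $(\hat\rho(M),\infty)$ the function is affine with slope $a^{-1}(M)$, so the main obstacle — the only genuine subtlety in the proof — is ensuring this slope is non-negative so that the affine piece does not dip below $c_1$. This is handled by choosing $M_1\ge-\phi(0)$ large enough that $a^{-1}(M_1)\ge 0$, which is always possible because $a^{-1}$ is a strictly increasing bijection onto $[\phi'(0),\infty)$ (Lemma \ref{lem:a-properties}). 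Then for all $M\ge M_1$ the affine piece is non-decreasing and attains its minimum at $\rho=\hat\rho(M)$, with value $\phi(\hat\rho(M))\ge c_1$ by the continuity established in (ii). Hence $\inf_\rho\Phi^M(\rho)\ge c_1$ uniformly in $M\ge M_1$, and any $c<c_1$ completes the proof.
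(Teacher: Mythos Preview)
Your proof is correct. Parts (i)--(iii) follow essentially the same route as the paper: the paper decomposes the supremum as $\max\{\Gamma_-(\rho),\Gamma_+^M(\rho)\}$ over the regions $\beta<\phi'(0)$ and $\beta\in[\phi'(0),a^{-1}(M)]$ and then invokes the inequalities of Lemma~\ref{lem:inequalities} to decide which piece dominates, whereas you cut this short by appealing directly to concavity of $\beta\mapsto\beta\rho-\Ahat(\beta)$ and the location of the unconstrained maximizer $\hat\beta^*(\rho)$ from Lemma~\ref{lem:exact-convex-duality}. Both arrive at \er{eq:Phi-M-explicit}--\er{eq:beta-M-star-explicit} and the $C^1$ matching at $\hat\rho(M)$ in the same way.

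Part (iv) is where your argument genuinely diverges. The paper proves a separate claim computing the full Fenchel dual $\Ahat^M(\beta)=\sup_\rho\{\beta\rho-\Phi^M(\rho)\}$ explicitly (via a second max decomposition $\max\{\Lambda_-^M,\Lambda_+^M\}$ and further inequalities from Lemma~\ref{lem:inequalities}), and then reads off $\inf_\rho\Phi^M(\rho)=-\Ahat^M(0)$. You instead work directly with the explicit formula \er{eq:Phi-M-explicit}: bound the $\phi$-piece below by $c_1=\inf_{[0,b^2)}\phi$, choose $M_1$ so that the affine slope $a^{-1}(M)\ge 0$, and conclude the affine piece is minimized at the junction $\hat\rho(M)$ with value $\phi(\hat\rho(M))\ge c_1$. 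Your route is more elementary and avoids the auxiliary duality computation entirely; the paper's route has the side benefit of producing $\Ahat^M$ explicitly, though this is not used elsewhere.

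One small remark on (ii): neither your argument nor the paper's actually establishes \emph{strict} convexity of $\Phi^M$ on all of $\dom\Phi^M=\R_{\ge 0}$, since the affine tail on $(\hat\rho(M),\infty)$ is not strictly convex. You correctly flag this by restricting the claim to the region where $\phi$ is active; the paper's proof likewise only verifies that $(\Phi^M)'$ is non-decreasing, i.e.\ convexity. This appears to be a minor imprecision in the lemma statement rather than a gap in either proof.
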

\begin{proof}
See Appendix \ref{app:convex-duality-proofs}.
\end{proof}

As per the exact case of Theorem \ref{thm:Phi-sup-of-quadratics}, application of this lemma along with a change of coordinates defined by \er{eq:exact-a} admits the required approximate sup-of-quadratics representation.

\begin{theorem}
\label{thm:Phi-M-sup-of-quadratics}
Given $b\in\R_{>0}$, the following holds:
\begin{enumerate}[\it (i)] \itsep
\item Given $M\in\R_{\ge -\phi(0)}$, the convex approximation $\Phi^M$ of the convex barrier function $\Phi$ of \er{eq:barrier}, represented in \er{eq:Phi-M-from-A-and-rho-M}, \er{eq:Phi-M-explicit}, has the sup-of-quadratics representation 
\begin{align}
	\Phi^M(|x|^2)
	& = \sup_{\alpha\in[-\phi(0),M]} \{ a^{-1}(\alpha)\, |x|^2 - \alpha \}
	\label{eq:Phi-M-sup-of-quadratics}
\end{align}
for all $x\in\R^n$, in which $a^{-1}$ is as per \er{eq:exact-a}. Furthermore, the optimizer in \er{eq:Phi-M-sup-of-quadratics} is
\begin{align}
	& \hat\alpha^{M*}(|x|^2)
	\doteq\argmax_{\alpha\in[-\phi(0),M]} \{ a^{-1}(\alpha)\, |x|^2 - \alpha \}
	= \left\{ \ba{rl}
			a\circ\phi'(|x|^2)\,, 
			& |x|^2\le\hat\rho(M)\,,
			\\
			M\,,
			& |x|^2>\hat\rho(M)\,,
	\ea \right.
	\label{eq:alpha-M-star}
\end{align}
where $\phi'$, $a$, $\hat\rho$ are as per \er{eq:phi-properties-1-2}, \er{eq:exact-a}, \er{eq:Phi-M-from-A-and-rho-M}; and

\item $\Phi^M(|\cdot|^2):\R^n\rightarrow\R$ defined by \er{eq:Phi-M-sup-of-quadratics} is pointwise non-decreasing in $M\in\R_{\ge -\phi(0)}$, and converges pointwise to $\Phi(|\cdot|^2):\R^n\rightarrow\ol{\R}^+$ of \er{eq:Phi-sup-of-quadratics} in the limit as $M\rightarrow\infty$.
\end{enumerate}
\end{theorem}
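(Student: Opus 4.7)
My plan is to mirror the proof of Theorem \ref{thm:Phi-sup-of-quadratics} almost verbatim, substituting Lemma \ref{lem:approx-convex-duality} for Lemma \ref{lem:exact-convex-duality} to supply the dual representation in the $\beta$-coordinate. Part (i) will then follow via the change of variables $\alpha = a(\beta)$, and part (ii) reduces immediately to Lemma \ref{lem:approx-convex-duality}(iii).

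For part (i), I would fix $x \in \R^n$, set $\rho = |x|^2 \ge 0$, and work with the defining equation in \er{eq:Phi-M-from-A-and-rho-M}. Using the explicit form of $\Ahat$ from \er{eq:exact-Phi-and-A}, I split the supremum over $\beta \le a^{-1}(M)$ into the sub-intervals $\beta < \phi'(0)$ and $\beta \in [\phi'(0), a^{-1}(M)]$; the latter is nonempty because $M \ge -\phi(0) = a(\phi'(0))$ and $a^{-1}$ is strictly increasing by Lemma \ref{lem:a-properties}. On the first sub-interval $\beta \rho - \Ahat(\beta) = \beta\rho + \phi(0)$ is non-decreasing in $\beta$ (since $\rho \ge 0$), so its supremum is attained in the limit $\beta \uparrow \phi'(0)$ and coincides with the value at $\beta = \phi'(0)$ on the second sub-interval. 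Hence the overall supremum equals the supremum over $[\phi'(0), a^{-1}(M)]$, on which $\Ahat(\beta) = a(\beta)$. Because $a$ is a strictly increasing bijection from $[\phi'(0), a^{-1}(M)]$ onto $[-\phi(0), M]$ with $a(\phi'(0)) = -\phi(0)$, the substitution $\alpha = a(\beta)$ converts $\beta\rho - a(\beta)$ into $a^{-1}(\alpha)\rho - \alpha$ and delivers \er{eq:Phi-M-sup-of-quadratics}. The same substitution applied to $\hat\beta^{M*}$ of \er{eq:beta-M-star-explicit} produces the optimizer: on $[0, \hat\rho(M)]$, $\hat\beta^{M*} = \phi'(|x|^2)$ maps to $a \circ \phi'(|x|^2)$; on $(\hat\rho(M), \infty)$, $\hat\beta^{M*} = a^{-1}(M)$ maps to $M$; and at the boundary $|x|^2 = \hat\rho(M) = (\phi')^{-1}(a^{-1}(M))$ the two branches agree at $M$, reproducing \er{eq:alpha-M-star}.

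For part (ii), the pointwise monotonicity of $M \mapsto \Phi^M(|x|^2)$ and the limit $\lim_{M \to \infty} \Phi^M(|x|^2) = \Phi(|x|^2)$ follow immediately by specializing Lemma \ref{lem:approx-convex-duality}(iii) to $\rho = |x|^2$ and invoking \er{eq:Phi-sup-of-quadratics}.

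The only genuinely subtle step is the restriction of the supremum in \er{eq:Phi-M-from-A-and-rho-M} to $\beta \ge \phi'(0)$ prior to the change of variables, since $a^{-1}$ is defined only on $[-\phi(0), \infty)$. This restriction is exactly the one already handled in the exact case of Theorem \ref{thm:Phi-sup-of-quadratics}, and the argument carries over verbatim because the truncation cutoff $a^{-1}(M)$ always satisfies $a^{-1}(M) \ge \phi'(0)$ whenever $M \ge -\phi(0)$. Beyond this, the proof is routine given the machinery provided by Lemmas \ref{lem:approx-convex-duality} and \ref{lem:a-properties}.
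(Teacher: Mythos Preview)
Your proposal is correct and follows essentially the same route as the paper: restrict the supremum in \er{eq:Phi-M-from-A-and-rho-M} to $\beta\in[\phi'(0),a^{-1}(M)]$ (the paper does this by citing the explicit maximizer $\hat\beta^{M*}$ from Lemma \ref{lem:approx-convex-duality}\,{\em(i)}, whereas you argue it directly from the form of $\Ahat$), then apply the change of variable $\alpha=a(\beta)$ via Lemma \ref{lem:a-properties}; part {\em(ii)} is handled identically by Lemma \ref{lem:approx-convex-duality}\,{\em(iii)}.
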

\begin{proof}
{\em (i)} Fix $M\in\R_{\ge -\phi(0)}$, $x\in\R^n$. Applying Lemma \ref{lem:approx-convex-duality} {\em (i)}, note that the optimizer \er{eq:beta-M-star-explicit} in \er{eq:Phi-M-from-A-and-rho-M} satisfies $\hat\beta^{M*}(|x|^2) \in [\phi'(0), a^{-1}(M)]$, as $|x|^2\in\R_{\ge 0}$. Meanwhile, $a$ of \er{eq:exact-a} defines a change of variable $\alpha= a(\beta)$ for all $\beta\in[\phi'(0),\infty)$, via Lemma \ref{lem:a-properties}. Hence, $\Phi^M(|x|^2)$ transforms from \er{eq:Phi-M-from-A-and-rho-M} to
$
	\Phi^M(|x|^2)
	= \sup_{\beta\in[\phi'(0), a^{-1}(M)]} \{ \beta\, |x|^2 - \Ahat(\beta) \}
	= \sup_{\beta\in[a^{-1}(-\phi(0)), a^{-1}(M)]} \{ \beta\, |x|^2 - a(\beta) \}
$
via \er{eq:exact-Phi-and-A}, which yields \er{eq:Phi-M-sup-of-quadratics}. The same change of variable applied to $\hat\beta^{M*}(|x|^2)$ of \er{eq:beta-M-star-explicit} yields \er{eq:alpha-M-star}.

{\em (ii)} Immediate by Lemma \ref{lem:approx-convex-duality} {\em (iii)}.
% inspection of \er{eq:Phi-sup-of-quadratics} and \er{eq:Phi-M-sup-of-quadratics}.
\end{proof}

\begin{corollary}
\label{cor:Phi-M-bound}
Given $K\in\R_{\ge -\phi'(0)}$ as per \er{eq:cost-I-bar}, \er{eq:phi-properties-1-2},
\begin{align}
	& \Phi^M(\rho) \ge \phi'(0) \, \rho + \phi(0),
	&&
	K\, \rho + \Phi^M(\rho) \ge [K + \phi'(0)]\, \rho + \phi(0) \ge \phi(0),
	% \lbel{eq:Phi-M-bound-1}
	\nn
	\\
	& \Phi(\rho) \ge \phi'(0) \, \rho + \phi(0),
	&& K\, \rho + \Phi(\rho) \ge \phi(0),
	% \lbel{eq:Phi-M-bound-2}
\end{align}
for all $M\in\R_{\ge -\phi(0)}$, $\rho\in\R_{\ge 0}$.
\end{corollary}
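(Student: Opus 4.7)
The plan is to obtain all four inequalities by inserting a single well-chosen feasible point into the sup-of-quadratics representations established in Theorems \ref{thm:Phi-sup-of-quadratics} and \ref{thm:Phi-M-sup-of-quadratics}. Specifically, since $\rho\in\R_{\ge 0}$, write $\rho=|x|^2$ for any $x\in\R^n$ with $|x|^2=\rho$ (e.g.\ $x=(\sqrt{\rho},0,\dots,0)$), so that \er{eq:Phi-sup-of-quadratics} and \er{eq:Phi-M-sup-of-quadratics} give
$$
\Phi(\rho)=\sup_{\alpha\ge -\phi(0)}\{a^{-1}(\alpha)\,\rho-\alpha\},
\qquad
\Phi^M(\rho)=\sup_{\alpha\in[-\phi(0),M]}\{a^{-1}(\alpha)\,\rho-\alpha\}.
$$

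The key observation, already used in the proof of Theorem \ref{thm:Phi-sup-of-quadratics}, is that $a^{-1}(-\phi(0))=\phi'(0)$; this follows immediately from \er{eq:exact-a}, since $(\phi')^{-1}(\phi'(0))=0$ gives $a(\phi'(0))=\phi'(0)\cdot 0-\phi(0)=-\phi(0)$, and $a$ is invertible by Lemma \ref{lem:a-properties}. The choice $\alpha=-\phi(0)$ is feasible in both suprema (for $\Phi^M$ it requires only $M\ge -\phi(0)$, which is the standing hypothesis), and substituting it produces the lower bound
$$
\Phi^M(\rho)\;\ge\; a^{-1}(-\phi(0))\,\rho-(-\phi(0))\;=\;\phi'(0)\,\rho+\phi(0),
$$
together with the identical bound for $\Phi(\rho)$. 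This disposes of the two left-hand inequalities.

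For the right-hand inequalities, simply add $K\rho$ to the just-established bounds to obtain
$$
K\,\rho+\Phi^M(\rho)\;\ge\;[K+\phi'(0)]\,\rho+\phi(0),
$$
and analogously for $\Phi$. The hypothesis $K\ge -\phi'(0)$ gives $K+\phi'(0)\ge 0$, and since $\rho\ge 0$ we conclude $[K+\phi'(0)]\,\rho\ge 0$, whence the chain $K\,\rho+\Phi^M(\rho)\ge[K+\phi'(0)]\,\rho+\phi(0)\ge\phi(0)$, and likewise $K\,\rho+\Phi(\rho)\ge\phi(0)$.

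There is no real obstacle: the entire corollary is a mechanical consequence of choosing the smallest admissible dual variable $\alpha=-\phi(0)$, using the identity $a^{-1}(-\phi(0))=\phi'(0)$, and invoking the sign condition $K+\phi'(0)\ge 0$ supplied by the hypothesis on $K$. The only subtlety worth flagging explicitly in the write-up is the justification of $a^{-1}(-\phi(0))=\phi'(0)$ via \er{eq:exact-a} and Lemma \ref{lem:a-properties}.
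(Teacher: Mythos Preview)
Your proposal is correct and follows essentially the same approach as the paper: both insert the feasible point $\alpha=-\phi(0)$ into the sup-of-quadratics representation \er{eq:Phi-M-sup-of-quadratics}, invoke $a^{-1}(-\phi(0))=\phi'(0)$, and then use $K+\phi'(0)\ge 0$. The only cosmetic difference is that the paper derives the $\Phi$ bound by passing to the limit $M\to\infty$ via the monotonicity in Theorem~\ref{thm:Phi-M-sup-of-quadratics}~{\em (ii)}, whereas you appeal directly to the exact representation \er{eq:Phi-sup-of-quadratics}; both routes are equally valid.
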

\begin{proof}
Fix $M\in\R_{\ge -\phi(0)}$, $\rho\in\R_{\ge 0}$. Applying Theorem \ref{thm:Phi-M-sup-of-quadratics} {\em (i)}, $\alpha \doteq -\phi(0)$ is suboptimal in the right-hand side of \er{eq:Phi-M-sup-of-quadratics}, so that $\Phi^M(\rho) \ge a^{-1}(-\phi(0))\, \rho + \phi(0) = \phi'(0)\, \rho + \phi(0)$ and 
$K\, \rho + \Phi^M(\rho) \ge [K + \phi'(0)]\, \rho + \phi(0) \ge \phi(0)$, by \er{eq:exact-a} and \er{eq:phi-properties-1-2}.
% in which the equality follows by \er{eq:exact-a}, and the right-hand inequalities follow by \er{eq:phi-properties-1-2}. 
As $\Phi^M(\rho)$ is non-decreasing in $M$ by Theorem \ref{thm:Phi-M-sup-of-quadratics} {\em (ii)}, the same inequalities hold with $M\rightarrow\infty$.
\end{proof}

%%		Approximate optimal control problem.

\subsection{Approximate regulator problem and its convergence to the exact problem}
The sup-of-quadratics representation \er{eq:Phi-sup-of-quadratics} for the convex barrier function $\Phi$ of \er{eq:barrier}, and its convergent approximation \er{eq:Phi-M-sup-of-quadratics}, can be used to formulate an approximate regulator problem for \er{eq:value-W}. Given $t\in\R_{\ge 0}$, $M\in\R_{\ge -\phi(0)}$, the value function $\ol{W}_t^M:\R^n\rightarrow\R$ for this approximate problem is defined by
\begin{align}
	\ol{W}_t^M(x)
	& \doteq \inf_{u\in\cU[0,t]} \bar J_t^M(x,u)
	\label{eq:value-W-M}
\end{align}
for all $x\in\R^n$, with $\bar J_t^M:\R^n\times\cU[0,t]\rightarrow\R$ defined with respect to $I_t^\kappa$ and $\Psi$ of \er{eq:cost-I-bar}, \er{eq:cost-Psi} and $\bar I_t^M:\R^n\times\cU[0,t]\rightarrow\R$ by
\begin{align}
	\bar J_t^M(x,u) & \doteq \bar I_t^M(x,u) + I_t^\kappa(u) + \Psi(\xi_t),
	\label{eq:cost-J-M}
	\\
	\bar I_t^M(x,u)
	& \doteq \int_0^t \ts{\frac{K}{2}}\, |\xi_s|^2 + \demi\, \Phi^M(|\xi_s|^2) \, ds,
	\label{eq:cost-I-J-M}
\end{align}
for all $x\in\R^n$, $u\in\cU[0,t]$, in which $\xi\doteq\chi(x,u)$ and $\Phi^M$ are as per \er{eq:dynamics} and \er{eq:Phi-M-from-A-and-rho-M}, \er{eq:Phi-M-explicit}, \er{eq:Phi-M-sup-of-quadratics} respectively, and $K\in\R_{\ge -\phi'(0)}$, $\kappa\in\R_{>0}$ are fixed. This approximate problem recovers the original problem of \er{eq:value-W} in the limit as $M\rightarrow\infty$, as formalized by the theorem below. For convenience, $\bar J_t^\infty:\R^n\times\cU[0,t]\rightarrow\ol{\R}^+$ and $\ol{W}_t^\infty:\R^n\rightarrow\ol{\R}^+$ are defined by
\begin{align}
	\bar J_t^\infty(x,u)
	& \doteq \sup_{M\in\R_{\ge -\phi(0)}} \bar J_t^M(x,u) 
	\label{eq:J-infty}
	\\
	\ol{W}_t^\infty(x)
	& \doteq \sup_{M\in\R_{\ge -\phi(0)}} \ol{W}_t^M(x)
	\label{eq:W-infty}
\end{align}
for all $x\in\R^n$, $u\in\cU[0,t]$.

\begin{theorem}
\label{thm:monotone}
Given $t\in\R_{\ge 0}$, the cost and value functions $\bar J_t^M$, $\ol{W}_t^M$ of \er{eq:cost-J-M}, \er{eq:value-W-M} are pointwise non-decreasing in $M\in\R_{\ge -\phi(0)}$, and satisfy
\begin{align}
	-\infty < \bar J_t^M(x,u)
	& \le \bar J_t^\infty(x,u) 
	= \lim_{M\rightarrow\infty} \bar J_t^M(x,u) = \bar J_t(x,u),
	\label{eq:order-cost}
	\\
	-\infty < \ol{W}_t^M(x)
	& \le \ol{W}_t^\infty(x) 
	=  \lim_{M\rightarrow\infty} \ol{W}_t^M(x) = \ol{W}_t(x),
	\quad
	\R^n = \dom\ol{W}_t^M\supset\dom\ol{W}_t\ne\emptyset\,,
	\label{eq:order-value}
\end{align}
for all $x\in\R^n$, $u\in\cU[0,t]$. where $\bar J_t,\, \bar J_t^\infty:\R^n\times\cU[0,t]\rightarrow\ol{\R}^+$ and
$\ol{W}_t,\, \ol{W}_t^\infty:\R^n\rightarrow\ol{\R}^+$ are defined by \er{eq:value-W}, \er{eq:cost-J-bar} and \er{eq:J-infty}, \er{eq:W-infty}.
\end{theorem}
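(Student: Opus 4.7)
The plan is to prove the cost-level claim \er{eq:order-cost} first, then to derive the value-level claim \er{eq:order-value} and the domain assertions from it, with the reverse inequality $\ol{W}_t \le \ol{W}_t^\infty$ being the main obstacle.

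I would first fix $(x,u)\in\R^n\times\cU[0,t]$. Since $\xi\doteq\chi(x,u)$ is continuous by Lemma \ref{lem:chi} \textit{(i)}, and $\Phi^M\in C(\R_{\ge 0};\R)$ by Lemma \ref{lem:approx-convex-duality} \textit{(ii)}, the integrand defining $\bar I_t^M(x,u)$ is continuous on $[0,t]$, so $\bar I_t^M(x,u)$ and hence $\bar J_t^M(x,u)$ are finite real numbers. The uniform lower bound $\frac{K}{2}|\xi_s|^2+\demi\Phi^M(|\xi_s|^2)\ge\phi(0)/2$ of Corollary \ref{cor:Phi-M-bound}, combined with $I_t^\kappa,\Psi\ge 0$, yields $\bar J_t^M(x,u)\ge t\phi(0)/2>-\infty$. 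Pointwise monotonicity of $M\mapsto\Phi^M(|\xi_s|^2)$ from Theorem \ref{thm:Phi-M-sup-of-quadratics} \textit{(ii)} passes to $M\mapsto\bar J_t^M(x,u)$, and shifting the integrand by its uniform lower bound to obtain a non-negative quantity licenses the monotone convergence theorem, delivering $\bar I_t^M(x,u)\uparrow\bar I_t(x,u)$ in $\ol{\R}^+$. Adding the $M$-independent terms $I_t^\kappa(u)+\Psi(\xi_t)$ and using $\bar J_t^\infty=\sup_M\bar J_t^M=\lim_M\bar J_t^M$ (monotone) completes \er{eq:order-cost}.

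Taking the infimum over $u$ of this pointwise monotone family preserves both the monotonicity in $M$ and the constant lower bound, so $\ol{W}_t^M(x)>-\infty$ and $\ol{W}_t^M(x)\uparrow\ol{W}_t^\infty(x)$ by \er{eq:W-infty}. Specializing to $u=0$ in the preceding paragraph shows $\bar J_t^M(x,0)$ is a finite real number, hence $\ol{W}_t^M(x)\le\bar J_t^M(x,0)<\infty$ for every $x\in\R^n$, giving $\dom\ol{W}_t^M=\R^n$; paired with Lemma \ref{lem:dom-value-W-bar} this yields the chain $\R^n=\dom\ol{W}_t^M\supset\dom\ol{W}_t\ne\emptyset$. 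The easy direction $\ol{W}_t^\infty\le\ol{W}_t$ is immediate from $\bar J_t^M\le\bar J_t$ by taking infima over $u$ and then supremum over $M$.

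The hard part is the opposite inequality $\ol{W}_t(x)\le\ol{W}_t^\infty(x)$, which is non-trivial only when $\ol{W}_t^\infty(x)<\infty$. My plan here is a standard direct-method argument. Pick $M_k\uparrow\infty$ and, for each $k\in\N$, $u_k\in\cU[0,t]$ with $\bar J_t^{M_k}(x,u_k)\le\ol{W}_t^{M_k}(x)+1/k$; in particular $\bar J_t^{M_k}(x,u_k)\to\ol{W}_t^\infty(x)$. The cost lower bound forces $\frac{\kappa}{2}\|u_k\|^2=I_t^\kappa(u_k)\le\bar J_t^{M_k}(x,u_k)-t\phi(0)/2$, so $\{u_k\}$ is bounded in $\cU[0,t]$. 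Lemma \ref{lem:chi} \textit{(iii)} extracts a subsequence (unrelabelled) with $u_k\weakly\bar u\in\cU[0,t]$ and $\xi_k\doteq\chi(x,u_k)\to\bar\xi\doteq\chi(x,\bar u)$ uniformly on $[0,t]$. Fix any $M_0\ge-\phi(0)$. Monotonicity gives $\bar J_t^{M_k}(x,u_k)\ge\bar J_t^{M_0}(x,u_k)$ once $M_k\ge M_0$. Continuity of $\Phi^{M_0}$ on $\R_{\ge 0}$ paired with the uniform convergence $\xi_k\to\bar\xi$ yields $\bar I_t^{M_0}(x,u_k)\to\bar I_t^{M_0}(x,\bar u)$ and $\Psi(\xi_{k,t})\to\Psi(\bar\xi_t)$, while weak lower semicontinuity of the convex continuous $I_t^\kappa$ gives $\liminf_k I_t^\kappa(u_k)\ge I_t^\kappa(\bar u)$. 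Combining these, $\ol{W}_t^\infty(x)=\lim_k\bar J_t^{M_k}(x,u_k)\ge\liminf_k\bar J_t^{M_0}(x,u_k)\ge\bar J_t^{M_0}(x,\bar u)$. Letting $M_0\to\infty$ and invoking the already-established \er{eq:order-cost} at the point $(x,\bar u)$ gives $\ol{W}_t^\infty(x)\ge\bar J_t(x,\bar u)\ge\ol{W}_t(x)$, closing the loop.
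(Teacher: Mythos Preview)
Your proposal is correct and follows the paper's overall structure closely: monotone convergence for the cost-level equality, then a direct-method compactness argument (bounded near-optimal inputs, weak/uniform convergence via Lemma \ref{lem:chi} \textit{(iii)}, weak lower semicontinuity of $I_t^\kappa$) for the hard inequality $\ol{W}_t\le\ol{W}_t^\infty$.

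The one genuine difference is in how you pass to the limit in the running-cost term along the subsequence. The paper works directly with the varying-$M_k$ integrands: it shows that the nonnegative functions $[\tilde\nu_k^\eps]_s\doteq \frac{K}{2}|[\tilde\xi_k^\eps]_s|^2+\demi\Phi^{M_k}(|[\tilde\xi_k^\eps]_s|^2)-\frac{\phi(0)}{2}$ converge pointwise in $s$ to the corresponding expression with $\bar\xi^\eps$ and $\Phi$, which requires a short case split according to whether the limit is finite or infinite, and then applies Fatou's lemma. Your ``freeze $M_0$'' device sidesteps this: by dropping from $\bar J_t^{M_k}$ to $\bar J_t^{M_0}$ via monotonicity, you land on a functional with a continuous integrand $\Phi^{M_0}$, so uniform convergence of trajectories gives ordinary convergence of $\bar I_t^{M_0}(x,u_k)$ without any Fatou step or case analysis; the passage to $\bar J_t$ is then delegated to the already-proved cost-level identity \er{eq:order-cost}. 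This is a cleaner and more elementary route to the same inequality. The paper's approach, on the other hand, is slightly more self-contained in that it produces $\bar J_t(x,\bar u)\le\liminf_k \bar J_t^{M_k}(x,u_k)$ in one shot, a liminf estimate that is reused verbatim later in the proof of Theorem \ref{thm:existence}.
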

\begin{proof}
Fix $t\in\R_{\ge 0}$, $x\in\R^n$.
{\em [Non-decreasing property]}
This is immediate by inspection of \er{eq:value-W-M}, \er{eq:cost-J-M}, \er{eq:cost-I-J-M}, and the non-decreasing property of $\Phi^M(|\cdot|^2)$ provided by Theorem \ref{thm:Phi-M-sup-of-quadratics} {\em (ii)}.

{\em [Left-hand inequalities in \er{eq:order-cost}, \er{eq:order-value}]} Immediate by the definition of $\bar J_t^\infty$, $\ol{W}_t^\infty$ in \er{eq:J-infty}, \er{eq:W-infty}. Also, Corollary \ref{cor:Phi-M-bound} implies that $-\infty < \ts{\frac{\phi(0)}{2}} \, t \le\bar J_t^M(x,u)$. Moreover, as $u$ is arbitrary here, $-\infty< \ts{\frac{\phi(0)}{2}} \, t \le\ol{W}_t^M(x) = \inf_{u\in\cU[0,t]} \bar J_t^M(x.u)$.

{\em [Domain properties in \er{eq:order-value}]} Fix $M\in\R_{\ge -\phi(0)}$. It is immediate by the left-hand inequality in \er{eq:order-value} and Lemma \ref{lem:dom-value-W-bar} that $\dom\ol{W}_t^M\supset\dom\ol{W}_t\ne\emptyset$ holds. For the remaining assertion, fix $u\in\cU[0,t]$, and recall that  $\chi(x,u)\in C([0,t];\R^n)$ via \er{eq:dynamics} and Lemma \ref{lem:chi}. Applying Lemma \ref{lem:approx-convex-duality} {\em (ii)}, $\Phi^M(|\chi(x,u)|^2)\in C([0,t];\R)$, so that $\bar J_t^M(x,u) < \infty$ by inspection of \er{eq:cost-J-M}. Hence, as $u$ is arbitrary, $\ol{W}_t^M(x) = \inf_{u\in\cU[0,t]} \bar J_t^M(x.u) <\infty$, and as $x\in\R^n$ is arbitrary, $\dom\ol{W}_t^M = \R^n$.

{\em [Limits in \er{eq:order-cost}, \er{eq:order-value}]} Immediate from the non-decreasing property above.

{\em [Right-hand equality in \er{eq:order-cost}]} Fix $u\in\cU[0,t]$. In view of Corollary \ref{cor:Phi-M-bound} and \er{eq:cost-I-J-M}, it follows that
$
	\bar I_t^{M}(x,u) = \int_0^t \nu_s^M\, ds + \ts{\frac{\phi(0)}{2}}\, t
$,
where $s\mapsto\nu_s^M \doteq \ts{\frac{K}{2}} \, |\xi_s|^2 + \demi\, \Phi^M(|\xi_s|^2) - \ts{\frac{\phi(0)}{2}}$ is nonnegative by Corollary \ref{cor:Phi-M-bound}, non-decreasing in $M\in\R_{\ge -\phi(0)}$ by Theorem \ref{thm:Phi-M-sup-of-quadratics} {\em (ii)}, and continuous (and hence measurable). Applying the monotone convergence theorem,
\begin{align}
	\sup_{M>-\phi(0)}  \bar I_t^{M}(x,u)
	& = \lim_{M\rightarrow\infty} \int_0^t \nu_s^M\, ds + \ts{\frac{\phi(0)}{2}}\, t
	\nn\\
	& = \int_0^t \lim_{M\rightarrow\infty} \nu_s^M\, ds + \ts{\frac{\phi(0)}{2}}\, t
	= \int_0^t \ts{\frac{K}{2}} \, |\xi_s|^2 + \demi\, \lim_{M\rightarrow\infty} \Phi^M(|\xi_s|^2) \, ds
	= \bar I_t(x,u),
	\nn
\end{align}
in which final equality follows by Lemma \ref{lem:approx-convex-duality} {\em (iii)}. Hence, recalling \er{eq:cost-J-M}, \er{eq:J-infty},
\begin{align}
	\bar J_t^\infty(x,u)
	& = \sup_{M\in\R_{\ge -\phi(0)}} \bar I_t^M(x,u) + I_t^\kappa(u) + \Psi(\xi_t)
	= \bar I_t(x,u) + I_t^\kappa(u) + \Psi(\xi_t) = \bar J_t(x,u)\,,
	\nn
\end{align}
in which it is noted that $x\in\R^n$ and $u\in\cU[0,t]$ are arbitrary.

{\em [Right-hand equality in \er{eq:order-value}]} Applying \er{eq:J-infty}, \er{eq:W-infty}, \er{eq:order-cost},
\begin{align}
	\ol{W}_t^\infty(x)
	& = \sup_{M\ge -\phi(0)} \inf_{u\in\cU[0,t]} \bar J_t^M(x,u) \le \inf_{u\in\cU[0,t]} \sup_{M\ge -\phi(0)} \bar J_t^M(x,u) 
	= \inf_{u\in\cU[0,t]} \bar J_t(x,u) = \ol{W}_t(x)\,.
	\label{eq:one-side}
\end{align}
It remains to demonstrate the opposite inequality. To this end, fix an arbitrary $\eps\in\R_{>0}$, and select any non-decreasing sequence $\{M_k\}_{k\in\N}\subset\R_{\ge -\phi(0)}$ such that $\lim_{k\rightarrow\infty} M_k = \infty$. Define a sequence $\{u_k^\eps\}_{k\in\N}\subset\cU[0,t]$ by
\begin{align}
	\bar J_t^{M_k}(x,u_k^\eps) 
	& < \ol{W}_t^{M_k}(x) + \eps,
	\label{eq:near-optimal-sequence}
\end{align}
and note by definition \er{eq:value-W-M} of $\ol{W}_t^{M_k}(x)$ that this is always possible. Suppose that $\{u_k^\eps\}_{k\in\N}$ is unbounded. Applying Corollary \ref{cor:Phi-M-bound} in the definition \er{eq:cost-I-J-M} of $\bar I_t^{M_k}(x,\cdot)$, note that $\bar I_t^{M_k}(x,u) \ge \ts{\frac{\phi(0)}{2}} \, t$ for all $u\in\cU[0,t]$. Combining this bound with \er{eq:cost-J-M} and \er{eq:near-optimal-sequence} yields
\begin{align}
	\ol{W}_t^\infty(x)
	& = \lim_{k\rightarrow\infty} \ol{W}_t^{M_k}(x)
	\ge \lim_{k\rightarrow\infty} 
	\bar J_t^{M_k}(x,u_k^\eps) - \eps
	\ge \ts{\frac{\phi(0)}{2}} \, t - \eps + \ts{\frac{\kappa}{2}}\, \lim_{k\rightarrow\infty} \|u_k^\eps\|_{\cU[0,t]}^{2} = \infty,
	\nn
\end{align}
which yields $\ol{W}_t^\infty(x) \ge \ol{W}_t(x)$, as required to complete the proof in that unbounded case.

Alternatively, suppose that $\{u_k^\eps\}_{k\in\N}$ is bounded, i.e. there exists $\ol{U}\in\R_{\ge 0}$ such that $\{u_k^\eps\}_{k\in\N}\in\cB_{\cU[0,t]}[0;\ol{U}]$. Lemma \ref{lem:chi} {\em (iii)} implies that there exists a subsequence $\{\tilde u_k^\eps\}_{k\in\N} \subset \{u_k^\eps\}_{k\in\N}$ such that $\tilde\xi_k^\eps\rightarrow\bar\xi^\eps$ uniformly as $k\rightarrow\infty$, where $\tilde\xi_k^\eps\doteq\chi(x,\tilde u_k^\eps)$. In view of \er{eq:cost-I-bar}, \er{eq:cost-I-J-M}, and Corollary \ref{cor:Phi-M-bound}, define a sequence $\{\tilde\nu_k^\eps\}_{k\in\N}$ of maps from $[0,t]$ to $\R_{\ge 0}$, and its candidate limit $\bar\nu^\eps:[0,t]\rightarrow\ol{\R}_{\ge 0}^+$, by
\begin{align}
	[\tilde\nu_k^\eps]_s
	& \doteq \ts{\frac{K}{2}} \, |[\tilde\xi_k^\eps]_s|^2 + \demi\, \Phi^{M_k}(|[\tilde\xi_k^\eps]_s|^2) - \ts{\frac{\phi(0)}{2}},
	\quad
	\bar\nu^\eps_s
	\doteq \ts{\frac{K}{2}} \, |[\bar\xi^\eps]_s|^2 + \demi\, \Phi(|[\bar\xi^\eps]_s|^2) - \ts{\frac{\phi(0)}{2}},
	\label{eq:nu-maps}
\end{align}
for all $s\in[0,t]$, $k\in\N$. Fix any $s\in[0,t]$, $j\in\N$. Note that by monotonicity of $\{\Phi^{M_k}\}_{k\in\N}$, see Lemma \ref{lem:approx-convex-duality} {\em (iii)} or Theorem \ref{thm:Phi-M-sup-of-quadratics} {\em (ii)},
$
	\Phi^{M_k}(|[\tilde\xi_k^\eps]_s|^2) 
	= [\Phi^{M_k}(|[\tilde\xi_k^\eps]_s|^2) - \Phi^{M_{j}}(|[\tilde\xi_k^\eps]_s|^2)] + \Phi^{M_{j}}(|[\tilde\xi_k^\eps]_s|^2) 
	\ge \Phi^{M_{j}}(|[\tilde\xi_k^\eps]_s|^2)
$
for all $k\ge j$. Hence, as $\Phi^{M_j}$ is continuous, $\liminf_{k\rightarrow\infty} \Phi^{M_k}(|[\tilde\xi_k^\eps]_s|^2) \ge \Phi^{M_{j}}(|\bar\xi^\eps_s|^2)$, so that $\liminf_{k\rightarrow\infty} \Phi^{M_k}(|[\tilde\xi_k^\eps]_s|^2) \ge \lim_{j\rightarrow\infty}\Phi^{M_{j}}(|\bar\xi^\eps_s|^2) = \Phi(|\bar\xi_s^\eps|^2)$. As $\lim_{k\rightarrow\infty} |[\tilde\xi_k^\eps]_s|^2 = |[\bar\xi^\eps]_s|^2$, \er{eq:nu-maps} subsequently yields that
\begin{align}
	&  \bar\nu_s^\eps
	\le \liminf_{k\rightarrow\infty} [\tilde\nu_k^\eps]_s\,.
	\label{eq:liminf-nu}
\end{align}

By inspection, $\bar\nu_s^\eps = \infty$ implies that $\lim_{k\rightarrow\infty} [\tilde\nu_k^\eps]_s = \infty = \bar\nu_s^\eps$. 

Alternatively, suppose that $\bar\nu_s^\eps<\infty$. In view of \er{eq:nu-maps}, define $[\hat\nu_k^\eps]_s \doteq \ts{\frac{K}{2}}\, |[\tilde\xi_k^\eps]_s|^2 + \demi\, \Phi(|[\tilde\xi_k^\eps]_s|^2)  - \ts{\frac{\phi(0)}{2}}$ for all $k\in\N$. As $\bar\nu_s^\eps<\infty$, there exists an open interval containing $|\bar\xi_s^\eps|^2$ on which $\Phi$ is continuous, and $\lim_{k\rightarrow\infty} [\tilde\xi_k^\eps]_s = [\bar\xi^\eps]_s$, so that $[\hat\nu_k^\eps]_s<\infty$ for all $k\in\N$ sufficiently large, and $\lim_{k\rightarrow\infty} [\hat\nu_k^\eps]_s = \bar\nu_s^\eps$. Note further that $[\tilde\nu_k^\eps]_s \le [\hat\nu_k^\eps]_s$ for all $k\in\N$, again by Lemma \ref{lem:approx-convex-duality} {\em (iii)}. Hence, 
\begin{align}
	\limsup_{k\rightarrow\infty}
	[\tilde\nu_k^\eps]_s
	& \le \limsup_{k\rightarrow\infty} [\hat\nu_k^\eps]_s 
	= \bar\nu_s^\eps\,.
	\label{eq:limsup-nu}
\end{align}
Consequently, combining \er{eq:liminf-nu} and \er{eq:limsup-nu}, and recalling the $\bar\nu_s^\eps = \infty$ case above, it may be concluded that $\lim_{k\rightarrow\infty} [\tilde\nu_k^\eps]_s = \bar\nu_s^\eps$ for both the $\bar\nu_s^\eps=\infty$ and the $\bar\nu_s^\eps<\infty$ cases.

Next, recall by definition \er{eq:nu-maps} and Corollary \ref{cor:Phi-M-bound}, that $\{\tilde v_k^\eps\}_{k\in\N}$ defines a non-negative sequence of functions in $C([0,t];\R)$. Consequently, every element of this sequence is measurable and non-negative, so that Fatou's lemma yields
$
	\int_0^t \bar\nu_s^\eps\, ds = \int_0^t \liminf_{k\rightarrow\infty} [\tilde\nu_k^\eps]_s
	\le \liminf_{k\rightarrow\infty} \int_0^t [\tilde\nu_k^\eps]_s\, ds
$.
Hence, recalling \er{eq:nu-maps}, the definitions of $\tilde\xi_k^\eps$, $\bar\xi^\eps$ prior, and \er{eq:cost-I-bar}, \er{eq:cost-I-J-M},
\begin{align}
	\bar I_t(x,\bar u^\eps)
	& = \int_0^t \ts{\frac{K}{2}}\, |\bar\xi_s^\eps|^2 + \demi\, \Phi(|\bar\xi_s^\eps|^2)\, ds
	=
	\int_0^t \bar\nu_s^\eps\, ds + \ts{\frac{\phi(0)}{2}}\, t
	\label{eq:cost-sequence-1}
	\\
	& \le  \liminf_{k\rightarrow\infty} \int_0^t [\tilde\nu_k^\eps]_s\, ds + \ts{\frac{\phi(0)}{2}}\, t
	= \liminf_{k\rightarrow\infty} \int_0^t \ts{\frac{K}{2}}\, |[\tilde\xi_k^\eps]_s|^2 + \demi\, \Phi^{M_k} (|[\tilde\xi_k^\eps]_s|^2) \, ds
	% \nn\\
	% &
	= \liminf_{k\rightarrow\infty} \bar I_t^{M_k}(x,\tilde u_k^\eps)\,.
	\nn
\end{align}
Meanwhile, by weak convergence of $\tilde u_k^\eps$ to $\bar u^\eps$, $\|\bar u^\eps\|_{\cU[0,t]} \le \liminf_{k\rightarrow\infty} \| \tilde u_k^\eps\|_{\cU[0,t]}$, so that \er{eq:cost-I-bar} implies
\begin{align}
	I_t^\kappa(\bar u^\eps)
	& = \ts{\frac{\kappa}{2}} \|\bar u^\eps\|_{\cU[0,t]}^2
	\le \liminf_{k\rightarrow\infty} \ts{\frac{\kappa}{2}} \, \|\tilde u_k^\eps\|_{\cU[0,t]}^2
	=
	\liminf_{k\rightarrow\infty} I_t^\kappa(\tilde u_k^\eps)\,.
	\label{eq:cost-sequence-2}
\end{align}
Moreover, continuity of $[\chi(x,\cdot)]_t$ by Lemma \ref{lem:chi} {\em (ii)}, along with continuity of $\Psi_t$ of \er{eq:cost-Psi}, imply that
\begin{align}
	\Psi_t(\bar\xi_t^\eps)
	& = \lim_{k\rightarrow\infty} \Psi_t([\tilde\xi_k^\eps]_t)\,.
	\label{eq:cost-sequence-3}
\end{align}
Combining \er{eq:cost-sequence-1}, \er{eq:cost-sequence-2}, \er{eq:cost-sequence-3} via \er{eq:cost-J-bar}, \er{eq:cost-J-M} yields
\begin{align}
	\bar J_t(x,\bar u^\eps)
	& = \bar I_t(x,\bar u^\eps) + I_t^\kappa(\bar u^\eps) + \Psi_t(\bar\xi_t^\eps)
	% \nn\\
	% & 
	\le \liminf_{k\rightarrow\infty} \bar I_t^{M_k}(x,\tilde u_k^\eps) + \liminf_{k\rightarrow\infty} I_t^\kappa(\tilde u_k^\eps)
	+ \lim_{k\rightarrow\infty} \Psi_t([\tilde\xi_k^\eps]_t)
	\nn\\
	& \le \liminf_{k\rightarrow\infty} \left\{ \bar I_t^{M_k}(x,\tilde u_k^\eps) + I_t^\kappa(\tilde u_k^\eps) + \Psi_t([\tilde\xi_k^\eps]_t) \right\}
	% \nn\\
	% & 
	= \liminf_{k\rightarrow\infty} \bar J_t^{M_k}(x,\tilde u_k^\eps).
	\label{eq:cost-sequence-4}
\end{align}
Hence, applying \er{eq:near-optimal-sequence} and \er{eq:cost-sequence-4} while recalling that $\{\tilde u_k^\eps\}_{k\in\N} \subset \{ u_k^\eps \}_{k\in\N}$ is a subsequence of the near-optimal inputs involved, and noting that $\bar u^\eps$ is suboptimal in the definition \er{eq:value-W} of $\ol{W}_t(x)$, yields
\begin{align}
	\ol{W}_t(x)
	& \le \bar J_t(x,\bar u^\eps) \le \liminf_{k\rightarrow\infty} \bar J_t^{M_k}(x,\tilde u_k^\eps)
	\le \liminf_{k\rightarrow\infty} \ol{W}_t^{M_k}(x) + \eps
	= \ol{W}^\infty(x)+ \eps\,.
	\nn
\end{align}
As $\eps\in\R_{>0}$ is arbitrary, it follows that $\ol{W}_t(x) \le \ol{W}^\infty(x)$. Recalling \er{eq:one-side} and the fact that $t\in\R_{\ge 0}$ and $x\in\R^n$ are also arbitrary completes the proof of the equality in \er{eq:order-value}.
\end{proof}

%%%%%%%%%%%%%%%%%%%%%%%%%%%%%%%%%%%%%%%%%%%%%%%%%%%%%%%%%%%%%%%%%%%%
%%
%%		Optimal trajectories and constraint satisfaction.
%%

\section{Optimal trajectories and constraint satisfaction}
\label{sec:existence}

Existence and uniqueness of the optimal trajectories in \er{eq:value-W}, \er{eq:value-W-M} is demonstrated via analysis of the attendant cost functions \er{eq:cost-J-bar}, \er{eq:cost-I-J-M}. In particular, these cost functions are shown to be proper, lower semicontinuous, strictly convex, and coercive. These properties are demonstrated to be sufficient for the required existence and uniqueness of the optimal controls involved, and hence the corresponding trajectories. The behaviour of these optimal trajectories relative to the desired state constraint is subsequently determined. 

%%		Existence of the optimal controls.

\subsection{Existence and uniqueness of the optimal controls}
In order to demonstrate that the cost functions $\bar J_t(x,\cdot), \, \bar J_t^M(x,\cdot): \cU[0,t]\rightarrow\ol{\R}^+$ of \er{eq:cost-J-bar}, \er{eq:cost-I-J-M} are proper, convex, and coercive for fixed $t\in\R_{\ge 0}$, $M\in\R_{\ge -\phi(0)}$, $x\in\R^n$, it is useful to consider the map $\gamma_x^{s,\alpha}:\cU[0,t]\rightarrow\R$ defined for fixed $x\in\R^n$, $s\in[0,t]$, $\alpha\in\R_{\ge -\phi(0)}$ by
\begin{align}
	\gamma_x^{s,\alpha}(u)
	& \doteq \demi\, [ K + a^{-1}(\alpha)]\, |[\chi(x,u)]_s|^2 - \ts{\frac{\alpha}{2}}
	\label{eq:gamma-fn}
\end{align}
for all $u\in\cU[0,t]$, in which $\chi$ is as per \er{eq:dynamics}.
\begin{lemma}
\label{lem:convex-gamma}
Given $t\in\R_{\ge 0}$, $x\in\R^n$, $s\in[0,t]$, $\alpha\in\R_{\ge -\phi(0)}$, $\gamma_x^{s,\alpha}:\cU[0,t]\rightarrow\R$ of \er{eq:gamma-fn} is convex.
\end{lemma}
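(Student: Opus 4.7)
The plan is to exploit two structural facts: the map $u\mapsto[\chi(x,u)]_s$ is affine in $u$, and the coefficient $K+a^{-1}(\alpha)$ multiplying the squared-norm term is non-negative. Convexity then follows from the elementary facts that the composition of the squared norm with an affine map is convex, non-negative scalar multiples preserve convexity, and adding a constant preserves convexity.

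First I would observe, directly from \er{eq:dynamics}, that for fixed $x\in\R^n$ and $s\in[0,t]$ the evaluation $u\mapsto[\chi(x,u)]_s = e^{As}x+\int_0^s e^{A(s-\sigma)}Bu_\sigma\, d\sigma$ is affine in $u\in\cU[0,t]$. Composing with the convex map $y\mapsto|y|^2$ on $\R^n$ yields that $u\mapsto|[\chi(x,u)]_s|^2$ is convex on $\cU[0,t]$.

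Next I would verify non-negativity of the scalar coefficient $K+a^{-1}(\alpha)$. By the standing hypothesis for the approximate regulator problem, $K\in\R_{\ge -\phi'(0)}$. Since $\alpha\ge -\phi(0)$ and $a^{-1}$ is strictly increasing by Lemma \ref{lem:a-properties}, one has $a^{-1}(\alpha)\ge a^{-1}(-\phi(0)) = \phi'(0)$, the last equality following from $a\circ\phi'(0)=-\phi(0)$ (cf.\ \er{eq:exact-a} and the discussion in the proof of Theorem \ref{thm:Phi-sup-of-quadratics}). Hence $K+a^{-1}(\alpha)\ge -\phi'(0)+\phi'(0)=0$.

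Putting these two pieces together, $\gamma_x^{s,\alpha}(u)=\tfrac{1}{2}[K+a^{-1}(\alpha)]\,|[\chi(x,u)]_s|^2 - \tfrac{\alpha}{2}$ is a non-negative multiple of a convex function of $u$, shifted by a constant, and is therefore convex on $\cU[0,t]$. I do not anticipate any real obstacle here; the only subtle point is the sign check $K+a^{-1}(\alpha)\ge 0$, which is where the joint standing assumptions on $K$ and $\alpha$ (together with the monotonicity of $a^{-1}$ from Lemma \ref{lem:a-properties}) are used in an essential way.
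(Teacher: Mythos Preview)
Your proof is correct and follows essentially the same approach as the paper: both argue that $u\mapsto[\chi(x,u)]_s$ is affine, that $K+a^{-1}(\alpha)\ge 0$ via monotonicity of $a^{-1}$ together with $a^{-1}(-\phi(0))=\phi'(0)$ and $K\ge-\phi'(0)$, and conclude by composition. The only minor remark is that the identity $a^{-1}(-\phi(0))=\phi'(0)$ is stated directly in Lemma~\ref{lem:a-properties} (see \er{eq:exact-a-inv}), so you need not route through the proof of Theorem~\ref{thm:Phi-sup-of-quadratics}.
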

\begin{proof}
Fix $t\in\R_{\ge 0}$, $x\in\R^n$, $s\in[0,t]$, $\alpha\in\R_{\ge -\phi(0)}$. As $u\mapsto[\chi(x,u)]_s$ is affine by \er{eq:dynamics}, convexity of $\gamma_x^{s,\alpha}$ follows by inspection of \er{eq:gamma-fn}, properties {\em (ii)} and {\em (iii)} of \er{eq:phi-properties-1-2}, and Lemma \ref{lem:a-properties}. % see \er{eq:exact-a-inv}.
\end{proof}

\begin{lemma}
\label{lem:convex-cost-J}
Given any $t\in\R_{>0}$, $M\in\R_{\ge -\phi(0)}$, the cost functions $\bar J_t(x,\cdot), \, \bar J_t^M(x,\cdot): \cU[0,t]\rightarrow\ol{\R}^+$ defined for $x\in\R^n$ by \er{eq:cost-J-bar}, \er{eq:cost-I-J-M} satisfy the following properties:
\begin{enumerate}[\it (i)] \itsep
\item $\bar J_t^M(x,\cdot)$ and $\bar J_t(x,\cdot)$ are respectively continuous and lower semicontinuous for all $x\in\R^n$;
\item Both are strictly convex and coercive for all $x\in\R^n$; and
\item $\bar J_t^M(x,\cdot)$ and $\bar J_t(y,\cdot)$ are proper for all $x\in\dom\ol{W}_t^M = \R^n$ and all $y\in\dom\ol{W}_t$.
\end{enumerate}
\end{lemma}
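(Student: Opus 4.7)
The plan is to decompose $\bar J_t(x,\cdot)$ and $\bar J_t^M(x,\cdot)$ into three additive pieces---the running cost $\bar I_t$ or $\bar I_t^M$, the control cost $I_t^\kappa$, and the terminal term $u\mapsto\Psi([\chi(x,u)]_t)$---and verify each target property (lsc/continuity, convexity, coercivity, properness) on each piece separately, since all four properties are well-behaved under sums. The terminal and control-cost pieces are handled trivially using Lemma \ref{lem:chi} {\em (ii)} (continuity and affineness of $\chi(x,\cdot)$), the fact that $\Psi$ is a psd quadratic, and the fact that $I_t^\kappa(\cdot)=\ts{\frac{\kappa}{2}}\|\cdot\|_{\cU[0,t]}^2$ is a continuous, strictly convex, coercive quadratic form. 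All nontrivial work thus reduces to the running cost.

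For assertion {\em (i)}, continuity of $\bar I_t^M(x,\cdot)$ follows because $\Phi^M$ is continuous on $\R_{\ge 0}$ by Lemma \ref{lem:approx-convex-duality} {\em (ii)} and $\chi(x,u_k)\to\chi(x,u)$ uniformly in $C([0,t];\R^n)$ by Lemma \ref{lem:chi} {\em (ii)}, so the integrand converges uniformly on $[0,t]$. For the exact case, $\Phi$ is only lower semicontinuous as it jumps to $+\infty$ at $\rho=b^2$, so I would invoke Fatou's lemma: pointwise lsc of the integrand along any strongly convergent $u_k\to u$, together with the uniform lower bound $\ts{\frac{K}{2}}\rho+\demi\Phi(\rho)\ge\ts{\frac{\phi(0)}{2}}$ from Corollary \ref{cor:Phi-M-bound}, yields lsc of the integral. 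For assertion {\em (ii)}, the sup-of-quadratics representations \er{eq:Phi-sup-of-quadratics} and \er{eq:Phi-M-sup-of-quadratics} express the running-cost integrand as a supremum over $\alpha$ of the function $\gamma_x^{s,\alpha}(u)$ of \er{eq:gamma-fn}, each of which is convex in $u$ by Lemma \ref{lem:convex-gamma}; a supremum of convex functions is convex, and integration preserves convexity. Strict convexity of $\bar J_t(x,\cdot)$ and $\bar J_t^M(x,\cdot)$ then follows from their strictly convex summand $I_t^\kappa$, and coercivity from the lower bound $\bar J_t(x,u),\,\bar J_t^M(x,u)\ge\ts{\frac{\kappa}{2}}\|u\|_{\cU[0,t]}^2+\ts{\frac{\phi(0)}{2}}t$ obtained by combining Corollary \ref{cor:Phi-M-bound} with $\Psi\ge 0$.

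For assertion {\em (iii)}, boundedness below of each cost is already secured by Corollary \ref{cor:Phi-M-bound}, so it only remains to exhibit some $u$ at which each cost is finite. For $\bar J_t^M(x,\cdot)$, $\Phi^M$ is real-valued on $\R_{\ge 0}$, so $\bar J_t^M(x,u)<\infty$ for every $u\in\cU[0,t]$ and every $x\in\R^n=\dom\ol W_t^M$; for $\bar J_t(y,\cdot)$ with $y\in\dom\ol W_t$, the very definition of the domain supplies such a $u$. The main technical obstacle I anticipate is ensuring the lsc argument in {\em (i)} for the exact case is rigorous given that $\Phi$ takes the value $+\infty$; the nonnegative-shift trick afforded by Corollary \ref{cor:Phi-M-bound} is what makes Fatou applicable without requiring any integrability hypothesis beyond boundedness below, and everything else reduces to routine manipulations of the sup-of-quadratics representation together with the affinity of $\chi(x,\cdot)$.
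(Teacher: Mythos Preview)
Your proposal is correct and largely mirrors the paper's proof: the decomposition into $\bar I_t^{(M)}$, $I_t^\kappa$, and $\Psi([\chi(x,\cdot)]_t)$, the use of Lemma~\ref{lem:convex-gamma} plus closure of convexity under suprema and integration for {\em (ii)}, the coercivity bound via Corollary~\ref{cor:Phi-M-bound}, and the properness argument in {\em (iii)} all match.

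The one genuine difference is in the lower-semicontinuity step for $\bar J_t(x,\cdot)$ in {\em (i)}. You argue directly: strong convergence $u_k\to u$ gives uniform convergence of trajectories by Lemma~\ref{lem:chi} {\em (ii)}, $\Phi$ is lsc (Lemma~\ref{lem:exact-convex-duality}), the integrand is bounded below by $\ts{\frac{\phi(0)}{2}}$ (Corollary~\ref{cor:Phi-M-bound}), and Fatou delivers lsc of $\bar I_t(x,\cdot)$. The paper instead invokes the monotone-approximation identity $\bar J_t(x,u)=\sup_{M\ge -\phi(0)}\bar J_t^M(x,u)$ from Theorem~\ref{thm:monotone}, combines it with the continuity of each $\bar J_t^M(x,\cdot)$ just established, and uses a $\sup$/$\liminf$ swap. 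Your route is self-contained and does not rely on Theorem~\ref{thm:monotone}; the paper's route is shorter once that theorem is in hand and emphasises how the approximation structure already encodes lsc. A similar minor variation occurs for convexity of $\bar J_t(x,\cdot)$: you use the exact sup-of-quadratics \er{eq:Phi-sup-of-quadratics} directly, whereas the paper again passes through $\sup_M \bar J_t^M$.
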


\newcommand{\ball}		{{\mathcal{B}}}

\begin{proof}
Fix $t\in\R_{>0}$, $M\in\R_{\ge -\phi(0)}$. {\em (i)} Fix $x\in\R^n$. 
{\em [Continuity of $\bar J_t^M(x,\cdot)$]} By inspection of \er{eq:cost-J-M}, \er{eq:cost-I-J-M}, and \er{eq:cost-I-bar}, \er{eq:cost-Psi}, continuity of $\bar J_t^M(x,\cdot)$ on $\cU[0,t]$ requires continuity of its constituent maps $\bar I_t^M(x,\cdot)$, $I_t^\kappa$, and $\Psi([\chi(x,\cdot)]_t)$ on $\cU[0,t]$. This is immediate for $I_t^\kappa$ and $\Psi([\chi(x,\cdot)]_t)$, by \er{eq:cost-I-bar}, \er{eq:cost-Psi}, and Lemma \ref{lem:chi} {\em (ii)}. The same conclusion follows for $\bar I_t^M(x,\cdot)$, by application of Lemma \ref{lem:chi} {\em (ii)} and Lemma \ref{lem:approx-convex-duality} {\em (ii)}.

{\em [Lower semicontinuity of $\bar J_t(x,\cdot)$]} Fix $u\in\cU[0,t]$, and any sequence $\{\tilde u_i\}_{i\in\N}\subset\cU[0,t]$ such that $\lim_{i\rightarrow\infty} \|u - \tilde u_i\|_{\cU[0,t]} = 0$. By continuity of $\bar J_t^M(x,\cdot)$, note that $\bar J_t^M(x,u) = \lim_{i\rightarrow\infty} \bar J_t^M(x,\tilde u_i)$. 
Hence, applying Theorem \ref{thm:monotone}, and in particular \er{eq:J-infty}, \er{eq:order-cost},
\begin{align}
	\bar J_t(x,u)
	& = \sup_{M\ge -\phi(0)} \bar J_t^M(x,u)
	= \sup_{M\ge -\phi(0)} \liminf_{i\rightarrow\infty} \bar J_t^M(x,\tilde u_i)
	=  \sup_{M\ge -\phi(0)} \sup_{j\in\N} \inf_{i>j} \bar J_t^M(x,\tilde u_i)
	\nn\\
	& \le \sup_{j\in\N} \inf_{i>j} \sup_{M\ge -\phi(0)} \bar J_t^M(x,\tilde u_i) = \liminf_{i\rightarrow\infty} \sup_{M\ge -\phi(0)} \bar J_t^M(x,\tilde u_i)
	= \liminf_{i\rightarrow\infty} \bar J_t(x,\tilde u_i)\,.
	\nn
\end{align}
As $u\in\cU[0,t]$ and $\{\tilde u_i\}_{i\in\N}\subset\cU[0,t]$ are arbitrary, the assertion follows.

{\em (ii)} Fix $x\in\R^n$.
{\em [Convexity of $\bar J_t^M(x,\cdot)$]} Fix $u\in\cU[0,t]$, and $\xi\doteq\chi(x,u)$ by \er{eq:dynamics}. By \er{eq:cost-I-bar}, \er{eq:Phi-M-sup-of-quadratics}, \er{eq:cost-I-J-M},
\begin{align}
	\bar I_t^M(x,u)
	& = \int_0^t \ts{\frac{K}{2}} \, |\xi_s|^2 + \demi\, \Phi^M(|\xi_s|^2) \, ds
	= \int_0^t \sup_{\alpha\in[-\phi(0),M]} \gamma_{x}^{s,\alpha}(u) \, ds, 
	\label{eq:I-M-1}
\end{align}
where $\gamma_{x}^{s,\alpha}$ is as per \er{eq:gamma-fn}. Recall by Lemma \ref{lem:convex-gamma} that $\gamma_{x}^{s,\alpha}:\cU[0,t]\rightarrow\R$ is convex for any $s\in[0,t]$, $\alpha\in\R_{\ge -\phi(0)}$. As convexity is preserved under suprema and integration, see \cite[Theorem 3 and (2.6), p.7]{R:74}, it follows by \er{eq:I-M-1} that $\bar I_t^M(x,\cdot):\cU[0,t]\rightarrow\R$ is convex. Consequently, as $\kappa\in\R_{>0}$ in \er{eq:cost-I-bar}, $\bar I_t^M(x,\cdot) + I_t^{\kappa}(\cdot):\cU[0,t]\rightarrow\R$ is strictly convex. Lastly, as $\Psi$ of \er{eq:cost-Psi} is convex by definition of $P_t\in\Sigma^n$, and $[\chi(x,\cdot)]_t:\cU[0,t]\rightarrow\R^n$ is affine, $\Psi([\chi(x,\cdot)]_t):\cU[0,t]\rightarrow\R$ is also convex. Hence, applying \er{eq:cost-J-M}, $\bar J_t^M(x,\cdot):\cU[0,t]\rightarrow\R$ is strictly convex.

{\em [Convexity of $\bar J_t(x,\cdot)$]}
Recalling the convexity argument immediately above, $\bar I_t^M(x,\cdot) + \Psi([\chi(x,\cdot)]_t) = \bar J_t^M(x,\cdot) - \ts{\frac{\kappa}{2}} \|\cdot\|_{\cU[0,t]}^2:\cU[0,t]\rightarrow\R$ is convex for all $M\in\R_{\ge -\phi(0)}$.
As convexity is preserved under suprema \cite[(2.6), p.7]{R:74}, and Theorem \ref{thm:monotone} implies that \er{eq:order-cost}, \er{eq:J-infty} hold, convexity of $\bar J_t(x,\cdot) - \ts{\frac{\kappa}{2}} \|\cdot\|_{\cU[0,t]}^2 = \sup_{M\ge -\phi(0)} \bar J_t^M(x,\cdot) - \ts{\frac{\kappa}{2}} \|\cdot\|_{\cU[0,t]}^2:\cU[0,t]\rightarrow\ol{\R}^+$ follows. Hence, $\bar J_t(x,\cdot)$ is strictly convex, as $\kappa\in\R_{>0}$. 

[Coercivity of $\bar J_t^M(x,\cdot)$] 
Recall by Corollary \ref{cor:Phi-M-bound} that $\ts{\frac{K}{2}} \, |\cdot|^2 + \demi\, \Phi^M(|\cdot|^2) \ge \ts{\frac{\phi(0)}{2}}$. Applying \er{eq:cost-J-bar}, \er{eq:cost-Psi},
\begin{align}
	\bar J_t^M(x,u)
	& = \int_0^t \ts{\frac{K}{2}} |\xi_s|^2 + \demi\, \Phi^M(|\xi_s|^2) \, ds + \ts{\frac{\kappa}{2}}\, \|u\|_{\cU[0,t]}^2 + \Psi(\xi_t)
	\ge \ts{\frac{\phi(0)}{2}}\, t + \ts{\frac{\kappa}{2}}\, \|u\|_{\cU[0,t]}^2,
	\label{eq:coercive-J-M-bound}
\end{align}
for all $x\in\R^n$, $u\in\cU[0,t]$. Hence, $\bar J_t^M(x,\cdot)$ is coercive, as $\kappa\in\R_{>0}$.

{\em [Coercivity of $\bar J_t(x,\cdot)$]} Follows by coercivity of $\bar J_t^M(x,\cdot)$ and \er{eq:order-cost} of Theorem \ref{thm:monotone}.

{\em (iii)} Lemma \ref{lem:dom-value-W-bar} demonstrates that $\dom\ol{W}_t\ne\emptyset$. Fix any $x\in\dom\ol{W}_t$. Select a near-optimal input $\tilde u\in\cU[0,t]$ in the definition \er{eq:value-W} of $\ol{W}_t(x)$, such that $\bar J_t^M(x,\tilde u) \le \bar J_t(x,\tilde u) < \ol{W}_t(x) + 1<\infty$, and note that this is always possible by Theorem \ref{thm:monotone}, i.e. \er{eq:order-cost}. Hence, $\dom\, \bar J_t^M(x,\cdot) \ne \emptyset \ne \dom\, \bar J_t(x,\cdot)$. Again
recalling \er{eq:order-cost}, along with \er{eq:coercive-J-M-bound}, note also that
$
	-\infty < \ts{\frac{\phi(0)}{2}}\, t + \ts{\frac{\kappa}{2}}\, \|u\|_{\cU[0,t]}^2
	\le 
	\bar J_t^M(x,u)
	\le 
	\bar J_t(x,u) 
$ for all $u\in\cU[0,t]$.
Hence, $\bar J_t(x,\cdot), \, \bar J_t^M(x,\cdot): \cU[0,t]\rightarrow\ol{\R}^+$ of \er{eq:cost-J-bar}, \er{eq:cost-I-J-M} are proper for any $x\in\dom\ol{W}_t$. Finally, recalling \er{eq:coercive-J-M-bound} yields that $\bar J_t^M(y,\cdot)$ is also proper for any $y\in\dom\ol{W}_t^M = \R^n$.
\end{proof}

With $t\in\R_{>0}$, existence and uniqueness of the optimal controls in \er{eq:value-W}, \er{eq:value-W-M} may now be established.

\begin{theorem}
\label{thm:existence}
Given any $t\in\R_{>0}$, $M\in\R_{\ge-\phi(0)}$, $x\in\dom\ol{W}_t$, $y\in\dom\ol{W}_t^M = \R^n$, there exist unique optimal controls $u^*, u^{M*}\in\cU[0,t]$ for the respective optimal control problems \er{eq:value-W}, \er{eq:value-W-M}, with
\begin{align}
	& u^*=\argmin_{u\in\cU[0,t]} \bar J_t(x,u),
	\qquad
	u^{M*}=\argmin_{u\in\cU[0,t]} \bar J_t^M(y,u).
	\label{eq:existence}
	\\[-3mm]
	& \nn
\end{align}
\end{theorem}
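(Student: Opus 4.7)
The plan is to apply the direct method of the calculus of variations. The control space $\cU[0,t]=\Ltwo([0,t];\R^m)$ is a Hilbert space and therefore reflexive, while Lemma~\ref{lem:convex-cost-J} has already established that $\bar J_t(x,\cdot)$ and $\bar J_t^M(y,\cdot)$ are proper, (lower semi)continuous, strictly convex, and coercive. These four properties supply exactly what is needed: existence via a weakly convergent minimizing sequence and uniqueness via strict convexity.

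First I would address the approximate problem. Fix $y\in\R^n$. Since $\bar J_t^M(y,\cdot)$ is proper (with domain $\cU[0,t]=\dom\ol{W}_t^M$) and coercive, any minimizing sequence $\{u_k\}\subset\cU[0,t]$ with $\bar J_t^M(y,u_k)\to\ol{W}_t^M(y)$ is norm-bounded in $\cU[0,t]$. Lemma~\ref{lem:chi}~\emph{(iii)} then yields a subsequence $\{v_k\}$ with $v_k\weakly \bar u^{M*}$ weakly in $\cU[0,t]$ and $\chi(y,v_k)\to\chi(y,\bar u^{M*})$ uniformly on $[0,t]$. Passing to the liminf in $\bar J_t^M(y,v_k)$ term by term --- using continuity of $\Phi^M$ (Lemma~\ref{lem:approx-convex-duality}~\emph{(ii)}) together with uniform trajectory convergence and the lower bound in Corollary~\ref{cor:Phi-M-bound} for the running cost, weak lower semicontinuity of $\|\cdot\|_{\cU[0,t]}^2$ for $I_t^\kappa$, and continuity of $\Psi$ for the terminal cost --- gives $\bar J_t^M(y,\bar u^{M*})\le\liminf_k \bar J_t^M(y,v_k)=\ol{W}_t^M(y)$, so $\bar u^{M*}$ attains the infimum and therefore equals $u^{M*}$.

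The exact problem is handled analogously. Fix $x\in\dom\ol{W}_t$, so $\ol{W}_t(x)<\infty$, and take a minimizing sequence $\{u_k\}$; coercivity and properness (Lemma~\ref{lem:convex-cost-J}) again yield norm-boundedness, and Lemma~\ref{lem:chi}~\emph{(iii)} delivers a weakly convergent subsequence $v_k\weakly\bar u^*$ with uniform trajectory convergence. The only adjustment relative to the approximate case is the extended-real-valued, merely lower semicontinuous barrier $\Phi$ appearing in the running cost, for which the required liminf inequality is obtained exactly as in the proof of Theorem~\ref{thm:monotone} (see the Fatou-based argument culminating in \er{eq:cost-sequence-1}). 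Combined with weak lower semicontinuity of $I_t^\kappa$ and continuity of $\Psi$, this gives $\bar J_t(x,\bar u^*)\le\liminf_k \bar J_t(x,v_k)=\ol{W}_t(x)<\infty$, so $\bar u^*=u^*$ attains the infimum.

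Uniqueness in both problems is then immediate from strict convexity (Lemma~\ref{lem:convex-cost-J}~\emph{(ii)}): any two distinct optimizers $u_1,u_2$ would satisfy $\bar J_t(x,\tfrac{u_1+u_2}{2})<\tfrac12[\bar J_t(x,u_1)+\bar J_t(x,u_2)]=\ol{W}_t(x)$, contradicting optimality; the same argument applies to $\bar J_t^M(y,\cdot)$. The only delicate step is the limit passage in the barrier term of the exact problem, where $\Phi$ can attain $+\infty$; this is precisely where the Fatou-type argument of Theorem~\ref{thm:monotone} must be invoked in lieu of any direct continuity/dominated-convergence statement, and it is the one place where the abstract lower semicontinuity supplied by Lemma~\ref{lem:convex-cost-J}~\emph{(i)} is essential.
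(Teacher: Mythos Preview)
Your proposal is correct and follows essentially the same route as the paper: a bounded minimizing sequence via coercivity, Lemma~\ref{lem:chi}~\emph{(iii)} for weak convergence of controls and uniform convergence of trajectories, a Fatou-type argument (as in the proof of Theorem~\ref{thm:monotone}) for lower semicontinuity of the barrier running cost, and strict convexity for uniqueness. The paper organizes the argument around level sets $\Lambda_\ell$ and treats only the exact problem explicitly, but the substance is identical. One small caution: the lower semicontinuity in Lemma~\ref{lem:convex-cost-J}~\emph{(i)} is with respect to the \emph{norm} topology on $\cU[0,t]$, so it is not directly what you need for passing to the weak limit; the Fatou argument you cite is doing the real work there, and the paper likewise re-derives it rather than invoking Lemma~\ref{lem:convex-cost-J}~\emph{(i)}.
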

\begin{proof}
As the existence and uniqueness arguments for the two optimal controls in \er{eq:existence} are analogous, only the first is included. Fix any $t\in\R_{>0}$, and recall that $\dom\ol{W}_t\ne\emptyset$ by Lemma \ref{lem:dom-value-W-bar}. Fix any $x\in\dom\ol{W}_t$, and recall by Lemma \ref{lem:convex-cost-J} that $\bar J_t(x,\cdot):\cU[0,t]\rightarrow\ol{\R}^+$ is proper, lower semicontinuous, strictly convex, and coercive. Given $\ell\in\ol{\R}^+$, define the level set $\Lambda_\ell\subset\cU[0,t]$ by
\begin{align}
	\Lambda_\ell
	& \doteq \left\{ u\in\cU[0,t] \, \bigl| \, \bar J_t(x,u) \le \ell \right\}.
	\label{eq:level-sets}
\end{align}
As $\bar J_t(x,\cdot):\cU[0,t]\rightarrow\ol{\R}^+$ is proper and coercive, and \er{eq:coercive-J-M-bound} holds, there exists $\hat u\in\cU[0,t]$ such that
$-\infty < \ts{\frac{\phi(0)}{2}}\, t + \ts{\frac{\kappa}{2}}\, \|\hat u\|_{\cU[0,t]}^2 \le \bar J_t(x,\hat u) < \infty$. Consequently, $\ell_0\doteq \inf_{u\in\cU[0,t]} \bar J_t(x,u)$ is finite, i.e. $\ell_0\in\R$, and $\Lambda_\ell$ of \er{eq:level-sets} is guaranteed to be non-empty for all $\ell>\ell_0$. Moreover, as $\kappa\in\R_{>0}$, \er{eq:coercive-J-M-bound} implies that $\Lambda_\ell$ is bounded for all $\ell>\ell_0$, with $\Lambda_\ell \subset \cB_{\cU[0,t]}[0,r_\ell]$, $r_\ell \doteq [\ell - \ts{\frac{\phi(0)}{2}}\, t]^\frac{1}{2}$ (and note by inspection that $\ell_0\ge\ts{\frac{\phi(0)}{2}}\, t$). Define a decreasing sequence $\{\ell_k\}_{k\in\N}\subset\R$ such that $\lim_{k\rightarrow\infty} \ell_k = \ell_0$, and a corresponding sequence $\{u_k\}_{k\in\N}\subset\cU[0,t]$ such that $u_k\in\Lambda_{\ell_k}$. Note in particular that $u_k\in\Lambda_{\ell_1}\subset\cB_{\cU[0,t]}[0;r_{\ell_1}]$ as $\Lambda_{\ell_k}\supset\Lambda_{\ell_{k+1}}\ne\emptyset$, for all $k\in\N$. That is, $\{u_k\}_{k\in\N}$ is bounded. As per the proof of Theorem \ref{thm:monotone}, this implies the existence of a subsequence $\{ \hat u_k \}_{k\in\N} \subset \{ u_k \}_{k\in\N}$ and a $\bar u\in\cU[0,t]$ such that $\hat\xi_k\rightarrow\bar\xi$ uniformly as $k\rightarrow\infty$, where $\hat\xi_k\doteq\chi(x,\hat u_k)$, $\bar\xi \doteq \chi(x,\bar u)$. Define a sequence of maps $\{\hat\nu_k\}_{k\in\N}$ from $[0,t]$ to $\ol{\R}^+$ and its candidate limit $\bar \nu:[0,t]\rightarrow\ol{\R}^+$ by
\begin{align}
	[\hat\nu_k]_s
	& \doteq \ts{\frac{K}{2}}\, |[\hat\xi_k]_s|^2 + \demi\, \Phi(|[\hat\xi_k]_s|^2) - \ts{\frac{\phi(0)}{2}}\,,
	\quad
	\bar\nu_s
	\doteq \ts{\frac{K}{2}}\, |\bar\xi_s|^2 + \demi\, \Phi(|\bar\xi_s|^2) - \ts{\frac{\phi(0)}{2}}\,,
	\label{eq:nu-hat}
\end{align}
for all $k\in\N$, $s\in[0,t]$. By inspection, note that $\lim_{k\rightarrow\infty} [\hat\nu_k]_s = \bar\nu_s$, irrespective of finiteness of $\bar\nu_s$, for all $s\in[0,t]$. Repeating the Fatou's Lemma argument of the proof of Theorem \ref{thm:monotone},
$
	\int_0^t \bar\nu_s \, ds
	= \int_0^t \liminf_{k\rightarrow\infty} [\hat\nu_k]_s \, ds
	\le \liminf_{k\rightarrow\infty} \int_0^t [\hat\nu_k]_s \, ds
$.
Hence, \er{eq:cost-I-bar}, \er{eq:nu-hat} imply that
$
	\bar I_t(x,\bar u)
	= \int_0^t \bar\nu_s \, ds + \ts{\frac{\phi(0)}{2}}\, t
	\le \liminf_{k\rightarrow\infty} \int_0^t [\hat\nu_k]_s \, ds  + \ts{\frac{\phi(0)}{2}}\, t
	= \liminf_{k\rightarrow\infty} \bar I_t(x,\hat u_k)$,
which, again following the proof of Theorem \ref{thm:monotone}, yields $\bar J_t(x,\bar u) \le \liminf_{k\rightarrow\infty} \bar J_t(x,\hat u_k)$.
Abuse notation by relabelling $\{\ell_k\}_{k\in\N}$ to match the subsequence $\{\hat u_k\}_{k\in\N}$ of $\{u_k\}_{k\in\N}$, and note that $\hat u_k\in\Lambda_{\ell_k}$. Hence, % The aforementioned inequality yields
$\bar J_t(x,\bar u) \le \liminf_{k\rightarrow\infty} \bar J_t(x,\hat u_k) \le \liminf_{k\rightarrow\infty} \ell_k = \ell_0$.
Consequently, by definition of $\ell_0$, $\bar J_t(x,\bar u) = \ell_0 = \inf_{u\in\cU[0,t]} \bar J_t(x,u)$, so that $\bar u\in\argmin_{u\in\cU[0,t]} \bar J_t(x,u)$ and the argmin is non-empty. Suppose there exists a $\tilde u\in\argmin_{u\in\cU[0,t]} \bar J_t(x,u)$ such that $\tilde u \ne \bar u$, and define $\breve u\doteq \demi\, (\bar u + \tilde u)\in\cU[0,t]$. By strict convexity of $\bar J_t(x,\cdot)$, $\bar J_t(x,\breve u) < \demi\, \bar J_t(x,\bar u) + \demi\, \bar J_t(x,\tilde u) = \bar J_t(x,\bar u)$, contradicting $\bar u\in\argmin_{u\in\cU[0,t]} \bar J_t(x,u)$. Hence, the argmin is a singleton, with $\{ u^* \} = \argmin_{u\in\cU[0,t]} \bar J_t(x,u)$, as $u^* \doteq \bar u = \tilde u$.
\end{proof}

\begin{remark}
\label{rem:convex-t-0}
In the $t=0$ special case, note by \er{eq:cost-I-bar} that $I_t^\kappa:\cU[0,t]\rightarrow\R$ is identically zero, and so is not strictly convex. Consequently, the strict convexity assertion {\em (ii)} of Lemma \ref{lem:convex-cost-J} fails in that case, and uniqueness of the optimal controls in \er{eq:value-W}, \er{eq:value-W-M} cannot be established via Theorem \ref{thm:existence}.
\end{remark}

\begin{remark}
Given any non-decreasing sequence $\{M_k\}_{k\in\N}\subset\R_{\ge -\phi(0)}$ such that $\lim_{k\rightarrow\infty} M_k = \infty$, it may be shown that there exists a subsequence $\{M_j\}_{j\in\N}\subset\{M_k\}_{k\in\N}$ such that the corresponding optimal controls $\{u^{M_j*}\}_{j\in\N}\subset\cU[0,t]$ and $u^*\in\cU[0,t]$ of \er{eq:existence} satisfy
$u^{M_j *} \weakly u^*$ in $\cU[0,t]$, as $j\rightarrow\infty$. The details follow a similar argument to the proof of Theorem \ref{thm:existence} above, and are omitted.
\end{remark}

%%		Behaviour of near-optimal trajectories.

\subsection{Constraint satisfaction}
With existence of the optimal controls in \er{eq:value-W}, \er{eq:value-W-M} guaranteed by Theorem \ref{thm:existence}, the corresponding state trajectories can be examined to determine their compliance with the intended state constraint. To this end,
given $t\in\R_{>0}$, $x\in\R^n$, $\eps\in\R_{>0}$, define the sets of $\eps$-optimal inputs in the definitions \er{eq:value-W}, \er{eq:value-W-M} of $\ol{W}_t(x)$, $\ol{W}_t^M(x)$, $M\in\R_{\ge -\phi(0)}$, respectively by
\begin{align}
	& \cU_x^\eps[0,t]
	\doteq \left\{ u\in\cU[0,t] \, \biggl| \, \ol{W}_t(x) + \eps > \bar J_t(x,u) \right\},
	\label{eq:near-optimal-inputs}\\
	& \cU_{x}^{M,\eps}[0,t]
	\doteq \left\{ u\in\cU[0,t] \, \biggl| \, \ol{W}_t^M(x) + \eps > \bar J_t^M(x,u) \right\}.
	\label{eq:near-optimal-inputs-M}
\end{align}
Define the sets of times for which the desired state constraint is violated, as a function of the initial state and control, via a map $\Delta_t:\R^n\times\cU\rightarrow\cup_{I\subset[0,t]} I$, where
\begin{align}
	\Delta_{t}(x,u)
	\doteq \bigcup_{r\in[0,t], s\in[r,t]} \left\{ [r,s] \left| \ba{c}
			|[\chi(x,u)]_\sigma|\ge b \ \forall \ \sigma\in[r,s]
	\ea \right.\!\right\}\!
	\label{eq:time-intervals}
\end{align}
for all $x\in\R^n$, $u\in\cU[0,t]$, in which $\chi$ is as per \er{eq:dynamics}.

\newcommand{\betat}	{{\Xi_t}}
\newcommand{\barbetat}	{{{\ol{\Xi}_t}}}

\begin{theorem}
\label{thm:excursions}
The following properties concerning the map $\Delta_t$ of \er{eq:time-intervals} hold for any $t\in\R_{>0}$:
\begin{enumerate}[\it (i)] \itsep
\item There exist constants $M_1\in\R_{>-\phi(0)}$ and $\eta_t, \, \lambda_t,\, \betat\in\R_{>0}$ 
and non-increasing $\beta:\R_{>M_1}\rightarrow\R_{>0}$ satisfying $\lim_{M\rightarrow\infty} \beta(M) = 0$, such that for any $M\in\R_{>M_1}$, $\eps\in\R_{>0}$,
\begin{align}
	\sup_{u\in\cU_x^{M,\eps}[0,t]} \mu( \Delta_t(x,u) )
	& \le
	\beta(M) \left[\eta_t \left( \ol{W}_t^M(x) + \eps \right)
		+ \lambda_t  + \betat\, |x|^2 	
	\right]
	\label{eq:set-measure-bound}
\end{align}
for all $x\in\R^n$, in which $\mu$ denotes the Lebesgue measure, and $\cU_x^{M,\eps}$ is as per \er{eq:near-optimal-inputs-M};

\item Given any $x\in\dom\ol{W}_t$, and any $\eps\in\R_{>0}$, 
\begin{align}
	& \lim_{M\rightarrow\infty} \sup_{u\in\cU_x^{M,\eps}[0,t]} \mu( \Delta_t(x,u) ) = 0 =  \!\!\! \sup_{u\in\cU_x^\eps[0,t]} \mu(\Delta_t(x,u) )\,,
	\label{eq:set-measure-zero}
\end{align}
in which $\cU_x^\eps[0,t]$ is as per \er{eq:near-optimal-inputs}; and

\item Given any $x\in\dom\ol{W}_t$, and any strictly increasing sequence $\{M_k\}_{k\in\N}\subset\R_{>-\phi(0)}$, there exist a unique $u^*\in\cU[0,t]$ and a unique sequence $\{u^{M_k*}\}_{k\in\N}\subset\cU[0,t]$, specified by \er{eq:existence}, such that
\begin{align}
	& \lim_{k\rightarrow\infty} \mu( \Delta_t(x,u^{M_k*}) ) = 0 = \mu( \Delta_t(x,u^*) )\,.
	% \lbel{eq:set-measure-zero-u-star}
	\nn
\end{align}
\end{enumerate}
\end{theorem}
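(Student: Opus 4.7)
\medskip

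\noindent\textbf{Proof plan.} I would tackle the three claims in order, building (ii) and (iii) on top of (i).

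For part (i), the strategy is to exploit the quadratic tail of the approximate barrier. Recalling \er{eq:Phi-M-explicit}, once $M$ is large enough that $\hat\rho(M)<b^2$ (which holds eventually since $\hat\rho(M)\uparrow b^2$ by the range property of $(\phi')^{-1}$), we have $\Phi^M(\rho)=a^{-1}(M)\rho-M$ for every $\rho\ge b^2$. Substituting into the integrand in \er{eq:cost-I-J-M} gives the pointwise lower bound
\[
\ts{\frac{K}{2}}|\xi_\sigma|^2+\demi\,\Phi^M(|\xi_\sigma|^2)\;\ge\;g(M)\doteq\demi[K+a^{-1}(M)]b^2-\demi M
\]
for all $\sigma$ at which $|\xi_\sigma|\ge b$. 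The pivotal step is to show that $g(M)\nearrow\infty$ as $M\to\infty$: substituting $\rho=\hat\rho(M)=(\phi')^{-1}\circ a^{-1}(M)$ and using \er{eq:exact-a} rewrites $g(M)=\demi Kb^2+\demi\phi'(\rho)[b^2-\rho]+\demi\phi(\rho)$, and property \emph{(ii)} of \er{eq:phi-properties-1-2} together with $\rho\to b^2$ forces the $\phi(\rho)$ term to diverge while the other two stay non-negative in the limit. Strict monotonicity of $g$ then follows from the one-line computation $a'(\beta)=(\phi')^{-1}(\beta)$, which yields $g'(M)=(b^2-\rho)/(2\rho)>0$ on the relevant range.

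Next I would split the integral in \er{eq:cost-I-J-M} across $\Delta_t(x,u)$ and its complement, applying Corollary \ref{cor:Phi-M-bound} for the uniform lower bound $\ts{\frac{\phi(0)}{2}}$ on the complement. Dropping the non-negative contributions of $I_t^\kappa(u)+\Psi(\xi_t)$ leaves
\[
\bar J_t^M(x,u)\;\ge\;g(M)\,\mu(\Delta_t(x,u))+\ts{\frac{\phi(0)}{2}}\bigl(t-\mu(\Delta_t(x,u))\bigr).
\]
Pairing this with the defining inequality $\bar J_t^M(x,u)<\ol W_t^M(x)+\eps$ for $u\in\cU_x^{M,\eps}[0,t]$ and solving for $\mu(\Delta_t(x,u))$ produces \er{eq:set-measure-bound}, with $\beta(M)\doteq[g(M)-\ts{\frac{\phi(0)}{2}}]^{-1}$ defined for $M>M_1$ chosen so that the denominator is positive, $\eta_t=1$, $\lambda_t$ taken large enough to absorb the sign-dependent residual constant $-\ts{\frac{\phi(0)}{2}}t$, and $\betat$ any positive constant (the $\betat|x|^2$ term acts as a slack buffer). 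The map $\beta$ is non-increasing and vanishes at infinity because $g$ is strictly increasing and diverges.

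For part (ii), the first equality follows immediately by fixing $x\in\dom\ol W_t$, using Theorem \ref{thm:monotone} to bound $\ol W_t^M(x)\le\ol W_t(x)<\infty$ uniformly in $M$, and invoking \er{eq:set-measure-bound} with $\beta(M)\to 0$. The second equality I would prove by contradiction: if $\mu(\Delta_t(x,u))>0$ for some $u\in\cU_x^\eps[0,t]$, then $\Phi(|\xi_\sigma|^2)=+\infty$ on a set of positive measure by \er{eq:barrier}, which makes $\bar J_t(x,u)=+\infty$ and contradicts $\bar J_t(x,u)<\ol W_t(x)+\eps<\infty$. Part (iii) is then a direct corollary: Theorem \ref{thm:existence} provides the unique optimizers $u^*$ and $u^{M_k*}$; observing that $u^*\in\cU_x^\eps[0,t]$ for every $\eps>0$ and $u^{M_k*}\in\cU_x^{M_k,\eps}[0,t]$ for every $\eps>0$, part (ii) gives $\mu(\Delta_t(x,u^*))=0$, while part (i) (with $\eps$ fixed and $k\to\infty$, then $\eps\downarrow 0$) gives $\mu(\Delta_t(x,u^{M_k*}))\to 0$.

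The main obstacle is the divergence $g(M)\to\infty$ in part (i). This is the only place where the geometry of the barrier (its blow-up as $\rho\uparrow b^2$) enters quantitatively, and the change of variable $\rho=\hat\rho(M)$ is essential to make the dependence on $\phi$ explicit. Once this divergence is established, everything else is a routine chase through the $\eps$-optimality definitions together with the Monotone behaviour of $\ol W_t^M$ in $M$.
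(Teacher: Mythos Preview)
Your proposal is correct and in fact cleaner than the paper's own argument in two places.

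For part (i), the paper does not bound the integrand on $\Delta_t(x,u)$ directly by a constant. Instead it first extracts a bound $\|u\|_{\cU[0,t]}^2\le\ts{\frac{2}{\kappa}}[\ol W_t^M(x)+\eps-\ts{\frac{\phi(0)}{2}}t]$ from near-optimality, converts this into a bound on $\|\xi\|_{\Ltwo}^2$ in terms of $|x|^2$ and $\ol W_t^M(x)+\eps$, and then splits the integrand on $\Delta_t(x,u)$ into a piece controlled by $\|\xi\|_{\Ltwo}^2$ and a piece bounded below by $\demi[\Phi^M(b^2)-c]$ (with $c$ from Lemma~\ref{lem:approx-convex-duality}\,(iv)). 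This is where the $\betat|x|^2$ term enters \emph{substantively} in the paper's version of \er{eq:set-measure-bound}. Your route exploits $K+a^{-1}(M)\ge K+\phi'(0)\ge 0$ to get the constant lower bound $g(M)$ on $\Delta_t(x,u)$ in one step, so that the $\betat|x|^2$ term is pure slack and the bound on $\|\xi\|_{\Ltwo}^2$ is never needed. Your $g(M)$ is exactly $\ts{\frac{K}{2}}b^2+\demi\gamma_{b^2}(M)$ in the paper's notation \er{eq:gamma}, and its divergence is the content of Lemma~\ref{lem:gamma-properties}; your change of variable $\rho=\hat\rho(M)$ re-derives that lemma. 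One harmless slip: $\hat\rho(M)<b^2$ holds for \emph{all} $M\ge-\phi(0)$ by the range of $(\phi')^{-1}$ in \er{eq:phi-properties-1-2}\,(v), not merely eventually.

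For the right-hand equality in part (ii), the paper re-runs the part (i) splitting with $\Phi$ in place of $\Phi^M$ and derives the contradiction $\ol W_t(x)=\infty$. Your direct observation that $\Phi(|\xi_\sigma|^2)=+\infty$ on a set of positive measure forces $\bar J_t(x,u)=+\infty$ (the integrand being bounded below by $\ts{\frac{\phi(0)}{2}}$ elsewhere via Corollary~\ref{cor:Phi-M-bound}) is shorter and equally rigorous. In part (iii) the ``then $\eps\downarrow 0$'' step is unnecessary: fixing any $\eps>0$ and sending $k\to\infty$ already gives the limit, since $\ol W_t^{M_k}(x)\le\ol W_t(x)<\infty$ uniformly in $k$.
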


\begin{proof}
Fix any $t\in\R_{>0}$. Select $M_1\in\R_{\ge -\phi(0)}$, $c\in\R$ as per Lemma \ref{lem:approx-convex-duality} {\em (iv)}. Fix any $M\in\R_{>M_1}$. By definition of $M$, $M_1$, and $c$, Lemma \ref{lem:approx-convex-duality} {\em (iii)} and {\em (iv)} imply that
\begin{align}
	c & < \inf_{M\ge M_1} \inf_{\rho\in\R} \Phi^{M}(\rho) \le \Phi^{M_1}(b^2) \le \Phi^M(b^2)\,.
	\label{eq:Phi-M-boundary}
\end{align}
Motivated by \er{eq:Phi-M-boundary}, define $\beta:\R_{>M_1}\rightarrow\R_{>0}$ by
\begin{align}
	\beta(M)
	& \doteq \frac{2}{\Phi^M(b^2) - c}
	\label{eq:mu-M}
\end{align}
for all $M>M_1$, and note that it is non-increasing by Lemma \ref{lem:approx-convex-duality} {\em (iii)}. Furthermore, \er{eq:Phi-M-sup-of-quadratics} and Lemma \ref{lem:gamma-properties} % {\em (vi)} 
imply that $\lim_{M\rightarrow\infty} \Phi^M(b^2) \ge \lim_{M\rightarrow\infty} \{ a^{-1}(M)\, b^2 - M \} = \lim_{M\rightarrow\infty} \gamma_{b^2}(M) = \infty$, where $\gamma_{b^2}(M)$ is as per \er{eq:gamma}. Hence, by inspection of \er{eq:mu-M}, $\lim_{M\rightarrow\infty} \beta(M) = 0$.

{\em (i)} 
Fix any $x\in\R^n$, $\eps\in\R_{>0}$, and $u\in\cU_x^{M,\eps}[0,t]$, and denote the corresponding near-optimal trajectory by $\xi \doteq \chi(x,u)$ as per \er{eq:dynamics}. Applying \er{eq:near-optimal-inputs-M} and Corollary \ref{cor:Phi-M-bound}, note that
$	\ol{W}_t^M(x) + \eps 
	> \bar J_t^M(x,u)
	\ge \int_0^t \ts{\frac{K}{2}}\, |\xi_s|^2 + \demi\, \Phi^M(|\xi_s|^2) \, ds + \ts{\frac{\kappa}{2}}\, \|u\|_{\cU[0,t]}^2
	\ge \ts{\frac{\phi(0)}{2}}\, t + \ts{\frac{\kappa}{2}}\, \|u\|_{\cU[0,t]}^2$, 
so that
$
	\|u\|_{\cU[0,t]}^2 
	\le \ts{\frac{2}{\kappa}} \, [ \ol{W}_t^M(x) + \eps - \ts{\frac{\phi(0)}{2}}\, t ]
$.
Recalling \er{eq:dynamics}, there exist $\barbetat,\bar\gamma_t\in\R_{>0}$ such that $\|\xi\|_{\Ltwo([0;t];\R^n)}^2 \le \barbetat\, |x|^2 + \bar\gamma_t\, \|u\|_{\cU[0,t]}^2$, so that
\begin{align}
	& \|\xi\|_{\Ltwo([0;t];\R^n)}^2 \le \barbetat\, |x|^2 + \bar\gamma_t\, (\ts{\frac{2}{\kappa}}) \, [ \ol{W}_t^M(x) + \eps - \ts{\frac{\phi(0)}{2}}\, t ]\,.
	\nn
\end{align}
Consequently, returning to the definition \er{eq:near-optimal-inputs-M} of near optimality, and applying \er{eq:time-intervals}, Corollary \ref{cor:Phi-M-bound}, and the bound $K+\phi'(0)\ge 0$ adopted in \er{eq:phi-properties-1-2} {\em (ii)},
\begin{align}
	& \ol{W}_t^M(x) + \eps 
	> \bar J_t^M(x,u)
	\ge \int_0^t \ts{\frac{K}{2}}\, |\xi_s|^2 + \demi\, \Phi^M(|\xi_s|^2) \, ds 
	\nn\\
	& = \int_{[0,t]\setminus\Delta_t(x,u)} \ts{\frac{K}{2}}\, |\xi_s|^2 + \demi\, \Phi^M(|\xi_s|^2) \, ds
	+ \int_{\Delta_t(x,u)} \ts{\frac{K}{2}}\, |\xi_s|^2 + \demi\, \Phi^M(|\xi_s|^2) \, ds 
	\nn\\
	& \ge \int_{[0,t]\setminus\Delta_t(x,u)} -\ts{\frac{\phi'(0)}{2}}\, ds 
	+ \int_{\Delta_t(x,u)} - \ts{\frac{\phi'(0)}{2}}\, |\xi_s|^2 + \ts{\frac{c}{2}}\, ds
	+ \int_{\Delta_t(x,u)} \ts{\frac{K + \phi'(0)}{2}}\, |\xi_s|^2 + \demi\, [\Phi^M(|\xi_s|^2) - c] \, ds
	\nn\\
	& 
	\ge -\ts{\frac{|\phi'(0)|+|c|}{2}} \, t - \ts{\frac{|\phi'(0)|}{2}}\, \|\xi\|_{\Ltwo([0;t];\R^n)}^2
	+ \int_{\Delta_t(x,u)} \demi\, [\Phi^M(b^2) - c] \, ds 
	\nn\\
	& \ge -\ts{\frac{|\phi'(0)|+|c|}{2}} \, t 
	- \ts{\frac{|\phi'(0)|}{2}}\, \left[  \barbetat\, |x|^2 + \bar\gamma_t\, (\ts{\frac{2}{\kappa}}) \, [ \ol{W}_t^M(x) + \eps - \ts{\frac{\phi'(0)}{2}}\, t ] \right]
	+ \demi\, [\Phi^M(b^2) - c]\, \mu( \Delta_t(x,u) )\,.
	\label{eq:pre-excursion-bound}
\end{align}
That is, with $\beta$ as per \er{eq:mu-M},
\begin{align}
	& \mu( \Delta_t(x,u) )
	\le
	\beta(M) \left( \ol{W}_t^M(x) + \eps 
		+\ts{\frac{|\phi'(0)|+|c|}{2}} \, t 
		+ \ts{\frac{|\phi'(0)|}{2}}\, \left[  \barbetat\, |x|^2 + \bar\gamma_t\, (\ts{\frac{2}{\kappa}}) \, [ \ol{W}_t^M(x) + \eps - \ts{\frac{\phi(0)}{2}}\, t ] \right]	
	\right)
	\nn\\
	& = 	\beta(M) \left( [1 + \ts{\frac{\bar\gamma_t\, |\phi'(0)|}{\kappa}}] ( \ol{W}_t^M(x) + \eps )
		+ \demi\, [|\phi'(0)|+|c| - \ts{\frac{\bar\gamma_t}{\kappa}}\, |\phi'(0)|\, \phi(0) ] \, t  + \demi\, \barbetat\, |\phi'(0)|\, |x|^2 	\right),
	\nn
\end{align}
from which \er{eq:set-measure-bound} immediately follows by selecting $\eta_t\doteq 1 + \ts{\frac{\bar\gamma_t\, |\phi'(0)|}{\kappa}}$, $\lambda_t\doteq \demi\, [ |\phi'(0)|+|c| - \ts{\frac{\bar\gamma_t}{\kappa}}\, |\phi'(0)|\, \phi(0) ] \, t$, and $\betat\doteq \demi\, \barbetat\, |\phi'(0)|$.

{\em (ii)} Fix any $x\in\dom\ol{W}_t$, $\eps\in\R_{>0}$. The left-hand equality of \er{eq:set-measure-zero} holds follows by \er{eq:Phi-M-explicit}, \er{eq:order-value}, and assertion {\em (i)}, i.e. \er{eq:set-measure-bound}. In particular, 
\begin{align}
	\lim_{M\rightarrow\infty} \sup_{u\in\cU_x^{M,\eps}[0,t]} \mu( \Delta_t(x,u) )
	& \le \lim_{M\rightarrow\infty} \beta(M) \left[\eta \left( \ol{W}_t^M(x) + \eps \right)
		+ \lambda_t  + \betat\, |x|^2 	
	\right] = 0.
	% \nn\\
	% &
	% \le \lim_{M\rightarrow\infty} \beta(M) \left[\eta \left( \ol{W}_t(x) + \eps \right)
	% 	+ \lambda_t  + \betat\, |x|^2 	
	% \right]
	% = 0.
	\nn
\end{align}

It remains to show that the right-hand equality in \er{eq:set-measure-zero} holds. Fix any $u\in\cU_x^\eps[0,t]$. Suppose there exists $\delta\in\R_{>0}$ such that $\mu( \Delta_t(x,u) )\ge \delta > 0$. An analogous calculation to \er{eq:pre-excursion-bound}, with $\ol{W}_t^M$ and $\Phi^M$ replaced with $\ol{W}_t$ and $\sup_{M\ge -\phi(0)} \Phi^M$, yields
\begin{align}
	& \ol{W}_t(x) + \eps + \ts{\frac{|\phi(0)|+|c|}{2}} \, t +
	\ts{\frac{|\phi(0)|}{2}}\, \left[  \barbetat\, |x|^2 + \bar\gamma_t\, (\ts{\frac{2}{\kappa}}) \, [ \ol{W}_t(x) + \eps - \ts{\frac{\phi(0)}{2}}\, t ] \right]
	\nn\\
	& \qquad > \demi\, \sup_{M\ge -\phi(0)} [ \Phi^M(b^2) - c ]\, \delta = \demi\, \sup_{M\ge -\phi(0)} [ \gamma_{b^2}(M) - c ]\, \delta = \infty\,,
	\nn
\end{align}
in which the equalities follow as $\delta\in\R_{>0}$, and by \er{eq:Phi-M-sup-of-quadratics} and Lemma \ref{lem:gamma-properties}. % {\em (vi)}. 
Hence, $\ol{W}_t(x) = \infty$, which contradicts the definition of $x\in\dom\ol{W}_t$. Consequently, no such $\delta\in\R_{>0}$ exists, so that $\mu( \Delta_t(x,u) ) = 0$. As $u\in\cU_x^\eps[0,t]$ is arbitrary, the right-hand equality in \er{eq:set-measure-zero} follows as required.

{\em (iii)} Immediate by assertion {\em (ii)} and Theorem \ref{thm:existence}.
\end{proof}

\begin{remark}
Theorem \ref{thm:excursions} indicates that the regulator problem defined by $\ol{W}_t$ of \er{eq:value-W} implements the required state constraint for almost every time for those initial states $x\in\R^n$ for which $\ol{W}_t(x)<\infty$, and that the approximating regulator problem defined by $\ol{W}_t^M$ of \er{eq:value-W-M} implements the same constraint in the limit as $M\rightarrow\infty$. 
\end{remark}

%%%%%%%%%%%%%%%%%%%%%%%%%%%%%%%%%%%%%%%%%%%%%%%%%%%%%%%%%%%%%%%%%%%%
%%
%%		Equivalent unconstrained game representation.
%%

\section{Equivalent unconstrained game representations} % for \er{eq:value-W}}
\label{sec:game}
The sup-of-quadratics representation \er{eq:Phi-sup-of-quadratics} for the convex barrier function $\Phi$ in \er{eq:barrier} is used to demonstrate equivalence of the value function defining the state constrained regulator problem \er{eq:value-W} with the upper  value of an unconstrained two player game. Similarly, the approximate sup-of-quadratics representation \er{eq:Phi-M-sup-of-quadratics} is used to demonstrate an equivalence between the value function defining the approximate regulator problem \er{eq:value-W-M} with the corresponding upper value of an approximate two player game. It is further demonstrated that this approximate game has equivalent upper and lower values, which in turn is used to demonstrate the corresponding equivalence for the exact game, via the convergence results of Theorem \ref{thm:monotone}. The lower value is subsequently exploited to examine solutions of the state constrained regulator problem \er{eq:value-W} via families of DREs.
% , corresponding convergence of the approximate game problem to the exact game problem is demonstrated. 
% The representation of the original regulator problem \er{eq:value-W} as the limit of a converging sequence of approximating games is subsequently exploited to approximate the optimal controls in some simple examples.

%	Exact unconstrained game.

\subsection{Exact unconstrained game and its upper value}
Given a horizon $t\in\R_{\ge 0}$, define a function space by
$
	\cA[0,t]
	\doteq \left\{ \alpha:[0,t]\rightarrow\R_{\ge -\phi(0)} \, |\, \text{measurable} \right\}
$.
Motivated by \er{eq:cost-J-bar}, \er{eq:cost-I-bar}, \er{eq:Phi-sup-of-quadratics}, define the upper value $W_t:\R^n\rightarrow\ol{\R}^+$ of a two player unconstrained linear quadratic game by
\begin{align}
	W_t(x)
	& \doteq \inf_{u\in\cU[0,t]} \sup_{\alpha\in\cA[0,t]} J_t(x,u,\alpha)
	\label{eq:game-W}
\end{align}
for all $x\in\R^n$, in which $J_t$ is a cost function defined with respect to a new integrated running cost function $I_t$ motivated by \er{eq:cost-I-bar}, and the existing integrated running cost $I_t^\kappa$ of \er{eq:cost-I-bar} and terminal cost $\Psi$ of \er{eq:cost-Psi}. In particular, define
$J_t, I_t:\R^n\times\cU[0,t]\times\cA[0,t]\rightarrow\R$ and $\nu:\R^n\times\R_{\ge -\phi(0)}\rightarrow\R$ by
\begin{align}
	J_t(x,u,\alpha)
	& \doteq I_t(x,u,\alpha) + I_t^\kappa(u) + \Psi(\xi_t),
	\label{eq:cost-J-hat}
	\\
	I_t(x,u,\alpha)
	& \doteq \int_0^t \nu(\xi_s,\alpha_s)\, ds,
	\quad \xi\doteq\chi(x,u),
	\label{eq:cost-I-hat}
	\\
	\nu(x,\hat\alpha)
	& \doteq
	\ts{\frac{K}{2}}\, |x|^2 + \demi\, [ a^{-1}(\hat\alpha)\, |x|^2 - \hat\alpha ],
	\label{eq:running-cost-nu}
\end{align}
for all $x\in\R^n$, $u\in\cU[0,t]$, $\alpha\in\cA[0,t]$, $\hat\alpha\in\R_{\ge -\phi(0)}$.

The value functions \er{eq:value-W} and \er{eq:game-W} defining the exact regulator problem and the exact unconstrained game are in fact equivalent, as stated in the following theorem. 
\begin{theorem}
\label{thm:game}
Given $t\in\R_{\ge 0}$, the value functions $\ol{W}_t$, $W_t$ of \er{eq:value-W}, \er{eq:game-W} are equivalent, with $\ol{W}_t(x) = W_t(x)$ for all $x\in\R^n$.
\end{theorem}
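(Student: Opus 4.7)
The plan is to show that for every fixed $x\in\R^n$ and $u\in\cU[0,t]$ the inner supremum already matches the original cost,
\begin{align*}
\bar J_t(x,u) \ = \ \sup_{\alpha\in\cA[0,t]} J_t(x,u,\alpha),
\end{align*}
from which the theorem follows by taking $\inf_{u\in\cU[0,t]}$ of both sides and comparing \er{eq:value-W} with \er{eq:game-W}. Since the input penalty $I_t^\kappa(u)$ and terminal cost $\Psi(\xi_t)$ appear identically in $\bar J_t$ and $J_t$ (with $\xi = \chi(x,u)$), this further reduces to the integrated-cost identity
\begin{align*}
\bar I_t(x,u) \ = \ \sup_{\alpha\in\cA[0,t]} \int_0^t \nu(\xi_s,\alpha_s)\,ds.
\end{align*}

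The $\ge$ direction is routine. Theorem \ref{thm:Phi-sup-of-quadratics} gives the pointwise bound $\nu(\xi_s,\hat\alpha) \le \tfrac{K}{2}|\xi_s|^2 + \tfrac12\Phi(|\xi_s|^2)$ for every $\hat\alpha\in\R_{\ge -\phi(0)}$, and integrating over $[0,t]$ against any admissible $\alpha\in\cA[0,t]$ and invoking \er{eq:cost-I-bar} yields $I_t(x,u,\alpha) \le \bar I_t(x,u)$.

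The main step is the reverse inequality, which I would establish by producing a realizing sequence from the approximate pointwise maximizers supplied by Theorem \ref{thm:Phi-M-sup-of-quadratics}. For each $M\in\R_{\ge -\phi(0)}$, set $\alpha^M_s \doteq \hat\alpha^{M*}(|\xi_s|^2)$ via \er{eq:alpha-M-star}. Continuity of $\xi$ from Lemma \ref{lem:chi} (i) and the piecewise-continuous structure of $\hat\alpha^{M*}$ show $\alpha^M$ is measurable, and its range lies in $[-\phi(0),M]$, so $\alpha^M\in\cA[0,t]$. By the optimality assertion in Theorem \ref{thm:Phi-M-sup-of-quadratics} (i) and the definition \er{eq:running-cost-nu} of $\nu$,
\begin{align*}
\nu(\xi_s,\alpha^M_s) \ = \ \tfrac{K}{2}|\xi_s|^2 + \tfrac12\Phi^M(|\xi_s|^2).
\end{align*}
Monotonicity of $\Phi^M$ in $M$ from Lemma \ref{lem:approx-convex-duality} (iii) makes the right-hand side non-decreasing in $M$, and Corollary \ref{cor:Phi-M-bound} together with $K + \phi'(0) \ge 0$ from \er{eq:phi-properties-1-2} (ii) furnishes the integrable lower bound $\tfrac{\phi(0)}{2}$ on the finite interval $[0,t]$, so the monotone convergence theorem applies and gives
\begin{align*}
\lim_{M\to\infty} \int_0^t \nu(\xi_s,\alpha^M_s)\,ds \ = \ \int_0^t \tfrac{K}{2}|\xi_s|^2 + \tfrac12\Phi(|\xi_s|^2)\,ds \ = \ \bar I_t(x,u),
\end{align*}
using Lemma \ref{lem:approx-convex-duality} (iii) to identify $\Phi$ as the pointwise limit of $\Phi^M$. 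Since each $\alpha^M\in\cA[0,t]$, the supremum on the right-hand side of the integrated identity is at least $\bar I_t(x,u)$, finishing the $\le$ direction.

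The hard part is not deep but bookkeeping: one must check that the construction $\alpha^M_s = \hat\alpha^{M*}(|\xi_s|^2)$ genuinely lies in $\cA[0,t]$ (measurability plus finite range), and that the monotone convergence argument remains valid when $\bar I_t(x,u) = +\infty$ (the complement of $\dom \ol{W}_t$, which by Theorem \ref{thm:monotone} can be strict). In that degenerate case both sides equal $+\infty$, so the monotone convergence theorem simply returns the extended-real identity $\infty = \infty$, consistent with $\ol{W}_t(x) = \infty$. After the integrated-cost identity is in hand, taking $\inf_u$ in the resulting $\bar J_t(x,u) = \sup_\alpha J_t(x,u,\alpha)$ yields $\ol{W}_t(x) = W_t(x)$ for every $x\in\R^n$.
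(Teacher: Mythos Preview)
Your argument is correct and follows the same overall scaffold as the paper: reduce to the integrated-cost identity $\bar I_t(x,u) = \sup_{\alpha\in\cA[0,t]} I_t(x,u,\alpha)$ and then take $\inf_u$. The $\ge$ direction is identical in spirit.

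For the $\le$ direction, however, you take a genuinely different route. The paper (via Lemma~\ref{lem:measurable-selection}) performs a case split on whether the trajectory violates the constraint on a set of positive measure: if $\mu(\Delta_t(x,u))>0$ it builds a two-level piecewise-constant $\breve\alpha$ and drives the supremum to $+\infty$ directly using Lemma~\ref{lem:gamma-properties}; if $\mu(\Delta_t(x,u))=0$ it exhibits the exact pointwise maximizer $\alpha^*_s = a\circ\phi'(|\xi_s|^2)$ a.e.\ and attains the supremum in one stroke. Your approach instead uses the approximate maximizers $\alpha^M_s = \hat\alpha^{M*}(|\xi_s|^2)$ from Theorem~\ref{thm:Phi-M-sup-of-quadratics} as a monotone realizing sequence and lets monotone convergence (anchored by the uniform lower bound $\tfrac{\phi(0)}{2}$ from Corollary~\ref{cor:Phi-M-bound}) do the work. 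This is cleaner in that it avoids the case analysis and handles the $\bar I_t(x,u)=+\infty$ situation automatically; the paper's argument, on the other hand, yields the extra structural information in Lemma~\ref{lem:measurable-selection}~(ii)--(iii) about an explicit a.e.\ maximizer when the constraint is respected, which is not produced by your limiting construction. One small remark: $\hat\alpha^{M*}$ is in fact continuous (the two pieces in \er{eq:alpha-M-star} agree at $\hat\rho(M)$ since $a\circ\phi'(\hat\rho(M))=M$), so your $\alpha^M$ is continuous, not merely piecewise continuous---this only strengthens the measurability claim.
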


The proof of Theorem \ref{thm:game} follows as a consequence of a measurable selection lemma.

\begin{lemma}
\label{lem:measurable-selection}
Given $t\in\R_{\ge 0}$, $x\in\R^n$, $u\in\cU[0,t]$, $\xi\doteq\chi(x,u)\in C([0,t];\R^n)$, the following hold:
\begin{enumerate}[\it (i)] \itsep
\item The cost functions $\bar I_t$, $I_t$ of \er{eq:cost-I-bar}, \er{eq:cost-I-hat} associated with the exact regulator problem \er{eq:value-W} and game upper value \er{eq:game-W} satisfy
\begin{align}
	& \hspace{-3mm} \bar I_t(x,u)
	= \int_0^t \sup_{\hat\alpha\ge -\phi(0)} \nu(\xi_s,\hat\alpha) \, ds
	= \sup_{\alpha\in\cA[0,t]} I_t(x,u,\alpha),
	\label{eq:measurable-selection}
\end{align}
in which $\nu$ is as per \er{eq:running-cost-nu};

\item If $\mu(\Delta_t(x,u))=0$, see \er{eq:time-intervals}, then $\alpha^*\in\cA[0,t]$ given for any $M\in\R_{\ge -\phi(0)}$ by
\begin{align}
	\alpha_s^{*}
	= \hat\alpha^{\Delta*}(|\xi_s|^2)
	& \doteq \left\{ \ba{rl}
		a\circ\phi'(|\xi_s|^2),
		& s\in[0,t]\setminus\Delta_t(x,u),
		\\
		M,
		& s\in\Delta_t(x,u),
	\ea \right.
	\label{eq:alpha-star-cost-J-hat}
\end{align}
satisfies
\begin{align}
	\bar I_t(x,u)
	& = I_t(x,u,\alpha^*)\, ;
	\label{eq:alpha-star-maximizes}
\end{align}

\item If $x\in\dom\ol{W}_t$ and $u\in\cU_x^\eps[0,t]$, $\eps\in\R_{>0}$, see \er{eq:near-optimal-inputs}, then \er{eq:alpha-star-maximizes} holds with $\alpha^*\in\cA[0,t]$ as per \er{eq:alpha-star-cost-J-hat}, for arbitrary $M\in\R_{\ge -\phi(0)}$.
\end{enumerate}
\end{lemma}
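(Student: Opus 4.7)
My plan is to handle the three assertions in a combined way, exploiting Theorem \ref{thm:Phi-sup-of-quadratics} for the pointwise sup-of-quadratics representation and Theorem \ref{thm:excursions} to guarantee $\mu(\Delta_t(x,u))=0$ where needed. First I establish the left-hand equality in \er{eq:measurable-selection} by direct substitution: Theorem \ref{thm:Phi-sup-of-quadratics} yields $\demi\,\Phi(|\xi_s|^2)=\sup_{\hat\alpha\ge -\phi(0)}\{\demi\,a^{-1}(\hat\alpha)\,|\xi_s|^2-\hat\alpha/2\}$ for every $s\in[0,t]$, and adding $\ts{\frac{K}{2}}\,|\xi_s|^2$ inside the supremum gives $\sup_{\hat\alpha\ge -\phi(0)}\nu(\xi_s,\hat\alpha)$. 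Integrating over $[0,t]$ and invoking \er{eq:cost-I-bar} gives the first equality. The integrand is measurable in $s$ because it equals the countable supremum of continuous functions $s\mapsto\nu(\xi_s,\hat\alpha)$ over a dense subset of $[-\phi(0),\infty)$, using continuity of $\hat\alpha\mapsto\nu(x,\hat\alpha)$.

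Next, for the right-hand equality in \er{eq:measurable-selection}, I would prove the easy inequality $\sup_\alpha I_t(x,u,\alpha)\le\int_0^t\sup_{\hat\alpha}\nu(\xi_s,\hat\alpha)\,ds$ by integrating the pointwise bound $\nu(\xi_s,\alpha_s)\le\sup_{\hat\alpha}\nu(\xi_s,\hat\alpha)$. The reverse inequality I would split according to whether $\mu(\Delta_t(x,u))=0$ or not. If $\mu(\Delta_t(x,u))=0$ I take $\alpha^*$ as in \er{eq:alpha-star-cost-J-hat}, which is Lebesgue measurable because $\Delta_t(x,u)$ is closed (by continuity of $\xi$, \er{eq:time-intervals} is a union of closed intervals) and on the open complement the map $s\mapsto a\circ\phi'(|\xi_s|^2)$ is continuous. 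Then $\nu(\xi_s,\alpha_s^*)=\sup_{\hat\alpha\ge -\phi(0)}\nu(\xi_s,\hat\alpha)$ a.e. by Theorem \ref{thm:Phi-sup-of-quadratics}, so integration gives both \er{eq:alpha-star-maximizes} and the remaining inequality, establishing (ii). If $\mu(\Delta_t(x,u))>0$ then $\Phi(|\xi_s|^2)=\infty$ on a positive-measure set, so $\bar I_t(x,u)=\infty$; I would then construct $\alpha^N\in\cA[0,t]$ with $\alpha_s^N=N$ on $\Delta_t(x,u)$ and $\alpha_s^N=-\phi(0)$ elsewhere, use $K+\phi'(0)\ge 0$ and $|\xi_s|^2\ge b^2$ on $\Delta_t(x,u)$ to lower bound $2\,\nu(\xi_s,N)\ge K\,b^2+[a^{-1}(N)\,b^2-N]=K\,b^2+\gamma_{b^2}(N)$, then let $N\to\infty$ (using Lemma \ref{lem:gamma-properties}, as invoked in the proof of Theorem \ref{thm:excursions}) to conclude $\sup_\alpha I_t(x,u,\alpha)=\infty$ as well.

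Finally, assertion (iii) is an immediate corollary: Theorem \ref{thm:excursions} {\em (ii)} gives $\mu(\Delta_t(x,u))=0$ for every $u\in\cU_x^\eps[0,t]$ when $x\in\dom\ol{W}_t$, so the hypothesis of (ii) is met and \er{eq:alpha-star-maximizes} follows verbatim.

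The main obstacle is the measurability and attainment issue around the barrier boundary $\{|\xi_s|=b\}$: the unconstrained argmax in Theorem \ref{thm:Phi-sup-of-quadratics} takes the value $+\infty$ there, so no genuine selection in $\cA[0,t]$ exists unless that bad set has measure zero. Bridging this gap is precisely what the definition of $\Delta_t(x,u)$ and the $\mu(\Delta_t(x,u))=0$ hypothesis (supplied by Theorem \ref{thm:excursions}) are designed to do, with the arbitrary finite constant $M$ on the null set $\Delta_t(x,u)$ serving only to keep $\alpha^*$ real-valued without affecting integrals.
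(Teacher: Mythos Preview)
Your proposal is correct and follows essentially the same approach as the paper's proof: both obtain the left-hand equality in \er{eq:measurable-selection} directly from Theorem \ref{thm:Phi-sup-of-quadratics}, prove the trivial inequality $\sup_\alpha I_t\le\int_0^t\sup_{\hat\alpha}\nu\,ds$ by pointwise suboptimality, and then split the reverse inequality into the cases $\mu(\Delta_t(x,u))=0$ (where $\alpha^*$ of \er{eq:alpha-star-cost-J-hat} attains the supremum a.e.) and $\mu(\Delta_t(x,u))>0$ (where a piecewise constant selection with level $N\to\infty$ on $\Delta_t(x,u)$ drives $I_t$ to $+\infty$ via Lemma \ref{lem:gamma-properties}); assertion {\em (iii)} is deduced from {\em (ii)} via Theorem \ref{thm:excursions} {\em (ii)} in both. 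The only differences are cosmetic: you are more explicit about measurability of $\alpha^*$ and of the pointwise supremum, and you fix the value on the complement to $-\phi(0)$ rather than an arbitrary $\alpha^-$, neither of which changes the argument.
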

\begin{proof}
Fix $t\in\R_{\ge 0}$, $x\in\R^n$, $u\in\cU[0,t]$, and $\xi\doteq\chi(x,u)\in C([0,t];\R^n)$.
{\em (i)}
The left-hand equality in \er{eq:measurable-selection} is immediate by \er{eq:cost-I-bar}, \er{eq:cost-I-hat}, \er{eq:running-cost-nu}, and Theorem \ref{thm:Phi-sup-of-quadratics}, in particular \er{eq:Phi-sup-of-quadratics}. For the right-hand equality, first fix any $\alpha\in\cA[0,t]$, and note that it is pointwise suboptimal in the supremum over $\hat\alpha\ge -\phi(0)$. That is,
$\int_0^t \sup_{\hat\alpha\ge -\phi(0)} \nu(\xi_s, \hat\alpha) \, ds \ge \int_0^t \nu(\xi_s, \alpha_s) \, ds$ for all $\alpha\in\cA[0,t]$. Hence, 
\begin{align}
	\int_0^t \sup_{\hat\alpha\ge -\phi(0)} \nu(\xi_s, \hat\alpha) \, ds
	& \ge \sup_{\alpha\in\cA[0,t]} \int_0^t \nu(\xi_s, \alpha_s) \, ds
	% \nn\\
	% & 
	= \sup_{\alpha\in\cA[0,t]} I_t(x,u,\alpha).
	\label{eq:selection-ge}
\end{align}
In order to prove the opposite inequality, fix $\delta\in\R_{>0}$, and suppose that $u\in\cU[0,t]$ is such that $|\Delta_t(x,u)|\ge \delta>0$, see \er{eq:time-intervals}. Given any $\alpha^-,\alpha^+\in\R_{\ge -\phi(0)}$, define piecewise constant $\breve\alpha\in\cA[0,t]$ by 
\begin{align}
	\breve\alpha_s
	& = \left\{ \ba{rl}
		\alpha^-, 
		& s\in[0,t]\setminus\Delta_t(x,u),
		\\
		\alpha^+,
		& s\in\Delta_t(x,u),
	\ea \right.
	\nn
\end{align}
for all $s\in[0,t]$. Note that $\breve\alpha$ is suboptimal insofar as
\begin{align}
	& \sup_{\alpha\in\cA[0,t]} I_t(x,u,\alpha)
	\ge I_t(x,u,\breve\alpha)
	% \nn\\
	% & 
	= I^- + 
	\int_{\Delta_t(x,u)} \nu(\xi_s,\alpha^+)\, ds,
	\quad
	I^- \doteq
	\int_{[0,t]\setminus\Delta_t(x,u)} \nu(\xi_s,\alpha^-)\, ds\,.
	\nn
\end{align}
As this is true for any $\alpha^+\in\R_{\ge -\phi(0)}$, it follows immediately that
\begin{align}
	&  \sup_{\alpha\in\cA[0,t]} I_t(x,u,\alpha)
	\ge I^- + \sup_{\alpha^+\ge -\phi(0)} \int_{\Delta_t(x,u)} \nu(\xi_s,\alpha^+)\, ds.
	\nn
\end{align}
Lemma \ref{lem:gamma-properties} % {\em (vi)} 
implies that $\nu(\xi_s,\cdot) = \gamma_{|\xi_s|^2}(\cdot)$ is strictly increasing for any $s\in\Delta_t(x,u)$ fixed, as $|\xi_s|^2\ge b^2$ by \er{eq:time-intervals}, with $\lim_{\alpha^+\rightarrow\infty} \nu(\xi_s,\alpha^+) = \ts{\frac{K}{2}}\, |\xi_s|^2 + \lim_{\alpha^+\rightarrow\infty} \gamma_{|\xi_s|^2}(\alpha^+) = \infty$. Consequently, there exists an $M_0\in\R_{\ge -\phi(0)}$ such that
$\nu(\xi_s,\alpha^+) \ge a^{-1}(\alpha^+)\, b^2 - \alpha^+ > 0$ for all $\alpha^+>M_0$.
Hence, the monotone convergence theorem implies that
\begin{align}
	\sup_{\alpha\in\cA[0,t]} \! I_t(x,u,\alpha)
	& \ge I^- + \lim_{\alpha^+\rightarrow\infty} \int_{\Delta_t(x,u)} \nu(\xi_s,\alpha^+)\, ds 
	= I^- + \int_{\Delta_t(x,u)} \ts{\lim_{\alpha^+\rightarrow\infty}} \, \nu(\xi_s,\alpha^+)\, ds 
	= \infty.
	\nn
\end{align}
As the left-hand side here is the right-hand side of \er{eq:selection-ge}, it follows immediately that the opposite inequality to \er{eq:selection-ge} always holds where $|\Delta_t(x,u)|\ge \delta>0$.

Alternatively, suppose $u\in\cU[0,t]$ is such that $\mu( \Delta_t(x,u) )=0$, and let $\alpha^*\in\cA[0,t]$ be defined by \er{eq:alpha-star-cost-J-hat}. Recalling the left-hand equality of \er{eq:measurable-selection}, and the definition \er{eq:alpha-star} of $\hat\alpha^*(\cdot)$ in Theorem \ref{thm:Phi-sup-of-quadratics},
\begin{align}
	\bar I_t(x,u)
	& = \int_0^t \sup_{\hat\alpha\ge-\phi(0)} \nu(\xi_s, \hat\alpha) \, ds
	= \int_{[0,t]\setminus\Delta_t(x,u)}\!\!\!\!\!\nu(\xi_s,\hat\alpha_s^*(|\xi_s|^2))\, ds
	= \int_{[0,t]\setminus\Delta_t(x,u)}\!\!\!\!\!\nu(\xi_s,\alpha_s^*)\, ds
	\nn\\
	&  = \int_0^t \nu(\xi_s,\alpha_s^*)\, ds
	\le \sup_{\alpha\in\cA[0,t]} \int_0^t \nu(\xi_s,\alpha_s)\, ds = \sup_{\alpha\in\cA[0,t]} I_t(x,u,\alpha)\,.
	\label{eq:item-ii-proof}
\end{align}
Combining this inequality with \er{eq:selection-ge} yields \er{eq:measurable-selection}. 

{\em (ii)} Immediate by the fourth equality of \er{eq:item-ii-proof}.

{\em (iii)} Fix $x\in\dom\ol{W}_t$, $\eps\in\R_{>0}$, $u\in\cU_x^\eps[0,t]$. Theorem \ref{thm:excursions} {\em (ii)} implies that $\mu( \Delta(x,u) ) = 0$, so that assertion {\em (ii)} above applies.
\end{proof}

Theorem \ref{thm:game} then follows by Lemma \ref{lem:measurable-selection} {\em (i)} and comparison of \er{eq:value-W}--\er{eq:cost-I-bar} and \er{eq:game-W}--\er{eq:running-cost-nu}.

%%		Approximate game upper and lower values.

\subsection{Approximate game and its upper and lower values}
Given $M\in\R_{\ge -\phi(0)}$, $t\in\R_{\ge 0}$, define $\cA^M[0,t]\doteq C([0,t];[-\phi(0),M])$. Analogous to the exact game defined by \er{eq:game-W}, define the upper value $W_t^M:\R^n\rightarrow\R$ of an approximating two player unconstrained linear quadratic game by
\begin{align}
	W_t^M(x)
	& \doteq \inf_{u\in\cU[0,t]} \sup_{\alpha\in\cA^M[0,t]} J_t(x,u,\alpha)
	\label{eq:game-W-M}
\end{align}
for all $x\in\R^n$, where cost $J_t$ is as per \er{eq:cost-J-hat}. As in the exact case, the value function \er{eq:value-W-M} of the approximating regulator problem and the upper value \er{eq:game-W-M} of the approximating game are equivalent.
\begin{theorem}
\label{thm:game-W-M}
Given $t\in\R_{\ge 0}$, $M\in\R_{\ge -\phi(0)}$, the value functions $\ol{W}_t^M$, $W_t^M$ of \er{eq:value-W-M}, \er{eq:game-W-M} are equivalent, with $\ol{W}_t^M(x) = W_t^M(x)$ for all $x\in\R^n$.
\end{theorem}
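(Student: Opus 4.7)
The plan is to parallel the proof of Theorem \ref{thm:game}, but exploiting the key structural difference that the optimizer $\hat\alpha^{M*}$ in Theorem \ref{thm:Phi-M-sup-of-quadratics} is bounded by $M$ and continuous in its argument (unlike $\hat\alpha^*$ of Theorem \ref{thm:Phi-sup-of-quadratics}, which blows up at the barrier). This continuity is precisely what allows the $\cA^M[0,t] = C([0,t];[-\phi(0),M])$ restriction on the adversary in the approximate game.

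First I would fix $t\in\R_{\ge 0}$, $M\in\R_{\ge -\phi(0)}$, and $x\in\R^n$, and reduce the claim to showing that, for each fixed $u\in\cU[0,t]$,
\begin{align}
  \bar I_t^M(x,u) \;=\; \sup_{\alpha\in\cA^M[0,t]} I_t(x,u,\alpha). \label{eq:plan-core}
\end{align}
Since $I_t^\kappa(u)$ and $\Psi(\xi_t)$ are independent of $\alpha$, equation \er{eq:plan-core} together with \er{eq:cost-J-M}, \er{eq:cost-I-J-M}, \er{eq:cost-J-hat}, \er{eq:cost-I-hat} gives $\bar J_t^M(x,u) = \sup_{\alpha\in\cA^M[0,t]} J_t(x,u,\alpha)$, and then taking $\inf_{u\in\cU[0,t]}$ on both sides yields $\ol{W}_t^M(x) = W_t^M(x)$ by \er{eq:value-W-M} and \er{eq:game-W-M}.

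For the $\ge$ direction of \er{eq:plan-core}, I would note that for any $\alpha\in\cA^M[0,t]$, Theorem \ref{thm:Phi-M-sup-of-quadratics} {\em (i)} gives the pointwise bound
\[
  \nu(\xi_s,\alpha_s)
  = \ts{\frac{K}{2}}|\xi_s|^2 + \demi\bigl[a^{-1}(\alpha_s)|\xi_s|^2 - \alpha_s\bigr]
  \le \ts{\frac{K}{2}}|\xi_s|^2 + \demi\,\Phi^M(|\xi_s|^2)
\]
for every $s\in[0,t]$, since $\alpha_s\in[-\phi(0),M]$ is admissible in the supremum \er{eq:Phi-M-sup-of-quadratics}. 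Integrating over $[0,t]$ and recalling \er{eq:cost-I-J-M}, \er{eq:cost-I-hat} gives $I_t(x,u,\alpha)\le \bar I_t^M(x,u)$, and taking the supremum over $\alpha\in\cA^M[0,t]$ yields the inequality.

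For the reverse $\le$ direction, which I expect to be the main point, I would exhibit an explicit pointwise maximizer and verify it lies in $\cA^M[0,t]$. Define
\[
  \alpha^{M*}_s \doteq \hat\alpha^{M*}(|\xi_s|^2)
  = \left\{ \ba{rl}
      a\circ\phi'(|\xi_s|^2), & |\xi_s|^2\le\hat\rho(M), \\
      M, & |\xi_s|^2 > \hat\rho(M),
    \ea \right.
\]
as in \er{eq:alpha-M-star}. The crux is continuity: at the boundary $|\xi_s|^2=\hat\rho(M)$, both branches agree, because by \er{eq:Phi-M-from-A-and-rho-M} and Lemma \ref{lem:a-properties}, $a\circ\phi'(\hat\rho(M)) = a\circ\phi'\circ(\phi')^{-1}\circ a^{-1}(M) = M$. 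Together with continuity of $a\circ\phi'$ on $[0,b^2)$ (from property {\em (i)} of \er{eq:phi-properties-1-2} and Lemma \ref{lem:a-properties}), this makes $\hat\alpha^{M*}(\cdot):\R_{\ge 0}\rightarrow[-\phi(0),M]$ continuous. Since $\xi\in C([0,t];\R^n)$ by Lemma \ref{lem:chi} {\em (i)}, the composition $s\mapsto\alpha^{M*}_s$ belongs to $\cA^M[0,t]$. By construction and Theorem \ref{thm:Phi-M-sup-of-quadratics} {\em (i)}, $\nu(\xi_s,\alpha^{M*}_s) = \ts{\frac{K}{2}}|\xi_s|^2 + \demi\Phi^M(|\xi_s|^2)$ for every $s$, whence $I_t(x,u,\alpha^{M*}) = \bar I_t^M(x,u)$, giving $\sup_{\alpha\in\cA^M[0,t]} I_t(x,u,\alpha)\ge \bar I_t^M(x,u)$. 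Combined with the opposite inequality, \er{eq:plan-core} follows, and the theorem is proved. No measure-theoretic selection or monotone/Fatou machinery as in Lemma \ref{lem:measurable-selection} is required here, precisely because the boundedness of the $\alpha$ coordinate eliminates the degeneracy responsible for the complications in the exact case.
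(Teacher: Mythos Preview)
Your proposal is correct and follows essentially the same approach as the paper: the paper proves the theorem by invoking Lemma~\ref{lem:measurable-selection-M}, whose content is precisely your equation \er{eq:plan-core} together with the identification of the explicit continuous maximizer $\alpha^{M*}_s = \hat\alpha^{M*}(|\xi_s|^2)$. Your argument is simply an explicit, self-contained version of that lemma (whose proof the paper omits as analogous to Lemma~\ref{lem:measurable-selection}), and your observation that continuity of $\hat\alpha^{M*}$ is what legitimizes the restriction to $\cA^M[0,t]=C([0,t];[-\phi(0),M])$ is exactly the point.
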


The proof of Theorem \ref{thm:game-W-M} follows as a consequence of a corresponding measurable selection lemma. 

\begin{lemma}
\label{lem:measurable-selection-M}
Given any $t\in\R_{\ge 0}$, $M\in\R_{\ge -\phi(0)}$, $x\in\R^n$, $u\in\cU[0,t]$, and $\xi\doteq\chi(x,u)\in C([0,t];\R^n)$,
the cost functions $\bar I_t^M$, $I_t$ and $J_t^M$, $J_t$ of \er{eq:cost-I-J-M}, \er{eq:cost-I-hat} and \er{eq:cost-J-M}, \er{eq:cost-J-hat} satisfy
\begin{gather}
	\bar I_t^M(x,u)
	= \int_0^t \sup_{\hat\alpha\in[-\phi(0),M]} \nu(\xi_s,\hat\alpha) \, ds 
	= \sup_{\alpha\in\cA^M[0,t]} I_t(x,u,\alpha) = I_t(x,u,\alpha^{M*})\,,
	\label{eq:measurable-selection-M}
	\\
	\bar J_t^M(x,u)
	= \sup_{\alpha\in\cA^M[0,t]} J_t(x,u,\alpha) = J_t(x,u,\alpha^{M*})\,,
	\label{eq:cost-I-J-hat-sup}
\end{gather}
in which $\alpha^{M*}\in\cA^M[0,t]$ is defined via \er{eq:alpha-M-star} by, and satisfies,
\begin{gather}
	\alpha_s^{M*}
	\doteq \hat\alpha^{M*}(|\xi_s|^2)\,,
	\quad s\in[0,t]\,,
	\label{eq:alpha-star-in-cost-J-hat-M}
	\\
	\alpha^{M*}\in\argmax_{\alpha\in\cA^M[0,t]} I_t(x,u,\alpha)\equiv\argmax_{\alpha\in\cA^M[0,t]} J_t(x,u,\alpha)\,.
	\label{eq:alpha-star-M-maximizes}
\end{gather}
\end{lemma}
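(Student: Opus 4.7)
The plan is to follow the template of Lemma \ref{lem:measurable-selection}, exploiting the key simplification that the bounded range $[-\phi(0),M]$ yields a bounded and continuous pointwise maximizer via Theorem \ref{thm:Phi-M-sup-of-quadratics}, which obviates the monotone-convergence and excursion-set machinery required in the exact case. In particular, no analogue of $\Delta_t(x,u)$ is needed here, and the candidate selection $\alpha^{M*}$ can be taken to lie in $\cA^M[0,t] = C([0,t];[-\phi(0),M])$.

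First, I would establish the leftmost equality in \er{eq:measurable-selection-M} pointwise in $s$: by Theorem \ref{thm:Phi-M-sup-of-quadratics} {\em (i)} and the definition \er{eq:running-cost-nu} of $\nu$, one has $\frac{K}{2}|x|^2 + \frac{1}{2}\Phi^M(|x|^2) = \sup_{\hat\alpha\in[-\phi(0),M]} \nu(x,\hat\alpha)$ for every $x\in\R^n$. Substituting $x = \xi_s$ and integrating over $s\in[0,t]$ gives the first equality via \er{eq:cost-I-J-M}. Next, the inequality $\int_0^t \sup_{\hat\alpha\in[-\phi(0),M]}\nu(\xi_s,\hat\alpha)\, ds \ge \sup_{\alpha\in\cA^M[0,t]} I_t(x,u,\alpha)$ is immediate since every $\alpha\in\cA^M[0,t]$ is pointwise admissible in the supremum.

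The crucial step is to produce a bona fide maximizer $\alpha^{M*}\in\cA^M[0,t]$ attaining equality, where the admissibility constraint includes continuity. Define $\alpha^{M*}_s \doteq \hat\alpha^{M*}(|\xi_s|^2)$ via \er{eq:alpha-M-star}, \er{eq:alpha-star-in-cost-J-hat-M}. The map $\hat\alpha^{M*}:\R_{\ge 0}\to[-\phi(0),M]$ in \er{eq:alpha-M-star} is piecewise defined, and the two branches agree at the interface $|x|^2 = \hat\rho(M)$ because $a\circ\phi'(\hat\rho(M)) = a\circ a^{-1}(M) = M$ by \er{eq:Phi-M-from-A-and-rho-M} and Lemma \ref{lem:a-properties}; together with continuity of $a\circ\phi'$ on $[0,\hat\rho(M)]$ (from \er{eq:phi-properties-1-2} {\em (i)} and Lemma \ref{lem:a-properties}), this yields $\hat\alpha^{M*}\in C(\R_{\ge 0};[-\phi(0),M])$. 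Composing with $s\mapsto|\xi_s|^2$, which is continuous by Lemma \ref{lem:chi} {\em (i)}, gives $\alpha^{M*}\in\cA^M[0,t]$. By Theorem \ref{thm:Phi-M-sup-of-quadratics}, $\nu(\xi_s,\alpha^{M*}_s) = \sup_{\hat\alpha\in[-\phi(0),M]} \nu(\xi_s,\hat\alpha)$ pointwise in $s$, so integration closes the chain of inequalities and establishes \er{eq:measurable-selection-M} including the attainment by $\alpha^{M*}$.

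Finally, \er{eq:cost-I-J-hat-sup} follows by adding $I_t^\kappa(u) + \Psi(\xi_t)$, which are independent of $\alpha$, to \er{eq:measurable-selection-M} and invoking \er{eq:cost-J-M}, \er{eq:cost-J-hat}; the identification \er{eq:alpha-star-M-maximizes} is then immediate since the argmax of $I_t(x,u,\cdot)$ coincides with that of $J_t(x,u,\cdot)$ over $\cA^M[0,t]$. The only nontrivial point is the continuity check at the interface $|x|^2 = \hat\rho(M)$, but this is resolved by the direct identity $a\circ\phi'\circ(\phi')^{-1}\circ a^{-1}(M) = M$, so I expect no real obstacle.
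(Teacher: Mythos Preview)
Your proposal is correct and follows essentially the same route as the paper, which omits the proof as analogous to Lemma~\ref{lem:measurable-selection}: establish the first equality pointwise via Theorem~\ref{thm:Phi-M-sup-of-quadratics}~{\em(i)}, obtain the easy inequality by pointwise suboptimality of any $\alpha\in\cA^M[0,t]$, exhibit $\alpha^{M*}$ as a continuous selection attaining the pointwise supremum, and then pass to $J_t$ by adding the $\alpha$-independent terms. Your explicit continuity check at the interface $|x|^2=\hat\rho(M)$ via $a\circ\phi'\circ(\phi')^{-1}\circ a^{-1}(M)=M$ is exactly the verification that $\alpha^{M*}\in\cA^M[0,t]$ the paper alludes to through Lemma~\ref{lem:a-properties} and \er{eq:phi-properties-1-2}, \er{eq:Phi-M-from-A-and-rho-M}.
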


The proof of Lemma \ref{lem:measurable-selection-M} follows analogously to that of Lemma \ref{lem:measurable-selection} and is omitted.
%
%%% OMITTED MEASURABLE SELECTION LEMMA M %%%
\if{false}

\begin{proof}
Fix any $t\in\R_{\ge 0}$, $M\in\R_{\ge -\phi(0)}$, $x\in\R^n$, $u\in\cU[0,t]$, and define $\xi\doteq\chi(x,u)\in C([0,t];\R^n)$.
Define $\alpha^{M*}$ as per \er{eq:alpha-star-in-cost-J-hat-M}, and note in particular that $\alpha^{M*}\in\cA^M[0,t]$ by Lemma \ref{lem:a-properties} and \er{eq:phi-properties-1-2}, \er{eq:Phi-M-from-A-and-rho-M}.

{\em [\er{eq:measurable-selection-M} and the left-hand argmax in \er{eq:alpha-star-M-maximizes}]}
The first equality in \er{eq:measurable-selection-M} is immediate by Theorem \ref{thm:Phi-M-sup-of-quadratics} {\em (i)}, i.e. \er{eq:Phi-M-sup-of-quadratics}. For the remaining equalities, any $\alpha\in\cA^M[0,t]$ is pointwise suboptimal in the supremum over $\hat\alpha\in[-\phi(0),M]$, so that
\begin{align}
	\bar I_t^M(x,u) = \int_0^t \sup_{\hat\alpha\in[-\phi(0),M]} \nu(\xi_s,\hat\alpha) \, ds
	& \ge \int_0^t \nu(\xi_s,\alpha_s)\, ds
	\nn
\end{align}
for all $\alpha\in\cA^M[0,t]$. Hence, 
\begin{align}
	\bar I_t^M(x,u) = \int_0^t \sup_{\hat\alpha\in[-\phi(0),M]} \nu(\xi_s,\hat\alpha) \, ds
	& \ge \sup_{\alpha\in\cA^M[0,t]} \int_0^t \nu(\xi_s,\alpha_s) \, ds 
	= \sup_{\alpha\in\cA^M[0,t]} I_t(x,u,\alpha)\,.
	\label{eq:measure-M-ineq-1}
\end{align}
In order to prove the opposite inequality, recall by \er{eq:alpha-star-in-cost-J-hat-M} and Theorem \ref{thm:Phi-M-sup-of-quadratics}, i.e. \er{eq:alpha-M-star}, that $\alpha_s^{M*}$ is the pointwise maximizer of $\nu(\xi_s,\cdot)$. That is,
\begin{align}
	\bar I_t^M(x,u)
	& =
	\int_0^t \sup_{\hat\alpha\in[-\phi(0),M]} \nu(\xi_s,\hat\alpha) \, ds
	= \int_0^t \nu(\xi_s,\alpha_s^{M*}) \, ds = I_t(x,u,\alpha^{M*})
	\nn\\
	& 
	\le \sup_{\alpha\in\cA^M[0,t]} \int_0^t \nu(\xi_s,\alpha_s) \, ds
	=  \sup_{\alpha\in\cA^M[0,t]} I_t(x,u,\alpha).
	\label{eq:measure-M-ineq-2}
\end{align}
Hence, combining inequalities \er{eq:measure-M-ineq-1} and \er{eq:measure-M-ineq-2} yields \er{eq:measurable-selection-M}, and the left-hand argmax in \er{eq:alpha-star-M-maximizes}.

{\em [\er{eq:cost-I-J-hat-sup} and the right-hand argmax in \er{eq:alpha-star-M-maximizes}]} Immediate by definitions \er{eq:cost-J-M}, \er{eq:cost-I-J-M}, \er{eq:cost-J-hat}, \er{eq:cost-I-hat} of $\bar J_t^M$, $\bar I_t^M$, $J_t$, $I_t$, \er{eq:measurable-selection-M}, \er{eq:alpha-star-in-cost-J-hat-M}, and the left-hand argmax in \er{eq:alpha-star-M-maximizes}. (Note in particular that the dependence of $J_t(\cdot,\cdot,\alpha)$ on $\alpha\in\cA^M[0,t]$ comes only through $I_t(\cdot,\cdot,\alpha)$.)
\end{proof}

\fi
%%% END OF OMITTED MEASURABLE SELECTION LEMMA M %%%

Theorem \ref{thm:game-W-M} subsequently follows by Lemma \ref{lem:measurable-selection-M} and comparison of \er{eq:value-W-M}--\er{eq:cost-I-J-M} and \er{eq:cost-J-hat}--\er{eq:running-cost-nu}, \er{eq:game-W-M}.

With a view to addressing computation, the remaining objective is to demonstrate equivalence of the upper and lower values for the game \er{eq:game-W-M}. To this end, a number of useful properties of the cost function $J_t$ of \er{eq:cost-J-hat} are summarised via the following two lemmas.

\begin{lemma}
\label{lem:convex-J-hat}
Given any $t\in\R_{>0}$, $x\in\R^n$, $M\in\R_{\ge -\phi(0)}$, $\alpha\in\cA^M[0,t]$, the cost function $J_t(x,\cdot,\alpha):\cU[0,t]\rightarrow\R$ defined by \er{eq:cost-J-hat} is {\Frechet} differentiable, strictly convex, and coercive.
\end{lemma}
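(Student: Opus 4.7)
The plan is to decompose $J_t(x,\cdot,\alpha) = I_t(x,\cdot,\alpha) + I_t^\kappa(\cdot) + \Psi(\chi(x,\cdot)_t)$ as in \er{eq:cost-J-hat}, and verify each of the three properties for each summand, exploiting that $u\mapsto\chi(x,u)$ is affine by \er{eq:dynamics} and Lemma \ref{lem:chi}. Crucially, since $\alpha\in\cA^M[0,t]$ takes values in the bounded interval $[-\phi(0),M]$, the integrand in $I_t$ has controlled behaviour in $\alpha_s$, which will be essential for both coercivity and the quadratic structure.

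For Fréchet differentiability, I would note that $I_t^\kappa(\cdot) = \tfrac{\kappa}{2}\|\cdot\|^2_{\cU[0,t]}$ is smooth on the Hilbert space $\cU[0,t]$, that $\chi(x,\cdot)\in C^\infty(\cU[0,t];C([0,t];\R^n))$ by Lemma \ref{lem:chi}\,{\it (ii)}, and that $\Psi$ is polynomial on $\R^n$, so $\Psi\circ[\chi(x,\cdot)]_t$ is Fréchet differentiable by the chain rule. For $I_t(x,\cdot,\alpha)$, observe via \er{eq:cost-I-hat} and \er{eq:running-cost-nu} that
$$
I_t(x,u,\alpha) = \int_0^t \ts{\frac{1}{2}}\,[K+a^{-1}(\alpha_s)]\,|[\chi(x,u)]_s|^2 - \ts{\frac{\alpha_s}{2}}\,ds,
$$
which, using the bounded linear operator representation of $D_u\chi(x,u)=\cA$ from \er{eq:derivatives} and differentiation under the integral sign (justified because $s\mapsto K+a^{-1}(\alpha_s)$ is bounded on $[0,t]$ since $\alpha\in\cA^M[0,t]$ is continuous with values in $[-\phi(0),M]$), is Fréchet differentiable.

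For strict convexity, the key observation is that the integrand of $I_t(x,\cdot,\alpha)$ coincides with $\gamma_{x}^{s,\alpha_s}(u)$ of \er{eq:gamma-fn}, so by Lemma \ref{lem:convex-gamma}, $u\mapsto \nu([\chi(x,u)]_s,\alpha_s)$ is convex for each fixed $s\in[0,t]$. As convexity is preserved under integration \cite[(2.6), p.7]{R:74}, $I_t(x,\cdot,\alpha)$ is convex. Similarly, $\Psi\circ[\chi(x,\cdot)]_t$ is convex since $\Psi$ of \er{eq:cost-Psi} is convex (as $P_t\in\Sigma^n$) and $[\chi(x,\cdot)]_t$ is affine. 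Finally, $I_t^\kappa$ is strictly convex because $\kappa\in\R_{>0}$, so the sum $J_t(x,\cdot,\alpha)$ is strictly convex.

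For coercivity, I would use that $\alpha_s\in[-\phi(0),M]$ implies $a^{-1}(\alpha_s)\ge a^{-1}(-\phi(0))=\phi'(0)$ by monotonicity of $a^{-1}$ (Lemma \ref{lem:a-properties}), and that $K+\phi'(0)\ge 0$ by \er{eq:phi-properties-1-2}\,{\it (ii)}. Hence $\nu(\xi_s,\alpha_s)\ge \tfrac{1}{2}[K+\phi'(0)]|\xi_s|^2 - \tfrac{M}{2}\ge -\tfrac{M}{2}$, giving $I_t(x,u,\alpha)\ge -\tfrac{Mt}{2}$. Combined with $\Psi\ge 0$, one obtains $J_t(x,u,\alpha) \ge -\tfrac{Mt}{2} + \tfrac{\kappa}{2}\|u\|^2_{\cU[0,t]}$, which is coercive in $u$ as $\kappa>0$. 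No step here is a serious obstacle — the main subtlety is merely recognizing the need to use the upper bound $M$ on $\alpha_s$ to bound the otherwise unbounded $-\alpha_s/2$ term in $\nu$, and combining this with the already-established $\gamma$-structure of \er{eq:gamma-fn} and Lemma \ref{lem:convex-gamma}.
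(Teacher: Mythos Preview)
Your proposal is correct and follows essentially the same approach as the paper, which states that the proof follows analogously to that of Lemma~\ref{lem:convex-cost-J} and omits the details; your decomposition into the three summands, use of Lemma~\ref{lem:convex-gamma} for convexity of the running integrand, strict convexity via $I_t^\kappa$, and the coercivity bound $J_t(x,u,\alpha)\ge -\tfrac{M t}{2}+\tfrac{\kappa}{2}\|u\|_{\cU[0,t]}^2$ exactly mirror the intended argument.
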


\begin{proof}
The proof follows analogous arguments to that of Lemma \ref{lem:convex-cost-J}, and the details are omitted.
\end{proof}

%%%% OMIT THE CONVEXITY PROOF %%%%
\if{false}

\rev{??}

\begin{proof}
Fix $t\in\R_{>0}$, $x\in\R^n$, $M\in\R_{\ge -\phi(0)}$, $\alpha\in\cA^M[0,t]$.

{\em [Differentiability]} By inspection of \er{eq:cost-J-hat}, it is sufficient to show that the maps $I_t(x,\cdot,\alpha)$, $I_t^\kappa$, and $\Psi([\chi(x,\cdot)]_t)$ are ({\Frechet}) differentiable on $\cU[0,t]$. Fix $u\in\cU[0,t]$. It is immediate by \er{eq:cost-I-bar} that $I_t^\kappa$ is differentiable on $\cU[0,t]$, with derivative $\langle\kappa\, u, \cdot \rangle_{\cU[0,t]}$. Similarly, by Lemma \ref{lem:chi} {\em (ii)}, $[\chi(x,\cdot)]_t$ is differentiable on $\cU[0,t]$, with derivative $D_u [\chi(x,u)]_t\, h = [\op{A}\, h]_t$ for all $h\in\cU[0,t]$. As $\Psi$ of \er{eq:cost-Psi} is differentiable on $\R^n$, the chain rule subsequently implies that $\Psi([\chi(x,\cdot)]_t)$ is differentiable on $\cU[0,t]$, with derivative $h\mapsto x'\, P_t\, [\op{A}\, h]_t\in\bo(\cU[0,t];\R)$.

It remains to be shown that $I_t(x,\cdot,\alpha)$ is differentiable on $\cU[0,t]$. To this end, motivated by \er{eq:cost-I-hat}, define $\iota^\alpha:C([0,t];\R^n)\rightarrow\R$ and $\op{I}^\alpha_\xi:C([0,t];\R^n)\rightarrow\R$ by
\begin{align}
	\iota^\alpha(\xi)
	& \doteq \int_0^t \demi\, [K + a^{-1}(\alpha_s)] \, |\xi_s|^2 - \ts{\frac{\alpha_s}{2}}\, ds\,,
	\quad
	\op{I}^\alpha_\xi\, h
	\doteq \int_0^t  [K + a^{-1}(\alpha_s)] \, \langle \xi_s, h_s \rangle\, ds\,,
	\label{eq:iota-frechet}
\end{align}
for all $\xi,h\in C([0,t];\R^n)$. With a view to applying the chain rule, observe in particular that $I_t(x,\cdot,\alpha) = \iota^\alpha\circ\chi(x,\cdot)$, with $\chi$ as per \er{eq:dynamics}. Attending first to differentiability of $\iota^\alpha$, note by Lemma \ref{lem:a-properties}, \er{eq:phi-properties-1-2}, and the definition of $\alpha\in\cA^M[0,t]$, that 
$
	K + a^{-1}(M) \ge K + a^{-1}(\alpha_s) \ge K+ a^{-1}(-\phi(0)) = K + \phi'(0) \ge 0
$.
Consequently,
\begin{align}
	|\iota^\alpha(\xi)|
	& \le \left( [K + a^{-1}(M)]\, \|\xi\|_{C([0,t];\R^n)}^2 + \max(|\phi(0)|,|M|) \right) \ts{\frac{t}{2}}\,,
	\nn\\
	|\op{I}_\xi^\alpha(h)|
	& \le t\, [K + a^{-1}(M)]\, \|\xi\|_{C([0,t];\R^n)}\, \|h\|_{C([0,t];\R^n)} \,,
	\nn
\end{align}
for all $\xi,h\in C([0,t];\R^n)$. Note in particular that $\op{I}_\xi^\alpha\in\bo(C([0,t];\R^n);\R)$ for all $\xi\in C([0,t];\R^n)$.
Fix $\xi\in C([0,t];\R^n)$. Recalling \er{eq:iota-frechet},
\begin{align}
	|\iota^\alpha(\xi+h) - \iota^\alpha(\xi) - \op{I}_\xi^\alpha\, h|
	& = \int_0^t \demi\, [K + a^{-1}(\alpha_s)] \, |h_s|^2\, ds
	\le \demi\, t\, [K + a^{-1}(M)]\, \|h\|_{C([0,t];\R^n)}^2
	\nn
\end{align}
for all $h\in C([0,t];\R^n)$. Hence, $\iota^\alpha$ is differentiable on $C([0,t];\R^n)$, with derivative $\op{I}_\xi^\alpha$ as per \er{eq:iota-frechet}. Meanwhile, Lemma \ref{lem:chi} {\em (ii)} implies that $\chi(x,\cdot)$ is differentiable on $\cU[0,t]$ with derivative as per \er{eq:derivatives}, i.e. $\op{A}\in\bo(\cU[0,t];C([0,t];\R^n))$. Consequently, $\iota^\alpha:C([0,t];\R^n)\rightarrow\R$ and $\chi(x,\cdot):\cU[0,t]\rightarrow C([0,t];\R^n)$ are differentiable, so that the chain rule implies that $ \iota^\alpha\circ\chi(x,\cdot)\equiv I_t(x,\cdot,\alpha):\cU[0,t]\rightarrow\R$ is differentiable, with derivative $\op{I}_{\chi(x,u)}^\alpha\,  \op{A}\in\bo(\cU[0,t];\R)$ at $u\in\cU[0,t]$. Consequently, $J_t(x,\cdot,\alpha)$ is ({\Frechet}) differentiable.

{\em [Strict convexity]}
Recall by \er{eq:gamma-fn} and \er{eq:cost-J-hat}--\er{eq:running-cost-nu} that
\begin{align}
	J_t(x,u,\alpha)
	& = I_t(x,u,\alpha) + I_t^\kappa(u) + \Psi(\xi_t),
	\quad
	I_t(x,u,\alpha) = \int_0^t \gamma_{x}^{s,\alpha_s}(u)\, ds,
	\nn
\end{align}
for all $u\in\cU[0,t]$, in which $\gamma_x^{s,\hat\alpha}\in C(\cU[0,t];\R)$ is convex for every $s\in[0,t]$, $\hat\alpha\in\R_{\ge -\phi(0)}$ by Lemma \ref{lem:convex-gamma}, including for $\hat\alpha \doteq\alpha_s\in[-\phi(0),M]$. Note in particular that $I_t(x,\cdot,\alpha)$ is convex on $\cU[0,t]$, as convexity is preserved under integration, and $I_t^\kappa(\cdot)$ and $\Psi([\chi(x,\cdot)]_t)$ are respectively strictly convex, and convex on $\cU[0,t]$ by inspection of \er{eq:cost-I-bar} and \er{eq:cost-Psi}. Hence, $J_t(x,\cdot,\alpha)$ is strictly convex on $\cU[0,t]$.

{\em [Coercivity]} Fix $u\in\cU[0,t]$. By inspection of \er{eq:cost-J-hat}, \er{eq:cost-I-hat}, \er{eq:running-cost-nu}, and noting that $K + a^{-1}(\alpha_s)\ge 0$ and $-\alpha_s \ge -M$ for all $s\in[0,t]$, as $\alpha\in\cA^M[0,t]$,
\begin{align}
	J_t(x,u,\alpha)
	& \ge 
	\int_0^t \demi\, [K + a^{-1}(\alpha_s) ]\, |\xi_s|^2 - \ts{\frac{\alpha_s}{2}}\, ds + \ts{\frac{\kappa}{2}}\, \|u\|_{\cU[0,t]}^2
	\ge -\ts{\frac{M}{2}}\, t  + \ts{\frac{\kappa}{2}}\, \|u\|_{\cU[0,t]}^2\,,
	\nn
\end{align}
which yields the required coercivity.
\end{proof}

\rev{??}

\fi
%%%% END OF OMITTED CONVEXITY PROOF %%%%

\begin{remark}
The strict convexity assertion in Lemma \ref{lem:convex-J-hat} requires $t\in\R_{>0}$, see also Remark \ref{rem:convex-t-0}.
\end{remark}

\begin{lemma}
\label{lem:u-star-is-optimal-in-cost-J-hat}
Given $t\in\R_{>0}$, $M\in\R_{\ge -\phi(0)}$, and $x\in\R^n$, let $u^{M*}\in\cU[0,t]$ be defined as per \er{eq:existence}, and let $\alpha^{M*}\doteq \hat\alpha^{M*}(|\chi(x,u^{M*})|^2)\in \cA^M[0,t]$ be defined via \er{eq:dynamics}, \er{eq:alpha-star-in-cost-J-hat-M}. Then, $u^{M*}$ and $\alpha^{M*}$ are unique, and together satisfy 
\begin{align} 
	&
	\argmin_{u\in\cU[0,t]} \bar J_t^M(x,u) = 
	u^{M*} 
	= \argmin_{u\in\cU[0,t]} J_t(x,\cdot,\alpha^{M*})\,.
	% \lbel{eq:u-star-is-optimal-in-cost-J-hat}
	\nn
\end{align}
\end{lemma}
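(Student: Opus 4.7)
The plan is to deduce both claims from two ingredients already in hand: Theorem \ref{thm:existence}, which provides the unique minimizer $u^{M*}$ of $\bar J_t^M(x,\cdot)$, and Lemma \ref{lem:measurable-selection-M}, which expresses $\bar J_t^M(x,\cdot)$ as the supremum of $J_t(x,\cdot,\alpha)$ over $\alpha\in\cA^M[0,t]$ and identifies the pointwise maximizer at $u = u^{M*}$ as $\alpha^{M*}$. The left-hand equality of the claim is then simply the definition of $u^{M*}$. Uniqueness of $\alpha^{M*}\in\cA^M[0,t]$ follows at once from uniqueness of $u^{M*}$ together with the pointwise formula $\alpha^{M*}_s = \hat\alpha^{M*}(|\chi(x,u^{M*})_s|^2)$; membership in $\cA^M[0,t] = C([0,t];[-\phi(0),M])$ is checked by observing that the two branches of $\hat\alpha^{M*}$ in \er{eq:alpha-M-star} agree at $|x|^2 = \hat\rho(M)$ (since $a\circ\phi'\circ(\phi')^{-1}\circ a^{-1}(M) = M$), so $\hat\alpha^{M*}$ is continuous on $\R_{\ge 0}$.

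For the second equality, the strategy is a first-order argument. Lemma \ref{lem:measurable-selection-M} yields $J_t(x,u,\alpha^{M*}) \le \sup_{\alpha\in\cA^M[0,t]} J_t(x,u,\alpha) = \bar J_t^M(x,u)$ for every $u\in\cU[0,t]$, with equality at $u = u^{M*}$. Thus $h(u) \doteq \bar J_t^M(x,u) - J_t(x,u,\alpha^{M*})$ is non-negative on $\cU[0,t]$ and vanishes at the interior point $u^{M*}$. Granted that both $\bar J_t^M(x,\cdot)$ and $J_t(x,\cdot,\alpha^{M*})$ are {\Frechet} differentiable at $u^{M*}$, we conclude $Dh(u^{M*}) = 0$, i.e.\ $D\bar J_t^M(x,u^{M*}) = DJ_t(x,u^{M*},\alpha^{M*})$. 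The left-hand derivative vanishes, because $u^{M*}$ is the interior minimizer of the (strictly convex, coercive, differentiable) $\bar J_t^M(x,\cdot)$, so the right-hand derivative vanishes as well. Strict convexity of $J_t(x,\cdot,\alpha^{M*})$ from Lemma \ref{lem:convex-J-hat} then forces $u^{M*}$ to be its unique minimizer, which is the second equality.

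The main obstacle is the {\Frechet} differentiability of $\bar J_t^M(x,\cdot)$, which is not catalogued earlier. I would dispatch this by the chain rule, using Lemma \ref{lem:chi}{\em (ii)} for smoothness of $\chi(x,\cdot):\cU[0,t]\rightarrow C([0,t];\R^n)$, Lemma \ref{lem:approx-convex-duality}{\em (ii)} for $\Phi^M \in C(\R_{\ge 0};\R) \cap C^1(\R_{>0};\R)$, and the observation that the finite one-sided derivative $(\Phi^M)'(0^+) = \phi'(0)$ makes $\xi \mapsto \Phi^M(|\xi|^2)$ a $C^1$ function on all of $\R^n$. A conceptually cleaner alternative is to compute $DJ_t(x,u^{M*},\alpha^{M*})$ directly and invoke the envelope identity $a^{-1}(\alpha^{M*}_s) = (\Phi^M)'(|\chi(x,u^{M*})_s|^2)$ (immediate from \er{eq:alpha-M-star} and \er{eq:Phi-M-explicit}), which makes the Euler--Lagrange integrand for $J_t(x,\cdot,\alpha^{M*})$ at $u^{M*}$ coincide identically with that of $\bar J_t^M(x,\cdot)$ at $u^{M*}$, so the two derivatives are equal without ever invoking the non-negativity of $h$.
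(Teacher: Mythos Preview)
Your proposal is correct and takes a genuinely different, and considerably shorter, route than the paper's own proof.

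The paper does \emph{not} establish {\Frechet} differentiability of $\bar J_t^M(x,\cdot)$; instead it works entirely with the already-known differentiability of $J_t(x,\cdot,\alpha^{M*})$ (Lemma~\ref{lem:convex-J-hat}) and shows the one-sided bound $D_u J_t(x,u^{M*},\alpha^{M*})(\tilde u)\ge 0$ directly. It does so by writing, via \er{eq:cost-difference},
\[
J_t(x,u^{M*}+\delta\tilde u,\alpha^{M*}) - J_t(x,u^{M*},\alpha^{M*})
= \bigl[\bar J_t^M(x,u^{M*}+\delta\tilde u)-\bar J_t^M(x,u^{M*})\bigr]
+ \int_0^t \bigl[\nu(\tilde\xi_s,\alpha_s^{M*}) - \nu(\tilde\xi_s,\tilde\alpha_s^{M*})\bigr]\,ds,
\]
and then bounding the last ``envelope correction'' integral by $-t\,K_0^M(\eps)\,\delta - t\,[K_1^M(\eps)+K_2^M(\eps)]\,\delta^2$ through a second-order Taylor expansion of $\nu$ in its $\alpha$-argument. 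Because $\partial^2\nu/\partial\alpha^2$ blows up as $|\xi|\to 0$, the paper must split the time domain into $\Delta_0^\eps$ (where $|\xi_s^{M*}|\le\eps/2$) and its complement, producing the constants $K_i^M(\eps)$ and finally sending $\eps\to 0$. This occupies roughly two pages.

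Your approach replaces all of that with a one-line first-order argument: $h\doteq\bar J_t^M(x,\cdot)-J_t(x,\cdot,\alpha^{M*})\ge 0$ attains its minimum $0$ at $u^{M*}$, hence $Dh(u^{M*})=0$, hence $D_uJ_t(x,u^{M*},\alpha^{M*})=D_u\bar J_t^M(x,u^{M*})=0$. The price is that you must first establish {\Frechet} differentiability of $\bar J_t^M(x,\cdot)$, which the paper avoids. Your sketch for this is correct: $\rho\mapsto\Phi^M(\rho)$ is $C^1$ on $\R_{>0}$ with finite one-sided derivative $(\Phi^M)'(0^+)=\phi'(0)$, so $\xi\mapsto\Phi^M(|\xi|^2)$ is $C^1$ on all of $\R^n$ (the gradient $2(\Phi^M)'(|\xi|^2)\,\xi$ tends to $0$ as $\xi\to 0$); the chain rule through the affine map $u\mapsto\chi(x,u)$ and the integral functional then gives what you need. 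Your ``cleaner alternative'' via the envelope identity $a^{-1}(\alpha_s^{M*})=(\Phi^M)'(|\xi_s^{M*}|^2)$ is equally valid and makes the equality of the two {\Frechet} derivatives at $u^{M*}$ completely explicit.

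In short: the paper trades a short differentiability lemma for a long Taylor-expansion computation; you make the opposite trade, and yours is the more economical choice.
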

\begin{proof}
Fix $t\in\R_{>0}$, $M\in\R_{\ge -\phi(0)}$. Define $u^{M*}\in\cU[0,t]$ uniquely as per \er{eq:existence}, i.e. as per the left-hand equality in the lemma statement. Given this $u^{M*}$, define $\alpha^{M*}\in\cA^M[0,t]$ as per the lemma statement, and note by Lemma \ref{lem:measurable-selection-M} that $\alpha^{M*}$ is unique by definition. Recall by Lemma \ref{lem:convex-J-hat} that $J_t(x,\cdot,\alpha^{M*}):\cU[0,t]\rightarrow\R$ is {\Frechet} differentiable and strictly convex. Hence, in order to show that $J_t(x,\cdot,\alpha^{M*})$ is (uniquely) minimized at $u^{M*}$, it is sufficient to show that the directional derivative of $J_t(x,\cdot,\alpha^{M*})$ is nonnegative in all directions when evaluated at $u^{M*}$. The details follow.

Fix any $\tilde u\in\cU[0,t]$ with $\|\tilde u\|_{\cU[0,t]} = 1$. The {\Frechet} derivative $D_u J_t(x,u^{M*},\alpha^{M*})\in\bo(\cU[0,t];\R)$, and corresponding Riesz representation $\ggrad_u J_t(x,u^{M*},\alpha^{M*})\in\cU[0,t]$, at $u^{M*}\in\cU[0,t]$, satisfy
\begin{align}
	& D_u J_t(x,u^{M*}, \alpha^{M*}) (\tilde u) 
	= \langle \ggrad_u J_t(x,u^{M*},\alpha^{M*}), \, \tilde u \rangle_{\cU[0,t]}
	\nn\\
	& \qquad = \lim_{\delta\rightarrow 0^+} \left\{ \frac{J_t(x, u^{M*} + \delta\, \tilde u, \alpha^{M*}) - J_t(x,u^{M*}, \alpha^{M*})}{\delta} \right\}.
	\label{eq:Frechet-J}
\end{align}
Fix any $\eps\in\R_{>0}$ with $\eps^2<\min(1,\hat\rho(M))$, and $\hat\rho(M)$ as per \er{eq:Phi-M-from-A-and-rho-M}. Let $L_t\doteq \|\op{A}\|_{\bo(\cU[0,t];C([0,t];\R^n))} \in\R_{>0}$, with $\op{A}$ as per \er{eq:derivatives}, and $\bar\delta^\eps\doteq \eps / (4\, L_t) \in\R_{>0}$. Fix any $\delta\in(0,\bar\delta^\eps]$. Define 
\begin{gather}
	\tilde u^{M*} \doteq u^{M*} + \delta\, \tilde u\,, \quad 
	{\tilde \xi}^{M*}\doteq\chi(x,\tilde u^{M*})\,, \quad 
	\xi^{M*}\doteq\chi(x,u^{M*})\,,
	\nn\\
	\tilde\alpha_s^{M*} \doteq \hat\alpha^{M*}(|\tilde\xi_s^{M*}|^2)\,, \quad
	\alpha_s^{M*} \doteq \hat\alpha^{M*}(|\xi_s^{M*}|^2),
	\label{eq:u-alpha}
\end{gather}
for all $s\in[0,t]$, with $\chi$, $\hat\alpha^{M*}$ as per \er{eq:dynamics}, \er{eq:alpha-M-star}. Recalling \er{eq:dynamics} and Lemma \ref{lem:chi}, note in particular that
\begin{align}
	& \|\tilde\xi^{M*} - \xi^{M*}\|_{C([0,t];\R^n)}
	\le L_t\, \delta\, \|\tilde u\|_{\cU[0,t]} = L_t\, \delta \le L_t\, \bar\delta^\eps \le \ts{\frac{\eps}{4}}.
	\label{eq:closeness}
\end{align}
%
% \begin{align}
%	& |\tilde\xi_s^{M*} - \xi_s^{M*}|
%	\le L_t\, \delta \le L_t\, \bar\delta^\eps = \ts{\frac{\eps}{4}}\,,
%	\label{eq:closeness-2}
% \end{align}
% for all $s\in[0,t]$. 
%
By \er{eq:cost-J-hat}, \er{eq:cost-I-hat}, \er{eq:running-cost-nu}, and Lemma \ref{lem:measurable-selection-M}, 
\begin{align}
	J_t(x,u^{M*},\alpha^{M*})
	& = \bar J_t(x,u^{M*}),
	\nn\\
	J_t(x,\tilde u^{M*},\alpha^{M*})
	& = \int_0^t \nu(\tilde\xi_s^{M*},\alpha_s^{M*}) \, ds +  I_t^\kappa(\tilde u^{M*}) + \Psi(\tilde\xi_t^{M*})
	\nn\\
	& = \int_0^t \nu(\tilde\xi_s^{M*},\tilde\alpha_s^{M*}) 
			+ [\nu(\tilde\xi_s^{M*}, \alpha_s^{M*}) - \nu(\tilde\xi_s^{M*}, \tilde\alpha_s^{M*})] \, ds 
			+  I_t^\kappa(\tilde u^{M*}) + \Psi(\tilde\xi_t^{M*})
	\nn\\
	& = \bar J_t(x,\tilde u^{M*}) + \int_0^t \nu(\tilde\xi_s^{M*}, \alpha_s^{M*}) - \nu(\tilde\xi_s^{M*}, \tilde\alpha_s^{M*}) \, ds, 
	\nn
\end{align}
so that, by subtraction,
\if{false}

??

\begin{align}
	& J_t(x,\tilde u^{M*},\alpha^{M*}) - J_t(x,u^{M*},\alpha^{M*})  
	\nn\\
	& = \int_0^t \nu(\tilde\xi_s^{M*},\alpha_s^{M*}) - \nu(\xi_s^{M*},\alpha_s^{M*})\, ds
	+ [I_t^\kappa(\tilde u^{M*}) - I_t^\kappa(u^{M*})]
	+ [\Psi(\tilde\xi_t^{M*}) - \Psi(\xi_t^{M*})]
	\label{eq:pre-cost-difference}
	\\
	& = \int_0^t [\nu(\tilde\xi_s^{M*}, \tilde\alpha_s^{M*}) -  \nu(\xi_s^{M*},\alpha_s^{M*})]\, ds
	+ [I_t^\kappa(\tilde u^{M*}) - I_t^\kappa(u^{M*})]
	+ [\Psi(\tilde\xi_t^{M*}) - \Psi(\xi_t^{M*})]
	\nn\\
	& \qquad\qquad
	+ \int_0^t [\nu(\tilde\xi_s^{M*}, \alpha_s^{M*}) - \nu(\tilde\xi_s^{M*}, \tilde\alpha_s^{M*})] \, ds
	\nn
\end{align}

??

Adding and subtracting a term involving $\tilde\alpha_s^{M*}$ in the integrand yields
$\nu(\tilde\xi_s^{M*}, \alpha_s^{M*}) -  \nu(\xi_s^{M*},\alpha_s^{M*})
	= [\nu(\tilde\xi_s^{M*}, \tilde\alpha_s^{M*}) -  \nu(\xi_s^{M*},\alpha_s^{M*})]
	+ [\nu(\tilde\xi_s^{M*}, \alpha_s^{M*}) - \nu(\tilde\xi_s^{M*}, \tilde\alpha_s^{M*})]$.
Applying \er{eq:cost-I-hat}, \er{eq:running-cost-nu}, and Lemma \ref{lem:measurable-selection-M}, 
\begin{align}
	& \int_0^t [\nu(\tilde\xi_s^{M*}, \tilde\alpha_s^{M*}) -  \nu(\xi_s^{M*},\alpha_s^{M*})]\, ds
	= \bar I_t^M(x,\tilde u^{M*}) - \bar I_t^M(x,u^{M*})\,.
	\nn
\end{align}
Hence, \er{eq:cost-J-M}, \er{eq:cost-J-hat}, \er{eq:pre-cost-difference} imply that

\fi
\begin{align}
	& \hspace{-10mm} 
	J_t(x,\tilde u^{M*},\alpha^{M*}) - J_t(x,u^{M*},\alpha^{M*})
	\nn\\
	&
	= \bar J_t^M(x,\tilde u^{M*}) - \bar J_t^M(x,u^{M*})
	+ \! \int_0^t \! \nu(\tilde\xi_s^{M*}, \alpha_s^{M*}) - \nu(\tilde\xi_s^{M*}, \tilde\alpha_s^{M*})\, ds.
	\label{eq:cost-difference}
\end{align}
% The objective is to demonstrate nonnegativity of the directional derivative, while \er{eq:cost-difference} expresses the first difference of the game cost $J_t(x,\cdot,\alpha^{M*})$ at $u^{M*}$ due to perturbations $\delta\, \tilde u$ in the control. 
As $u^{M*}$ is the minimizer of $\bar J_t^M(x,\cdot)$, see Theorem \ref{thm:existence} and \er{eq:existence}, a lower bound for the integral term in the right-hand side of \er{eq:cost-difference} is sought, as a function of $\delta$, using Taylor's theorem. To this end, it may be shown with some calculation via \er{eq:alpha-M-star}, \er{eq:running-cost-nu}, \er{eq:u-alpha}, \er{eq:gamma} % and Lemmas \ref{lem:a-properties} and \ref{lem:gamma-properties}, 
that
\begin{align}
	\alpha_s^{M*} - \tilde\alpha_s^{M*}
	& = \left\{ \ba{cl}
			a\circ\phi'(|\xi_s^{M*}|^2) - a\circ\phi'(|\tilde\xi_s^{M*}|^2)\,, 
			& |\tilde\xi_s^{M*}|^2 < \hat\rho(M)\,,  |\xi_s^{M*}|^2 < \hat\rho(M)\,,
			\\
			M -  a\circ\phi'(|\tilde\xi_s^{M*}|^2)\,, & |\tilde\xi_s^{M*}|^2 < \hat\rho(M) \le  |\xi_s^{M*}|^2\,, % \ge \hat\rho(M)\,,
			\\
			a\circ\phi'(|\xi_s^{M*}|^2) - M\,,
			& |\tilde\xi_s^{M*}|^2 \ge \hat\rho(M) > |\xi_s^{M*}|^2 % < \hat\rho(M)\,,
			\\
			0\,,
			&  |\tilde\xi_s^{M*}|^2 \ge \hat\rho(M), \ |\xi_s^{M*}|^2 \ge \hat\rho(M)\,,
		\ea \right.
	\nn\\
	\ts{\pdtone{\nu}{\hat\alpha}} (\tilde\xi_s^{M*}, \tilde\alpha_s^{M*})
	& = \frac{|\tilde\xi_s^{M*}|^2}{a'\circ a^{-1}(\tilde\alpha_s^{M*})} - 1
	% = \frac{|\tilde\xi_s^{M*}|^2}{a'\circ a^{-1}(\tilde\alpha_s^{M*})} - 1
	= \left\{ \ba{cl}
		0\,, & |\tilde\xi_s^{M*}|^2 < \hat\rho(M)\,,
		\\
		\frac{|\tilde\xi_s^{M*}|^2}{\hat\rho(M)} - 1\,,
		& |\tilde\xi_s^{M*}|^2 \ge \hat\rho(M)\,,
	\ea \right.	
	\label{eq:alpha-difference}
	\\
	\ts{\pdttwo{\nu}{\alpha}}(\tilde\xi_s^{M*},\tilde\alpha_s^{M*}) 
	& = \frac{-|\tilde\xi_s^{M*}|^2}{2\, \phi''(a'\circ a^{-1}(\tilde\alpha_s^{M*})) \, [a'\circ a^{-1}(\tilde\alpha_s^{M*})]^3}
	% \nn\\
	% &
	= \left\{ \ba{cl}
		\frac{-1}{2\, \phi''(|\tilde\xi_s^{M*}|^2)\, |\tilde\xi_s^{M*}|^2},
		& |\tilde\xi_s^{M*}|^2 < \hat\rho(M)\,,
		\\[3mm]
		\frac{-|\tilde\xi_s^{M*}|^2}{2\, \phi''(\hat\rho(M))\, [\hat\rho(M)]^3},
		& |\tilde\xi_s^{M*}|^2 \ge \hat\rho(M)\,,
	\ea \right.
	\nn	
\end{align}
in which the derivatives follow by Lemma \ref{lem:gamma-properties} and the identity $\nu(\tilde\xi_s^{M*},\alpha) = \frac{K}{2}\, | \tilde\xi_s^{M*}|^2 + \demi\, \gamma_{| \tilde\xi_s^{M*}|^2}(\alpha)$ for all $\alpha\in\R_{\ge -\phi(0)}$, with $\gamma_{(\cdot)}$ as per \er{eq:gamma}.
Observe that the second partial derivative in \er{eq:alpha-difference} is unbounded if $|\tilde\xi_s^{M*}|\rightarrow 0$. Two cases are thus considered, {\em (i)} $s\in\Delta_0^\eps$, and {\em (ii)} $s\in[0,t]\setminus\Delta_0^\eps$, in which 
\begin{align}
	\Delta_0^\eps
	& \doteq \left\{ s\in [0,t] \ \bigl| \ |\xi_s^{M*}|\le \ts{\frac{\eps}{2}} \right\} \subset[0,t].
	\label{eq:Delta-0}
\end{align}

{\em (i)} Fix $s\in\Delta_0^\eps$. The triangle inequality, \er{eq:closeness}, \er{eq:Delta-0} imply that
$|\tilde\xi_s^{M*}| \le |\tilde\xi_s^{M*} - \xi_s^{M*}| + |\xi_s^{M*}|\le \ts{\frac{\eps}{4}} + \ts{\frac{\eps}{2}} = \ts{\frac{3\, \eps}{4}}$, so that $\max( |\tilde\xi_s^{M*}|^2, |\xi_s^{M*}|^2 ) \le \eps^2 < \hat\rho(M)$ by definition of $\eps$. Hence, \er{eq:alpha-M-star}, \er{eq:u-alpha} yield $\tilde\alpha_s^{M*} = a\circ\phi'(|\tilde\xi_s^{M*}|^2)$, $\alpha_s^{M*} = a\circ\phi'(|\xi_s^{M*}|^2)$, so that $a^{-1}(\tilde\alpha_s^{M*}) = \phi'(|\tilde\xi_s^{M*}|^2)$, $a^{-1}(\alpha_s^{M*}) = \phi'(|\xi_s^{M*}|^2)$. Note also that $a\circ\phi'$ is differentiable by Lemma \ref{lem:a-properties}, with $(a\circ\phi')'(\rho) = [a'\circ\phi'(\rho)]\, \phi''(\rho) = \rho\, \phi''(\rho)$ for all $\rho\in[0,\eps]$. Hence, \er{eq:running-cost-nu}, the triangle inequality, Taylor's theorem, and \er{eq:closeness} together imply that the integrand in \er{eq:cost-difference} satisfies
\begin{align}
	& |\nu(\tilde\xi_s^{M*}, \alpha_s^{M*}) - \nu(\tilde\xi_s^{M*}, \tilde\alpha_s^{M*})|
	\nn\\
	& 
	\le \demi\, | a^{-1}(\alpha_s^{M*}) - a^{-1}(\tilde\alpha_s^{M*}) | \,  |\tilde\xi_s^{M*}|^2
	+ \demi\, | \tilde\alpha_s^{M*} - \alpha_s^{M*} |
	\nn\\
	& \le \ts{\frac{\eps^2}{2}}\, | \phi'(|\xi_s^{M*}|^2) - \phi'(|\tilde\xi_s^{M*}|^2) | 
			+ \demi\, | a\circ\phi'(|\xi_s^{M*}|^2) - a\circ\phi'(|\tilde\xi_s^{M*}|^2)|
	\nn\\
	& \le \demi \left( \eps^2\, | \phi''(\mu_s) |  
	+ |\rho_s\, \phi''(\rho_s)| \right) \left| |\xi_s^{M*}|^2 - |\tilde\xi_s^{M*}|^2 \right| 
	\le \ts{\frac{\eps}{2}}\, (\eps + 1) \sup_{\rho\in[0,\eps]} |\phi''(\rho)| \left| |\xi_s^{M*}|^2 - |\tilde\xi_s^{M*}|^2 \right| 
	\nn\\
	& \le \eps \sup_{\rho\in[0,\hat\rho(M)]} |\phi''(\rho)|\, ( |\xi_s^{M*}| + |\tilde\xi_s^{M*}| )\, | \xi_s^{M*} - \tilde\xi_s^{M*} |
	\le 2\, \eps^2\, \sup_{\rho\in[0,\hat\rho(M)]} |\phi''(\rho)| \, | \xi_s^{M*} - \tilde\xi_s^{M*} |
	\le K_0^M(\eps)\, \delta\,,
	\nn
\end{align}
in which $\eps<1$ by definition, and $\mu_s,\rho_s\in[0,\eps]\subset[0,\hat\rho(M)]$ lie in an interval defined by the end points $|\xi_s^{M*}|^2$ and $|\tilde\xi_s^{M*}|^2$, and
\begin{align}
	& K_0^M(\eps) \doteq L_1^M\, \eps^2\,,
	\quad L_1^M\doteq \sup_{\rho\in[0,\hat\rho(M)]} 2\, |\phi''(\rho)|\,.
	\label{eq:K-L-eps}
\end{align}
As $s\in\Delta_0^\eps$ is arbitrary, integration yields
\begin{align}
	\int_{\Delta_0^\eps} 
	\nu(\tilde\xi_s^{M*}, \alpha_s^{M*}) - \nu(\tilde\xi_s^{M*}, \tilde\alpha_s^{M*}) \, ds
	& \ge 
	\int_{\Delta_0^\eps} 
	-|\nu(\tilde\xi_s^{M*}, \alpha_s^{M*}) - \nu(\tilde\xi_s^{M*}, \tilde\alpha_s^{M*})| \, ds
	\ge -t\, K_0^M(\eps)\, \delta\,.
	\label{eq:nu-diff-bound-1}
\end{align}

{\em (ii)} Fix any $s\in [0,t]\setminus\Delta_0^\eps$. As $|\xi_s^{M*}| > \ts{\frac{\eps}{2}}$, by definition of $\Delta_0^\eps$, the triangle inequality and \er{eq:closeness} imply that $|\tilde\xi_s^{M*}| \ge |\xi_s^{M*}| - |\tilde\xi_s^{M*} - \xi_s^{M*}|  >  \ts{\frac{\eps}{2}} - \ts{\frac{\eps}{4}} = \ts{\frac{\eps}{4}}$, so that $\tilde\xi_s^{M*}, \xi_s^{M*} \not\in\cB_{\R^n}[0;\ts{\frac{\eps}{4}}]$. Consequently, \er{eq:u-alpha}, \er{eq:alpha-difference} imply that $\pdtone{\nu}{\alpha}(\tilde\xi_s^M,\tilde\alpha_s^{M*})$ and $\pdttwo{\nu}{\alpha}(\tilde\xi_s^M,\bar\alpha_s)$ exist and are uniformly bounded for $s\in[0,t]$, given any $\bar\alpha_s$ contained in the interval defined by the end points $\tilde\alpha_s^{M*}$ and $\alpha_s^{M*}$. By Taylor's theorem, such an $\bar\alpha_s$ exists, and satisfies
\begin{align}
	\nu(\tilde\xi_s^{M*}, \alpha_s^{M*}) - \nu(\tilde\xi_s^{M*}, \tilde\alpha_s^{M*})
	& = \ts{\pdtone{\nu}{\hat\alpha}} (\tilde\xi_s^{M*}, \tilde\alpha_s^{M*}) \, ( \alpha_s^{M*} - \tilde\alpha_s^{M*} ) 
			+ \demi\, \ts{\pdttwo{\nu}{\hat\alpha}} (\tilde\xi_s^{M*}, \bar\alpha_s)\, ( \alpha_s^{M*} - \tilde\alpha_s^{M*} )^2\,.
	\label{eq:nu-series}
\end{align}
Note by inspection of the various cases in \er{eq:alpha-difference} that the first order term is equivalently given by
\begin{align}
	\ts{\pdtone{\nu}{\hat\alpha}} (\tilde\xi_s^{M*}, \tilde\alpha_s^{M*})\, (\alpha_s^{M*} - \tilde\alpha_s^{M*})
	& = \left\{ \ba{cl}
		( a\circ\phi'(|\xi_s^{M*}|^2) - M )\, ( \frac{|\tilde\xi_s^{M*}|^2}{\hat\rho(M)} - 1 ),
		& |\tilde\xi_s^{M*}|^2 \ge \hat\rho(M) > |\xi_s^{M*}|^2\,,
		\nn\\
		0\,, & \text{otherwise}.
	\ea \right.
	\nn
\end{align}
Let $R^M\doteq \|\xi^{M*}\|_{C([0,t];\R^n)}>\ts{\frac{\eps}{2}}$, and note that $|\tilde\xi_s^{M*}| \le |\xi_s^{M*}| + |\tilde\xi_s^{M*} - \xi_s^{M*}| \le R^M + \ts{\frac{\eps}{4}}$, by \er{eq:closeness}. In the non-zero case above, as $M = a\circ\phi'(\hat\rho(M))$, a second application of Taylor's theorem yields
\begin{align}
	& |a\circ \phi'(|\xi_s^{M*}|^2) - M|
	= |a\circ \phi'(\hat\rho(M) + [|\xi_s^{M*}|^2 - \hat\rho(M)] ) - a\circ\phi'(\hat\rho(M))|
	\nn\\
	& 
	= |(a\circ\phi')'(\mu_s)| \, \left| |\xi_s^{M*}|^2 - \hat\rho(M) \right| = |\mu_s\, \phi''(\mu_s)| \, \left| |\xi_s^{M*}|^2 - \hat\rho(M) \right|
	\nn\\
	&
	\le \sup_{\mu\in[0,\hat\rho(M)]} |\mu\, \phi''(\mu)|\, \left| |\xi_s^{M*}|^2 -  |\tilde\xi_s^{M*}|^2 \right|
	% \nn\\
	% & 
	\le \hat\rho(M) \sup_{\mu\in[0,\hat\rho(M)]} |\phi''(\mu)|\, \left( |\xi_s^{M*}| + |\tilde\xi_s^{M*}| \right) |\xi_s^{M*} -  \tilde\xi_s^{M*}|
	\nn\\
	& \le \hat\rho(M) \sup_{\mu\in[0,\hat\rho(M)]} |\phi''(\mu)|\, \left( 2\, R^M + \ts{\frac{\eps}{4}} \right) L_t\, \delta
	= \hat\rho(M)\, L_1^M\, (R^M + \ts{\frac{\eps}{8}} )\, L_t\, \delta\,.
	\nn
\end{align}
in which $\mu_s\in[|\xi_s^{M*}|^2, \hat\rho(M)]$. Similarly,
\begin{align}
	\left| \ts{\frac{|\tilde\xi_s^{M*}|^2}{\hat\rho(M)}} - 1 \right|
	& \le \ts{\frac{1}{\hat\rho(M)}} \left| |\xi_s^{M*}|^2 -  |\tilde\xi_s^{M*}|^2 \right|
	\le \ts{\frac{2}{\hat\rho(M)}} \, \left( R^M + \ts{\frac{\eps}{8}} \right) L_t\, \delta\,.
	\nn
\end{align}
Hence, combining these inequalities yields a lower bound for the first order term, with
\begin{align}
	\ts{\pdtone{\nu}{\hat\alpha}} (\tilde\xi_s^{M*}, \tilde\alpha_s^{M*})\, (\alpha_s^{M*} - \tilde\alpha_s^{M*})
	& \ge -|\ts{\pdtone{\nu}{\hat\alpha}} (\tilde\xi_s^{M*}, \tilde\alpha_s^{M*})\, (\alpha_s^{M*} - \tilde\alpha_s^{M*})|
	\ge -K_1^M(\eps)\, \delta^2\,,
	\label{eq:nu-first-order-bound}
\end{align}
with $K_1^M(\eps) \doteq 2\, L_1^M \left( R^M + \ts{\frac{\eps}{8}} \right)^2 L_t^2$.

% Consequently, recalling \er{eq:alpha-M-star}, \er{eq:u-alpha}, and the monotonicity of $a\circ\phi'$ (as $(a\circ\phi')' = \rho\, \phi''(\rho)>0$ for all $\rho>0$), it follows that $\min(\alpha_s^{M*},\tilde\alpha_s^{M*}) = \min( a\circ\phi'(|\xi_s^{M*}|^2), \, a\circ\phi'(|\tilde\xi_s^{M*}|^2) )\ge a\circ\phi'(\ts{\frac{\eps^2}{16}}) > -\phi(0)$. With a view to applying Taylor's theorem, 

The second order term in \er{eq:nu-series} has the same form as \er{eq:alpha-difference}, with
\begin{align}
	\ts{\pdttwo{\nu}{\alpha}}(\tilde\xi_s^{M*},\bar\alpha_s) 
	& = \frac{-|\tilde\xi_s^{M*}|^2}{2\, \phi''(a'\circ a^{-1}(\bar\alpha_s)) \, [a'\circ a^{-1}(\bar\alpha_s)]^3}
	= \frac{-|\tilde\xi_s^{M*}|^2}{2\, \phi''\circ\hat\rho(\bar\alpha_s)\, [\hat\rho(\bar\alpha_s)]^3}\,,
	\nn
\end{align}
in which $\bar\alpha_s$ is in the interval defined by the end points $\tilde\alpha_s^{M*}$ and $\alpha_s^{M*}$. 
As $\tilde\xi_s^{M*}, \xi_s^{M*}\not\in\cB_{\R^n}[0;\ts{\frac{\eps}{4}}]$, \er{eq:u-alpha} implies that $\alpha_s^{M*}, \tilde\alpha_s^{M*}\in[a\circ\phi'(\frac{\eps^2}{16}), M]$, so that $\bar\alpha_s\in[a\circ\phi'(\frac{\eps^2}{16}), M]$. Hence, Lemma \ref{lem:a-properties}, i.e. \er{eq:exact-a-dot}, yields
\begin{align}
	|\ts{\pdttwo{\nu}{\alpha}}(\tilde\xi_s^{M*},\bar\alpha_s)|
	& \le \frac{|\tilde\xi_s^{M*}|^2}{\displaystyle \inf_{\rho\in[\eps^2/16, \hat\rho(M)]} \{ 2\, \rho^3\, \phi''(\rho) \} }
	\le L_2^M\, (\ts{\frac{16}{\eps^2}})^3\, (R^M+\ts{\frac{\eps}{4}})^2\,,
	\quad
	L_2^M \doteq \sup_{\rho\in[0,\hat\rho(M)]} [2\, \phi''(\rho)]^{-1}.
	\nn
\end{align}
Furthermore, in each of the four cases listed for $\alpha_s^{M*} - \tilde\alpha_s^{M*}$ in \er{eq:alpha-difference}, Taylor's theorem again yields
\begin{align}
	|\alpha_s^{M*} - \tilde\alpha_s^{M*}|
	& \le (a\circ\phi)'(\rho_s)\, \left| |\xi_s^{M*}|^2 - |\tilde\xi_s^{M*}|^2 \right|
	\le
	\sup_{\rho\in[0,\hat\rho(M)]} \rho\, \phi''(\rho)\, (|\xi_s^{M*}| + |\tilde\xi_s^{M*}|) \, |\xi_s^{M*} - \tilde\xi_s^{M*}|
	\nn\\
	& \le \hat\rho(M)\, L_1^M\, (R^M + \ts{\frac{\eps}{8}})\, L_t\, \delta\,,
	\nn
\end{align}
in which $\rho_s\in[0,\hat\rho(M)]$ in every case. Hence, a lower bound for the second order term is
\begin{align}
	\demi\, \ts{\pdttwo{\nu}{\hat\alpha}} (\tilde\xi_s^{M*}, \bar\alpha_s)\, ( \alpha_s^{M*} - \tilde\alpha_s^{M*} )^2
	& \ge - \demi\, |\ts{\pdttwo{\nu}{\hat\alpha}} (\tilde\xi_s^{M*}, \bar\alpha_s)|\, |\alpha_s^{M*} - \tilde\alpha_s^{M*} |^2
	\ge - K_2^M(\eps)\, \delta^2\,,
	\label{eq:nu-second-order-bound}
\end{align}
with $K_2^M(\eps)\doteq \demi\, L_2^M\, (\ts{\frac{16}{\eps^2}})^3\, (R^M+\ts{\frac{\eps}{4}})^2\, [\hat\rho(M)\, L_1^M\, (R^M + \ts{\frac{\eps}{8}})\, L_t]^2$. Thus, integrating \er{eq:nu-series} via \er{eq:nu-first-order-bound}, \er{eq:nu-second-order-bound},
\begin{align}
	& \int_{[0,t]\setminus \Delta_0^\eps}
	\nu(\tilde\xi_s^{M*}, \alpha_s^{M*}) - \nu(\tilde\xi_s^{M*}, \tilde\alpha_s^{M*})\, ds
	\nn\\
	& = \int_{[0,t]\setminus \Delta_0^\eps} 
	\ts{\pdtone{\nu}{\hat\alpha}} (\tilde\xi_s^{M*}, \tilde\alpha_s^{M*}) \, ( \alpha_s^{M*} - \tilde\alpha_s^{M*} ) 
			+ \demi\, \ts{\pdttwo{\nu}{\hat\alpha}} (\tilde\xi_s^{M*}, \bar\alpha_s)\, ( \alpha_s^{M*} - \tilde\alpha_s^{M*} )^2
			\, ds
	\nn\\
	& \ge \int_{[0,t]\setminus \Delta_0^\eps}  -K_1^M(\eps)\, \delta^2 - K_2^M(\eps)\, \delta^2 \, ds
	\ge - t\, [K_1^M(\eps) + K_2^M(\eps) ] \, \delta^2\,.
	\label{eq:nu-difference-case-ii}
\end{align}

Cases {\em (i)} and {\em (ii)} may now be combined, via \er{eq:nu-diff-bound-1} and \er{eq:nu-difference-case-ii}, in \er{eq:cost-difference}, with
\begin{align}
	& J_t(x,\tilde u^{M*},\alpha^{M*}) - J_t(x,u^{M*},\alpha^{M*})
	= \bar J_t^M(x,\tilde u^{M*}) - \bar J_t^M(x,u^{M*}) 
	\nn\\
	& \hspace{20mm}
		+ \int_{\Delta_0^\eps} \nu(\tilde\xi_s^{M*}, \alpha_s^{M*}) - \nu(\tilde\xi_s^{M*}, \tilde\alpha_s^{M*})\, ds
	% \nn\\
	% & \hspace{70mm}
		+ \int_{[0,t]\setminus \Delta_0^\eps} \nu(\tilde\xi_s^{M*}, \alpha_s^{M*}) - \nu(\tilde\xi_s^{M*}, \tilde\alpha_s^{M*})\, ds
	\nn\\
	& \ge  \bar J_t^M(x,\tilde u^{M*}) - \bar J_t^M(x,u^{M*}) - t\, K_0^M(\eps)\, \delta 
									-  t\, [K_1^M(\eps) + K_2^M(\eps) ] \,\delta^2\,.
	\nn
\end{align}
Recalling \er{eq:u-alpha}, a lower bound for the directional derivative \er{eq:Frechet-J} can subsequently be evaluated, with
\begin{align}
	D_u J_t(x,u^{M*},\alpha^{M*}(\tilde u)
	& =
	\lim_{\delta\rightarrow 0^+}  \left\{ \frac{J_t(x,u^{M*}+\delta\, \tilde u,\alpha^{M*}) - J_t(x,u^{M*},\alpha^{M*})}{\delta} \right\}
	\nn\\
	& \ge \liminf_{\delta\rightarrow 0^+} \left\{ \frac{\bar J_t^M(x,u^{M*}+\delta\, \tilde u) - \bar J_t(x,u^{M*})}{\delta} \right\}
	- t\, K_0^M(\eps)
	\ge -t\, K_0^M(\eps)\,,
	\nn
\end{align}
in which the second inequality follows by Theorem \ref{thm:existence}, i.e. \er{eq:existence}. Furthermore, as $\eps\in\R_{>0}$ can be selected arbitrarily small, cf. its definition prior to \er{eq:u-alpha}, and $K_0^M(0) = 0$ by \er{eq:K-L-eps}, it follows that
\begin{align}
	D_u J_t(x,u^{M*},\alpha^{M*})(\tilde u)
	& \ge \liminf_{\delta\rightarrow 0^+} \left\{ \frac{\bar J_t^M(x,u^{M*}+\delta\, \tilde u) - \bar J_t(x,u^{M*})}{\delta} \right\},
	\nn
\end{align}
in which $\tilde u\in\cU[0,t]$, $\|\tilde u\|_{\cU[0,t]} = 1$, is arbitrary. Hence, $u^{M*}\in\cU[0,t]$ minimizes $J_t(x,\cdot,\alpha^{M*})$.
\end{proof}

\begin{theorem}
\label{thm:inf-sup-swap-M}
Given $t\in\R_{>0}$, $M\in\R_{\ge -\phi(0)}$, $x\in\R^n$, and $u^{M*}$, $\alpha^{M*}$ as per Lemma \ref{lem:u-star-is-optimal-in-cost-J-hat},
\begin{align}
	& \ol{W}_t^M(x) = W_t^M(x)
	= \inf_{u\in\cU[0,t]} \sup_{\alpha\in\cA^M[0,t]} J_t(x,u,\alpha) 
	\nn\\
	& = \min_{u\in\cU[0,t]} \max_{\alpha\in\cA^M[0,t]} J_t(x,u,\alpha) 
	= \max_{\alpha\in\cA^M[0,t]} \min_{u\in\cU[0,t]} J_t(x,u,\alpha) 
	= J_t(x,u^{M*},\alpha^{M*})
	\label{eq:games-M}
\end{align}
\end{theorem}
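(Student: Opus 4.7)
The plan is to chain together the results already established to produce the string of equalities in \er{eq:games-M}, with the min-max/max-min swap being the only non-routine step.

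First I would dispatch the easy equalities. The identity $\ol{W}_t^M(x) = W_t^M(x) = \inf_{u}\sup_{\alpha} J_t(x,u,\alpha)$ is immediate by Theorem \ref{thm:game-W-M} and the definition \er{eq:game-W-M}. Next, to upgrade $\inf\sup$ to $\min\max$, I would invoke Lemma \ref{lem:measurable-selection-M}: for every $u\in\cU[0,t]$ the inner supremum $\sup_{\alpha\in\cA^M[0,t]} J_t(x,u,\alpha)$ is attained at $\hat\alpha^{M*}(|\chi(x,u)|^2)\in\cA^M[0,t]$ and equals $\bar J_t^M(x,u)$. Substituting and applying Theorem \ref{thm:existence} to the outer infimum, $\inf_u\bar J_t^M(x,u)$ is attained uniquely at $u^{M*}$, whence $\min_u\max_\alpha J_t(x,u,\alpha) = \bar J_t^M(x,u^{M*}) = J_t(x,u^{M*},\alpha^{M*})$, using \er{eq:cost-I-J-hat-sup} for the last equality and the fact that $\alpha^{M*}$ in Lemma \ref{lem:u-star-is-optimal-in-cost-J-hat} is precisely $\hat\alpha^{M*}(|\chi(x,u^{M*})|^2)$.

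The substantive step is the equality $\max_{\alpha}\min_{u} J_t(x,u,\alpha) = J_t(x,u^{M*},\alpha^{M*})$. I would establish two inequalities. Weak duality yields
\begin{align*}
\max_{\alpha\in\cA^M[0,t]} \min_{u\in\cU[0,t]} J_t(x,u,\alpha) \le \min_{u\in\cU[0,t]} \max_{\alpha\in\cA^M[0,t]} J_t(x,u,\alpha) = J_t(x,u^{M*},\alpha^{M*}).
\end{align*}
For the reverse direction, I would use Lemma \ref{lem:u-star-is-optimal-in-cost-J-hat}, which asserts that $u^{M*}$ minimizes $J_t(x,\cdot,\alpha^{M*})$ over $\cU[0,t]$. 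Consequently, $\alpha^{M*}$ is a feasible choice in the outer maximization, and
\begin{align*}
\max_{\alpha\in\cA^M[0,t]} \min_{u\in\cU[0,t]} J_t(x,u,\alpha) \ge \min_{u\in\cU[0,t]} J_t(x,u,\alpha^{M*}) = J_t(x,u^{M*},\alpha^{M*}),
\end{align*}
with the inner min likewise attained (so it is a true $\min$). Combining these yields equality, which also confirms that both outer optimizations are attained, justifying the use of $\max$ and $\min$ rather than $\sup$ and $\inf$.

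The main obstacle here is really the reverse inequality in the saddle-point identity: it is what requires the nontrivial content of Lemma \ref{lem:u-star-is-optimal-in-cost-J-hat}, namely that the state feedback maximizer $\alpha^{M*}$ constructed along the optimal trajectory $\chi(x,u^{M*})$ of the regulator problem is simultaneously a best response in the adversary's problem against which $u^{M*}$ remains the minimizer. Once that lemma is in hand, the theorem follows by stringing together the equalities above; no further analytic work is needed.
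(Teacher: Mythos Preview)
Your proposal is correct and follows essentially the same approach as the paper: weak duality gives one inequality, and Lemma \ref{lem:u-star-is-optimal-in-cost-J-hat} supplies the reverse via $\min_u J_t(x,u,\alpha^{M*}) = J_t(x,u^{M*},\alpha^{M*})$, with Theorem \ref{thm:game-W-M}, Lemma \ref{lem:measurable-selection-M}, and Theorem \ref{thm:existence} handling the remaining identifications. The organization differs only cosmetically from the paper's proof.
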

\begin{proof}
Fix $t\in\R_{> 0}$, $M\in\R_{\ge -\phi(0)}$, $x\in\R^n$, and $\alpha^{M*}$, $u^{M*}$ as per Lemma \ref{lem:u-star-is-optimal-in-cost-J-hat}. Recalling Theorem \ref{thm:game-W-M},
\begin{align}
	\ol{W}_t^M(x) = W_t^M(x)
	& = \inf_{u\in\cU[0,t]} \sup_{\alpha\in\cA^M[0,t]} J_t(x,u,\alpha) 
	% \nn\\
	% & \quad 
	\ge \sup_{\alpha\in\cA^M[0,t]}  \inf_{u\in\cU[0,t]} J_t(x,u,\alpha).
	\nn
\end{align}
For the opposite inequality, and existence of the minimizer and maximzer as per the final equality in \er{eq:games-M}, note by Theorem \ref{thm:existence}, Lemma \ref{lem:measurable-selection-M}, the definition of $\alpha^{M*}$, and finally Lemma \ref{lem:u-star-is-optimal-in-cost-J-hat}, that
\begin{align}
	& \inf_{u\in\cU[0,t]} \sup_{\alpha\in\cA^M[0,t]} J_t(x,u,\alpha) 
	= \inf_{u\in\cU[0,t]} \bar J_t^M(x, u)
	= \bar J_t^M(x,u^{M*}) = J_t(x,u^{M*},\alpha^{M*}),
	\nn\\
% \textline{so that by }
	& = \min_{u\in\cU[0,t]} J_t(x,u,\alpha^{M*})
	\le \max_{\alpha\in\cA^M[0,t]} \min_{u\in\cU[0,t]} J_t(x,u,\alpha)
	=  \sup_{\alpha\in\cA^M[0,t]} \inf_{u\in\cU[0,t]} J_t(x,u,\alpha)\,.
	\nn\\[-17mm]
	& \nn
\end{align}
\end{proof}

\vspace{2mm}
\begin{corollary}
\label{cor:inf-sup-swap}
Given $t\in\R_{>0}$ and $x\in\R^n$, the game upper value defined by $W_t$ of \er{eq:game-W} and the corresponding game lower value are equivalent, with
\begin{align}
	 & W_t(x) \doteq \inf_{u\in\cU[0,t]} \sup_{\alpha\in\cA[0,t]} J_t(x,u,\alpha)
	 = \sup_{\alpha\in\cA[0,t]} \inf_{u\in\cU[0,t]} J_t(x,u,\alpha).
	 \label{eq:games}
\end{align}
\end{corollary}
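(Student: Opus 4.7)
The plan is to reduce the equivalence to the already-established $M$-indexed result, Theorem \ref{thm:inf-sup-swap-M}, together with the monotone convergence $\ol{W}_t^M(x) \nearrow \ol{W}_t(x) = W_t(x)$ provided by Theorems \ref{thm:monotone} and \ref{thm:game}. The weak direction $W_t(x) \ge \sup_{\alpha\in\cA[0,t]} \inf_{u\in\cU[0,t]} J_t(x,u,\alpha)$ is the standard minimax inequality, valid for any real-valued functional on any product space, so only the reverse inequality $W_t(x) \le \sup_{\alpha\in\cA[0,t]} \inf_{u\in\cU[0,t]} J_t(x,u,\alpha)$ requires argument.

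For this, first observe that $\cA^M[0,t] = C([0,t];[-\phi(0),M]) \subset \cA[0,t]$, since every continuous function on a compact interval is measurable and $[-\phi(0),M] \subset \R_{\ge -\phi(0)}$. Consequently, for any $M\in\R_{\ge -\phi(0)}$, restricting the outer supremum of the lower value from $\cA[0,t]$ to $\cA^M[0,t]$ yields
\begin{align}
\max_{\alpha\in\cA^M[0,t]} \min_{u\in\cU[0,t]} J_t(x,u,\alpha)
& \le \sup_{\alpha\in\cA[0,t]} \inf_{u\in\cU[0,t]} J_t(x,u,\alpha)\,. \nn
\end{align}
Combining this with the chain of equalities in Theorem \ref{thm:inf-sup-swap-M} gives $\ol{W}_t^M(x) \le \sup_{\alpha\in\cA[0,t]} \inf_{u\in\cU[0,t]} J_t(x,u,\alpha)$ for every $M\in\R_{\ge -\phi(0)}$. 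Passing to the limit $M\rightarrow\infty$ on the left, and applying Theorem \ref{thm:monotone} (equation \er{eq:order-value}) followed by Theorem \ref{thm:game}, yields
\begin{align}
W_t(x) = \ol{W}_t(x) = \lim_{M\rightarrow\infty} \ol{W}_t^M(x)
& \le \sup_{\alpha\in\cA[0,t]} \inf_{u\in\cU[0,t]} J_t(x,u,\alpha)\,, \nn
\end{align}
which, combined with the reverse minimax inequality, completes the proof of \er{eq:games}.

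The argument has no real obstacle because all of the delicate work, namely the directional-derivative / Taylor analysis used in Lemma \ref{lem:u-star-is-optimal-in-cost-J-hat}, the measurable-selection identification of the inner supremum in Lemma \ref{lem:measurable-selection}, and the monotone passage to the limit in Theorem \ref{thm:monotone}, has already been absorbed into the $M$-indexed saddle identity \er{eq:games-M}. The only subtlety worth flagging is the set containment $\cA^M[0,t]\subset\cA[0,t]$, which is why the monotone truncation bound $\ol{W}_t^M(x)\le \sup_\alpha\inf_u J_t(x,u,\alpha)$ is legitimate without having to exhibit any particular near-optimal $\alpha\in\cA[0,t]$ by hand.
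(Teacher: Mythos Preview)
Your proof is correct and follows essentially the same route as the paper: both combine Theorem \ref{thm:game}, the monotone limit of Theorem \ref{thm:monotone}, the $M$-indexed saddle identity of Theorem \ref{thm:inf-sup-swap-M}, and the inclusion $\cA^M[0,t]\subset\cA[0,t]$, with the standard minimax inequality closing the remaining direction. The paper compresses this into a single chain of equalities whose final step tacitly relies on the minimax inequality; you have simply made that step explicit.
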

%\vspace{-2mm}
\begin{proof}
Fix $t\in\R_{>0}$, $x\in\R^m$. Applying Theorem \ref{thm:game}, followed by Theorems \ref{thm:monotone} and \ref{thm:inf-sup-swap-M},
\begin{align}
	W_t(x) = \ol{W}_t(x)
	& = \sup_{M\ge -\phi(0)} \ol{W}_t^M(x) = \sup_{M\ge -\phi(0)} W_t^M(x)
	= \sup_{M\ge -\phi(0)} \inf_{u\in\cU[0,t]} \sup_{\alpha\in\cA^M[0,t]} J_t(x,\alpha,u)
	\nn\\
	& = \sup_{M\ge -\phi(0)} \sup_{\alpha\in\cA^M[0,t]} \inf_{u\in\cU[0,t]} J_t(x,u,\alpha)
	= \sup_{\alpha\in\cA[0,t]} \inf_{u\in\cU[0,t]} J_t(x,u,\alpha).
	\nn\\[-17mm]
	& \nn
\end{align}
\end{proof}
\vspace{2mm}

%%		Computation via game lower-value.

\subsection{Computation via the lower value}
Theorems \ref{thm:game}, \ref{thm:game-W-M}, \ref{thm:inf-sup-swap-M}, and Corollary \ref{cor:inf-sup-swap}, together establish equivalences of the exact and approximate regulator problems \er{eq:value-W} and \er{eq:value-W-M} with the corresponding exact and approximate games \er{eq:game-W}, \er{eq:game-W-M}, \er{eq:games-M}, \er{eq:games}, and that the upper and lower values of these games are equivalent in both cases. With a view to computation, via the value function and optimal trajectories corresponding to the approximate regulator problem \er{eq:value-W-M}, it is useful to explicitly consider the lower value of the approximate game.
%
% Theorem \ref{thm:game} establishes that the upper value of the game defined by \er{eq:game-W} is equivalent to the corresponding lower value. With a view to computation of the associated value function $W_t^M$ and optimal trajectories, it is useful to explicitly consider this lower value. 
%
To this end, given $t\in\R_{\ge 0}$, $M\in\R_{\ge -\phi(0)}$, $\alpha\in\cA^M[0,t]$, define an auxiliary value function $\wh{W}_t^{\alpha}$ by
\begin{align}
	\wh{W}_t^{\alpha}(x)
	& \doteq \inf_{u\in\cU[0,t]} J_t(x,u,\alpha)
	\label{eq:value-W-hat-M-alpha}
\end{align}
for all $t\in\R_{\ge 0}$, $x\in\R^n$. The following is then immediate.

\begin{lemma}
\label{lem:game}
Given any $t\in\R_{\ge 0}$, $M\in\R_{\ge -\phi(0)}$,  the value functions 
$W_t^M,\wh{W}_t^{\alpha}:\R^n\rightarrow\ol{\R}^+$, $\alpha\in\cA^M[0,t]$, of \er{eq:game-W}, \er{eq:value-W-hat-M-alpha} satisfy $W_t^M(x) = \sup_{\alpha\in\cA^M[0,t]} \wh{W}_t^{\alpha}(x)$ for all $x\in\R^n$.
\end{lemma}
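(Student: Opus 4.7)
The plan is to observe that the right-hand side $\sup_{\alpha\in\cA^M[0,t]} \wh{W}_t^{\alpha}(x)$, by the definition \er{eq:value-W-hat-M-alpha}, is nothing other than the lower value of the approximate game, namely $\sup_{\alpha\in\cA^M[0,t]} \inf_{u\in\cU[0,t]} J_t(x,u,\alpha)$. Meanwhile, by \er{eq:game-W-M}, the left-hand side $W_t^M(x)$ is the corresponding upper value $\inf_{u\in\cU[0,t]} \sup_{\alpha\in\cA^M[0,t]} J_t(x,u,\alpha)$. Thus the lemma is simply a reformulation of the equivalence of these two values.

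First I would dispatch the degenerate case $t=0$: inspecting \er{eq:cost-J-hat}--\er{eq:running-cost-nu} and \er{eq:cost-I-bar}, the integrated terms vanish, so $J_0(x,u,\alpha)=\Psi(x)$ is independent of $u$ and $\alpha$. Consequently $W_0^M(x)=\Psi(x)=\wh{W}_0^{\alpha}(x)$ for every admissible $\alpha$, and the asserted equality is trivial.

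For $t\in\R_{>0}$, I would invoke Theorem \ref{thm:inf-sup-swap-M} directly. That theorem establishes
\[
\inf_{u\in\cU[0,t]} \sup_{\alpha\in\cA^M[0,t]} J_t(x,u,\alpha)
= \sup_{\alpha\in\cA^M[0,t]} \inf_{u\in\cU[0,t]} J_t(x,u,\alpha),
\]
and the right-hand side coincides with $\sup_{\alpha\in\cA^M[0,t]} \wh{W}_t^{\alpha}(x)$ by \er{eq:value-W-hat-M-alpha}, while the left-hand side is $W_t^M(x)$ by \er{eq:game-W-M}.

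I do not foresee any genuine obstacle here: the entire substance of the argument has already been absorbed into Theorem \ref{thm:inf-sup-swap-M} (via Lemma \ref{lem:u-star-is-optimal-in-cost-J-hat}, which supplies the saddle point $(u^{M*},\alpha^{M*})$). The lemma is essentially a notational repackaging that isolates the $\alpha$-parameterised auxiliary value $\wh{W}_t^{\alpha}$ for use in the subsequent DRE-based computational development.
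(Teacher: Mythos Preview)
Your proposal is correct and matches the paper's own treatment: the paper simply declares the lemma ``immediate'' following the definition \er{eq:value-W-hat-M-alpha}, which is precisely because Theorem \ref{thm:inf-sup-swap-M} has already established the equality of upper and lower values for $t\in\R_{>0}$. Your explicit handling of the degenerate $t=0$ case is a welcome addition, since Theorem \ref{thm:inf-sup-swap-M} is stated only for $t\in\R_{>0}$.
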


\newcommand{\br}[1]	{{\breve {#1}}}

By inspection of \er{eq:cost-J-hat}, \er{eq:cost-I-hat}, \er{eq:running-cost-nu}, $\wh{W}_t^{\alpha}$ of \er{eq:value-W-hat-M-alpha} defines the value of an LQR problem, parameterized by $\alpha\in\cA^M[0,t]$. In order to demonstrate that $\wh{W}_t^{\alpha}$ has an explicit quadratic representation, it is convenient to consider the final value problem (FVP)
\begin{align}
	-\dot {\hat P}_s^\alpha
	& = \hat A' \, \hat P_s^\alpha + \hat P_s^\alpha\, \hat A - \ts{\frac{1}{\kappa}} \, \hat P_s^\alpha\, \hat B\, \hat B' \, \hat P_s^\alpha 
	+ \hat V_s^\alpha,
		\quad  
	% s\in(0,t),
	% \nn\\
	&& \hat P_t^\alpha = \hat P_t,
	\label{eq:DRE}
\end{align}
for all $s\in [0,t]$, in which $\hat A, \hat P_t, \hat V_s\in\R^{(n+1)\times(n+1)}$, $s\in[0,t]$, $\hat B\in\R^{(n+1)\times m}$, $\hat C\in\R^{n\times (n+1)}$ are defined in terms of $\kappa$, $K$ of \er{eq:cost-J-bar}, $A$, $B$ of \er{eq:dynamics}, and $P_t$ of \er{eq:cost-Psi} by
\begin{gather}
	\hat A \doteq \left( \ba{l|l}
				A & 0_{n} \\\hline
				0_{n}' & 0 
			\ea \right),
	\ \hat B \doteq \left( \ba{c}
						B \\\hline 0_m'
					\ea \right),
	\ \hat P_t \doteq \left( \ba{c|c}
			P_t & Q_t \\\hline
			Q_t' & R_t
		\ea \right),
	\ \hat V_s^\alpha
	\doteq \left( \ba{c|c}
			[K + a^{-1}(\alpha_s)]\, I_n & 0_n
			\\\hline
			0_n' & -\alpha_s
		\ea \right),
	\nn\\
	Q_t \doteq - P_t\, z, \quad R_t \doteq \langle z,\, P_t \, z\rangle,
	\label{eq:data-and-BC}
\end{gather}
in which $I_n\in\R^{n\times n}$ and $0_n\in\R^{n\times 1}$ denote the identity matrix and zero vector respectively. Solutions to FVP \er{eq:DRE}, where they exist, take the compatibly partitioned form
\begin{align}
	\hat P_s^\alpha
	& \doteq \left( \ba{c|c}
			P_s^\alpha & Q_s^\alpha \\\hline
			(Q_s^\alpha)' & R_s^\alpha
		\ea \right),
	\label{eq:P-hat}
\end{align}
for all $s\in[0,t]$. 

\begin{remark}
FVP \er{eq:DRE} may be equivalently expressed as three component FVPs
\begin{align}
	& - \dot P_s^\alpha
	= A'\, P_s^\alpha + P_s^\alpha \, A - \ts{\frac{1}{\kappa}}\, P_s^\alpha\, B\, B' \, P_s^\alpha 
	+ [ K + a^{-1}(\alpha_s) ] I_n,
	&& P_t^\alpha = P_t,
	\label{eq:DRE-P}
	\\
	& -\dot Q_s^\alpha
	= (A - \ts{\frac{1}{\kappa}}\, B \, B' \, P_s^\alpha )'\, Q_s^\alpha,
	&& Q_t^\alpha = Q_t,
	\label{eq:DRE-Q}
	\\
	& -\dot R_s^\alpha
	= -\alpha_s - \ts{\frac{1}{\kappa}}\, (Q_s^\alpha)' \, B\, B'\, Q_s^\alpha,
	\quad\  s\in(0,t),
	&& R_t^\alpha = R_t,
	\label{eq:DRE-R}
\end{align}
in which the respective boundary data follows by \er{eq:cost-Psi}, \er{eq:data-and-BC}.
\if{false}

 by
\begin{align}
	& 
	P_t^\alpha = P_t,
	\quad Q_t^\alpha = Q_t,
	\quad R_t^\alpha = R_t.
	\label{eq:BC-PQR}
\end{align}

\fi
\end{remark}

\begin{lemma}
\label{lem:existence}
Given fixed $t\in\R_{>0}$, $M\in\R_{\ge -\phi(0)}$, and any $\alpha\in\cA^M[0,t]$, there exists a unique $\hat P^\alpha\in C([0,t];\Sigma^{n+1})\cap C^1((0,t);\Sigma^{n+1})$ of the form \er{eq:P-hat} that satisfies FVP \er{eq:DRE}.
\end{lemma}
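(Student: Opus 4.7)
The plan is to exploit the triangular structure of FVP \er{eq:DRE}: in the block form \er{eq:P-hat}, the $P^\alpha$-equation \er{eq:DRE-P} is self-contained, the $Q^\alpha$-equation \er{eq:DRE-Q} is a linear matrix ODE driven by $P^\alpha$, and the $R^\alpha$-equation \er{eq:DRE-R} is a scalar ODE driven by $P^\alpha$ and $Q^\alpha$. Existence, uniqueness and regularity of $\hat P^\alpha$ therefore reduce to establishing these properties for each block separately and then assembling via \er{eq:P-hat}.

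For \er{eq:DRE-P}, I would first verify that $s\mapsto K + a^{-1}(\alpha_s)$ is continuous on $[0,t]$ with values in $\R_{\ge 0}$: this uses $\alpha\in\cA^M[0,t] = C([0,t];[-\phi(0),M])$, continuity and strict monotonicity of $a^{-1}$ from Lemma \ref{lem:a-properties}, and the inequality $a^{-1}(-\phi(0)) = \phi'(0)\ge -K$ from \er{eq:phi-properties-1-2} {\em (ii)}. Hence the source term $[K+a^{-1}(\alpha_s)]I_n$ lies in $C([0,t];\Sigma^n)$, and combined with $P_t\in\Sigma^n$ and $\kappa\in\R_{>0}$, \er{eq:DRE-P} is a standard time-varying matrix Riccati equation of LQR type. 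Classical theory (see \cite{AM:71,GL:95}) then yields a unique $P^\alpha\in C([0,t];\Sigma^n)\cap C^1([0,t];\Sigma^n)$ on the full interval: local existence and uniqueness follow from local Lipschitz continuity of the right-hand side in $P$, while global existence on $[0,t]$ (no finite-time blowup) is guaranteed by the $\Sigma^n$-preserving structure, with $P_s^\alpha$ dominated above by the solution of the Lyapunov equation obtained by dropping the quadratic $-\ts{\frac{1}{\kappa}}P_s^\alpha BB' P_s^\alpha$ term.

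Given $P^\alpha$, \er{eq:DRE-Q} is a linear matrix ODE with continuous coefficient $(A - \ts{\frac{1}{\kappa}}BB'P_s^\alpha)'$, admitting a unique global solution $Q^\alpha\in C([0,t];\R^n)\cap C^1([0,t];\R^n)$. The right-hand side of \er{eq:DRE-R} is then a continuous function of $s$ alone, so $R^\alpha\in C([0,t];\R)\cap C^1([0,t];\R)$ follows by direct integration. Assembling via \er{eq:P-hat} yields $\hat P^\alpha$ of the required block form which, by block-by-block substitution, satisfies \er{eq:DRE}; symmetry of $\hat P_s^\alpha$ for all $s\in[0,t]$ follows because \er{eq:DRE} preserves the symmetric subspace and $\hat P_t$ is symmetric by inspection of \er{eq:data-and-BC}.

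The delicate step, which I expect to be the main obstacle, is verifying positive semidefiniteness of $\hat P_s^\alpha$ throughout $[0,t]$. The terminal value factorises as $\hat P_t = \left(\ba{c}I_n\\-z'\ea\right)P_t\left(\ba{cc}I_n&-z\ea\right)\in\Sigma^{n+1}$, but this product structure is not preserved by \er{eq:DRE} because $\hat V_s^\alpha$ in \er{eq:data-and-BC} contains the indefinite corner entry $-\alpha_s$. My preferred route is to identify $\demi\,\hat x' \hat P_s^\alpha \hat x$ with the value of an augmented LQR on $\R^n\times\R$ with state $\hat x=(x,y)$, constant-$y$ dynamics $\dot{\hat x} = \hat A\hat x + \hat B u$, terminal cost $\demi\,\hat x_t'\hat P_t\hat x_t = \demi\,(x_t - y z)'P_t(x_t - y z)\ge 0$, and running cost $\demi\,\hat x_s'\hat V_s^\alpha\hat x_s + \ts{\frac{\kappa}{2}}|u|^2$, then to absorb the indefinite contribution $-\ts{\frac{\alpha_s}{2}}y^2$ by a completion-of-squares or comparison argument exploiting the PSD block $[K+a^{-1}(\alpha_s)]I_n$ alongside the specific structure of $a$ from \er{eq:exact-a}.
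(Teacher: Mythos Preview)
The paper's own proof is a bare citation to standard Riccati/LQR references, so your block-by-block argument is considerably more detailed. It is also, in one respect, more careful than a direct appeal to those references: the augmented running-cost matrix $\hat V_s^\alpha$ in \er{eq:data-and-BC} is \emph{not} in $\Sigma^{n+1}$ whenever $\alpha_s>0$, because of the corner entry $-\alpha_s$, and the textbook theorems being cited are typically stated under a PSD running-cost hypothesis. Your triangular decomposition --- standard DRE \er{eq:DRE-P} for $P^\alpha$ with genuinely nonnegative data, then the linear ODE \er{eq:DRE-Q} for $Q^\alpha$, then the quadrature \er{eq:DRE-R} for $R^\alpha$ --- cleanly delivers global existence, uniqueness, $C^1$-regularity and symmetry of $\hat P^\alpha$, together with $P^\alpha\in C([0,t];\Sigma^n)$.

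Where you hit a wall is exactly where you anticipated: the full $\Sigma^{n+1}$ conclusion. Your proposed completion-of-squares cannot succeed, and not because the argument is incomplete --- the assertion itself fails. In your augmented LQR picture the last coordinate $y$ is uncontrolled and constant along trajectories ($\hat A$, $\hat B$ have zero last row), so the contribution $-\ts{\frac{1}{2}}\int_s^t \alpha_\sigma\,d\sigma\, y^2$ cannot be offset by anything involving $x$ or $u$. Concretely, take $A=0$, $z=0$, $P_t=0$ and any $\alpha\in\cA^M[0,t]$ with $\int_0^t\alpha_s\,ds>0$; then $Q^\alpha\equiv 0$ and $R_0^\alpha=-\int_0^t\alpha_s\,ds<0$, so $\hat P_0^\alpha\notin\Sigma^{n+1}$. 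The lemma's codomain should really be the symmetric matrices, with the PSD conclusion retained only for the $P^\alpha$ block. This is harmless downstream: Theorems \ref{thm:inner-LQR} and \ref{thm:optimal-inputs} use only $P^\alpha\in\Sigma^n$ for the feedback law \er{eq:optimal-control-fixed-alpha} and the scalar value $\breve W_t^\alpha$, never positive semidefiniteness of $\hat P^\alpha$ itself. So your argument already proves everything that is actually needed; simply drop the attempt to force $\hat P^\alpha\in\Sigma^{n+1}$ and record instead that $\hat P^\alpha$ is symmetric with $P^\alpha\in\Sigma^n$.
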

\begin{proof}
See for example \cite[Theorem 37, p.364]{S-book:98} or \cite[7, Lemma 2.2, p. 391]{BDDM:07}.
\end{proof}

\begin{theorem}
\label{thm:inner-LQR}
Given any $t\in\R_{>0}$, $M\in\R_{\ge -\phi(0)}$, $\alpha\in\cA^M[0,t]$, the auxiliary value function $\wh{W}_t^{\alpha}$ of \er{eq:value-W-hat-M-alpha} satisfies $\wh{W}_t^{\alpha}(x) = \br{W}_t^{\alpha}(0,x)$ for all $x\in\R^n$, where $\br{W}_t^{\alpha}:[0,t]\times\R^n\rightarrow\ol{\R}^+$ is given by
\begin{align}
	\br{W}_t^{\alpha}(s,x)
	& \doteq  \demi \left\langle \left( \ba{c}
					x \\ 1
			\ea \right),\, [\hat P_{t}^{\alpha}]_s  \left( \ba{c}
					x \\ 1
			\ea \right) \right\rangle
	% \lbel{eq:value-W-breve-M-alpha-explicit}
	\nn
\end{align}
for all $s\in[0,t]$, $x\in\R^n$, in which $\hat P_t^{\alpha}\in C([0,t];\Sigma^{n+1})\cap C^1((0,t);\Sigma^{n+1})$ is the unique solution of FVP \er{eq:DRE}. Furthermore, the optimal input $u^\alpha\in\cU[0,t]$ in \er{eq:value-W-hat-M-alpha} has the state feedback characterization
\begin{equation}
	\begin{aligned}
	& \dot\xi_s^{\alpha}
	= A\, \xi_s^{\alpha} + B\, u_s^{\alpha},
	&& \xi_0^\alpha = x,
	\\
	&
	u_s^{\alpha} \doteq -\ts{\frac{1}{\kappa}}\, B' \, [ P_s^\alpha\, \xi_s^{\alpha} + Q_s^\alpha],
	&& s\in(0,t),
	\end{aligned}
	\label{eq:optimal-control-fixed-alpha}
\end{equation}
for any $x\in\R^n$, where $P_s^\alpha$, $Q_s^\alpha$ are as per \er{eq:DRE-P}, \er{eq:DRE-Q}. %, \er{eq:BC-PQR}.
\end{theorem}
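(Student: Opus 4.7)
The plan is to reformulate the $\alpha$-parameterized cost $J_t(x,\cdot,\alpha)$ as a standard time-varying linear quadratic regulator problem on an augmented state, and then apply a completing-the-squares / verification argument using the DRE \er{eq:DRE}. This is why the (1-dimensional) augmentation to $\R^{n+1}$ and the block structures of $\hat A$, $\hat B$, $\hat V_s^\alpha$, $\hat P_t$ have been engineered in \er{eq:data-and-BC}: they allow the affine terms in the terminal cost $\Psi$ and the constant $-\alpha_s/2$ in the running cost $\nu$ to be absorbed into a purely quadratic form.

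First I would introduce the augmented state $\hat\xi_s\doteq(\xi_s',\,1)'\in\R^{n+1}$ and observe, from the zero last-row / last-entry structure of $\hat A, \hat B$ in \er{eq:data-and-BC}, that $\dot{\hat\xi}_s=\hat A\,\hat\xi_s+\hat B\,u_s$ with $\hat\xi_0=\hat x\doteq(x',1)'$, so the trailing $1$ is preserved by the augmented dynamics. A direct block computation using \er{eq:data-and-BC} then yields the identities $\demi\,\hat\xi_s'\,\hat V_s^\alpha\,\hat\xi_s=\nu(\xi_s,\alpha_s)$ and $\demi\,\hat\xi_t'\,\hat P_t\,\hat\xi_t=\demi\,\langle \xi_t-z,\,P_t(\xi_t-z)\rangle=\Psi(\xi_t)$, the latter exploiting $Q_t=-P_t z$ and $R_t=\langle z,P_t z\rangle$. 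Combining these via \er{eq:cost-J-hat}--\er{eq:running-cost-nu} gives
\begin{align}
J_t(x,u,\alpha)
& = \int_0^t \demi\,\hat\xi_s'\,\hat V_s^\alpha\,\hat\xi_s + \ts{\frac{\kappa}{2}}\,|u_s|^2\,ds + \demi\,\hat\xi_t'\,\hat P_t\,\hat\xi_t.
\nonumber
\end{align}

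Next I would carry out the verification step. Let $\hat P^\alpha\in C([0,t];\Sigma^{n+1})\cap C^1((0,t);\Sigma^{n+1})$ be the unique solution of \er{eq:DRE} guaranteed by Lemma \ref{lem:existence}, define $V(s,\hat\xi)\doteq\demi\,\hat\xi'\,\hat P_s^\alpha\,\hat\xi$, and differentiate along any admissible trajectory on $(0,t)$. Substituting the DRE \er{eq:DRE} for $\dot{\hat P}_s^\alpha$ and completing the square in $u_s$ yields
\begin{align}
\ddtone{V(s,\hat\xi_s)}{s} + \demi\,\hat\xi_s'\,\hat V_s^\alpha\,\hat\xi_s + \ts{\frac{\kappa}{2}}\,|u_s|^2
& = \ts{\frac{\kappa}{2}}\left| u_s + \ts{\frac{1}{\kappa}}\,\hat B'\,\hat P_s^\alpha\,\hat\xi_s \right|^2,
\nonumber
\end{align}
where the cross terms cancel by design of the Riccati equation. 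Integrating on $[0,t]$ and using $\hat P_t^\alpha=\hat P_t$ together with the cost identity above gives
\begin{align}
J_t(x,u,\alpha) - V(0,\hat x)
& = \ts{\frac{\kappa}{2}}\int_0^t \left| u_s + \ts{\frac{1}{\kappa}}\,\hat B'\,\hat P_s^\alpha\,\hat\xi_s \right|^2 ds\ \ge 0,
\nonumber
\end{align}
with equality iff $u_s=-\frac{1}{\kappa}\hat B'\,\hat P_s^\alpha\,\hat\xi_s$ a.e. Closing the loop and using the block partition \er{eq:P-hat} gives $\hat B'\,\hat P_s^\alpha\,\hat\xi_s = B'(P_s^\alpha\xi_s^\alpha + Q_s^\alpha)$, which is exactly \er{eq:optimal-control-fixed-alpha}; feasibility of this $u^\alpha\in\cU[0,t]$ follows from continuity of $s\mapsto P_s^\alpha,Q_s^\alpha$ and of $\xi^\alpha$ on the compact interval $[0,t]$. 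Hence $\wh W_t^\alpha(x)=V(0,\hat x)=\br W_t^\alpha(0,x)$, and the infimum in \er{eq:value-W-hat-M-alpha} is attained uniquely (uniqueness also being available from strict convexity in Lemma \ref{lem:convex-J-hat}).

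The main technical point to handle with care is the regularity of $\hat P^\alpha$: Lemma \ref{lem:existence} only provides $C^1$ on the open interval $(0,t)$, so the differentiation identity above must be integrated on subintervals $[\delta,t-\delta]$ and then extended to $[0,t]$ using continuity of $s\mapsto\hat P_s^\alpha$ and of $s\mapsto\hat\xi_s$ at the endpoints, together with dominated convergence for the running-cost integrals (the integrand is bounded since $\alpha\in\cA^M[0,t]$ is continuous and $|\xi|,|u|$ are in $L^2$). Every other ingredient is either a direct consequence of the block structure in \er{eq:data-and-BC} or the standard Riccati algebra; no result beyond Lemma \ref{lem:existence} and the cost reformulation above is needed.
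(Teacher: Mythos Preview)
Your proposal is correct and follows essentially the same verification strategy as the paper. The paper writes the argument in HJB/Hamiltonian form on $\R^n$, computing $\partial_s\br W_t^\alpha$ and $\nabla_x\br W_t^\alpha$ from the component DREs \er{eq:DRE-P}--\er{eq:DRE-R} and checking that $-\partial_s\br W_t^\alpha = H^\alpha(s,x,\nabla_x\br W_t^\alpha)$, then integrating along an arbitrary trajectory to obtain $\br W_t^\alpha(0,x)\le J_t(x,\bar u,\alpha)$ and along the closed-loop trajectory to obtain equality; you instead stay in the augmented coordinates and obtain the same conclusion via the completing-the-square identity on $\hat\xi$. These are two packagings of the same Riccati verification, and your augmented-space version is arguably cleaner here since \er{eq:DRE} and \er{eq:data-and-BC} are already set up in $(n+1)$-dimensional form. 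Your remark about integrating on $[\delta,t-\delta]$ and passing to the limit to accommodate the $C^1((0,t))$ regularity from Lemma~\ref{lem:existence} is appropriate; the paper glosses over this endpoint issue.
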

\begin{proof}
Fix arbitrary $t\in\R_{>0}$, $M\in\R_{\ge -\phi(0)}$, and $\alpha\in\cA^M[0,t]$. Applying Lemma \ref{lem:existence}, there exists a unique $\hat P^\alpha\in C([0,t];\Sigma^{n+1})\cap C^1((0,t);\Sigma^{n+1})$ of the form \er{eq:P-hat} that satisfies FVP \er{eq:DRE}. Consequently, given any $s\in(0,t)$, $x\in\R^n$, \er{eq:DRE}, \er{eq:P-hat}, \er{eq:DRE-P}, \er{eq:DRE-Q}, \er{eq:DRE-R} imply that 
\begin{align}
	\ts{\pdtone{\br{W}_t^\alpha}{s}}(s,x)
	& = \demi\, \langle x,\, \dot P_s^\alpha\, x\rangle + \langle x,\, \dot Q_s^\alpha \rangle + \demi \, \dot R_s^\alpha
	\nn\\
	& 
	= -\demi\, \langle x, ( A' \, P_s^\alpha + P_s^\alpha \, A - \ts{\frac{1}{\kappa}}\, P_s^\alpha\, B \, B' \, P_s^\alpha 
	% \nn\\
	% & \hspace{30mm}
					+ [K + a^{-1}(\alpha_s) ] I_n )\, x \rangle
	\nn\\
	& \hspace{10mm}
	- \langle x,\, (A - \ts{\frac{1}{\kappa}}\, B\, B' \, P_s^\alpha)'\, Q_s^\alpha \rangle
	% \nn\\
	% & \hspace{10mm}
	+ \demi\, [ \alpha_s + \ts{\frac{1}{\kappa}}\, (Q_s^\alpha)' \, B\, B' \, Q_s^\alpha ],
	\nn\\
	\ggrad_x \br{W}_t^\alpha(s,x)
	& = P_s^\alpha \, x + Q_s^\alpha\,.
	\label{eq:derivatives-W-breve}
\end{align}
Define the Hamiltonian $H^\alpha:[0,t]\times\R^n\times\R^n\rightarrow\R$ by
\begin{align}
	H^\alpha(s,x,p)
	& \doteq \langle p, \, A\, x \rangle - \ts{\frac{1}{2\, \kappa}} \, \langle p,\, B\, B'\, p \rangle
	% \nn\\
	% & \hspace{30mm}
		+ \demi\, [K + a^{-1}(\alpha_s) ]\, |x|^2 - \ts{\frac{\alpha_s}{2}}
	\nn\\
	& = \inf_{u\in\R^m} \{ 
			\langle p,\, A\, x + B\, u \rangle + \ts{\frac{\kappa}{2}}\, |u|^2 
	% \nn\\
	% & \hspace{20mm}
			+ \demi\, [K + a^{-1}](\alpha_s)]\, |x|^2
			- \ts{\frac{\alpha_s}{2}}
	\}
	\label{eq:H}
\end{align}
for all $x,p\in\R^n$, $s\in[0,t]$. Combining \er{eq:derivatives-W-breve}, \er{eq:H}, note that
$-\pdtone{\br{W}_t^\alpha}{s}(s,x) =  H^\alpha(s,x,\ggrad_x \br{W}_t^\alpha(s,x))$.
Fix any $\bar u\in\cU[0,t]$. Define $\bar \xi \doteq\chi(x,\bar u)$ via \er{eq:dynamics}, and observe via \er{eq:H} that $\bar u_s$ is pointwise suboptimal in $H^\alpha(s,\bar\xi_s,\ggrad_x \br{W}_t^\alpha(s,\bar\xi_s))$ for any $s\in[0,t]$. Consequently, 
\begin{align}
	& 0 \le \ts{\pdtone{\br{W}_t^\alpha}{s}}(s,\bar\xi_s) 
	+ \langle \ggrad_x \br{W}_t^\alpha(s,\bar\xi_s),\, A\, \bar\xi_s + B\, \bar u_s \rangle + \ts{\frac{\kappa}{2}}\, |\bar u_s|^2 
	% \nn\\
	% & \hspace{20mm}
			+ \demi\, [K + a^{-1}](\alpha_s)]\, |\bar\xi_s|^2
			- \ts{\frac{\alpha_s}{2}}
	\nn\\
	& = \ts{\ddtone{}{s}} \br{W}_t^\alpha(s,\bar\xi_s) 
			+ \ts{\frac{\kappa}{2}}\, |\bar u_s|^2 
 			+ \demi\, [K + a^{-1}](\alpha_s)]\, |\bar\xi_s|^2
			- \ts{\frac{\alpha_s}{2}}.
	\nn
\end{align}
Integrating with respect to $s\in[0,t]$, and observing that $\breve W_t^\alpha(t,x) = \Psi(x)$, yields
\begin{align}
	\br{W}_t^\alpha(0,x) 
	& \le 
	\int_0^t \ts{\frac{\kappa}{2}}\, |\bar u_s|^2 
			+ \demi\, [K + a^{-1}](\alpha_s)]\, |\bar\xi_s|^2
			- \ts{\frac{\alpha_s}{2}}\, ds
	+ \Psi(\bar\xi_t) 
	= J_t(x,\bar u,\alpha).
	\nn
\end{align}
As $\bar u\in\cU[0,t]$ is arbitrary, it follows by \er{eq:value-W-hat-M-alpha} that
\begin{align}
	& \br{W}_t^\alpha(0,x) \le \wh{W}_t^\alpha(x)\,,
	\label{eq:verify-lower-bound}
\end{align}
for all $x\in\R^n$. 
Consider the initial value problem \er{eq:optimal-control-fixed-alpha}.
By Lemma \ref{lem:existence}, note that $\xi^\alpha\in\Ltwo([0,t];\R^n)$ and $u^\alpha\in\cU[0,t]$. Note further that $u_s^\alpha\in\R^m$ is pointwise optimal in $H^\alpha(s,\xi_s^\alpha,\ggrad_x \br{W}_t^\alpha(s,\xi_s^\alpha))$ for any $s\in[0,t]$. Hence, repeating the above argument and applying \er{eq:value-W-hat-M-alpha}, \er{eq:verify-lower-bound},
\begin{align}
	& \wh{W}_t^\alpha(x)
	\le J_t(x,u^\alpha,\alpha) 
	= 
	\br{W}_t^\alpha(0,x) \le \wh{W}_t^\alpha(x).
	\nn
\end{align}
Recalling that $x\in\R^n$ is arbitrary completes the proof.
\end{proof}

\begin{theorem}
\label{thm:optimal-inputs}
Given any $t\in\R_{>0}$, $M\in\R_{\ge -\phi(0)}$, suppose 
there exists a unique solution 
\begin{align}
	& P^*\in C([0,t];\Sigma^n)\cap C^1((0,t);\Sigma^n),  \quad
	Q^*, \xi^*\in C([0,t];\R^n)\cap C^1((0,t);\R^n)
	\nn
\end{align}
of the two point boundary value problem (TPBVP)
\begin{align}
	\hspace{0mm}
	-\dot P_s^*
	& = A'\, P_s^* + P_s^*\, A - \ts{\frac{1}{\kappa}} \, P_s^* \, B \, B' \, P_s^* + [K + a^{-1}\circ\hat\alpha^{M*}(|\xi_s^*|^2)]\, I_n\,,
	&& P_t^* = P_t,
	\nn\\
	-\dot Q_s^*
	& = (A - \ts{\frac{1}{\kappa}}\, B\, B' \, P_s^*)'\, Q_s^*\,,
	&& Q_t^* = Q_t = -P_t\, z,
	\nn\\
	\dot \xi_s^*
	& = (A - \ts{\frac{1}{\kappa}}\, B\, B' \, P_s^*)\, \xi_s^*  - \ts{\frac{1}{\kappa}}\, B\, B' \, Q_s^*\,,
	&& \xi_0 = x,
	\label{eq:TPBVP}
\end{align}
for all $s\in(0,t)$, 
\if{false}

defined with respect to the boundary data
\begin{gather}
	P_t^* = P_t,
	\quad 
	Q_t^* = Q_t = -P_t\, z,
	\quad
	\xi_0 = x,
	\label{eq:TPBVP-BC}
\end{gather}

\fi
where $P_t$, $z$ are as per \er{eq:cost-Psi}. Then, the optimal inputs $u^* = u^{M*}\in\cU[0,t]$, $\alpha^* = \alpha^{M*}\in\cA^M[0,t]$ in \er{eq:games-M} are given by the state feedback characterizations
\begin{align}
	& u_s^{*}
	= -\ts{\frac{1}{\kappa}}\, B' (P_s^*\, \xi_s^* + Q_s^*)\,,
	\quad
	\alpha_s^{*}
	= \hat\alpha^{M*}(|\xi_s^*|^2)\,,
	\label{eq:optimal-inputs}
\end{align}
for all $s\in[0,t]$, in which $\hat\alpha^{M*}$ is as per \er{eq:alpha-M-star}, \er{eq:alpha-star-in-cost-J-hat-M}.
\end{theorem}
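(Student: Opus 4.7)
The plan is to construct a solution of the TPBVP \er{eq:TPBVP} directly from the known optimal pair $(u^{M*},\alpha^{M*})$ provided by Theorem \ref{thm:inf-sup-swap-M}, and then invoke the assumed uniqueness of the TPBVP solution to identify that construction with $(P^*,Q^*,\xi^*)$.

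First, let $\xi^{M*}\doteq\chi(x,u^{M*})$. By Lemma \ref{lem:u-star-is-optimal-in-cost-J-hat} and the definition \er{eq:alpha-star-in-cost-J-hat-M} in Lemma \ref{lem:measurable-selection-M}, the maximizing policy satisfies $\alpha^{M*}_s = \hat\alpha^{M*}(|\xi^{M*}_s|^2)$ for all $s\in[0,t]$. Since $\xi^{M*}\in C([0,t];\R^n)$ by Lemma \ref{lem:chi} {\em (i)}, and the two branches of $\hat\alpha^{M*}$ in \er{eq:alpha-M-star} match at $|x|^2=\hat\rho(M)$ (because $\phi'(\hat\rho(M))=a^{-1}(M)$ by \er{eq:Phi-M-from-A-and-rho-M}, so $a\circ\phi'(\hat\rho(M))=M$), the composition $\alpha^{M*}$ is continuous and lies in $\cA^M[0,t]$. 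Moreover, Lemma \ref{lem:u-star-is-optimal-in-cost-J-hat} asserts that $u^{M*}$ is the unique minimizer of the inner LQR cost $J_t(x,\cdot,\alpha^{M*})$.

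Next, I apply Theorem \ref{thm:inner-LQR} with $\alpha = \alpha^{M*}$. This yields a unique Riccati solution $\hat P^{\alpha^{M*}}\in C([0,t];\Sigma^{n+1})\cap C^1((0,t);\Sigma^{n+1})$ with the partitioned components $P^{\alpha^{M*}},Q^{\alpha^{M*}},R^{\alpha^{M*}}$ solving \er{eq:DRE-P}--\er{eq:DRE-R}, together with the inner-problem optimal control $u_s^{\alpha^{M*}} = -\ts{\frac{1}{\kappa}}\, B'[P_s^{\alpha^{M*}}\xi_s^{\alpha^{M*}} + Q_s^{\alpha^{M*}}]$, $\xi_0^{\alpha^{M*}}=x$. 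Uniqueness of the minimizer (Lemma \ref{lem:convex-J-hat}) forces $u^{\alpha^{M*}} = u^{M*}$, hence $\xi^{\alpha^{M*}} = \xi^{M*}$. Substituting $\alpha^{M*}_s = \hat\alpha^{M*}(|\xi^{M*}_s|^2)$ into the coefficient $K + a^{-1}(\alpha^{M*}_s)$ appearing in \er{eq:DRE-P} and inserting the feedback law into \er{eq:dynamics} shows that the triple $(P^{\alpha^{M*}},Q^{\alpha^{M*}},\xi^{M*})$ verbatim satisfies the coupled system \er{eq:TPBVP}.

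Finally, by the assumed uniqueness of the TPBVP solution, $(P^{\alpha^{M*}},Q^{\alpha^{M*}},\xi^{M*}) = (P^*,Q^*,\xi^*)$. Substituting into the feedback expression for $u^{M*}$ above, and into $\alpha^{M*}_s = \hat\alpha^{M*}(|\xi^{M*}_s|^2)$, yields the characterizations \er{eq:optimal-inputs}. I expect the only delicate point to be verifying that $\alpha^{M*}\in\cA^M[0,t]$ (so that Theorem \ref{thm:inner-LQR} is applicable), which hinges on the value-matching $a\circ\phi'(\hat\rho(M))=M$ at the breakpoint of \er{eq:alpha-M-star}; everything else reduces to direct substitution and the uniqueness hypothesis.
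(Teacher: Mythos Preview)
Your argument is correct and uses the same ingredients as the paper (Theorem \ref{thm:inner-LQR}, Lemma \ref{lem:u-star-is-optimal-in-cost-J-hat}, and a uniqueness step), but runs in the opposite direction. The paper starts from the assumed TPBVP solution $(P^*,Q^*,\xi^*)$, defines candidate inputs $u^*,\alpha^*$ via \er{eq:optimal-inputs}, applies Theorem \ref{thm:inner-LQR} with $\alpha=\alpha^*$, and identifies $P^{\alpha^*}=P^*$, $Q^{\alpha^*}=Q^*$, $\xi^{\alpha^*}=\xi^*$ via Lemma \ref{lem:existence} and ODE uniqueness; it then closes with a cost sandwich $\bar J_t^M(x,u^*)=J_t(x,u^*,\alpha^*)\ge\ol{W}_t^M(x)\ge J_t(x,u^{\alpha^*},\alpha^*)=J_t(x,u^*,\alpha^*)$ to conclude $(u^*,\alpha^*)=(u^{M*},\alpha^{M*})$. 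You instead begin with the saddle pair $(u^{M*},\alpha^{M*})$, use Lemma \ref{lem:u-star-is-optimal-in-cost-J-hat} (together with strict convexity from Lemma \ref{lem:convex-J-hat}) to identify $u^{M*}$ with the inner-LQR minimizer $u^{\alpha^{M*}}$, observe that the resulting Riccati and trajectory data solve the TPBVP, and then invoke the TPBVP uniqueness hypothesis directly. Your route is marginally shorter, since Lemma \ref{lem:u-star-is-optimal-in-cost-J-hat} already delivers the key fact that $u^{M*}$ minimizes $J_t(x,\cdot,\alpha^{M*})$, making the paper's final cost sandwich unnecessary; on the other hand, the paper's version shows that the identification $P^{\alpha}=P^*$, $Q^{\alpha}=Q^*$, $\xi^{\alpha}=\xi^*$ actually rests only on Lemma \ref{lem:existence} and standard ODE uniqueness, so the full TPBVP-uniqueness hypothesis is used there only to make the statement well posed rather than in the proof itself.
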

\begin{proof}
Fix $t\in\R_{>0}$, $M\in\R_{\ge -\phi(0)}$, and suppose that a unique solution of TPBVP \er{eq:TPBVP} exists as per the theorem statement. Inputs $u^*\in\cU[0,t]$ and $\alpha^*\in\cA^M[0,t]$ of \er{eq:optimal-inputs} thus exist and are uniquely defined by \er{eq:TPBVP}, % \er{eq:TPBVP-BC}, 
\er{eq:optimal-inputs}. Applying Theorem \ref{thm:inner-LQR}, i.e. \er{eq:optimal-control-fixed-alpha}, with $\alpha\doteq\alpha^*$ yields the optimal control in \er{eq:value-W-hat-M-alpha}, with 
$u_s^{M*} = u_s^\alpha = -\ts{\frac{1}{\kappa}}\, B' \, (P_s^{\alpha}\, \xi_s^{\alpha} + Q_s^{\alpha})$,
in which $P^{\alpha}$, $Q^{\alpha}$, $\xi^{\alpha}$ are as per \er{eq:optimal-control-fixed-alpha}. Note by inspection that $P^{\alpha}$, $Q^{\alpha}$ satisfy the FVPs defined by \er{eq:DRE-P}, \er{eq:DRE-Q}, % \er{eq:BC-PQR}, 
which are precisely the FVPs defined by the first two equations of \er{eq:TPBVP}. % along with \er{eq:TPBVP-BC}. 
By assertion, these FVPs exhibit a unique solution given by $P^*$, $Q^*$, so that $P^{\alpha} \equiv P^*$, $Q^{\alpha}\equiv Q^*$. Consequently, \er{eq:optimal-control-fixed-alpha} and \er{eq:optimal-inputs} imply that
\begin{align}
	u_s^{\alpha}
	& = -\ts{\frac{1}{\kappa}}\, B'\, (P_s^{\alpha}\, \xi_s^{\alpha} + Q_s^{\alpha}) 
	= -\ts{\frac{1}{\kappa}}\, B'\, (P_s^*\, \xi_s^* + Q_s^*) 
	= u_s^*,
	\label{eq:u-alpha-star}
\end{align}
for all $s\in[0,t]$, so that $\xi^{\alpha} \equiv \xi^*$ by \er{eq:optimal-control-fixed-alpha}. Consequently,
$\alpha_s = \alpha_s^* = \hat\alpha^{M*}(|\xi_s^*|^2) =  \hat\alpha^{M*}(|[\chi(x,u^*)]_s|^2)$ for all $s\in[0,t]$. Hence, Lemma \ref{lem:measurable-selection-M} implies that
\begin{align}
	& \bar J_t^M(x,u^*) = \sup_{\alpha\in\cA^M[0,t]} J_t(x, u^*, \alpha) = J_t(x, u^*, \alpha^*)
	\nn\\
	& \ge \ol{W}_t^M(x) = \sup_{\alpha\in\cA^M[0,t]} \wh{W}_t^\alpha(x)
	= \sup_{\alpha\in\cA^M[0,t]} J_t(x, u^\alpha, \alpha) = \sup_{\alpha\in\cA^M[0,t]} J_t(x, u^*, \alpha)
	% \nn\\
	% & 
	\ge J_t(x, u^*, \alpha^*)\,,
	\nn
\end{align}
in which the first inequality is immediate by \er{eq:value-W-M}, and the last inequality follows by \er{eq:u-alpha-star}.
Hence, $\ol{W}_t^M(x) = J_t(x, u^*, \alpha^*)$, and uniqueness via Theorem \ref{thm:inf-sup-swap-M} and Lemma \ref{lem:u-star-is-optimal-in-cost-J-hat} imply that $u^* = u^{M*}$ and $\alpha^* = \alpha^{M*}$ as required.
\end{proof}

\begin{remark}
Theorem \ref{thm:optimal-inputs} implies that the unique optimal inputs $u^{M*}$ and $\alpha^{M*}$ of Theorem \ref{thm:inf-sup-swap-M} and Lemma \ref{lem:u-star-is-optimal-in-cost-J-hat} can be computed via the state feedback characterizations \er{eq:optimal-inputs}, which depend on the solution of TPBVP \er{eq:TPBVP}. Consequently, as expected, a shooting method applied to TPBVP \er{eq:TPBVP} will yield numerical approximations of these optimal inputs in specific examples.
\end{remark}

\begin{remark}
Recent work \cite{DC1:17} by the authors has further generalized the approach described in this paper to include linear time-varying dynamics, and convex constraints defined by the intersection of a finite collection of $p\in\N$ ellipses. The latter generalization involves an increase in the dimension of the range of the actions of the barrier penalty negotiating player, i.e. $\alpha_s\in\R^p$, $s\in[0,t]$. Crucially, the dimension of the DRE \er{eq:DRE}, or equivalently the DREs \er{eq:DRE-P}, \er{eq:DRE-Q}, \er{eq:DRE-R}, does not change, so that the dimension of the dynamics underlying the TPBVP involved does not increase beyond that presented here. % The technical details follow analogously, and are omitted for brevity. 
The interested reader is referred to \cite{DC1:17} for some preliminary details and examples.
\end{remark}

%%		Examples.

\section{Illustrative example}
\label{sec:examples}

In illustrating an application of Theorems \ref{thm:game}, \ref{thm:inf-sup-swap-M}, and \ref{thm:optimal-inputs} the approximate  solution of a state constrained regulator problem \er{eq:value-W} via the approximate problem \er{eq:value-W-M} and corresponding game \er{eq:game-W-M}, a simple example is considered. The linear dynamics \er{eq:dynamics} and barrier \er{eq:barrier} are specified by 
\begin{gather}
	A \doteq \left[ \ba{cc}
			-1 & 2 \\ -1 & 1
		\ea \right],
	\quad
	B \doteq \left[ \ba{c} 1 \\ 0 \ea \right],
	\qquad
	\begin{gathered}
	\phi:[0,b^2)\rightarrow\R, \ b\doteq 3, \\
	\phi(\rho)
	\doteq - \log(1 - \rho/9), \ \rho\in[0,b^2),
	\end{gathered}
	\label{eq:example}
\end{gather}
while the running cost \er{eq:cost-I-bar} and its approximation \er{eq:cost-I-J-M} are specified by $t \doteq 4$, $\kappa\doteq 1$, $K\doteq 0.1$, and $M\doteq 50$. The sup-of-quadratics representation for $\Phi^M$ provided by Theorem \ref{thm:Phi-M-sup-of-quadratics} is illustrated in Figure \ref{fig:sup-of-quad-no-dip}. The trajectory defined by TPBVP \er{eq:TPBVP} is computed using a standard shooting method, which integrates the state dynamics \er{eq:dynamics} and FVP \er{eq:DRE} % , \er{eq:BC-PQR} 
backward in time from the known terminal cost $\wh{P}_t^* = \wh{P}_t\in\Sigma^3$, and a candidate terminal state $\xi_t^* = \xi_t\in\R^2$. The error in the obtained initial state $|x - \xi_0^*|$ is subsequently iteratively minimized by varying $\xi_t$ using a Nelder-Mead simplex method. % in {\rm MATLAB}.

{\bf Case I:} {\em Terminal cost \er{eq:cost-Psi} with $z\doteq 0$ and $P_t \doteq I_2$.}
A pair of optimal trajectories for this terminal cost case is illustrated in Figure \ref{fig:case-I-traj}, corresponding to the barrier cost being active or inactive, i.e. included or excluded, in the cost \er{eq:cost-J-bar}, \er{eq:cost-I-J-M}. The circle included identifies the boundary of the state constraint imposed. An initial state of $x \doteq [\ba{cc} 1.6 & -1.6 \ea]'$ for dynamics \er{eq:dynamics} is assumed. Figures \ref{fig:case-I-alpha} and \ref{fig:case-I-control} illustrate the optimal inputs $\tilde\alpha^*$ and $\tilde u^*$ of \er{eq:optimal-inputs} respectively. By inspection of the unconstrained case, $\tilde\alpha^*$ attains its maximum value of $M=50$ where the constraint is violated. However, as $\tilde\alpha^*$ does not influence the control in the unconstrained case, the trajectory is not adjusted accordingly. In contrast, in the active constraint case, $\tilde\alpha^*$ attains a maximum of approximately $35$ as the trajectory approaches the constraint. By inspection, the state constraint is not violated, due to the intervention evident in the large actuated control $\tilde u^*$ that ensues.

{\bf Case II:} {\em Terminal cost \er{eq:cost-Psi} with $z\doteq [\, 1 \ \ 1 \,]'$ and $P_t \doteq 10\, I_2$.}
The terminal cost is adjusted in this case so as to encourage the trajectory to move towards the non-zero terminal state $\xi_t = z = [\, 1 \ \ 1 \,]'$, while respecting the state constraint. Figures \ref{fig:case-II-traj}, \ref{fig:case-II-alpha}, and \ref{fig:case-II-control} illustrate respectively the corresponding state trajectories, the optimal input $\tilde\alpha^*$, and the optimal control $\tilde u^*$ obtained, by solving TPBVP \er{eq:TPBVP}, with the constraint inactive and active.

% {\bf \rev{?? HERE 31 March 2019 ??}}

Note that in both cases, the log barrier function $\phi$ of the form specified in \er{eq:example} yields
\begin{gather}
	\phi(\rho) = -\log(1 - \frac{\rho}{b^2}), \quad
	\phi'(\rho) = \frac{1}{b^2 - \rho}, \quad
	\phi''(\rho) = \frac{1}{(b^2 - \rho)^2},
	\nn\\
	a(\beta) = b^2\, \beta - \log(b^2\, \beta) - 1, \quad
	a'(\beta) = (\phi')^{-1}(\beta) = b^2 - \frac{1}{\beta}, \quad
	a''(\beta) = \frac{1}{\beta^2},
	\nn\\
	(\phi')^{-1}(\beta) = b^2 - \frac{1}{\beta}, \quad
	a^{-1}(\alpha) = -\frac{1}{b^2}\, W_{-1}(-\exp(-1-\alpha)), \quad
	(a')^{-1}(\rho) = \phi'(\rho) = \frac{1}{b^2 - \rho},
	\nn
\end{gather}
in which $W_{-1}$ is the $-1$ branch of the {\em Lambert-W function}. In practice, it was found that evaluating the inverses numerically was sufficiently accurate and fast, e.g. solving $\alpha = a(\beta)$ for $\beta$ given $\alpha$.
% \end{remark}

%%%%%%%%%%%%%%%%%%%%%%%%%%%%%%%%%%%%%%%%%%%%%%%%%%%%%%%%%%%%%%%%%%%%
%%
%%		Conclusions.
%%

\section{Conclusions}
\label{sec:conc}
A sup-of-quadratics representation is developed for a class of convex barrier functions of interest in implementing state constraints in linear regulator problems. Using this representation, an equivalent unconstrained two player linear quadratic game is constructed. By demonstrating equivalence of its upper and lower values, an approach to computation is presented, and illustrated by example.

\begin{figure}[h]
\vspace{-1mm}
\begin{center}
\begin{subfigure}[t]{0.45\textwidth}
\psfrag{x1}{$x_1$}
\psfrag{x2}{$x_2$}
\includegraphics[width=\textwidth,height=60mm]{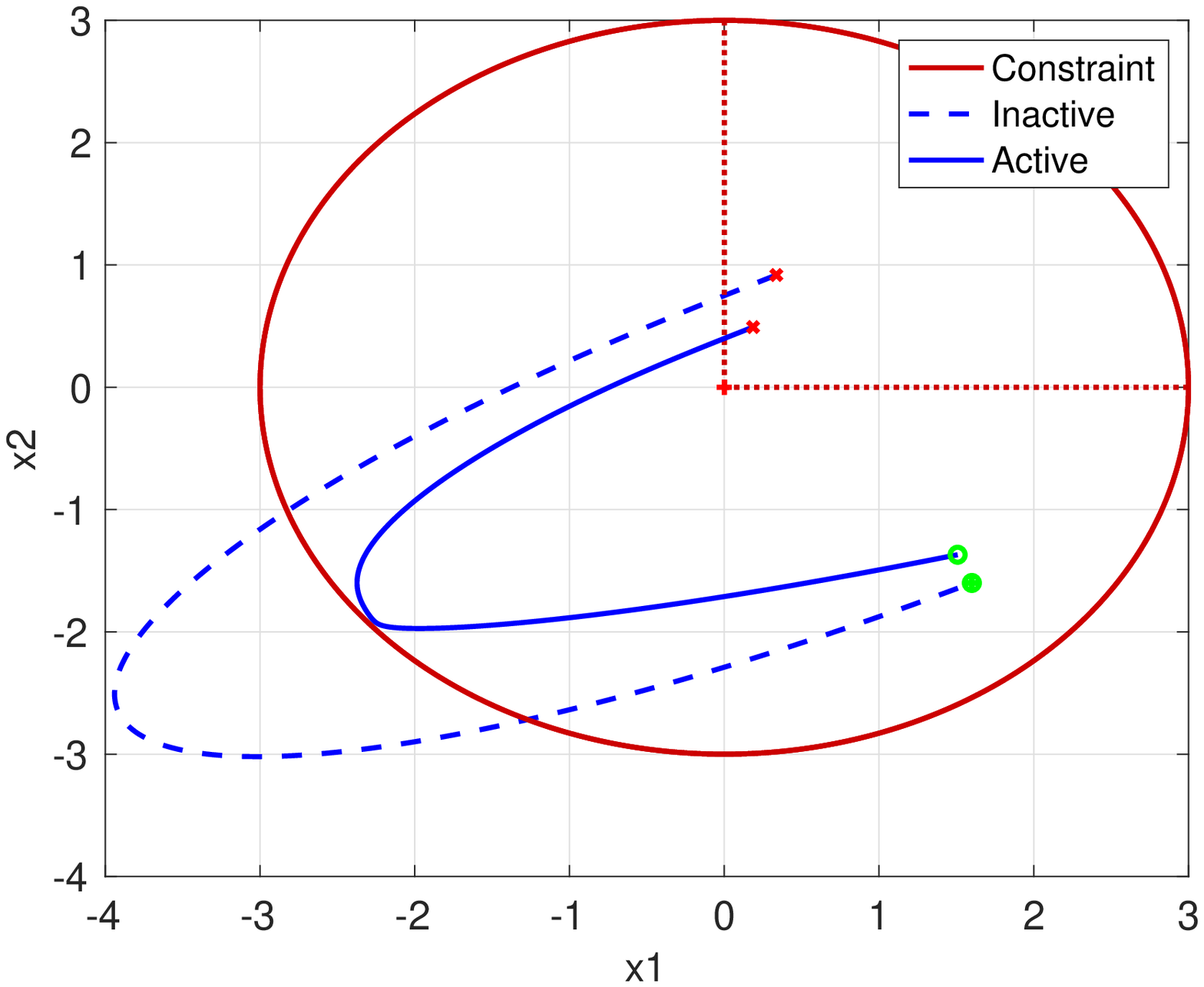}
\vspace{-6mm}
\caption{Case I.}
\label{fig:case-I-traj}
\end{subfigure}
\quad
\begin{subfigure}[t]{0.45\textwidth}
\psfrag{x1}{$x_1$}
\psfrag{x2}{$x_2$}
\includegraphics[width=\textwidth,height=60mm]{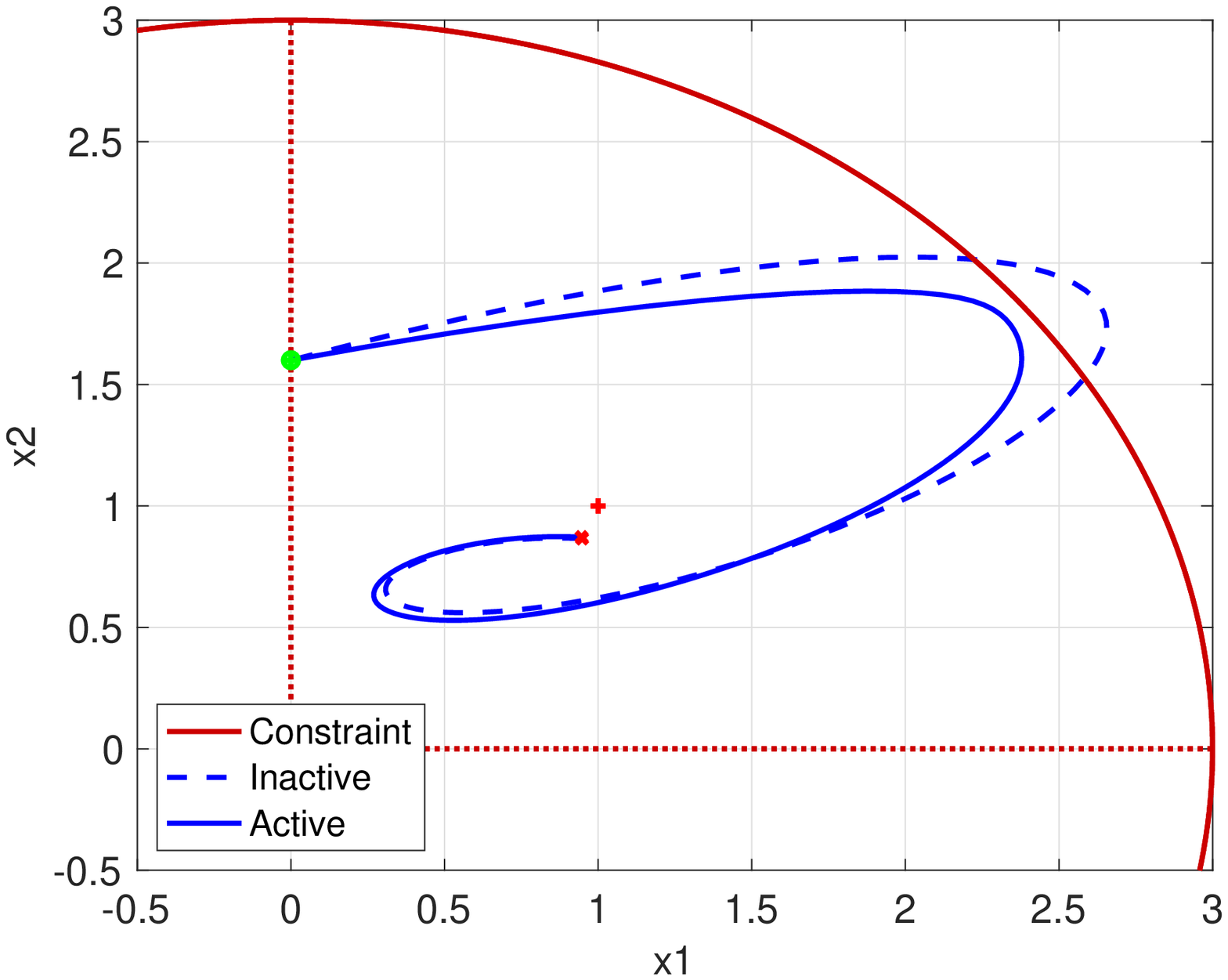}
\vspace{-6mm}
\caption{Case II.}
\label{fig:case-II-traj}
\end{subfigure}
\vspace{-3mm}
\caption{State trajectory with state constraint active and inactive (Cases I and II).}
\label{fig:state-traj}
\end{center}
\vspace{-3mm}
\end{figure}

\begin{figure}[h]
\begin{center}
\vspace{-2mm}
\begin{subfigure}[t]{0.45\textwidth}
\psfrag{s}{$s$}
\psfrag{alpha}{$\tilde\alpha_s^*$}
\includegraphics[width=\textwidth,height=60mm]{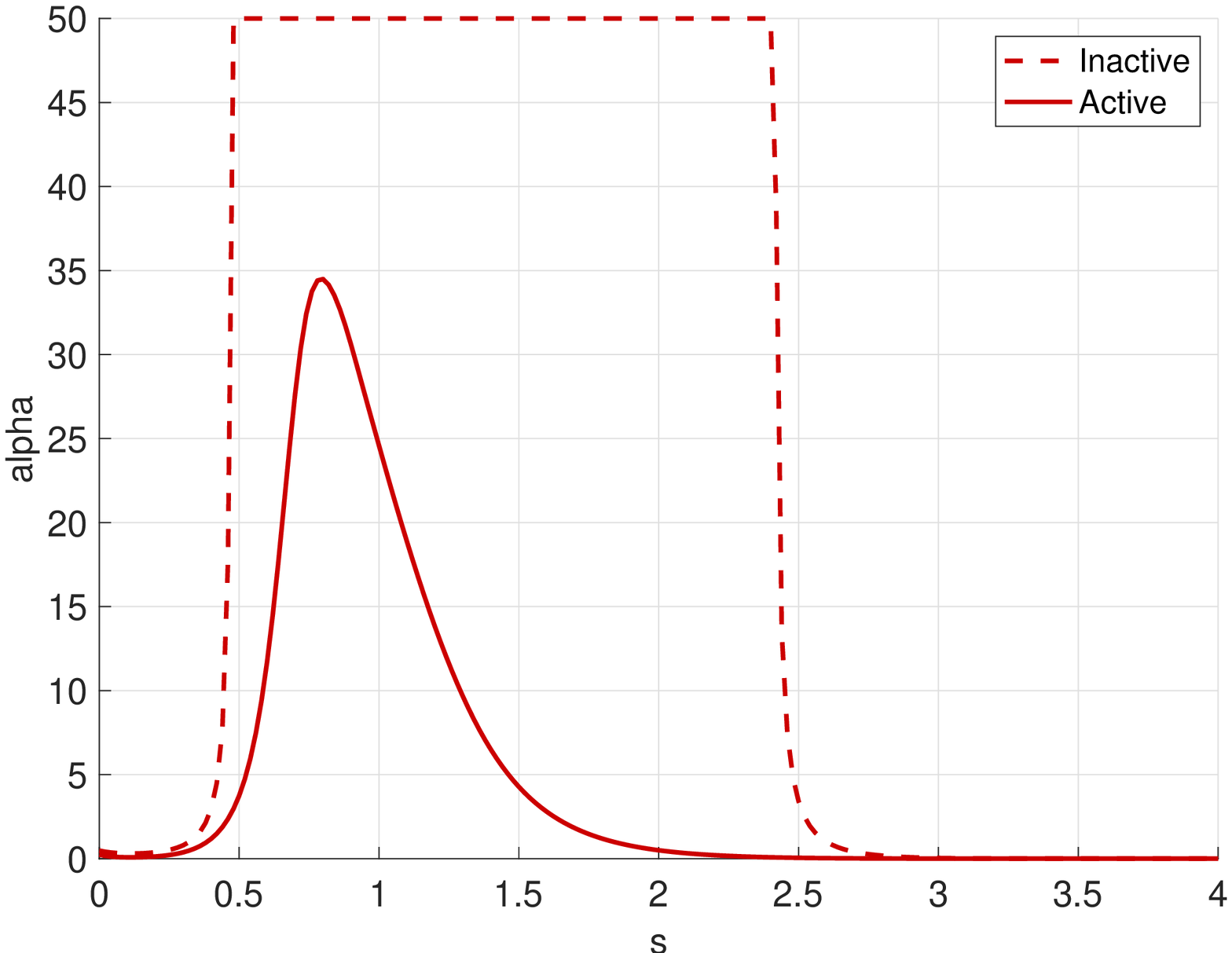}
\vspace{-6mm}
\caption{Case I.}
\label{fig:case-I-alpha}
\end{subfigure}
\quad
\begin{subfigure}[t]{0.45\textwidth}
\psfrag{s}{$s$}
\psfrag{alpha}{$\tilde\alpha_s^*$}
\includegraphics[width=\textwidth,height=60mm]{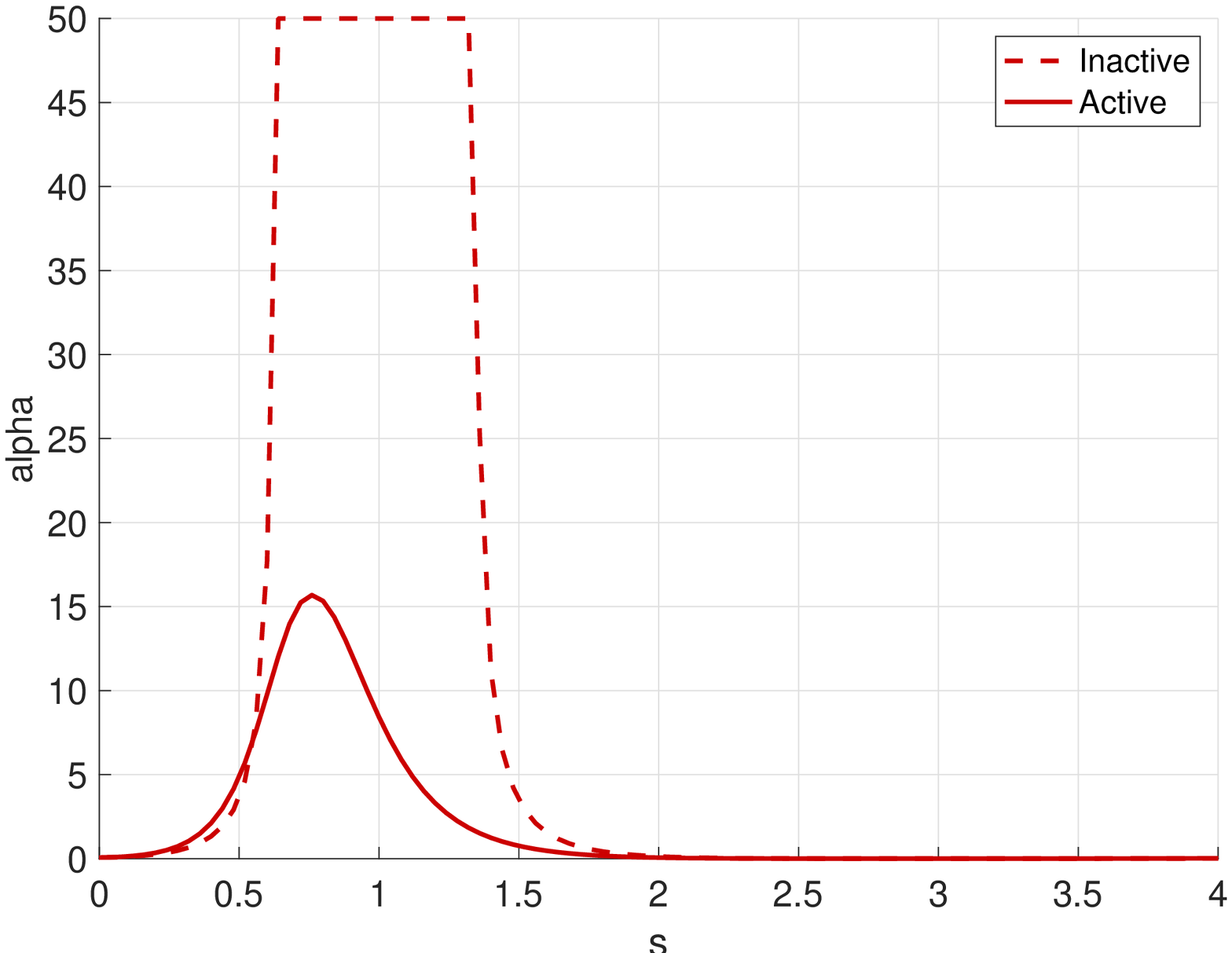}
\vspace{-6mm}
\caption{Case II.}
\label{fig:case-II-alpha}
\end{subfigure}
\vspace{-3mm}
\caption{Optimal input $\tilde\alpha^*$ (Cases I and II).}
\label{fig:alpha}
\end{center}
\vspace{-3mm}
\end{figure}

\begin{figure}[h]
\begin{center}
\vspace{-2mm}
\begin{subfigure}[t]{0.45\textwidth}
\psfrag{s}{$s$}
\psfrag{u}{$\tilde u_s^*$}
\includegraphics[width=\textwidth,height=60mm]{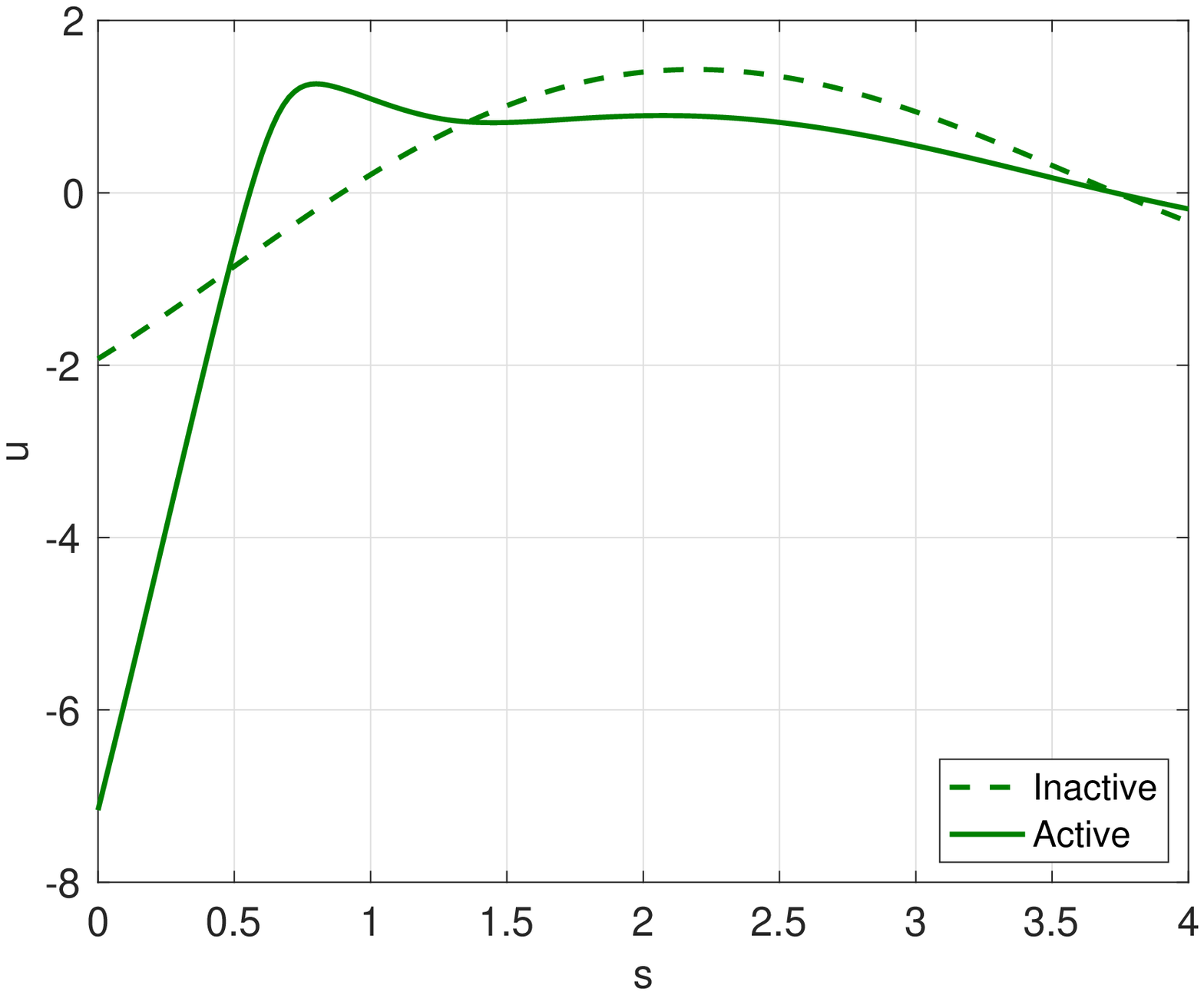}
\vspace{-6mm}
\caption{Case I.}
\label{fig:case-I-control}
\end{subfigure}
\quad
\begin{subfigure}[t]{0.45\textwidth}
\psfrag{s}{$s$}
\psfrag{u}{$\tilde u_s^*$}
\includegraphics[width=\textwidth,height=60mm]{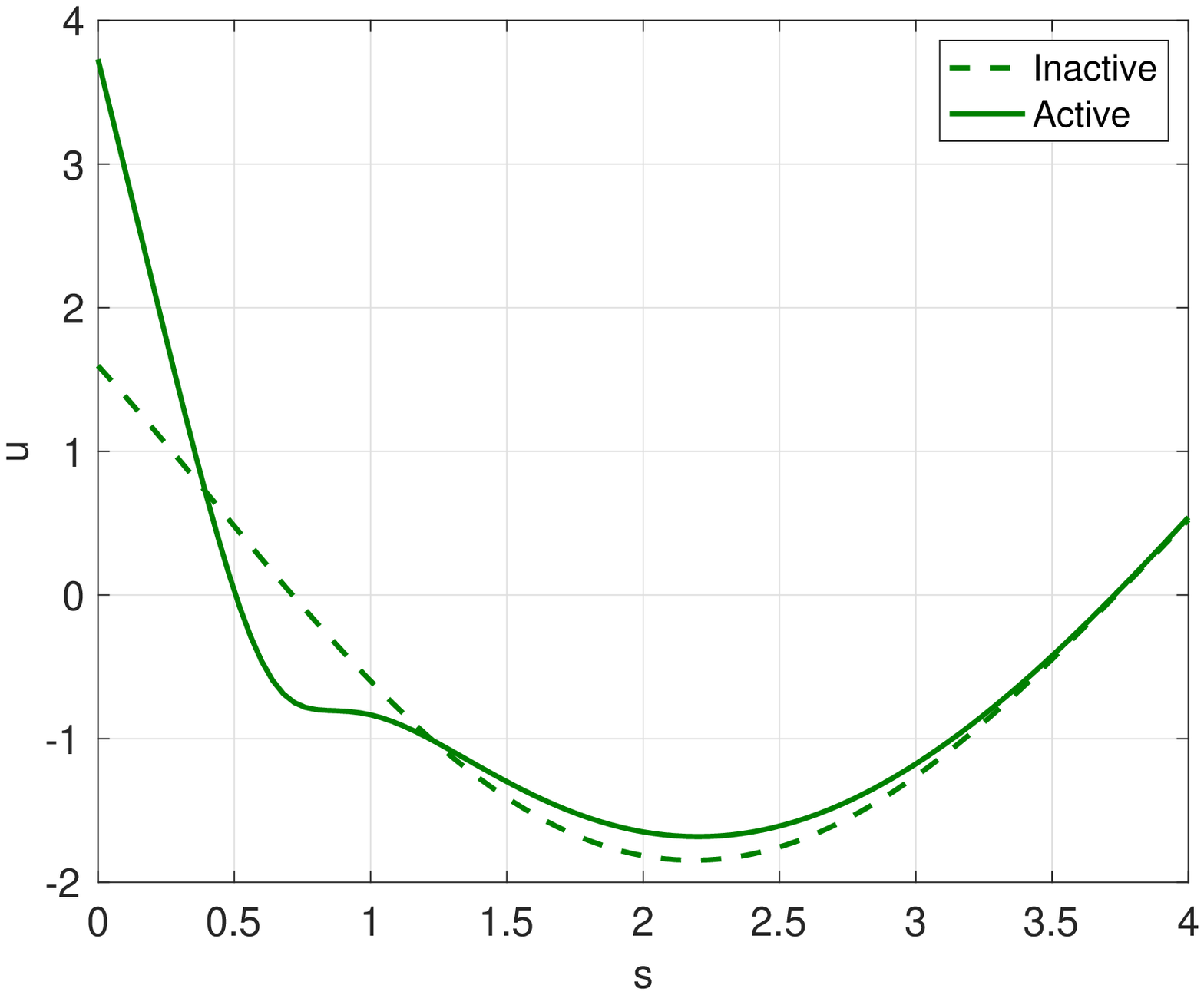}
\vspace{-6mm}
\caption{Case II.}
\label{fig:case-II-control}
\end{subfigure}
\vspace{-3mm}
\caption{Optimal control $\tilde u^*$ (Cases I and II).}
\label{fig:control}
\end{center}
\vspace{-3mm}
\end{figure}

\clearpage

% \addtolength{\textheight}{-12cm}   % This command serves to balance the column lengths
                                  % on the last page of the document manually. It shortens
                                  % the textheight of the last page by a suitable amount.
                                  % This command does not take effect until the next page
                                  % so it should come on the page before the last. Make
                                  % sure that you do not shorten the textheight too much.

%%%%%%%%%%%%%%%%%%%%%%%%%%%%%%%%%%%%%%%%%%%%%%%%%%%%%%%%%%%%%%%%%%%%
%%
%%		Appendix (if required).
%%

%% OMITTED APPENDIX %%
% \if{false}

\appendix

%%		Preliminaries.

\section{Some useful properties of the barrier and its dual}
\label{app:properties}

\begin{lemma}
\label{lem:a-properties}
Given $\phi$ satisfying \er{eq:phi-properties-1-2}, the function $a$ of \er{eq:exact-a} is well-defined, differentiable, and strictly increasing, and has a well-defined, differentiable, and strictly increasing first derivative $a'$ and inverse $a^{-1}$, and a well-defined, strictly positive second derivative $a''$, satisfying
\begin{align}
	& a' : \R_{\ge \phi'(0)} \rightarrow [0,b^2), 
	\quad a'(\beta) = (\phi')^{-1}(\beta),
	\label{eq:exact-a-dot}
	\\
	& a'' : \R_{\ge \phi'(0)}\rightarrow\R_{>0},
	\quad a''(\beta) = \frac{1}{\phi''\circ (\phi')^{-1}(\beta)}.
	\label{eq:a-ddot-positive}
	\\
	& a^{-1} : \R_{\ge -\phi(0)} \rightarrow\R_{\ge \phi'(0)},
	\quad a^{-1}(-\phi(0)) = \phi'(0),
	\label{eq:exact-a-inv}
	\\
	& (a^{-1})' : \R_{\ge -\phi(0)} \rightarrow\ol{\R}_{>1/b^2}^+,
	\ (a^{-1})'(\alpha) = \frac{1}{a'\circ a^{-1}(\alpha)},
	\label{eq:exact-a-inv-dot}
	\\
	& \lim_{\alpha\uparrow\infty} (a^{-1})'(\alpha) = 1/b^2,
	\
	\lim_{\alpha\downarrow -\phi(0)} (a^{-1})'(\alpha) = +\infty.
	\label{eq:exact-a-inv-dot-limits}
	\\[-5mm]
	\nn
\end{align}
\end{lemma}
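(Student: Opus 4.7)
The plan is to derive all assertions by differentiating the explicit formula \er{eq:exact-a} for $a$, using the hypotheses on $\phi$ in \er{eq:phi-properties-1-2} together with the inverse function theorem applied first to $\phi'$ and then to $a$ itself. First, I would observe that $\phi'\in C^1([0,b^2);[\phi'(0),\infty))$ with $\phi''>0$, so by the inverse function theorem $(\phi')^{-1}$ is well-defined, strictly increasing, and continuously differentiable on $[\phi'(0),\infty)$, with derivative $[(\phi')^{-1}]'(\beta)=1/\phi''((\phi')^{-1}(\beta))>0$. This immediately ensures $a$ of \er{eq:exact-a} is well-defined and differentiable on $\R_{\ge\phi'(0)}$.

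Next I would compute $a'$ directly: applying the product and chain rules to \er{eq:exact-a} yields
$
a'(\beta)=(\phi')^{-1}(\beta)+\left[\beta-\phi'\!\circ\!(\phi')^{-1}(\beta)\right][(\phi')^{-1}]'(\beta)=(\phi')^{-1}(\beta),
$
since $\phi'\circ(\phi')^{-1}$ is the identity. This establishes \er{eq:exact-a-dot}, and property \er{eq:phi-properties-1-2}\emph{(v)} supplies the range $[0,b^2)$. Differentiating once more and using the formula for $[(\phi')^{-1}]'$ above gives \er{eq:a-ddot-positive}, where strict positivity of $a''$ follows from strict positivity of $\phi''$ (property \er{eq:phi-properties-1-2}\emph{(i)}). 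Strict monotonicity of $a'$ follows from $a''>0$ (or from $(\phi')^{-1}$ being strictly increasing), so $a$ is strictly convex and hence strictly increasing on $\R_{\ge\phi'(0)}$, even though $a'(\phi'(0))=0$.

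For the inverse, I would first establish the image of $a$: evaluation at the left endpoint gives $a(\phi'(0))=\phi'(0)\cdot 0-\phi(0)=-\phi(0)$, and since $a'(\beta)\to b^2>0$ as $\beta\to\infty$ by \er{eq:phi-properties-1-2}\emph{(ii)} (namely $(\phi')^{-1}(\beta)\uparrow b^2$), $a(\beta)\to\infty$. Thus $a$ is a continuous strictly increasing bijection from $\R_{\ge\phi'(0)}$ onto $\R_{\ge -\phi(0)}$, which gives \er{eq:exact-a-inv}, including the boundary identity $a^{-1}(-\phi(0))=\phi'(0)$. Applying the inverse function theorem to $a$ on the open set where $a'>0$ (i.e.\ on $\beta>\phi'(0)$, equivalently $\alpha>-\phi(0)$) produces $(a^{-1})'(\alpha)=1/a'(a^{-1}(\alpha))$, i.e.\ \er{eq:exact-a-inv-dot}; the lower bound $1/b^2$ on $(a^{-1})'$ follows since $a'$ takes values in $[0,b^2)$.

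Finally, the two limits in \er{eq:exact-a-inv-dot-limits} follow by composing continuities. As $\alpha\uparrow\infty$, $a^{-1}(\alpha)\uparrow\infty$, so $a'\circ a^{-1}(\alpha)=(\phi')^{-1}(a^{-1}(\alpha))\uparrow b^2$, whence $(a^{-1})'(\alpha)\to 1/b^2$. As $\alpha\downarrow -\phi(0)$, $a^{-1}(\alpha)\downarrow\phi'(0)$, so $a'\circ a^{-1}(\alpha)\downarrow (\phi')^{-1}(\phi'(0))=0$, whence $(a^{-1})'(\alpha)\to+\infty$. The main subtlety is handling the left endpoint $\beta=\phi'(0)$, where $a'$ vanishes and the inverse function theorem in its classical form does not apply; this is managed by working on the open interval $\beta>\phi'(0)$ and taking a one-sided limit to capture the blow-up of $(a^{-1})'$ at $\alpha=-\phi(0)$, rather than asserting differentiability there.
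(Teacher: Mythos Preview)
Your proposal is correct and follows essentially the same route as the paper: both compute $[(\phi')^{-1}]'(\beta)=1/\phi''\circ(\phi')^{-1}(\beta)$, differentiate \er{eq:exact-a} directly to obtain $a'=(\phi')^{-1}$ and $a''>0$, deduce that $a$ is a strictly increasing bijection onto $[-\phi(0),\infty)$, and then apply the inverse function formula for $(a^{-1})'$ together with the limiting behaviour of $(\phi')^{-1}$ to obtain \er{eq:exact-a-inv-dot} and \er{eq:exact-a-inv-dot-limits}. The only cosmetic difference is that the paper establishes $a(\beta)\to\infty$ by bounding $a'$ below by some $\eps>0$ for large $\beta$, whereas you use $a'(\beta)\to b^2$; these are equivalent here.
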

%
% \begin{proof}
% The stated properties follow (with some calculation) from \er{eq:phi-properties-1-2}, \er{eq:exact-a}, and properties of monotone and differentiable functions. The details are omitted.
% \end{proof}
%
\begin{proof} 
By inspection of \er{eq:exact-a}, and the properties of $(\phi')^{-1}$ provided by \er{eq:phi-properties-1-2} {\em (v)}, it is evident that $a$ is well-defined on $\R_{\ge\phi'(0)}$. Note further that 
$
	[(\phi')^{-1}]'(\beta)
	= 1/[\phi''\circ(\phi')^{-1}(\beta)]
$
for all $\beta\in\R_{\ge\phi'(0)}$, in which the denominator is strictly positive by \er{eq:phi-properties-1-2} {\em (i)}, {\em (v)}. Hence, $a$ is differentiable by inspection of \er{eq:exact-a}, and the chain rule yields
$
	a'(\beta)
	= (\phi')^{-1}(\beta) + \beta\, [(\phi')^{-1}]'(\beta) - [\phi\circ(\phi')^{-1}(\beta)] \, [(\phi')^{-1}]'(\beta) = (\phi')^{-1}(\beta)
$
for all $\beta\in\R_{\ge\phi'(0)}$. Consequently, $a' = (\phi')^{-1}$ is well-defined and strictly increasing, with $a':\R_{\ge\phi'(0)}\rightarrow[0,b^2)$, by \er{eq:phi-properties-1-2} {\em (v)}. That is, \er{eq:exact-a-dot} holds. As $a'(\phi'(0)) = 0$ (by substitution), the strict increase property of $a'$ implies that $a'(\beta)\in\R_{>0}$ for all $\beta\in(\phi'(0),\infty)$. Hence, $a$ is also strictly increasing, and so \er{eq:exact-a} implies that $a(\beta) \ge a(\phi'(0)) = -\phi(0)$ for all $\beta\in[\phi'(0),\infty)$. By the same strict increase property of $a'$, note further that there exists an $\eps>0$ and $\beta_\eps>\phi'(0)$ such that
$a'(\beta) \ge \eps>0$ for all $\beta\ge\beta_\eps$. Consequently, $\lim_{\beta\rightarrow\infty} a(\beta) \ge \lim_{\beta\rightarrow\infty} [ (\beta - \beta_\eps)\, \eps + a(\beta_\eps)] = \infty$. Hence, $a(\beta)\in[-\phi(0),\infty)$ for all $\beta\in[\phi'(0),\infty)$, which confirms the range of $a$ specified in \er{eq:exact-a}. 

By inspection of \er{eq:exact-a-dot} and \er{eq:phi-properties-1-2} {\em(i)}, $a'$ is differentiable with derivative $a''$ given by
$
	a''(\beta)
	= [(\phi')^{-1}]'(\beta) = 1/[\phi''\circ(\phi')^{-1}(\beta)]
$
for all $\beta\in\R_{\ge\phi'(0)}$, which (as indicated previously) is strictly positive by \er{eq:phi-properties-1-2} {\em (i)}, {\em (v)}. Hence, \er{eq:a-ddot-positive} holds.

As $a$ is strictly increasing, the existence of its strictly increasing inverse $a^{-1}$, with domain and range specified by \er{eq:exact-a-inv}, follows immediately from \er{eq:exact-a}. The chain rule and \er{eq:exact-a-dot} subsequently imply that $a^{-1}$ is also differentiable, with derivative \er{eq:exact-a-inv-dot}. There, the range of this derivative follows by \er{eq:phi-properties-1-2} {\em (v)}, \er{eq:exact-a-dot}. The two limits in \er{eq:exact-a-inv-dot-limits} follow directly from \er{eq:exact-a-inv-dot}, with
\begin{align}
	\lim_{\alpha\uparrow\infty} (a^{-1})'(\alpha)
	& = \lim_{\alpha\uparrow\infty} \frac{1}{a'\circ a^{-1}(\alpha)}
	% \nn\\
	% & \quad 
	= \frac{1}{\lim_{\alpha\uparrow\infty} (\phi')^{-1}\circ a^{-1}(\alpha)}
	= \frac{1}{\lim_{\beta\uparrow\infty} (\phi')^{-1}(\beta)} = \frac{1}{b^2},
	\nn\\
	\lim_{\alpha\downarrow -\phi(0)} (a^{-1})'(\alpha)
	% & = \lim_{\alpha\downarrow -\phi(0)} \frac{1}{a'\circ a^{-1}(\alpha)}
	% \nn\\
	% & = \frac{1}{\lim_{\alpha\downarrow -\phi(0)} (\phi')^{-1}\circ a^{-1}(\alpha)}
	% \nn\\
	& = \frac{1}{\lim_{\beta\downarrow \phi'(0)} (\phi')^{-1}(\beta)}
	= +\infty,
	\nn
\end{align}
These limits, along with the fact that $(a^{-1})'$ is decreasing, confirm the range in \er{eq:exact-a-inv-dot}.
\end{proof}

% {\bf \rev{?? HERE Can we get rid of the lemma [lem:gamma-properties] ?? Or at least abbreviate it ??}}

\begin{lemma}
\label{lem:gamma-properties}
Given $\rho\in\R_{\ge 0}$, and $a^{-1}$ as per \er{eq:exact-a-inv}, the map $\gamma_\rho:\R_{\ge -\phi(0)}\rightarrow\R$ defined by
\begin{align}
	\gamma_\rho(\alpha)
	& \doteq a^{-1}(\alpha)\, \rho - \alpha,
	\quad \rho\in\R_{\ge 0},
	\label{eq:gamma}
\end{align}
is twice differentiable with derivatives $\gamma_\rho':\R_{>-\phi(0)} \rightarrow(\rho/b^2-1,\infty)$ and $\gamma_\rho'':\R_{>-\phi(0)} \rightarrow \R_{<0}$ given by
\begin{align}
	\gamma_\rho'(\alpha)
	= \frac{\rho}{a'\circ a^{-1}(\alpha)} - 1 = \frac{\rho}{\hat\rho(\alpha)} - 1,
	\quad
	\gamma_\rho''(\alpha)
	= -\rho\, \frac{ a''\circ a^{-1}(\alpha)}{[a'\circ a^{-1}(\alpha)]^3} = \frac{-\rho}{\phi''\circ\hat\rho(\alpha)\, [\hat\rho(\alpha)]^3}
	\label{eq:gamma-derivatives}
\end{align}
with $\hat\rho$ as per \er{eq:Phi-M-from-A-and-rho-M}. If $\rho\in\R_{\ge b^2}$, then $\alpha\mapsto\gamma_\rho(\alpha)$ is
strictly increasing with $\lim_{\alpha\rightarrow\infty}\gamma_\rho(\alpha) = + \infty$.
\end{lemma}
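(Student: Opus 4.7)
The plan is to obtain everything by straightforward differentiation of $a^{-1}$ via Lemma~\ref{lem:a-properties}, and then to handle the monotonicity/limit claims when $\rho\ge b^2$ by reducing to statements about $\phi$ through the change of variable $\beta = a^{-1}(\alpha)$. First, Lemma~\ref{lem:a-properties} provides that $a^{-1}$ is differentiable on $\R_{\ge -\phi(0)}$ with derivative $(a^{-1})'(\alpha) = 1/(a'\circ a^{-1}(\alpha))$, and that $a'$ is itself differentiable with $a''>0$. Chain-rule differentiation of \er{eq:gamma} immediately yields the formula for $\gamma_\rho'$; the identification with $\rho/\hat\rho(\alpha)-1$ is obtained by noting that $a'(\beta)=(\phi')^{-1}(\beta)$ by \er{eq:exact-a-dot} and hence $a'\circ a^{-1} = (\phi')^{-1}\circ a^{-1} = \hat\rho$, per \er{eq:Phi-M-from-A-and-rho-M}. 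A second application of the chain and quotient rules to $(a^{-1})'$, together with $a''(\beta)=1/[\phi''\circ(\phi')^{-1}(\beta)]$ from \er{eq:a-ddot-positive}, yields both forms of $\gamma_\rho''$.

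Next I would read off the ranges directly from the properties of $\hat\rho$. Since $\hat\rho:\R_{\ge -\phi(0)}\to[0,b^2)$ is strictly increasing with $\hat\rho(-\phi(0))=0$ and $\hat\rho(\alpha)\uparrow b^2$ as $\alpha\to\infty$, the quantity $\rho/\hat\rho(\alpha)-1$ strictly decreases from $+\infty$ down to $\rho/b^2-1$, giving the claimed range for $\gamma_\rho'$. Similarly $\gamma_\rho'' = -\rho\, a''\circ a^{-1}/[a'\circ a^{-1}]^3$ is strictly negative for $\rho>0$ since $a''>0$ and $a'>0$ on $(\phi'(0),\infty)$.

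For the monotonicity assertion when $\rho\ge b^2$, I would simply note that $\hat\rho(\alpha)<b^2\le\rho$ for all $\alpha\in\R_{>-\phi(0)}$, so $\gamma_\rho'(\alpha)>0$ and $\gamma_\rho$ is strictly increasing on its domain. The only delicate point, and the main obstacle, is the limit $\lim_{\alpha\to\infty}\gamma_\rho(\alpha)=+\infty$ in the boundary case $\rho=b^2$, since then $\gamma_\rho'(\alpha)\to 0$ and one must show the integrated growth is still unbounded. I would change variable $\beta = a^{-1}(\alpha)$, writing
\begin{align*}
\gamma_\rho(\alpha) - \gamma_\rho(-\phi(0))
= \int_{\phi'(0)}^{a^{-1}(\alpha)} [\rho - a'(\beta)]\, d\beta
= \int_{\phi'(0)}^{a^{-1}(\alpha)} [\rho - (\phi')^{-1}(\beta)]\, d\beta,
\end{align*}
and then substituting $u=(\phi')^{-1}(\beta)$ (so $d\beta = \phi''(u)\, du$, $u\in[0,b^2)$) to obtain
\begin{align*}
\int_0^{(\phi')^{-1}\circ a^{-1}(\alpha)} [\rho - u]\, \phi''(u)\, du.
\end{align*}
For $\rho>b^2$ divergence is immediate since the integrand is bounded below by $(\rho-b^2)\,\phi''(u)$ and $\phi'(b^2)=\infty$ by \er{eq:phi-properties-1-2}\,\emph{(iv)}. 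For $\rho=b^2$, integration by parts gives $\int_0^{b^2}(b^2-u)\,\phi''(u)\,du = -b^2\,\phi'(0) + \phi(b^2) - \phi(0)$, which is $+\infty$ by the barrier property $\lim_{u\uparrow b^2}\phi(u)=\infty$ in \er{eq:phi-properties-1-2}\,\emph{(ii)}. Monotone convergence then delivers $\gamma_{b^2}(\alpha)\to\infty$ as $\alpha\to\infty$, completing the proof.
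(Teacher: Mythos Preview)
Your argument is correct. The derivative formulas and range claims follow from Lemma~\ref{lem:a-properties} and the identification $a'\circ a^{-1}=\hat\rho$ exactly as you outline. The one step needing a word of care is the integration by parts for $\rho=b^2$: strictly, $\int_0^v(b^2-u)\,\phi''(u)\,du=(b^2-v)\,\phi'(v)-b^2\,\phi'(0)+\phi(v)-\phi(0)$, and the boundary term $(b^2-v)\,\phi'(v)$ need not vanish as $v\uparrow b^2$. But it is eventually nonnegative (once $\phi'(v)\ge 0$), so dropping it yields the lower bound $\phi(v)-\phi(0)-b^2\,\phi'(0)\to\infty$, which is all you need.

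The paper itself omits a proof of this lemma. A slightly more direct route for the delicate $\rho=b^2$ limit, avoiding the integral entirely, is to set $\alpha=a(\beta)$ and expand $a$ from its definition \er{eq:exact-a}:
\[
\gamma_{b^2}(a(\beta))=\beta\, b^2-a(\beta)=\beta\,[\,b^2-(\phi')^{-1}(\beta)\,]+\phi\circ(\phi')^{-1}(\beta)\ \ge\ \phi\circ(\phi')^{-1}(\beta)
\]
for $\beta\ge 0$, since $(\phi')^{-1}(\beta)<b^2$; the right-hand side tends to $+\infty$ by the barrier property \er{eq:phi-properties-1-2}\,\emph{(ii)}. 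Your integral approach is a valid alternative with the merit of treating $\rho>b^2$ and $\rho=b^2$ through a single computation; the algebraic route is a one-liner but relies on the separate (trivial) observation that $\gamma_\rho'>\rho/b^2-1>0$ already forces at least linear growth when $\rho>b^2$.
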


\if{false}

\begin{lemma}
\label{lem:gamma-properties}
A function $\gamma_\rho:\R_{\ge -\phi(0)}\rightarrow\R$ defined by
\begin{align}
	\gamma_\rho(\alpha)
	& \doteq a^{-1}(\alpha)\, \rho - \alpha,
	\quad \rho\in\R_{\ge 0},
	\label{eq:gamma}
\end{align}
with $a^{-1}$ as per \er{eq:exact-a-inv}, satisfies the following properties:
\begin{enumerate}[\it (i)] \itsep
\item $\gamma_\rho$ is differentiable for all $\rho\in\R_{\ge 0}$, and twice differentiable for all $\rho\in\R_{>0}$, with 
$\gamma_0'(\alpha) = -1$, $\gamma_0''(\alpha) = 0$, and
\begin{align}
	& \gamma_\rho':\R_{>-\phi(0)} \rightarrow(\rho/b^2-1,\infty),
	&& \gamma_\rho'(\alpha) = \frac{\rho}{a'\circ a^{-1}(\alpha)} - 1, && \rho\in\R_{\ge 0},
	\label{eq:gamma-dot}
	\\
	& \gamma_\rho'':\R_{>-\phi(0)} \rightarrow \R_{<0}, 
	&& \gamma_\rho''(\alpha) = - \rho\, \frac{ a''\circ a^{-1}(\alpha)}{[a'\circ a^{-1}(\alpha)]^3}, && \rho\in\R_{>0},
	\label{eq:gamma-ddot}
\end{align}
for all $\alpha\in\R_{>-\phi(0)}$;
% or $\gamma_\rho'(\alpha) = -1$ if $\rho=0$;

\item Derivatives $\gamma_\rho'$, $\gamma_\rho''$ of \er{eq:gamma-dot}, \er{eq:gamma-ddot} satisfies the limits
\begin{align}
	\lim_{\alpha\downarrow -\phi(0)} \gamma_\rho'(\alpha) & = \infty\,,
	&& \rho\in\R_{>0}\,,
	\label{eq:gamma-dot-limit-low}
	\\
	\lim_{\alpha\uparrow\infty} \gamma_\rho'(\alpha) & = \ts{\frac{\rho}{b^2}} - 1\,,
	&& \rho\in\R_{\ge b^2}\,,
	\label{eq:gamma-limit-high}
	\\
	\lim_{\alpha\downarrow -\phi(0)} \gamma_\rho''(\alpha) & = -\infty\,,
	&& \rho\in\R_{>0}\,;
	\label{eq:gamma-ddot-limit-low}
\end{align}	

\item $\gamma_\rho$ is strictly decreasing if $\rho = 0$, see (i); 

\item $\gamma_\rho$ is strictly concave if $\rho\in\R_{>0}$;

\item $\gamma_\rho$ attains its maximum in $\R_{>-\phi(0)}$ if $\rho\in(0,b^2)$, in which case the maximum is finite; and

\item $\gamma_\rho$ is strictly increasing if $\rho\in\R_{\ge b^2}$, and satisfies $\lim_{\alpha\rightarrow\infty} \gamma_\rho(\alpha) = +\infty$.
\end{enumerate} 
\end{lemma}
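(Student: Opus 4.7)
The plan is to read off everything mechanically from Lemma \ref{lem:a-properties}, then treat the monotonicity/limit at the end as the one non-routine piece.

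\textbf{Derivatives and ranges.} First I would invoke Lemma \ref{lem:a-properties} to get that $a^{-1}$ is differentiable on $\R_{>-\phi(0)}$ with $(a^{-1})'(\alpha) = 1/(a'\circ a^{-1}(\alpha))$, and that $a'\circ a^{-1}:\R_{>-\phi(0)}\to(0,b^2)$, $a''\circ a^{-1}:\R_{>-\phi(0)}\to\R_{>0}$. Differentiating \er{eq:gamma} once gives
\begin{align*}
    \gamma_\rho'(\alpha) &= \rho\,(a^{-1})'(\alpha)-1
    = \frac{\rho}{a'\circ a^{-1}(\alpha)}-1,
\end{align*}
and a second application of the chain rule (differentiating $\alpha\mapsto 1/(a'\circ a^{-1}(\alpha))$) gives
\begin{align*}
    \gamma_\rho''(\alpha) &= -\rho\,\frac{(a''\circ a^{-1}(\alpha))\,(a^{-1})'(\alpha)}{[a'\circ a^{-1}(\alpha)]^2}
    = -\rho\,\frac{a''\circ a^{-1}(\alpha)}{[a'\circ a^{-1}(\alpha)]^3}.
\end{align*}
The alternative expressions in terms of $\hat\rho$ and $\phi''$ then follow from \er{eq:Phi-M-from-A-and-rho-M}, i.e.\ $\hat\rho(\alpha)=(\phi')^{-1}\circ a^{-1}(\alpha)=a'\circ a^{-1}(\alpha)$, and \er{eq:a-ddot-positive}, i.e.\ $a''=1/(\phi''\circ(\phi')^{-1})$. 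For the ranges, since $a'\circ a^{-1}(\alpha)\in(0,b^2)$ the map $\alpha\mapsto \rho/(a'\circ a^{-1}(\alpha))$ lands in $(\rho/b^2,\infty)$ for $\rho>0$, giving $\gamma_\rho'(\alpha)\in(\rho/b^2-1,\infty)$; and $\gamma_\rho''(\alpha)<0$ for $\rho>0$ is immediate from $a''>0$, $a'>0$.

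\textbf{Monotonicity for $\rho\ge b^2$.} In this regime the formula for $\gamma_\rho'$ yields $\gamma_\rho'(\alpha)>\rho/b^2-1\ge 0$ for all $\alpha\in\R_{>-\phi(0)}$, so $\gamma_\rho$ is strictly increasing on its domain.

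\textbf{Divergence to $+\infty$.} This is the only step requiring actual work, because when $\rho=b^2$ the derivative $\gamma_\rho'(\alpha)$ approaches $0$ as $\alpha\to\infty$, so one cannot bound below by a positive constant times $\alpha$. I would change variables to $\beta\doteq a^{-1}(\alpha)$, noting by Lemma \ref{lem:a-properties} that $\beta\to\infty$ iff $\alpha\to\infty$. Using \er{eq:exact-a},
\begin{align*}
    \gamma_\rho(\alpha) &= \rho\,\beta - a(\beta)
    = \beta\bigl(\rho-(\phi')^{-1}(\beta)\bigr) + \phi\circ(\phi')^{-1}(\beta).
\end{align*}
Because $(\phi')^{-1}(\beta)\uparrow b^2$ and $\phi\circ(\phi')^{-1}(\beta)\to\infty$ as $\beta\to\infty$ (by \er{eq:phi-properties-1-2}\,{\em (ii),(v)}), and because $\rho\ge b^2$ keeps the first summand non-negative for all sufficiently large $\beta$, the right-hand side diverges to $+\infty$. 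This handles both $\rho=b^2$ (where the second summand alone drives divergence) and $\rho>b^2$ (where both summands diverge).

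The only real obstacle is this last limit in the borderline case $\rho=b^2$; the change of variables to $\beta=a^{-1}(\alpha)$ and explicit use of the blow-up of $\phi$ at $b^2$ is what makes it go through.
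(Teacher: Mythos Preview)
Your proposal is correct and follows essentially the same route as the paper's proof: both treat {\em (i)}--{\em (v)} as routine consequences of Lemma \ref{lem:a-properties}, and both isolate the divergence $\gamma_\rho(\alpha)\to+\infty$ in the borderline case $\rho=b^2$ as the one non-trivial step, handling it by the identical change of variable $\beta=a^{-1}(\alpha)$ and the decomposition $\gamma_\rho\circ a(\beta)=\beta\,[\rho-(\phi')^{-1}(\beta)]+\phi\circ(\phi')^{-1}(\beta)$, appealing to the blow-up of $\phi$ at $b^2$. Your sketch omits explicit verification of the limits in {\em (ii)} and the intermediate-value argument for {\em (v)}, but these are indeed mechanical from \er{eq:exact-a-inv-dot-limits} and strict concavity, exactly as the paper treats them.
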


\begin{proof}
The stated properties follow (with some calculation) from \er{eq:phi-properties-1-2}, \er{eq:exact-a}, \er{eq:gamma}, Lemma \ref{lem:a-properties}, and properties of monotone and differentiable functions. The details are omitted.
\end{proof}

\fi
%%

%%% OMITTED PROPERTIES PROOF %%%
\if{false}

\begin{proof}
{\em (i)} Given any fixed $\rho\in\R_{\ge 0}$, \er{eq:gamma} and Lemma \ref{lem:a-properties} immediately imply that $\gamma_\rho(\cdot)$ is differentiable, with derivative $\gamma_\rho'$ as per \er{eq:gamma-dot}. The $\rho=0$ case is immediate by inspection. In the $\rho\in\R_{>0}$ case, the range of $\gamma_\rho'$ follows by Lemma \ref{lem:a-properties}. The assertions regarding $\gamma_\rho''$ follow similarly.

{\em (ii)} With $\rho\in\R_{>0}$, \er{eq:gamma-dot-limit-low} follows by \er{eq:exact-a-inv-dot-limits}, \er{eq:gamma-dot}. With $\rho\in\R_{>b^2}$, \er{eq:gamma-dot} implies that $\gamma_\rho(\cdot)$ is strictly increasing and grows asymptotically linearly. That is, \er{eq:gamma-limit-high} holds. Alternatively, with $\rho = b^2$, the change of variable $\alpha = a(\beta)$ with $\beta\in[\phi'(0),\infty)$, along with \er{eq:exact-a}, imply that
\begin{align}
	\gamma_\rho\circ a(\beta)
	& = \beta\, b^2 - a(\beta)
	% \nn\\
	% & 
	= \beta \, \underbrace{[b^2 - (\phi')^{-1}(\beta)]}_{>0} + \phi\circ(\phi')^{-1}(\beta)
	% \nn\\
	% & 
	\ge \phi\circ(\phi')^{-1}(\beta)
	\nn
\end{align}
for all $\beta\in[\phi'(0),\infty)\cap\R_{\ge 0}$. Using $a$ of \er{eq:exact-a} as a change of variable $\alpha \doteq a(\beta)$ via Lemma \ref{lem:a-properties},
\begin{align}
	\lim_{\alpha\uparrow\infty} \gamma_\rho(\alpha)
	& = \lim_{\beta\uparrow\infty} \gamma_\rho\circ a(\beta)
	\ge \lim_{\beta\uparrow\infty} \phi\circ(\phi')^{-1}(\beta) = +\infty,
	\nn
\end{align}
so that \er{eq:gamma-limit-high} again holds. Recalling Lemma \ref{lem:a-properties}, note that $a''\circ a^{-1}(\alpha) = 1/\phi''\circ a'\circ a^{-1}(\alpha)$, so that \er{eq:gamma-ddot} yields
\begin{align}
	\gamma_\rho''(\alpha)
	& = -\rho\, \frac{1}{\phi''(a'\circ a^{-1}(\alpha))\, [ a'\circ a^{-1}(\alpha) ]^3}\,.
	\nn
\end{align}
Note in particular that $\lim_{\alpha\downarrow-\phi(0)} a'\circ a^{-1}(\alpha) = 0$, and consequently, $\limsup_{\alpha\downarrow-\phi(0)} \phi''(a'\circ a^{-1}(\alpha)) = \limsup_{\rho\downarrow 0} \phi''(\rho)\in\R$. Hence, taking the required limit of $\gamma_\rho''(\alpha)$ above yields \er{eq:gamma-ddot-limit-low}.

{\em (iii), (iv)} Immediate by inspection of \er{eq:gamma-dot}, \er{eq:gamma-ddot}.

{\em (v)} Fix arbitrary $\rho\in(0,b^2)$. By inspection of \er{eq:gamma-limit-high}, $\lim_{\alpha\uparrow\infty} \gamma_\rho(\alpha) = -(1 - \frac{\rho}{b^2}) < 0$. Continuity of $\gamma_\rho'$ along with \er{eq:gamma-dot-limit-low} implies that there exists an $\bar\alpha\in\R_{>-\phi(0)}$ such that $\gamma'(\bar\alpha) = 0$. A maximum is achieved at $\bar\alpha$ by {\em (iv)}.

{\em (vi)} Strict monotonicity is immediate by inspection of \er{eq:gamma-dot}. The asserted infinite limit follows as per the proof of assertion {\em (ii)}.
\end{proof}

\fi
%%% OMITTED PROPERTIES PROOF %%%

\begin{lemma}
\label{lem:inequalities}
Given $M\in\R_{\ge -\phi(0)}$,
\begin{align}
	0 & \ge \phi(0) + \phi'(0)\, \rho - \Phi(\rho)
	&& \forall \, \rho\ge 0,
	\label{eq:inequality-1}
	\\
	0 & \ge \phi(0) + \phi'(0)\, \rho - a^{-1}(M)\, \rho + M
	&& \forall \, \rho\ge\hat\rho(M),
	\label{eq:inequality-2}
	\\
	0 & \ge \lambda_+^M(\beta)
	&& \forall \, \beta\le a^{-1}(-\phi(0)),
	\label{eq:inequality-3}
	\\
	0 & \ge \lambda_+^M(\beta) - a(\beta) - \phi(0)
	&& \forall \, \beta\in[a^{-1}(-\phi(0)), a^{-1}(M)],
	\label{eq:inequality-4}
\end{align}
in which $a$, $a^{-1}$, $\hat\rho(M)$ are given by \er{eq:exact-a}, \er{eq:exact-a-inv}, \er{eq:Phi-M-from-A-and-rho-M}, and
$\lambda_+^M:\R\rightarrow\R$ is defined by
\begin{align}
	\lambda_+^M(\beta)
	& \doteq M + \phi(0) - (a^{-1}(M) - \beta)\, \hat\rho(M).
	\label{eq:lambda-M-p}
\end{align} 
\end{lemma}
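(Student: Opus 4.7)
The plan is to handle each of the four inequalities by exploiting the duality identity that the map $M\mapsto\hat\rho(M)$ encodes, namely $M = a^{-1}(M)\,\hat\rho(M) - \phi(\hat\rho(M))$, which follows by combining $\hat\rho(M) = (\phi')^{-1}\circ a^{-1}(M)$ from \er{eq:Phi-M-from-A-and-rho-M} with the definition \er{eq:exact-a} of $a$ evaluated at $\beta = a^{-1}(M)$, together with the standard first-order characterisation of convexity of $\phi$ implied by \er{eq:phi-properties-1-2}.

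For \er{eq:inequality-1}, when $\rho\in[0,b^2)$ the assertion reduces to $\phi(\rho)\ge\phi(0)+\phi'(0)\,\rho$, which is immediate from convexity of $\phi$, and for $\rho\ge b^2$ it is trivial because $\Phi(\rho)=+\infty$. For \er{eq:inequality-2}, I would note that both sides are affine in $\rho$. At $\rho=\hat\rho(M)$ the right-hand side equals $a^{-1}(M)\,\hat\rho(M)-M=\phi(\hat\rho(M))$ by the duality identity, while the left-hand side $\phi(0)+\phi'(0)\,\hat\rho(M)$ is no larger by \er{eq:inequality-1}; the slope of the right-hand side is $a^{-1}(M)=\phi'(\hat\rho(M))\ge\phi'(0)$ by \er{eq:phi-properties-1-2} \emph{(iv)}, so the inequality persists for all $\rho\ge\hat\rho(M)$.

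For \er{eq:inequality-3} and \er{eq:inequality-4}, the key first step is to rewrite $\lambda_+^M(\beta)$ in a more convenient form. Substituting $M=a^{-1}(M)\,\hat\rho(M)-\phi(\hat\rho(M))$ into \er{eq:lambda-M-p} collapses two terms and gives the identity $\lambda_+^M(\beta) = \beta\,\hat\rho(M) - [\phi(\hat\rho(M))-\phi(0)]$. With this in hand, \er{eq:inequality-3} becomes $\beta\,\hat\rho(M)\le\phi(\hat\rho(M))-\phi(0)$ for $\beta\le\phi'(0)$ (recalling $a^{-1}(-\phi(0))=\phi'(0)$ from \er{eq:exact-a-inv}), which is immediate from the tangent-line bound $\phi'(0)\,\hat\rho(M)\le\phi(\hat\rho(M))-\phi(0)$ and $\hat\rho(M)\ge 0$. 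Meanwhile, \er{eq:inequality-4} rearranges to $a(\beta)\ge\beta\,\hat\rho(M)-\phi(\hat\rho(M))$, which is immediate from the Legendre-transform interpretation of \er{eq:exact-a}: for $\beta\in[\phi'(0),\infty)$ we have $a(\beta)=\sup_{\rho\in[0,b^2)}\{\beta\rho-\phi(\rho)\}$, so $\hat\rho(M)\in[0,b^2)$ is merely a suboptimal choice.

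The main potential obstacle is bookkeeping across the various identities and domains of $a$, $a^{-1}$, $\phi'$, and $\hat\rho$ provided by Lemma \ref{lem:a-properties}; once the reformulation $\lambda_+^M(\beta) = \beta\,\hat\rho(M) - [\phi(\hat\rho(M))-\phi(0)]$ is in place, every inequality reduces to either a first-order convexity inequality for $\phi$ or to the suboptimality of $\hat\rho(M)$ in the supremum defining $a(\beta)$.
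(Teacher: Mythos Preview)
Your proof is correct and takes a genuinely different, more elementary route than the paper's. The key difference is your reformulation
\[
\lambda_+^M(\beta) = \beta\,\hat\rho(M) - [\phi(\hat\rho(M))-\phi(0)],
\]
obtained from the duality identity $M = a^{-1}(M)\,\hat\rho(M) - \phi(\hat\rho(M))$. With this in hand, \er{eq:inequality-3} and \er{eq:inequality-4} reduce immediately to the tangent-line inequality for $\phi$ at $0$ and the Fenchel--Young inequality $a(\beta)\ge\beta\,\hat\rho(M)-\phi(\hat\rho(M))$, respectively. The paper instead proves \er{eq:inequality-3} and \er{eq:inequality-4} by introducing auxiliary functions $\mu(\alpha)\doteq\alpha+\phi(0)-(a^{-1}(\alpha)-a^{-1}(-\phi(0)))\,\hat\rho(\alpha)$ and $\eta(\alpha)\doteq\lambda_+^M\circ a^{-1}(\alpha)-\alpha$, computing their derivatives via Lemma~\ref{lem:a-properties}, establishing monotonicity, and evaluating at endpoints (with separate treatment of the degenerate case $M=-\phi(0)$). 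The paper also reduces \er{eq:inequality-2} to the special case $\lambda_+^M(a^{-1}(-\phi(0)))\le 0$ of \er{eq:inequality-3}, whereas you verify \er{eq:inequality-2} directly by comparing two affine functions at $\rho=\hat\rho(M)$ and their slopes. Your argument is shorter, avoids the case split on $M$, and makes the underlying convex-duality structure transparent; the paper's calculus argument is more self-contained in that it does not invoke the Legendre interpretation of $a$ but only its derivative formulae from Lemma~\ref{lem:a-properties}.
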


\begin{proof}
{\em [Inequality \er{eq:inequality-1}]} See Corollary \ref{cor:Phi-M-bound}.

%
% With $\rho\in[0,b^2)$, \er{eq:barrier} implies that $\Phi(\rho) = \phi(\rho)$. Recalling \er{eq:phi-properties-1-2} {\em (i)}, Taylor's theorem implies that there exists a $c\in[0,\rho]$ such that
% $\Phi(\rho) = \phi(\rho) = \phi(0) + \phi'(0)\, \rho + \demi\, \phi''(c)\, \rho^2 \ge \phi(0) + \phi'(0)\, \rho$, as required by \er{eq:inequality-1}. For $\rho\in\R_{\ge b^2}$, \er{eq:inequality-1} holds trivially by inspection of \er{eq:barrier}.
%
{\em [Inequality \er{eq:inequality-2}]} Fix $M\in\R_{\ge -\phi(0)}$. As $a^{-1}(M)\ge a^{-1}(-\phi(0))$, 
\begin{align}
	\phi(0) + \phi'(0)\, \rho - a^{-1}(M)\, \rho + M
	% \nn\\
	& 
	= M + \phi(0) - [ a^{-1}(M) - a^{-1}(-\phi(0))]\, \rho
	\nn\\
	& \le M + \phi(0) -  [ a^{-1}(M) - a^{-1}(-\phi(0)) ] \, \hat\rho(M)
	% \nn\\
	% & 
	= \lambda_+^M(a^{-1}(-\phi(0))),
	\nn
\end{align}
for any $\rho\ge\hat\rho(M) = a'\circ a^{-1}(M)\in[0,b^2)$, see \er{eq:Phi-M-from-A-and-rho-M}, \er{eq:exact-a-dot},
where $\lambda_+^M$ is as per \er{eq:lambda-M-p}. Hence, inequality \er{eq:inequality-2} is a special case of inequality \er{eq:inequality-3}. 

{\em [Inequality \er{eq:inequality-3}]} There are two cases to consider, namely, {\em (i)} $M = -\phi(0)$, and {\em (ii)} $M>-\phi(0)$.

{\em (i)} Fix $M = -\phi(0)$. By inspection of the definition of $\lambda_+^M$ and \er{eq:Phi-M-from-A-and-rho-M}, \er{eq:exact-a-inv},
\begin{align}
	\lambda_+^M(\beta) 
	& = -(a^{-1}(-\phi(0)) - \beta) \, \hat\rho(-\phi(0)) 
	= -(a^{-1}(-\phi(0)) - \beta) \, (\phi')^{-1}\circ a^{-1}(-\phi(0))
	\nn\\
	& = -(a^{-1}(-\phi(0)) - \beta) \, (\phi')^{-1}\circ\phi'(0) = 0\,,
	\label{eq:lambda-M-p-zero}
\end{align}
so that \er{eq:inequality-3} holds.

{\em (ii)} Alternatively, fix $M>-\phi(0)$. Differentiating \er{eq:lambda-M-p}, $(\lambda_+^M)'(\beta) = \hat\rho(M)\in(0,b^2)$ for all $\beta\in\R$, see \er{eq:Phi-M-from-A-and-rho-M}. Hence, $\lambda_+^M(\beta)$ is strictly increasing in $\beta\in\R$. In particular,
$\beta \le a^{-1}(-\phi(0))$ implies that $\lambda_+^M(\beta) \le \lambda_+^M(a^{-1}(-\phi(0))) = \mu(M)$,
where $\mu\in C(\R_{\ge -\phi(0)};\R)$ is defined by
$\mu(\alpha) \doteq \alpha + \phi(0) - (a^{-1}(\alpha) - a^{-1}(-\phi(0)))\, \hat\rho(\alpha)$ for all $\alpha\in\R_{\ge -\phi(0)}$.	
Recall the derivative of $a^{-1}$ from \er{eq:exact-a-inv-dot}, and similarly differentiate $\hat\rho$ via \er{eq:Phi-M-from-A-and-rho-M}, yielding
$(a^{-1})'(\alpha) = 1/(a'\circ a^{-1}(\alpha)) = 1/\hat\rho(\alpha) > 0$,
$\hat\rho'(\alpha)
	= (a''\circ a^{-1}(\alpha))/(a'\circ a^{-1}(\alpha)) = 1/([\phi''\circ\hat\rho(\alpha)]\, \hat\rho(\alpha)) > 0$,
for all $\alpha\in\R_{>-\phi(0)}$. Note that neither derivative is defined at $\alpha = -\phi(0)$, as $a'\circ a^{-1}(-\phi(0)) = 0$. Hence, by inspection of its definition above, $\mu$ is differentiable on $\R_{>-\phi(0)}$, with the product rule yielding
$
	\mu'(\alpha)
	% = 1 - (a^{-1})'(\alpha) \, \hat\rho(\alpha)
	% \nn\\
	% & \hspace{20mm} 
	% - [a^{-1}(\alpha) - a^{-1}(-\phi(0))] \, \hat\rho'(\alpha)
	% \nn\\
	% & 
	= % - \left( \frac{
	[a^{-1}(\alpha) - a^{-1}(-\phi(0))]\, ([\phi''\circ\hat\rho(\alpha)]\, \hat\rho(\alpha))^{-1}
	% \right)
	% \,  \left[ \frac{a''\circ a^{-1}(\alpha)}{a'\circ a^{-1}(\alpha)}\right]
	< 0
$,
for all $\alpha\in\R_{>-\phi(0)}$. That is, $\mu$ is continuous on $\R_{\ge -\phi(0)}$ and strictly decreasing on $\R_{>-\phi(0)}$, so that
$
	0 = \mu(-\phi(0)) 
	> \mu(M) = \lambda_+^M(a^{-1}(-\phi(0))) \ge \lambda_+^M(\beta)
$
for all $\beta\le a^{-1}(-\phi(0))$, as required by \er{eq:inequality-3}.

{\em [Inequality \er{eq:inequality-4}]} Again there are two cases to consider, {\em (i)} $M = -\phi(0)$, and {\em (ii)} $M>-\phi(0)$.

{\em (i)} Fix $M = -\phi(0)$. Recall in this case that $\lambda_+^M(\beta) = 0$ for all $\beta\in\R$, see \er{eq:lambda-M-p-zero}. Hence, recalling that $a$ is strictly increasing, the right-hand side of \er{eq:inequality-4} for $\beta\in[a^{-1}(-\phi(0)), a^{-1}(M)]$ is 
$
	\lambda_+^M(\beta) - a(\beta) - \phi(0)
	= -a(\beta) - \phi(0)
	\le a\circ a^{-1}(-\phi(0))) - \phi(0) = 0
$,
so that \er{eq:inequality-4} holds.eq:Gamma-p-M-and-beta-p-star-M

{\em (ii)} Alternatively, fix $M > -\phi(0)$.
Define $\eta\in C([-\phi(0),M];\R)$ by $\eta(\alpha)\doteq \lambda_+^M\circ a^{-1}(\alpha) - \alpha$ for all $\alpha\in [-\phi(0),M]$.
By inspection, $\eta$ is differentiable on $(-\phi(0),M]$, with
\begin{align}
	\eta'(\alpha)
	& = (\lambda_+^M)'\circ a^{-1}(\alpha) \, (a^{-1})'(\alpha) - 1
	= \hat\rho(M)\, (a^{-1})'(\alpha) - 1
	= \frac{a'\circ a^{-1}(M)}{a'\circ a^{-1}(\alpha)} - 1 > 0\,,
	\nn
\end{align}
for all $\alpha\in (-\phi(0),M]$, in which the final inequality follows as $a'$, $a^{-1}$ are strictly increasing. Again note that differentiability is lost at $\alpha = -\phi(0)$, as $a'\circ a^{-1}(-\phi(0)) = (\phi')^{-1} (\phi'(0)) = 0$. Hence, $\mu$ is continuous on $[-\phi(0),M]$ and strictly increasing on $(-\phi(0),M]$, so that
$
	\eta(\alpha) \le \eta(M) = \lambda_+^M\circ a^{-1}(M) - M = \phi(0)
$
for all $\alpha\in[-\phi(0),M]$. 
Setting $\alpha = a(\beta)$ for $\beta\in[a^{-1}(-\phi(0)), a^{-1}(M)]$ yields $\lambda_+^M(\beta) - a(\beta) \le \phi(0)$, as required by \er{eq:inequality-4}.
\end{proof}

%%

%%% OMITTED LINEAR AND DOMAIN PROOFS %%%
\if{false}

\subsection{Proof of Lemmas \ref{lem:chi} and \ref{lem:dom-value-W-bar}}
\label{app:chi-and-dom}

\begin{proof}{\lbrack Lemma \ref{lem:chi}\rbrack}
Fix $t\in\R_{\ge 0}$, $x\in\R^n$. 
{\em (i)} Fix $r,s\in[0,t]$, $u\in\cU[0,t]$. Define $\xi \doteq \chi(x,u)$. Recalling \er{eq:dynamics},
\begin{align}
	\xi_s - \xi_r
	& = e^{A\,s} \, [ I - e^{-A(s-r)} ] \left( x + \int_0^r e^{-A\, \sigma}\, B\, u_\sigma\, d\sigma \right) 
		+ e^{A\, s}\, \int_r^s e^{-A\, \sigma}\, B \, u_\sigma\, d\sigma\,.
	\label{eq:uniform-1}
\end{align}
By definition of the matrix exponential, and H\"{o}lder's inequality,
\begin{align}
	\left\| I - e^{-A(s-r)} \right\|
	& \le \|A\| \, e^{\|A\|\, t} \, |s-r|\,,
	\quad
	\left| \int_r^s e^{-A\, \sigma}\, B \, u_\sigma \, d\sigma \right|
	\le \sup_{s\in[0,t]} \| e^{-A\, s}\, B\| \, \|u\|_{\cU[0,t]} \, \sqrt{|s-r|}\,.
	\nn
\end{align}
Applying these inequalities in \er{eq:uniform-1}, via the triangle inequality,
\begin{align}
	| \xi_s - \xi_r |
	& \le \omega_{\|u\|_{\cU[0,t]}} (|s-r|)\,,
	\label{eq:uniform-2}
\end{align}
in which $\omega_U:\R_{\ge 0} \rightarrow\R_{\ge 0}$, $U\in\R_{\ge 0}$, is a modulus of continuity defined by
\begin{align}
	\omega_U(\delta)
	& \doteq
	e^{2\, \|A\| \, t} \, \|A\| \left(
		\|x\| + \sup_{s\in[0,t]} \| e^{-A\, s}\, B\| \, U  \, \sqrt{t}
	\right) \delta
	+ e^{\|A\| \, t} \sup_{s\in[0,t]} \| e^{-A\, s}\, B\| \, U \sqrt{\delta}\,,
	\qquad \delta\in\R_{\ge 0}\,.
	\nn
\end{align}
Hence, $\xi:[0,t]\rightarrow\R^n$ is uniformly continuous for fixed $u\in\cU[0,t]$ by \er{eq:uniform-2}.

{\em (ii)} This follows directly by \er{eq:dynamics} and H\"{o}lder's inequality, and the details are omitted.

{\em (iii)} Fix $\ol{U}\in\R_{\ge 0}$ and $\{ u_k \}_{k\in\N}\subset\cB_{\cU[0,t]}[0;\ol{U}]$ and define $\{\xi_k\}_{k\in\N}\subset C([0,t];\R^n)$ via {\em (i)} by $\xi_k \doteq  \chi(x,u_k)$ for all $k\in\N$. Applying \er{eq:uniform-2}, $|[\xi_k]_s - [ \xi_k ]_r| \le \omega_\ol{U}(|s-r|)$ for all $k\in\N$, so that $\{\xi_k\}_{k\in\N}$ is uniformly equicontinuous. Recalling \er{eq:dynamics}, and the triangle and H\"{o}lder's inequalities,
\begin{align}
	\|\xi_k\|_{C([0,t];\R^n)}
	& \le \sup_{s\in[0,t]} \| e^{A\, s} \| \left( \|x\| + \|B\| \, \sqrt{t}\, U \right).
	\nn
\end{align}
Hence, $\{\xi_k\}_{k\in\N}$ is both uniformly equicontinuous and uniformly bounded, so that the Arzela-Ascoli Theorem, e.g. \cite[p.295]{TL:80}, implies existence of a subsequence $\{\hat y_k\}_{k\in\N}\subset\{\xi_k\}_{k\in\N}$ and a $\bar y\in C([0,t];\R^n)$ such that $\lim_{k\rightarrow\infty} \sup_{s\in[0,t]} | [\hat y_k]_s - \bar y_s|=0$, i.e. $\hat y_k\rightarrow \bar y$ uniformly as $k\rightarrow\infty$. Let $\{\hat u_k\}_{k\in\N}\subset\{ u_k \}_{k\in\N}$ be the subsequence of inputs corresponding to $\{\hat y_k\}_{k\in\N}$. As $\{\hat u_k\}_{k\in\N}\subset\cB_{\cU[0,t]}[0;\ol{U}]\subset\cU[0,t]$ is bounded and $\cU[0,t]$ is a reflexive Banach space, \cite[Theorem III.10.6, p.177]{TL:80} implies that there exists a $\bar u\in\cU[0,t]$ and a weakly convergent subsequence $\{v_k\}_{k\in\N}\subset\{\hat u_k\}_{k\in\N}$ satisfying $v_k\weakly\bar u$. Define $\bar\xi \doteq \chi(x,\bar u)$ and $\{y_k\}_{k\in\N}\subset\{\hat y_k\}_{k\in\N}\subset\{\xi_k\}_{k\in\N}$ by $y_k \doteq \chi(x,v_k)$ for all $k\in\N$. Note as a consequence that $y_k\rightarrow\bar y$ uniformly as $k\rightarrow\infty$. It remains to show that $\bar y = \bar\xi$.

Applying the Riesz representation theorem for $\cU[0,t]$, weak convergence $v_k\weakly \bar u$ is equivalent to $\lim_{k\rightarrow\infty} \langle w,\, v_k \rangle_{\cU[0,t]} = \langle w,\, \bar u \rangle_{\cU[0,t]}$ for all $w\in\cU[0,t]$. Fix any $\bar x\in\R^n$, $s\in[0,t]$, and select a specific $w\in\cU[0,t]$ defined by $w_\sigma\doteq \chi^{[0,s]}\, B'\, e^{A'(s-\sigma)}\, \bar x$ for all $\sigma\in[0,t]$, in which $\chi^{I}:[0,t]\rightarrow\{0,1\}$ denotes the indicator for interval $I\subset[0,t]$. Hence,
\begin{align}
	\langle w,\, v_k \rangle_{\cU[0,t]}
	& = \int_0^t \chi_\sigma^{[0,s]} \, \langle B'\, e^{A'(s-\sigma)}\, \bar x,\, [v_k]_\sigma \rangle\, d\sigma
	= \left\langle \bar x,\, \int_0^s e^{A\, (s-\sigma)}\, B\, [v_k]_\sigma \, d\sigma \right\rangle,
	\nn\\
	\langle w,\, \bar u \rangle_{\cU[0,t]}
	& = \left\langle \bar x,\, \int_0^s e^{A\, (s-\sigma)}\, B\, \bar u_\sigma \, d\sigma \right\rangle\,,
	\nn
\end{align}
in which the inner products are on $\R^n$. As $\bar x\in\R^n$ is arbitrary, it follows that
\begin{align}
	& \int_0^s e^{A\, (s-\sigma)}\, B\, [v_k]_\sigma \, d\sigma \weakly \int_0^s e^{A\, (s-\sigma)}\, B\, \bar u_\sigma \, d\sigma\,,
	\nn
\end{align}
As weak and strong convergence coincide on finite dimensional spaces, this in turn implies that
\begin{align}
	\bar y_s = \lim_{k\rightarrow\infty} [y_k]_s
	& = e^{A\, s}\, x + \lim_{k\rightarrow\infty} \int_0^s e^{A\, (s-\sigma)}\, B\, [v_k]_\sigma\, d\sigma
	= e^{A\, s}\, x + \int_0^s e^{A\, (s-\sigma)}\, B\, \bar u_\sigma\, d\sigma = \bar \xi_s\,,
	\nn
\end{align}
in which the limits are pointwise in $\R^n$. As $s\in[0,t]$ is arbitrary, $\bar y = \bar \xi$, as required.
\end{proof}

\begin{proof} {\lbrack Lemma \ref{lem:dom-value-W-bar}\rbrack}
Fix $t\in\R_{\ge 0}$, $\bar x \doteq 0\in\R^n$. By inspection of \er{eq:value-W}, note that $\bar u\doteq 0\in\cU[0,t]$ is suboptimal in the definition of $\ol{W}_t(\bar x)$. With $\bar \xi \doteq \chi(\bar x, \bar u) = 0\in C([0,t];\R^n)$, see \er{eq:dynamics}, it immediately follows that 
\begin{align}
	& \ol{W}_t(\bar x)
	\le \bar J_t(\bar x, \bar u) = \bar I_t(\bar x, \bar u) + I_t^\kappa(\bar u) + \Psi([\bar \xi]_t)
	= \ts{\frac{\phi(0)}{2}} \, t + \demi\, \langle z,\, P_t\, z \rangle < \infty,
	\nn
\end{align}
so that $0=\bar x\in\dom\ol{W}_t$ as required.
\end{proof}

\fi
%%% END OF OMITTED LINEAR AND DOMAIN PROOFS %%%

%%
%%		Proof of Lemma \ref{lem:exact-convex-duality}.
%%

\section{Proof of Lemmas \ref{lem:exact-convex-duality} and \ref{lem:approx-convex-duality}}
\label{app:convex-duality-proofs}

\begin{proof} 
{\em [Lemma \ref{lem:exact-convex-duality}]}
The barrier function $\Phi$ of \er{eq:barrier} is closed and convex on $\R$ by \er{eq:barrier}, \er{eq:phi-properties-1-2}, \cite[(3.8), pp.15,17]{R:74}. Hence, there exists a one-to-one pairing between $\Phi$ and its Fenchel transform $\Ahat = \Phi^*$, as indicated by \er{eq:exact-Phi-and-A}, see \cite[Theorem 5, p.16]{R:74}. The objectives are to establish the explicit form \er{eq:exact-Phi-and-A} of the function $\Ahat$, its range, and the optimizers \er{eq:exact-beta-and-rho-star} attending the suprema in \er{eq:exact-Phi-and-A}. To this end, note by inspection of \er{eq:barrier} and the definition of $\Theta$ in \er{eq:exact-Phi-and-A} that
\begin{align}
	\Ahat(\beta) 
	& = \sup_{\rho\in[0,b^2)} \pi_\beta(\rho),
	\quad \pi_\beta(\rho) \doteq \beta\, \rho - \phi(\rho),
	\label{eq:A-pi}
\end{align}
for all $\beta\in\R$, $\rho\in[0,b^2)$. If $\beta\in\R_{\ge\phi'(0)}$, the supremum is attained at $\rho = \rho^* = (\phi')^{-1}(\beta)$, as $0 = \pi_\beta'(\rho^*) = \beta - \phi'(\rho^*)$.
Note by \er{eq:phi-properties-1-2} {\em (v)} that $\rho^*\in[0,b^2)$. The supremum is then
$	\pi_\beta(\rho^*) 
	= \beta\, (\phi')^{-1}(\beta) - \phi\circ(\phi')^{-1}(\beta) \doteq a(\beta),
$
as per \er{eq:exact-a}. Alternatively, if $\beta\in\R_{<\phi'(0)}$, \er{eq:phi-properties-1-2} {\em (iv)} implies that $\pi_\beta'(\rho) = \beta - \phi'(\rho) < \phi'(0) - \phi'(\rho) \le 0$ for $\rho\in[0,b^2)$. Hence, the supremum must be achieved at $\rho^* = 0$, and $\pi_\beta(\rho^*) = -\phi(0)$. Combining both of the above cases immediately yields the right-hand equations in \er{eq:exact-Phi-and-A} and \er{eq:exact-beta-and-rho-star}.

In order to demonstrate that the left-hand equality in \er{eq:exact-beta-and-rho-star},
holds, note by \er{eq:exact-Phi-and-A} that
\begin{align}
	\Phi(\rho) & = \max\{ \Gamma_-(\rho), \, \Gamma_+(\rho) \},
	\label{eq:Phi-from-Gamma-pm}
	\\
	\Gamma_-(\rho)
	& \doteq \sup_{\beta<\phi'(0)} \{ \beta\, \rho + \phi(0) \}
	= \left\{\ba{rl}
		+\infty,
		& \rho\in\R_{<0},
		\\
		\phi(0) + \phi'(0) \, \rho,
		& \rho\in\R_{\ge 0},
	\ea \right.
	\label{eq:Gamma-m-explicit}
	\\
	\Gamma_+(\rho)
	& \doteq \sup_{\beta\ge\phi'(0)} \chi_\beta(\rho),
	\quad \chi_\rho(\beta) \doteq \beta\, \rho - a(\beta),
	\label{eq:Gamma-p}
\end{align}
for all $\beta, \rho\in\R$. The supremum in \er{eq:Gamma-m-explicit} is achieved at
\begin{align}
	& \beta = \hat\beta_-^*(\rho) \doteq \left\{ \ba{rl}
	-\infty, & \rho\in\R_{<0},
	\\
	\phi'(0), & \rho\in\R_{\ge 0}.
	\ea \right.
	\label{eq:beta-m-explicit}
\end{align}
In considering $\Gamma_+$ of \er{eq:Gamma-p}, recall that $a$ of \er{eq:exact-a} is differentiable by Lemma \ref{lem:a-properties}, with derivative given by \er{eq:exact-a-dot}. Three cases are subsequently considered, {\em (i)} $\rho\in[0,b^2)$, {\em (ii)} $\rho\in\R_{<0}$, and {\em (iii)} $\rho\in\R_{\ge b^2}$.

{\em (i)} $\rho\in[0,b^2)$: As $a'(\beta) = (\phi')^{-1}(\beta)$ is well-defined for all $\beta\in\R_{\ge\phi'(0)}$, see \er{eq:exact-a-dot}, $\chi_\rho$ is differentiable with $\chi_\rho'(\beta) = \rho - (\phi')^{-1}(\beta)$ for all $\beta\in\R_{\ge\phi'(0)}$. Substituting $\beta = \beta^* \doteq \phi'(\rho)\in\R_{\ge\phi'(0)}$ yields $\chi_\rho'(\beta^*) = \rho - (\phi')^{-1}(\beta^*) = 0$. Hence, the supremum in \er{eq:Gamma-p} is attained at $\beta = \beta^*$, with 
$
	\chi_\rho(\beta^*) 
	= \rho\, \phi'(\rho) - a\circ\phi'(\rho)
	= \rho\, \phi'(\rho) - [ \phi'(\rho)\, \rho - \phi(\rho) ]
	= \phi(\rho)
$.
{\em (ii)} $\rho\in\R_{<0}$: As $a'$ has a nonnegative range, see \er{eq:exact-a-dot}, $\chi_\rho'(\beta) = \rho - a'(\beta) < -a'(\beta) \le 0$ for all $\beta\in\R_{\ge\phi'(0)}$. Hence, the supremum in \er{eq:Gamma-p} is achieved at $\beta^* = \phi'(0)$, and
$
	\chi_\rho(\beta^*) = \phi'(0)\, \rho - [ 0 - \phi(0)] = \phi(0) + \phi'(0)\, \rho
$.
{\em (iii)} $\rho\in\R_{\ge b^2}$: Observe that $\chi_\rho(\beta) = \gamma_\rho\circ a(\beta)$ for all $\beta\in\R_{\ge\phi'(0)}$, in which $\gamma_\rho$ is defined in \er{eq:gamma}. As $a:\R_{\ge\phi'(0)}\rightarrow\R_{\ge -\phi(0)}$ is strictly increasing and has an unbounded range, Lemma \ref{lem:gamma-properties} % {\em (vi)} 
implies that $\lim_{\beta\rightarrow\infty} \chi_\rho(\beta) = \lim_{\beta\rightarrow\infty} \gamma_\rho\circ a(\beta) = \infty$. Hence, the supremum in \er{eq:Gamma-p} is achieved at $\beta^* = \infty$, and $\chi_\rho(\beta^*) = \infty$.

Combining cases {\em (i)} -- {\em (iii)}, $\Gamma_+$ of \er{eq:Gamma-p} may be written explicitly as
\begin{align}
	\Gamma_+(\rho)
	& = \left\{ \ba{rl}
		\phi(0) + \phi'(0)\, \rho, 
		& \rho\in\R_{<0},
		\\
		\phi(\rho),
		& \rho\in[0,b^2),
		\\
		+\infty,
		& \rho\in\R_{\ge b^2},
	\ea \right.
	\qquad
	\beta = \beta^* = \hat\beta_+^*(\rho)
	\doteq \left\{ \ba{rl}
		\phi'(0),
		& \rho\in\R_{<0},
		\\
		\phi'(\rho),
		& \rho\in[0,b^2),
		\\
		+\infty,
		& \rho\in\R_{\ge b^2}.
	\ea \right.
	\label{eq:Gamma-p-and-beta-p-explicit}
\end{align}
with the supremum achieved at the $\beta^* = \hat\beta_+^*(\rho)$ specified.
\if{false}

\begin{align}
	& \beta = \beta^* = \hat\beta_+^*(\rho)
	\doteq \left\{ \ba{rl}
		\phi'(0),
		& \rho\in\R_{<0},
		\\
		\phi'(\rho),
		& \rho\in[0,b^2),
		\\
		+\infty,
		& \rho\in\R_{\ge b^2}.
	\ea \right.
	\label{eq:beta-p-explicit}
\end{align}

\fi
Combining \er{eq:Phi-from-Gamma-pm} with \er{eq:Gamma-m-explicit}, \er{eq:Gamma-p-and-beta-p-explicit}, and the fact that $\phi(\rho) \ge \phi(0) + \phi'(0)\, \rho$ for all $\rho\in[0,b^2)$ by \er{eq:phi-properties-1-2} {\em (i)}, yields \er{eq:barrier}. Similarly, combining \er{eq:beta-m-explicit} and \er{eq:Gamma-p-and-beta-p-explicit} yields the left-hand equation in \er{eq:exact-beta-and-rho-star}.
\end{proof}

%%
%%		Proof of lemma setting out properties of the barrier function and its convex dual.
%%

% \section{Proof of Lemma \ref{lem:approx-convex-duality}}
% \label{app:Phi-M-sup-of-quadratics}

\begin{proof}
{\em [Lemma \ref{lem:approx-convex-duality}]}
%
% [{\em (i)}--{\em (iii)}]
{\em (i)} Fix $M\in\R_{\ge -\phi(0)}$. By the monotonicity of $a^{-1}$, see Lemma \ref{lem:a-properties} and \er{eq:exact-a-inv}, 
note that $a^{-1}(M) \ge a^{-1}(-\phi(0)) = \phi'(0)$. Hence, recalling \er{eq:Phi-M-from-A-and-rho-M}, and subsequently \er{eq:exact-Phi-and-A},
\begin{align}
	\Phi^M(\rho)
	% & = \max\left\{ \sup_{\beta<\phi'(0)} \{ \beta\, \rho - \Ahat(\beta) \}, \, \sup_{\beta\in[\phi'(0),a^{-1}(M)]} \{ \beta\, \rho - \Ahat(\beta) \} \right\}
	% \nn\\
	& = \max\left\{ \sup_{\beta<\phi'(0)} \{ \beta\, \rho + \phi(0) \}, \, \sup_{\beta\in[\phi'(0),a^{-1}(M)]} \{ \beta\, \rho - \Ahat(\beta) \} \right\}
	% \nn\\
	% & 
	= \max\{ \Gamma_-(\rho), \, \Gamma_+^M(\rho) \},
	\label{eq:Phi-M-max}
\end{align}
where $\Gamma_-$ is as per \er{eq:Gamma-m-explicit}, and
$	\Gamma_+^M(\rho)
	\doteq \sup_{\beta\in[\phi'(0), a^{-1}(M)]} \{ \beta\, \rho - a(\beta) \}$.
Modifying the argument preceding \er{eq:Gamma-p-and-beta-p-explicit} in the proof of Lemma \ref{lem:exact-convex-duality}, 
\begin{align}
	\Gamma_+^M(\rho)
	& = \left\{ \ba{rl}
		\phi(0) + \phi'(0)\, \rho,
		& \rho\in\R_{<0},
		\\
		\phi(\rho),
		& \rho\in[0, \hat\rho(M)],
		\\
		a^{-1}(M) \, \rho - M,
		& \rho\in\R_{>\hat\rho(M)},
	\ea \right.
	% \label{eq:Gamma-p-M}
	\label{eq:Gamma-p-M-and-beta-p-star-M}
\end{align}
with the supremum achieved at the
$\beta = \hat\beta_+^{M*}(\rho)$ specified.
\if{false}

\begin{align}
	\beta = \hat\beta_+^{M*}(\rho)
	& \doteq \left\{ \ba{rl}
		\phi'(0),
		& \rho\in\R_{<0},
		\\
		\phi'(\rho),
		& \rho\in[0,\hat\rho(M)],
		\\
		a^{-1}(M),
		& \rho\in\R_{>\hat\rho(M)}.
	\ea \right.
	% \label{eq:beta-p-star-M}
\end{align}

\fi
The pointwise maximum \er{eq:Phi-M-max} may be evaluated via \er{eq:Gamma-m-explicit}, \er{eq:Gamma-p-M-and-beta-p-star-M}, and the inequalities \er{eq:inequality-1}, \er{eq:inequality-2} of Lemma \ref{lem:inequalities}. Indeed, inspection of \er{eq:Phi-M-max}, \er{eq:Gamma-m-explicit}, \er{eq:Gamma-p-M-and-beta-p-star-M}, \er{eq:inequality-1}, \er{eq:inequality-2} immediately yields \er{eq:Phi-M-explicit}. The optimizer \er{eq:beta-M-star-explicit} that achieves the supremum in \er{eq:Phi-M-from-A-and-rho-M} follows by matching the corresponding cases in \er{eq:beta-m-explicit}, \er{eq:Gamma-p-M-and-beta-p-star-M}.

{\em (ii)} In view of $\Phi^M$, $\hat\rho(M)$ of \er{eq:Phi-M-explicit}, \er{eq:Phi-M-from-A-and-rho-M}, define
\begin{align}
	L & \doteq 
	\Phi^M\circ\hat\rho(M)
	% = \phi\circ(\phi')^{-1}\circ a^{-1}(M)
	= \phi\circ (\phi')^{-1}\circ a^{-1}(M),
	% \nn\\
	\quad
	U \doteq a^{-1}(M)\, \hat\rho(M) - M
	= a^{-1}(M)\, (\phi')^{-1}\circ a^{-1}(M) - M,
	\nn
\end{align}
for $M\in\R_{\ge -\phi(0)}$. With $\bar\beta\doteq a^{-1}(M)$, note that
$L = \phi\circ (\phi')^{-1}(\bar\beta)$ and $U = \bar\beta \, (\phi')^{-1}(\bar\beta) - M$,
so that
$U - L = [\bar\beta \, (\phi')^{-1}(\bar\beta) - \phi\circ (\phi')^{-1}(\bar\beta)] - M = a(\bar\beta) - M = a\circ a^{-1}(M) - M = 0$,
via \er{eq:exact-a}.
That is, $\Phi^M$ is continuous at $\hat\rho(M)$, and $\Phi^M\in C(\R_{\ge 0};\R)$. 
By inspection of \er{eq:Phi-M-explicit},
\begin{align}
	(\Phi^M)'(\rho)
	& = \left\{ \ba{rl}
			\phi'(\rho),		&		\rho\in(0,\hat\rho(M)),
			\\
			a^{-1}(M),		&		\rho\in(\hat\rho(M),\infty),
	\ea \right.
	\nn
\end{align}
and $\lim_{\rho\uparrow\hat\rho(M)} (\Phi^M)'(\rho) = \phi'\circ\hat\rho(M) = a^{-1}(M) = \lim_{\rho\downarrow\hat\rho(M)} (\Phi^M)'(\rho)$ via \er{eq:Phi-M-from-A-and-rho-M}. Hence, $\Phi^M\in C(\R_{\ge 0};\R)\cap C^1(\R_{>0};\R)$.
As $(\Phi^M)'$ is non-decreasing on $\R_{>0}$, and infinite elsewhere, $\Phi^M:\R\rightarrow\ol{\R}^+$ is (lower) closed convex on $\R$, see for example \cite[(3.8), pp.15,17]{R:74}.

{\em (iii)} Follows by inspection of \er{eq:barrier}, \er{eq:exact-Phi-and-A}, \er{eq:Phi-M-from-A-and-rho-M}, via Lemma \ref{lem:exact-convex-duality}. ${ }^{ }$\hfill{$\blacksquare$}

{\em (iv)} The following claim is first demonstrated.

{\em Claim:} Given $M\in\R_{\ge -\phi(0)}$, there exists $\Ahat^M:\R\rightarrow\R_{\ge -\phi(0)}^+$ such that
\begin{align}
	\Phi^M(\rho)
	& = \sup_{\beta\in\R} \{ \beta\, \rho - \Ahat^M(\beta) \},
	\label{eq:Phi-M-from-A-M}
	\\
	\Ahat^M(\beta)
	& = \sup_{\rho\in\R} \{ \beta\, \rho - \Phi^M(\rho) \}
	= \left\{ \ba{rl}
		-\phi(0),	&		\beta\in\R_{<\phi'(0)},
		\\
		a(\beta),	&		\beta\in[\phi'(0), a^{-1}(M)],
		\\
		+\infty,	&		\beta\in\R_{>a^{-1}(M)},
	\ea \right.
	\label{eq:A-M-from-Phi-M}
\end{align}
for all $\rho,\beta\in\R$, with $\phi$, $a$ as per \er{eq:phi-properties-1-2}, \er{eq:exact-a}. 

{\em Proof of Claim:}
Convexity assertion {\em (ii)} and \cite[Theorem 5, p.16]{R:74} imply that their exists a one-to-one pairing between $\Phi^M$ and its Fenchel transform $\Ahat^M:\R\rightarrow\ol{\R}^+$ as per \er{eq:Phi-M-from-A-M} and the left-hand equation in \er{eq:A-M-from-Phi-M}. It remains to show that the right-hand equation in \er{eq:A-M-from-Phi-M} holds.

By \er{eq:Phi-M-explicit}, the supremum in the left-hand equation in \er{eq:A-M-from-Phi-M} is never achieved at $\rho\in\R_{<0}$. Hence,
\begin{gather}
	\Ahat^M(\beta)
	= \max\{ \Lambda_-^M(\beta),\, \Lambda_+^M(\beta) \},
	\label{eq:A-M-max}
	\\
	\Lambda_-^M(\beta)
	\doteq \sup_{\rho\in[0,\hat\rho(M)]} \pi_\beta(\rho),
	\quad
	\Lambda_+^M(\beta)
	\doteq \sup_{\rho\in\R_{> \hat\rho(M)}} \{ (\beta - a^{-1}(M) ) \, \rho + M \},
	\label{eq:Lambda-M-p}
\end{gather}
for all $\beta\in\R$, with $\pi_\beta$ as per \er{eq:A-pi}. Replacing $b^2$ with $\hat\rho(M)$ in the argument following \er{eq:A-pi} yields
\begin{align}
	\Lambda_-^M(\beta)
	& = \left\{ \ba{rl}
		-\phi(0),
		& \beta\in\R_{<\phi'(0)},
		\\
		a(\beta),
		& \beta\in[\phi'(0), a^{-1}(M)],
		\\
		\lambda_-^M(\beta),
		& \beta\in\R_{>a^{-1}(M)},
	\ea\right.
	\qquad\qquad
	\lambda_-^M(\beta)
	\doteq \beta\, \hat\rho(M) - \phi\circ\hat\rho(M).
	\label{eq:Lambda-M-m-explicit}
\end{align}
By inspection of \er{eq:Lambda-M-p}, and recalling \er{eq:lambda-M-p},
\begin{align}
	\Lambda_+^M(\beta)
	& = \left\{ \ba{rl}
		\lambda_+^M(\beta) - \phi(0)\,,
		& \beta\in\R_{\le a^{-1}(M)}\,,
		\\
		% M,
		% & \beta = a^{-1}(M), 
		% \\
		\infty\,,
		& \beta\in\R_{>a^{-1}(M)}\,.
	\ea \right.
	\label{eq:Lambda-M-p-explicit}
	% \\
	% \lambda_+^M(\beta)
	% & \doteq M + \phi(0) - (a^{-1}(M) - \beta)\, \hat\rho(M).
	% \ooer{eq:lambda-M-p}
\end{align}
Hence, the pointwise maximum in \er{eq:A-M-max} may be evaluated by \er{eq:Lambda-M-m-explicit}, \er{eq:Lambda-M-p-explicit} and inequalities \er{eq:inequality-3}, \er{eq:inequality-4} from Lemma \ref{lem:inequalities} in Appendix \ref{app:properties}, which yields the right-hand equation in \er{eq:A-M-from-Phi-M}. 
${ }^{ }$\hfill{$\blacksquare$}

Returning to the proof of {\em (iv)}, by Lemma \ref{lem:a-properties}, there exists an $M_1\in\R_{\ge -\phi(0)}$ such that $a^{-1}(M)\in\R_{>0}$ for all $M\in\R_{\ge M_1}$. Meanwhile, applying the above claim, in particular \er{eq:A-M-from-Phi-M}, $\Ahat^M(0) = \sup_{\rho\in\R} \{ -\Phi^M(\rho) \}$ for any $M\in\R_{\ge -\phi(0)}$. Hence, recalling \er{eq:A-M-from-Phi-M},
\begin{align}
	& \hat c\doteq \inf_{M\ge M_1}
	\inf_{\rho\in\R} \Phi^M(\rho)
	= - \sup_{M\ge M_1} \Ahat^M(0)
	% \nn\\
	% & 
	= \left\{ \ba{cl}
			\phi(0),				&	0\in\R_{<\phi'(0)},
			\\
			\phi\circ(\phi')^{-1}(0), 	&	0\in\R_{\ge \phi'(0)},
		\ea \right.
	\nn% \\
	% & 
	% \doteq c\in\R,
\end{align}
so that the assertion is proved for any $c\in\R$ satisfying $c<\hat c\in\R$, as required.
\end{proof}

%%%%%%%%%%%%%%%%%%%%%%%%%%%%%%%%%%%%%%%%%%%%%%%%%%%%%%%%%%%%%%%%%%%%
%%
%%		References.
%%

\bibliographystyle{IEEEtran}
\bibliography{barrier}

%%%%%%%%%%%%%%%%%%%%%%%%%%%%%%%%%%%%%%%%%%%%%%%%%%%%%%%%%%%%%%%%%%%%
%%
%%		End of document.
%%

\end{document}